\tikzset{
    cross/.pic = {
    \draw[rotate = 45] (-#1,0) -- (#1,0);
    \draw[rotate = 45] (0,-#1) -- (0, #1);
    }
}
\pgfplotsset{my style/.append style={axis x line=middle, axis y line=
middle, xlabel={$x$}, ylabel={$y$}, axis equal }}
\theoremstyle{definition}
\newtheorem{theorem}{Theorem}[section]
\newtheorem{lemma}[theorem]{Lemma}
\newtheorem{corollary}[theorem]{Corollary}
\newtheorem{proposition}[theorem]{Proposition}
\newtheorem*{theoremA}{Theorem A}
\newtheorem*{theoremB}{Theorem B}
\newtheorem*{theoremC}{Theorem C}
\newtheorem*{theoremD}{Theorem D}
\newtheorem*{definition}{Definition}
\newtheorem*{remark}{Remark}
\newcommand{\bb}{\mathbb}
\newcommand{\mk}{\mathfrak}
\newcommand{\ml}{\mathcal}
\newcommand{\la}{\langle}
\newcommand{\ra}{\rangle}
\newcommand{\ld}{\lambda}
\newcommand{\pl}{\partial}
\newcommand{\Hom}{\text{Hom}}
\newcommand{\ds}{\displaystyle}
\newcommand{\ep}{\varepsilon}
\newcommand{\Z}{\mathbb{Z}}
\newcommand{\T}{\mathbb{T}}
\newcommand{\R}{\mathbb{R}}
\newcommand{\C}{\mathbb{C}}
\newcommand{\N}{\mathbb{N}}
\renewcommand{\P}{\mathbb{P}}
\renewcommand{\Im}{\mathrm{Im}}
\newcommand{\Ld}{\Lambda}
\newcommand{\tr}{\mathrm{tr}}
\newcommand{\g}{\gamma}
\newcommand{\G}{\Gamma}
\newcommand{\og}{\omega}
\renewcommand{\O}{\Omega}
\renewcommand{\mod}{\;\mathrm{mod}\;}
\newcommand{\quaddd}{\quad \quad \quad \quad}
\newcommand{\quadd}{\quad \quad}
\newcommand{\vol}{\mathrm{vol}}
\newcommand{\Diff}{\mathrm{Diff}}
\newcommand{\Aut}{\mathrm{Aut}}
\newcommand{\Out}{\mathrm{Out}}
\newcommand{\nt}{\noindent}
\newcommand{\onu}{\overline{\nu}}
\newcommand{\uog}{\underline{\omega}}
\newcommand{\loc}{\mathrm{loc}}
\title{Stationary measures and orbit closures of uniformly expanding random dynamical systems on surfaces}
\author{Ping Ngai (Brian) Chung}
\date{}
\begin{document}
\maketitle
\setcounter{section}{0}

\begin{abstract}
We study the problem of classifying stationary measures and orbit closures for non-abelian action on a surface with a given smooth invariant measure. Using a result of Brown and Rodriguez Hertz, we show that under a certain finite verifiable average growth condition, the only nonatomic stationary measure is the given smooth invariant measure, and every orbit closure is either finite or dense. Moreover, every point with infinite orbit equidistributes on the surface with respect to the smooth invariant measure. This is analogous to the results of Benoist-Quint and Eskin-Lindenstrauss in the homogeneous setting, and the result of Eskin-Mirzakhani in the setting of moduli spaces of translation surfaces. We then apply this result to two concrete settings, namely discrete perturbation of the standard map and Out($F_2$)-action on a certain character variety. We verify the growth condition analytically in the former setting, and verify numerically in the latter setting.
\end{abstract}

\section{Introduction}

Given a  Riemannian manifold $M$ and an acting semigroup $\G$, the closure of the $\G$-orbit of some points of $M$ may exhibit fractal-like structure. For instance in the case when $M$ is a compact manifold and $\G$ is generated by a single Anosov diffeomorphism, there are orbit closures with fractional Hausdorff dimension. A one-dimensional example is the action of $\N$ on the circle $S^1 = \R / \Z$ generated by 
$$ x \mapsto 3x \mod 1. $$
By the Birkhoff ergodic theorem, we know that (Lebesgue-)almost every point on the circle has dense orbit. Nonetheless the orbit of every rational number is clearly finite, and one can get orbit closures that are neither finite nor the whole circle, for instance the standard Cantor middle third set.
%The idea is that one can write each number $x \in [0, 1)$ in its ternary expansion, 
%$$ x = 0. x_1 x_2 x_3 \ldots $$
%so that the action $x \mapsto 3x \mod 1$ corresponds to deleting the first digit after the decimal, and shifting all the other digits to the left, i.e. 
%$$ 3x \mod 1 = 0. x_2 x_3 x_4 \ldots. $$
%Then the orbit of, say, $\sum_{k=0}^\infty (2/3)^{k!} = 0.20200200...$ would never enter the middle third of the interval $[0, 1]$ (the ternary expansion does not have the digit $1$), hence cannot be a dense orbit, despite being an infinite orbit. 

It turns out that if one consider instead the action of a larger group, the situation becomes more rigid. Furstenberg \cite{F} showed that the orbits of the action of $\N^2$ generated by 
$$ x \mapsto 2x \mod 1 \quaddd \text{ and } \quaddd x \mapsto 3x \mod 1, $$
are either finite or dense. Moreover, he famously asked whether all the ergodic invariant Borel probability measures are either finitely supported or the Lebesgue measure on $S^1$. Major progress on this conjecture was made by Rudolph \cite{R}, who showed that the ergodic invariant measures either have zero-entropy for the action of every one-parameter subgroup or is the Lebesgue measure on $S^1$.

In two or higher dimensions, similar phenomena have been observed. For example, the action of $\Z$ on the $2$-torus $\bb{T}^2 = \R^2 / \Z^2$ generated by the matrix
$$ \begin{pmatrix} 2 & 1 \\ 1 & 1 \end{pmatrix} $$
has orbits that are neither finite nor dense. In fact using the theory of Markov partitions \cite{Bo}, one can conjugate this system to a subshift of finite type to obtain orbit closures of any Hausdorff dimension between $0$ and $2$. If one consider instead the nonabelian action on $\bb{T}^2$ generated by, say,
$$ \begin{pmatrix} 2 & 1 \\ 1 & 1 \end{pmatrix} \quad \text{ and } \quad \begin{pmatrix} 1 & 1 \\ 1 & 2 \end{pmatrix}, $$
then it follows from a result of Bourgain, Furman, Lindenstrauss and Mozes \cite{BFLM} that the orbits are either finite or dense. In fact Benoist and Quint has proved in a series of papers \cite{BQ1, BQ2, BQ3} a number of such orbit closure classifications and the corresponding measure rigidity results. A special case of their result is the following: let $\mu$ be a finitely supported measure on $\mathrm{SL}(n, \Z)$ and let $\G_\mu \subset \mathrm{SL}(n, \Z)$ be the closed subgroup generated by the support of $\mu$. If $\G_\mu$ is ``large enough'', in this case this means that every finite-index subgroup of $\G_\mu$ acts irreducibly on $\R^n$, then every ergodic $\mu$-stationary probability measure on $\bb{T}^n$ is either finitely supported or the Haar measure on $\bb{T}^n$. In particular every $\mu$-stationary probability measure is $\mathrm{SL}(n, \Z)$-invariant. They used this measure rigidity result to show that every orbit closure is either finite or dense, by first showing a stronger equidistribution result. 

The results of Benoist and Quint are in the setting of homogeneous dynamics, where one considers the natural action of a Lie group $G$ acting on a homogeneous space $G/\Lambda$. In \cite{BQ1}, it was proved that if $\mu$ is a compactly supported measure on a simple real Lie group $G$, and the subgroup $\G \subset G$ generated by the support of $\mu$ is Zariski dense in $G$, then every $\G$-orbit is either finite or dense. Moreover, the corresponding $\mu$-stationary probability measures are either finitely supported or the Haar measure on $G/\Lambda$, hence in particular are $\G$-invariant. The result was extended to a general real Lie group $G$ in \cite{BQ2}, where they showed that assuming the Zariski closure of $\G$ is semisimple, Zariski connected with no compact factor, any $\mu$-stationary measure is homogeneous. This result was further generalized by Eskin-Lindenstrauss \cite{EL} where they relaxed the assumption on $\G$ to the ``uniform expansion'' assumption to include many cases where the Zariski closure of $\G$ is not semisimple. In contrast with the case of abelian actions (for instance Rudolph's theorem mentioned above), the measure classification has no entropy assumption, and the orbit closure classification follows as a corollary of the measure rigidity theorem. 

In this paper, we study the question of measure rigidity and orbit closure classification in the setting of smooth dynamics, i.e. the action of a subgroup of diffeomorphisms on a manifold $M$. In particular, we shall prove positivity of Lyapunov exponent, measure rigidity and orbit closure classification theorems in the following two settings.
\begin{itemize}
\item Discrete random perturbation of the standard map.
\item Outer automorphism group action on the character variety $\Hom(F_2, \mathrm{SU(2)}) // \mathrm{SU(2)}$. 
\end{itemize}
The first setting was studied by Blumenthal, Xue and Young \cite{BXY}, where they considered a ``continuous'' random perturbation of the standard map and obtained positivity of Lyapunov exponent, even though positivity of exponent for the standard map is notoriously hard. Their method, however, does not apply to discrete perturbations that we consider in this paper, as it is no longer clear that any stationary measure is absolutely continuous with respect to Lebesgue. This will be explained in Section \ref{smap}. 

The second setting was studied by Goldman \cite{G}, which is based on his earlier work \cite{G2}. In \cite{G}, the ergodic decomposition of the $\Out(F_2)$-action on the character variety $\Hom(F_2, \mathrm{SU(2)})/\mathrm{SU(2)}$ is given. The topological dynamics was studied by Previte and Xia \cite{PX}, who proved that on each ergodic component, every $\Out(F_2)$-orbit is either finite or dense. Their method uses crucially the fact that $\Out(F_2)$ is generated by Dehn twists. In this paper, we shall prove that for some finite set of generators $\ml{S}$ of $\G := \Out(F_2)$ that does not contain any nontrivial powers of Dehn twist, every $\G$-orbit on each ergodic component is either finite or dense. This will be explained in Section \ref{cvariety}.

Both results are part of a more general theorem concerning the volume-preserving action of a group $\G \subset \Diff^2(M)$ on a closed surface $M$. The measure rigidity problem in this setting was studied by Brown and Rodriguez-Hertz \cite{BR}. Based on the ``exponential drift'' technique first introduced in \cite{BQ1} and some ideas in \cite{EM}, they proved that in this setting, if ``the stable distribution is not nonrandom'' (see Section \ref{mrigidity} for the precise definition), then the stationary measures are either finitely supported, or the restriction of the volume on a positive volume subset. In this paper, we will build on the work of \cite{BR} to give a more verifiable (but stronger) criterion on the acting group $\G$ so that the stationary measures and orbit closures can be classified. Such a criterion should, on one hand, be strict enough to rule out the case of a one-parameter acting group (in which case we can see from above that there can be measures of arbitrary Hausdorff dimension in general), and, on the other hand, be flexible enough to include many larger group $\G$. We will then verify this criterion in both of the aforementioned settings. 

Our measure rigidity result relies heavily on the result of Brown and Rodriguez-Hertz \cite{BR}, hence only works in the two-dimensional case. The assumption we introduce will be stronger than that of \cite{BR}, in order to give us the proof of the orbit closure classification. Nonetheless, such an assumption is a finite criterion and hence can be checked, at least in principle, in concrete settings. 

\subsection{Main results}

In this paper, we shall prove positivity of Lyapunov exponent, measure rigidity and orbit closure classification in the following two settings. 

\begin{enumerate}
\item Discrete random perturbation of the standard map
\begin{theoremA}
Let $\bb{T}^2 := \R^2 / (2\pi \Z)^2$ be the $2$-torus. For $L > 0$, $\ep > 0$ and positive integer $r$, let
\begin{itemize}
\item $F_L: \bb{T}^2 \to \bb{T}^2$ be the standard map $F_L(x, y) = (L \sin x + 2x - y, x)$,
\item $F_{L, \og}: \bb{T}^2 \to \bb{T}^2$ be the perturbation $F_{L, \og}(x, y) := F_L(x+\og, y)$ by $\og \in \O := \{k\ep: k = 0, \pm 1, \pm2, \ldots, \pm r\}$,
%\item  $\mu_{r, \ep}$ be the uniform probability measure on $\Diff^2(\bb{T}^2)$ supported on $F_{\O} := \{F_\og: \og \in \O\} \subset \Diff^2(\bb{T}^2)$. 
\end{itemize}
Let $\delta \in (0, 1)$. There exists an integer $r_0 = r_0(\delta) > 0$ such that if $r \geq r_0$ and $\ep \in [L^{-1+\delta}, 1/(2r+1))$, then for all large enough $L$,
\begin{enumerate}
\item the random dynamical system defined by $F_{L, \O} := \{F_{L, \og}: \og \in \O\} \subset \Diff^2(\bb{T}^2)$ has positive Lyapunov exponent with respect to the Lebesgue measure on $\bb{T}^2$,
\item every orbit of the system defind by $F_{L, \O}$ is either finite or dense.
\end{enumerate}
%Let $\delta \in (0, 1)$. There exists an integer $r_0 = r_0(\delta) > 0$ such that if $r \geq r_0$ and $\ep \in [L^{-1+\delta}, 1/(2r+1))$, then $\mu_{r, \ep}$ is uniformly expanding. Hence any ergodic $\mu_{r, \ep}$-stationary measure on $\bb{T}^2$ has positive Lyapunov exponent, and is either finitely supported or the Lebesgue measure on $\bb{T}^2$. Moreover every orbit of the system defined by $F_\O$ is either finite or dense.
\end{theoremA}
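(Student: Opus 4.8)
\medskip
\noindent\emph{Sketch of the argument.} The plan is to deduce both assertions from the general volume--preserving surface theorem of Section~\ref{mrigidity}, whose sole hypothesis is a finite, checkable \emph{average growth condition}. The preliminary remark is that each $F_{L,\og}$ is the composition of the standard map $F_L$, which has $\det DF_L\equiv 1$, with the translation $(x,y)\mapsto(x+\og,y)$; hence $F_{L,\og}\in\Diff^2(\bb{T}^2)$ preserves the Lebesgue probability measure $m$, and the group $\G:=\la F_{L,\og}:\og\in\O\ra$ with the uniform measure on $\O$ is an admissible input for that theorem. It then suffices to produce an integer $n$ and a constant $c>0$ --- both allowed to depend on $L,\delta,r$ --- with
\[
\frac{1}{(2r+1)^{n}}\sum_{(\og_1,\dots,\og_n)\in\O^{n}}\log\bigl\|D_p\bigl(F_{L,\og_n}\circ\cdots\circ F_{L,\og_1}\bigr)\,v\bigr\|\ \ge\ c
\qquad(p\in\bb{T}^2,\ v\in T_p\bb{T}^2,\ \|v\|=1).
\]
This forces the top Lyapunov exponent of every ergodic stationary measure, in particular of $m$, to be at least $c/n>0$, which is assertion (a); assertion (b) is then precisely the orbit--closure dichotomy supplied by the theorem of Section~\ref{mrigidity}. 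So the entire content of Theorem~A is the verification of this cocycle estimate for $r\ge r_0(\delta)$, $\ep\in[L^{-1+\delta},1/(2r+1))$ and $L$ large, carried out in Section~\ref{smap}.

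For the verification, write $a(t):=L\cos t+2$, so that $D_{(x,y)}F_{L,\og}$ is the matrix $\left(\begin{smallmatrix}a(x+\og)&-1\\ 1&0\end{smallmatrix}\right)\in\mathrm{SL}(2,\R)$, with operator norm comparable to $|a(x+\og)|$ when $|a|\ge 2$ and with both its norm and that of its inverse of size $O(1)$ on the \emph{critical set} $B:=\{t\in S^1:|a(t)|<2\}$ --- a union of two arcs around $t=\pm\pi/2$ of total length $\asymp 1/L$. The point of the hypothesis $\ep\ge L^{-1+\delta}$ is that $|B|\ll\ep$, so any arithmetic progression of step $\ep$ meets $B$ in a bounded number of terms; equivalently, for every base point at most $O(1)$ of the $2r+1$ perturbations $\og$ are ``bad'' (have $x+\og\in B$), and every ``good'' $\og$ makes $DF_{L,\og}$ uniformly hyperbolic at $(x,y)$. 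The complementary hypothesis $\ep<1/(2r+1)$ confines the perturbations to an interval of length $<1$ and forces $r<\tfrac12 L^{1-\delta}$, so the stated parameter range is nonempty once $L$ is large. A short computation shows that the cone $\ml{C}:=\{v:|v_2|\le|v_1|\}$ is forward--invariant under every good matrix, that a good matrix satisfies $\|DF_{L,\og}v\|\gtrsim|a(x+\og)|\cdot\|v\|$ for $v\in\ml{C}$, and --- crucially --- that
\[
\frac{1}{2r+1}\sum_{\og\in\O}\log\|D_{(x,y)}F_{L,\og}\,v\|\ \ge\ c_1\,\delta\log L\qquad\text{uniformly over }(x,y)\text{ and unit }v\in\ml{C},
\]
for a universal $c_1>0$; the worst case, when the perturbation interval straddles $\pm\pi/2$, uses $L\ep\ge L^{\delta}$ together with $\sum_{k=1}^{r}\log k\asymp r\log r$, and it absorbs the $O(1)$ bad perturbations (on which $\|DF_{L,\og}v\|\gtrsim\|v\|$, since the matrix is close to a rotation when $|a|$ is small).

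With these ingredients I would run an itinerary/cone argument. Any unit vector is swept into $\ml{C}$ within at most two good steps, at the cost of dividing its length by $O(L)$ --- an ``alignment loss'' of $O(\log L)$ in logarithmic scale; once a vector lies in $\ml{C}$ it stays there under good matrices, a bad matrix can eject it at $O(1)$ cost in length, and each ejection is repaired within a bounded number of good steps. Hence, if the itinerary $(\og_1,\dots,\og_n)$ has $J$ bad steps, then at least $n-O(1)-O(J)$ of its steps are ``cone steps'' (a good matrix applied to a vector of $\ml{C}$) and at most $O(1)+O(J)$ are not, each of the latter contributing at least $-O(\log L)$ because every $DF_{L,\og}$ has $\|(DF_{L,\og})^{-1}\|\le L+O(1)$. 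Taking expectation over $\O^{n}$, where $\E[J]=O(n/r)$ since from any base point only $O(1)$ perturbations are bad, and using that the conditional expectation (given the past) of each cone step's contribution is at least $c_1\delta\log L$, one obtains a lower bound of the form $\log L\cdot\bigl(c_1\delta n-O(1)-O(n/r)\bigr)$. Choosing $n$ a sufficiently large fixed multiple of $1/\delta$, and then $r_0(\delta)$ large enough that $O(n/r_0)$ is negligible, makes this $\ge c$ with $c\asymp\delta\log L$; in particular the Lyapunov exponent is large. The main obstacle is making every estimate uniform over all base points, directions, and itineraries simultaneously: one must absorb the genuine $\log L$-scale contractions of near--vertical vectors near the critical set (handled by the alignment--loss bookkeeping together with the fact that the contracting directions of all good matrices cluster near the vertical), and one must rule out long runs of bad perturbations dominating the average (the combinatorial count of bad strings, which is exactly where $\ep\ge L^{-1+\delta}$ is used). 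Granting these, Theorem~A follows from the general theorem of Section~\ref{mrigidity}.
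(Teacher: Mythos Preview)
Your overall reduction---verify uniform expansion for $\mu$, then invoke Theorem~C for positivity of the exponent and Theorem~D for the orbit dichotomy---is exactly the paper's strategy. The verification itself, however, proceeds quite differently. You run a forward cone argument, tracking whether the evolving vector $v_k$ lies in $\ml{C}=\{|v_2|\le|v_1|\}$ and bounding the expected number of non-cone steps; the paper instead works ``backward,'' estimating for each $n$-word $\uog$ the \emph{contracting direction} $\theta_{\uog}$ of the full product $DF^n_{\uog}$ and showing (Lemmas~\ref{singleangleestimate}--\ref{angleestimate}) that these directions are quantitatively separated: two long words agreeing on their first $i-1$ letters satisfy $|\theta_{\uog}-\theta_{\uog'}|\gtrsim (A_1\cdots A_{i-1})^{-2}\cdot\ep L/(A_iA_i')$. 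For a fixed test direction $\theta$ one then counts, level by level, how many long words have $\theta_{\uog}$ at least this far from $\theta$, and converts via $\|DF^n_{\uog}(\theta)\|\gtrsim \|DF^n_{\uog}\|\cdot d(\theta,\theta_{\uog})$. This yields a pointwise lower bound on each summand rather than a conditional-expectation bound, and the final combinatorics is a direct count of $(|\O|-2)(|\O|-1)^{n-i}$ words at each level $i$.

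Your one-step cone estimate and the observation that $B=\{|a|<2\}$ meets each $\O$-progression in $O(1)$ points are correct, but the alignment bookkeeping has a real gap. The assertion that any unit vector is swept into $\ml C$ within at most two good steps is not a deterministic fact: if $v_0=(\eta_0,1)$ and step~1 is good with $a_1\eta_0\approx 1$, then $v_1$ is again near-vertical, and step~2 at the new base point can again satisfy $a_2\eta_1\approx 1$; non-cone runs of arbitrary length are possible in principle, each step contracting by $\sim L$. More importantly, your bad set $B$ is not the only obstruction. Near $x=0$ or $x=\pi$ all $2r+1$ matrices are strongly hyperbolic but have \emph{nearly identical} contracting directions (the values $a_\omega$ cluster within $O(r^2L\ep^2)$, which for $\delta<1/2$ is $o(1)$), so a vector near that common direction fails the cone test after one step for \emph{every} $\omega$, not merely for $O(1)$ of them. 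This is precisely the ``overlapping contracting directions'' obstruction highlighted in Proposition~\ref{generalcriterion}, and the paper's approach absorbs it automatically because Lemma~\ref{angleestimate} is phrased in terms of the accumulated product $A_1\cdots A_{i-1}$ rather than the one-step spacing of the $a_\omega$. Your route can probably be repaired---one must show that the trajectory leaves the clustering region after one step and that the expected length of a non-cone run is uniformly $O(1)$ with expected log-loss $O(\log L)$---but this requires essentially the angular analysis the paper performs, and as written your deterministic count ``$\#\{\text{non-cone steps}\}\le O(1)+O(J)$'' does not hold.
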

\item Outer automorphism group action on character variety
\begin{theoremB}
Let $\mk{X}_s := \Hom_s (F_2, \mathrm{SU(2)})//\mathrm{SU(2)}$ be the relative character variety corresponding to the boundary conjugacy class $s \in [-2, 2]$. Each $\mk{X}_s$ has a natural finite measure $\ld_s$ inherited from the natural measure on $\Hom(F_2, \mathrm{SU(2)})$ that is invariant under the natural action of $\Out(F_2)$ (see Section \ref{cvariety} for the precise definitions and motivations).

There exists a finite set $\ml{S} \subset \Out(F_2)$ without any nontrivial powers of Dehn twists such that for the semigroup $\G$ generated by $\ml{S}$, and for $s = 1.99$, 
\begin{enumerate}[label=(\alph*)]
\item the only $\G$-invariant measure $\nu$ on $\mk{X}_s$ that is not finitely supported is the natural finite measure $\ld_s$.
\item Every orbit of $\G$ on $\mk{X}_s$ is either finite or dense,
\item Each dense $\G$-orbit equidistributes (with respect to $\ml{S}$) on $\mk{X}_s$ (in the precise sense defined in Proposition \ref{equidistribution}). 
\end{enumerate}
\end{theoremB}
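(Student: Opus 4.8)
The plan is to derive Theorem B from the general measure‑rigidity and orbit‑closure theorem of this paper (Section~\ref{mrigidity}, Section~\ref{cvariety}) by exhibiting an explicit semigroup and verifying the average‑growth hypothesis for its action on $\mk{X}_{1.99}$. First I would fix the geometry. For $s\in(-2,2)$ the relative character variety $\mk{X}_s=\Hom_s(F_2,\mathrm{SU(2)})//\mathrm{SU(2)}$ is a smooth closed surface, realized inside $\R^3$ through the trace coordinates $(x,y,z)=(\tr\rho(a),\tr\rho(b),\tr\rho(ab))$ as the level set $\{\kappa=s\}$ of the Fricke polynomial $\kappa(x,y,z)=x^2+y^2+z^2-xyz-2=\tr\rho([a,b])$; topologically it is a $2$-sphere. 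The $\Out(F_2)$-action is by explicit polynomial automorphisms of $\R^3$ --- compositions of Nielsen moves such as $(x,y,z)\mapsto(x,z,xz-y)$ together with sign changes --- all preserving $\kappa$, so each restricts to a $C^\infty$ (hence $\Diff^2$) diffeomorphism of $\mk{X}_s$ preserving the Goldman area form; after checking $\ld_s(\mk{X}_s)<\infty$ we normalize $\ld_s$ to a probability measure. With this set-up it suffices to (i) produce a finite set $\ml{S}\subset\Out(F_2)$ that contains no nontrivial power of a Dehn twist and whose generated semigroup $\G$ acts on $\mk{X}_{1.99}$, and (ii) verify the finite average-growth condition for the derivative cocycle of the $\G$-action on $\mk{X}_{1.99}$. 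Granting this, conclusion (b) and conclusion (c) are precisely the orbit-closure and equidistribution parts of the general theorem (the latter in the sense of Proposition~\ref{equidistribution}); and conclusion (a) follows because any $\G$-invariant probability measure is $\mu$-stationary for the uniform measure $\mu$ on $\ml{S}$, so the measure classification forces it to be finitely supported or equal to $\ld_{1.99}$, while $\ld_{1.99}$ itself is $\Out(F_2)$-invariant and nonatomic.

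Next I would choose $\ml{S}$. Under the isomorphism $\Out(F_2)\cong\mathrm{GL}(2,\Z)$ coming from the action on $H_1(F_2;\Z)$, Dehn twists correspond to conjugates of unipotent matrices; such elements act ``parabolically'' and cannot expand in every direction, which is exactly why the theorem (and our growth condition) must avoid their powers --- this is where the present approach diverges from the Dehn-twist arguments of Previte--Xia. Instead I would take a short list of automorphisms whose images in $\mathrm{GL}(2,\Z)$ are hyperbolic with pairwise distinct (irrational) axes, so that $\G$ has no common invariant line on $H_1$ and no forced finite-orbit structure, and then write out the corresponding polynomial maps on $(x,y,z)$ and their Jacobians restricted to $T\mk{X}_{1.99}$. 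The particular generators, and the particular value $s=1.99$, are selected so that the estimate in step (ii) clears its error bars.

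The core of the argument, and the main obstacle, is the verification in step (ii). The average-growth condition is a \emph{finite} statement: for a suitable word length $n$ one must show
\[
\inf_{x\in\mk{X}_{1.99}}\ \frac{1}{n}\sum_{g} p(g)\,\log\big\|D_x g\big\|\ \geq\ c\ >\ 0,
\]
where the sum is over length-$n$ words $g$ in $\ml{S}$ with weights $p(g)$, the derivative is taken along $T_x\mk{X}_{1.99}$, and (in the relevant formulation) one also minimizes over unit tangent vectors. Since all maps are polynomial and $\mk{X}_{1.99}$ is compact, the function being minimized is Lipschitz with an explicitly computable constant, so I would (a) triangulate or grid $\mk{X}_{1.99}$ finely, (b) compute the word-derivative norms at the sample points using interval (or exact) arithmetic, (c) bound the gap between the sampled minimum and the true infimum via the Lipschitz estimate, and (d) take $n$ large enough and the mesh fine enough that the certified lower bound is strictly positive. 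This is a computer-assisted computation, and the whole difficulty lies in making it rigorous --- choosing $n$, controlling the exponential number of length-$n$ words, and keeping all error terms below the margin $c$ --- which is why Theorem B is stated for one explicit $s$ and one explicit $\ml{S}$ rather than a range. Finally I would record that the remaining nondegeneracy hypotheses of the general theorem, namely positivity of the top Lyapunov exponent of every stationary measure and that the stable distribution is not nonrandom, are automatic consequences of the uniform-expansion estimate just established, so no work beyond (i)--(ii) remains.
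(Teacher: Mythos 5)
Your proposal is correct and follows essentially the same route as the paper: realize $\mk{X}_s$ via trace coordinates as a level set of the Fricke polynomial in $\R^3$, choose a finite set of Nielsen-move compositions whose images in $\mathrm{GL}(2,\Z)$ are hyperbolic (the paper takes sixteen length-five words in $\tau_X,\tau_Y$ together with their inverses), verify uniform expansion by a certified grid computation with explicit Lipschitz/$C^2$ bounds, and then read off (a), (b), (c) from the general measure-rigidity, orbit-closure, and equidistribution theorems (Theorems~C and~D, Proposition~\ref{equidistribution}). The only notable difference is operational: you leave the word length $n$ open and suggest taking it large, whereas the paper exploits that near $s=2$ the hyperbolicity is strong enough to certify the condition at $N=1$; the paper also uses a carefully normalized frame built from the symplectic structure so that the derivative cocycle becomes a $2\times2$ matrix with controlled entries, which is a practical point you would need to supply to make step (ii) actually run.
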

\end{enumerate}

In \cite{BXY}, Theorem A(a) was proved when $\O = [-\ep, \ep]$, and $\ep > e^{-L^{2-\delta}}$. However, in this paper, we shall prove a stronger condition (called \emph{uniform expansion}), and we are only able to show this for $\ep > L^{-1+\delta}$. In fact, in a subsequence paper \cite{BXY2}, the same authors essentially showed uniform expansion in the case when $\O = [-\ep, \ep]$ and $\ep > L^{-1+\delta}$ \cite[Prop. 9]{BXY2}. Their method, however, does not apply in this discrete setting, since their approach relies heavily on the fact that any stationary measure is absolutely continuous with respect to Lebesgue measure (see \cite[Lem. 5]{BXY} and \cite[Lem. 8]{BXY2}), which is not necessarily true in the discrete setting.  

In \cite{PX}, the orbit closure classification in Theorem B was proved for $\G = \Out(F_2)$ without going through a measure rigidity result. Instead, the topological dynamics was analyzed directly using critically the fact that $\Out(F_2)$ is generated by two Dehn twists. These Dehn twists take a particularly simple form on the space, which allow an explicit analysis of the orbits generated by them. In this paper, we shall prove uniform expansion for generators $\ml{S}$ of $\Out(F_2)$ that does not have any nontrivial powers of Dehn twists, hence does not admit such explicit analysis. The difference between these two results is analogous to the classical setting of the action on the $2$-torus $\bb{T}^2$ generated by 
$$ \begin{pmatrix} 1 & 1 \\ 0 & 1 \end{pmatrix}, \quaddd \begin{pmatrix} 1 & 0 \\ 1 & 1 \end{pmatrix}, $$
where the action by each individual generator is rotation on a circle, versus the action generated by hyperbolic elements in $\mathrm{SL}(2, \Z)$ that generate a subgroup Zariski dense in $\mathrm{SL}(2, \R)$, where the generic orbit (though certainly not all orbit) of each individual generator is dense in $\bb{T}^2$. 

Our method in the proof of Theorem B goes through a numerical verification using a computer program. We demonstrate such verification on one particular shell $s = 1.99$ and for one particular set of generators $\ml{S}$, though just by some derivative bounds (to be made explicit in Section \ref{cvariety}) the same result can be extended to nearby shells. Such verification is faster for $s$ close to $2$, though there is no theoretical obstruction in applying the same verification to any shells $\mk{X}_s$ with $s \in (-2, 2)$ (just the computation time grows as $s \to -2$). There is also no theoretical obstruction in applying it to other finite subsets $\ml{S}$ that generate a non-elementary subgroup $\G \subset \Out(F_2)$. 

\subsection{Uniform expansion}
As mentioned in the introduction, both theorems are special cases of a more general result. In this section, we shall introduce a general criterion called \emph{uniform expansion}, and state that this criterion implies positivity of Lyapunov exponents, measure rigidity and orbit closure classification.

Given a Riemannian manifold $M$, let $\Diff^k(M)$ be the group of $C^k$ diffeomorphisms on $M$. Given a measure $m$ on $M$, let $\Diff^k_m(M)$ be the group of $C^k$ diffeomorphisms on $M$ that preserve $m$, i.e.
$$ \Diff^k_m(M) := \{f \in \Diff^k(M): f_* m = m \}. $$
Throughout this paper, any measure is assumed to be a Borel probability measure on the corresponding topological space. 

\begin{definition}
A probability measure $\nu$ on $M$ is called \emph{$\mu$-stationary} if 
$$ \mu * \nu = \nu, \quaddd \text{ where } \quad \mu * \nu = \int_{\Diff^2(M)} f_* \nu \; d\mu(f). $$
\end{definition}

\begin{definition}
Let $M$ be a Riemannian manifold, $\mu$ be a measure on $\Diff^2(M)$. We say that $\mu$ is \emph{uniformly expanding} if there exists $C > 0$ and $N \in \N$ such that for all $x \in M$ and $v \in T_x M$, 
$$ \int_{\Diff^2(M)} \log \frac{\| D_x f(v) \|}{\| v\|} d\mu^{(N)}(f) > C. $$
Here $\mu^{(N)} := \mu * \mu * \cdots * \mu$ is the $N$-th convolution power of $\mu$. We remark that if $M$ is compact, this is equivalent to the weaker formulation where we allow $C$ and $N$ to depend on $x$ and $v$ (see e.g. \cite[Lem. 4.3.1]{LX}, where such weaker criterion is called ``weakly expanding''). 
\end{definition}

Sometimes we say that a finite subset $\ml{S} \subset \Diff^2(M)$ is \emph{uniformly expanding} if the uniform measure supported on $\ml{S}$ is uniformly expanding in the above sense. Note that in this case the integral in the uniform expansion condition reduces to a finite sum. 

The goal of the first half of the paper is to classify $\mu$-stationary measures on a closed surface $M$ and the corresponding orbit closures if $\mu$ is uniformly expanding and supported on $\Diff^2_m M$ for some \emph{smooth} measure $m$ on $M$, i.e. a Borel probability measure $m$ equivalent to the Riemannian volume on $M$.

\begin{theoremC} \label{mainmeasurerigidity}
Let $M$ be a closed surface (compact connected two-dimensional Riemannian manifold) and $m$ be a smooth measure on $M$. Let $\mu$ be a uniformly expanding probability measure on $\Diff^2_m(M)$ with %$\mu(\G) = 1$ and % and $\G \subset \Diff^2_m(M)$ be a subgroup of diffeomorphisms that preserves $m$
\begin{equation}
\tag{*}
 \int_{\Diff^2_m(M)} \log^+(|f|_{C^2}) + \log^+(|f^{-1}|_{C^2}) \; d\mu(f) < \infty. 
 \end{equation}
Let $\nu$ be an ergodic, $\mu$-stationary Borel probability measure on $M$. Then
\begin{enumerate}[label=(\alph*)]
\item \label{mainpositiveexponent} $\nu$ has positive Lyapunov exponent;
\item \label{mainmeasurerigidity} either $\nu$ is finitely supported, or $\nu = m$. %In particular $\nu$ is in fact $\G$-invariant.
\end{enumerate}
\end{theoremC}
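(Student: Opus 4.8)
The plan is to deduce Theorem C from the measure rigidity theorem of Brown--Rodriguez-Hertz \cite{BR}, which says that an ergodic $\mu$-stationary measure $\nu$ on a closed surface $M$ is either finitely supported, has a certain non-hyperbolic/invariance structure, or equals (up to normalization) the restriction of $m$ to a positive-volume set, \emph{provided} one rules out the degenerate intermediate possibilities by showing the stable distribution "is not nonrandom." So the argument has two halves. First I would establish part \ref{mainpositiveexponent}: positivity of the top Lyapunov exponent $\lambda^+(\nu)$. By the Furstenberg-type formula, $\lambda^+(\nu) + \lambda^-(\nu) = \int \log |\det D_x f|\, d\mu(f)\, d\nu(x) = 0$ since every $f$ preserves the smooth measure $m$ and $\nu$ need not equal $m$ — wait, this trace identity requires care, so instead I would argue directly: for any $x$ and any nonzero $v \in T_xM$, iterating the uniform expansion inequality
\[
\int \log \frac{\|D_x f(v)\|}{\|v\|}\, d\mu^{(N)}(f) > C
\]
along a random trajectory and using the subadditive ergodic theorem (Oseledets) shows that the \emph{sum} of the finite-scale growth is bounded below by $C>0$; combined with Fubini and stationarity of $\nu$, the largest Lyapunov exponent of the cocycle $D_xf$ over the skew product $(\Omega^{\N}\times M,\, \mu^{\N}\times\nu)$ satisfies $\lambda^+ \ge C/N > 0$. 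This is essentially the standard deduction that uniform expansion forces a positive exponent; the integrability hypothesis $(*)$ guarantees $\log^+\|D_xf\|^{\pm1} \in L^1$ so Oseledets applies.

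Second, for part \ref{mainmeasurerigidity}, I would feed positivity of the exponent into the dichotomy of \cite{BR}. Because $\lambda^+(\nu) > 0$, the measure is nonuniformly hyperbolic (the negative exponent is then also nonzero by the trace/Jacobian relation, using that $\log|\det D_xf| \in L^1$ and invariance of $m$), so there is a genuine stable distribution $E^s$ defined $\nu$-a.e. The key input I need is that this stable distribution is "not nonrandom," i.e. there is no measurable $\nu$-a.e.-defined line field on $M$ invariant under $\mu$-a.e. $f$. Here is where uniform expansion does the work beyond \cite{BR}: if such an invariant line field $v(x)$ existed, then along it $\int \log \frac{\|D_xf(v(x))\|}{\|v(x)\|}\, d\mu^{(N)}(f)$ would equal the (negative or zero) Lyapunov exponent in that direction by the invariance and the cocycle/ergodic-average identity — contradicting that this integral is $> C > 0$ for \emph{every} line. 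Hence no invariant line field exists, the hypothesis of the Brown--Rodriguez-Hertz classification is met, and $\nu$ is either finitely supported or (a normalized restriction of) $m$. To upgrade "restriction of $m$ to a positive-volume set $A$" to "$\nu = m$," I would use ergodicity of $\nu$ together with uniform expansion: uniform expansion propagates positive volume under the dynamics (an expanding random map cannot leave a proper positive-volume set invariant in measure), forcing $m(A) = m(M)$, so $\nu = m$.

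The main obstacle I anticipate is the "not nonrandom" verification, i.e. cleanly ruling out a measurable invariant line field using only the uniform expansion inequality. The subtlety is that the relevant Lyapunov-type average along a \emph{measurable} invariant line field $v(x)$ need not literally be $\int\log\frac{\|D_xf(v)\|}{\|v\|}d\mu^{(N)}$ pointwise; one only controls the Birkhoff/Oseledets average $\lim \frac1n \log\|D_x f^{(n)}(v(x))\|$ along $\mu^{\N}$-a.e. trajectory. Reconciling the pointwise-in-$x$, all-$v$ lower bound $C$ with the a.e.-in-trajectory ergodic average requires an application of the cocycle average identity $\int_M\int_\Omega \log\frac{\|D_xf(v(x))\|}{\|v(x)\|}\,d\mu\,d\nu = \int_M\int_\Omega \log\frac{\|D_{fx}(fv)(w)\|}{\cdot}$-telescoping, i.e. integrating the $\mu^{(N)}$-expansion estimate against $\nu$ and using $f$-equivariance of $v$ to see the left side is both $>C$ and equals the exponent in the $v$-direction, which is $\le 0$. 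Making this telescoping rigorous — including the measurability and integrability of $\log\|v\|$ bookkeeping under a Riemannian metric on $M$ — is the technical heart; the rest is assembling \cite{BR} and standard multiplicative ergodic theory.
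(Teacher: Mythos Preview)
Your outline for part~(a) and for ruling out a non-random stable distribution is essentially the paper's approach: the paper proves $\lambda^+(\nu)\ge C/N$ via a martingale/law-of-large-numbers argument (Proposition~\ref{positiveexponent}), and eliminates the non-random case exactly by observing that a $\mu$-a.s.\ invariant line field $\hat V$ would have $\int\log\|D_xf(v)\|\,d\mu^{(n)}(f)<0$ for large $n$ along $\hat V$, contradicting uniform expansion (Lemma~\ref{nonrandom}). Your worry about the ``telescoping'' is slightly misplaced: one does not need to identify the $\mu^{(N)}$-integral with the Lyapunov exponent; it suffices that vectors in $E^s_\omega(x)=\hat V(x)$ decay, so the expectation is eventually negative.

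The genuine gap is your last sentence. Upgrading ``$\nu$ is the normalized restriction of $m$ to a positive-volume set $A$'' to ``$\nu=m$'' is \emph{not} a one-liner, and your heuristic ``an expanding random map cannot leave a proper positive-volume set invariant in measure'' is false as stated: every $f\in\supp\mu$ is volume-preserving, so $f(A)$ has the same volume as $A$, and nothing about growth of tangent vectors directly forbids a proper $\Gamma$-invariant set of positive volume. What is needed is ergodicity of $m$ for the random dynamics, and the paper devotes the bulk of Section~\ref{mrigidity} (Proposition~\ref{allofM} and subsections~\ref{choiceR}--\ref{choiceU}) to this. The argument is a Hopf-type argument, but with a twist unavailable in the deterministic setting: one connects points not via stable/unstable manifolds of a single map, but via local stable manifolds $W^s_\omega(x)$ and $W^s_{\omega'}(x)$ for \emph{different} random words $\omega,\omega'$. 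Uniform expansion enters crucially to give uniform lower bounds on the angle between $E^s_\omega(x)$ and $E^s_{\omega'}(x)$ (Lemma~\ref{uniformangle}) and uniform control on the size and geometry of local stable curves (Lemma~\ref{uniformsize}), for a positive-$\mu^\N$-measure set of words at \emph{every} point. These uniform estimates, combined with absolute continuity of the stable lamination, yield a uniform lower bound on the density of the basin $B(\nu)$ in every small ball, from which $\vol(B(\nu))=1$ follows by Lebesgue density. You should expect this step to require real work; it is where uniform expansion does something that positivity of the exponent alone does not.
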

\noindent This result was proved in \cite[Thm. 4.1.4]{LX}, where they used this statement to prove a large deviation result. We shall recall the proof in Section \ref{pexponent} and \ref{mrigidity} for completeness.

Here we are more concerned with the following orbit closure classification which follows from Theorem C, and its applications in concrete settings. 

%\noindent Using Theorem A, we can classify the orbit closures of uniformly expanding systems. 

\begin{theoremD} \label{mainorbitclosure}
Let $M$ be a closed surface, $m$ be a smooth measure on $M$, and $\ml{S} \subset \Diff^2_m(M)$ be a finite subset of diffeomorphisms that preserve $m$. Let $\G \subset \Diff^2_m(M)$ be the subsemigroup generated by $\ml{S}$. If $\ml{S}$ is uniformly expanding, then 
\begin{enumerate}[label=(\alph*)]
\item every orbit of $\G$ is either finite or dense, 
\item every dense $\G$-orbit equidistributes on $M$ (in the precise sense defined in Proposition \ref{equidistribution}).
\end{enumerate}
\end{theoremD}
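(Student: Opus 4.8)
The plan is to deduce the orbit closure classification from the measure rigidity result (Theorem C) via an equidistribution argument, following the strategy of Benoist--Quint. First I would set up the random walk: let $\mu$ be the uniform measure on $\ml{S}$, so by hypothesis $\mu$ is uniformly expanding and supported on $\Diff^2_m(M)$, and condition $(*)$ holds trivially since $\ml{S}$ is finite. For a point $x \in M$ with infinite $\G$-orbit, consider the empirical measures $\nu_n^x := \frac{1}{n}\sum_{k=1}^n \mu^{(k)} * \delta_x$ on the compact space $M$. Any weak-$*$ limit point $\nu_\infty$ of this sequence is $\mu$-stationary. The key first step is to show that no ergodic component of $\nu_\infty$ can be finitely supported: a finitely supported ergodic stationary measure would be supported on a finite $\G$-invariant set, and I would argue (using that $x$ has infinite orbit, together with a recurrence/drift argument exploiting uniform expansion — a finite orbit is a "sink" that the expanding random walk cannot be attracted to, or more precisely the escape-of-mass estimates force the walk away from any finite invariant set) that $\nu_\infty$ gives zero mass to every finite $\G$-invariant subset. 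Hence by the ergodic decomposition and Theorem C\ref{mainmeasurerigidity}, $\nu_\infty = m$.

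Once every limit point of $\nu_n^x$ equals $m$, we get $\nu_n^x \to m$ weakly, which is precisely the equidistribution statement of part (b) (this is where Proposition \ref{equidistribution} enters — I would phrase equidistribution of the $\G$-orbit of $x$ as convergence of these Cesàro-averaged pushforwards to $m$). Part (a) then follows: if $x$ has infinite orbit, then $\supp \nu_n^x \subseteq \overline{\G x}$ for all $n$, so $\supp m = M \subseteq \overline{\G x}$, i.e. the orbit is dense; and if $x$ has finite orbit we are in the other alternative. The dichotomy is thus genuinely a dichotomy because $M$ is connected and $m$ has full support.

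The main obstacle I anticipate is the step showing limit points of the empirical measures charge no finite invariant set — equivalently, ruling out "escape into a finite orbit." In the homogeneous setting Benoist--Quint handle the analogous issue (escape of mass to infinity / to lower-dimensional invariant sets) using the recurrence properties of the walk and the structure of the acting group; here the analogue is that uniform expansion should prevent the walk from concentrating near a finite orbit, since near such an orbit the derivative cocycle would have to contract on average along the walk, contradicting the uniform lower bound $C > 0$ in the definition of uniform expansion. Making this rigorous likely requires a quantitative non-concentration estimate: for any finite $\G$-invariant set $Z$ and any $\eta > 0$, the measure $\mu^{(n)} * \delta_x$ assigns mass $\le 1 - c(\eta)$ to the $\eta$-neighborhood of $Z$ for infinitely many $n$ (uniformly), which one feeds into the Cesàro average. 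A secondary technical point is verifying that a finitely supported ergodic $\mu$-stationary measure is automatically supported on a finite $\G$-orbit (so that "$\nu$ finitely supported" in Theorem C really does correspond to the finite-orbit alternative) — this is a standard but necessary bookkeeping lemma about stationary measures on finite sets.
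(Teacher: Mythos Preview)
Your outline matches the paper's approach closely: Section~\ref{oclosures} proves equidistribution (Proposition~\ref{equidistribution}) by taking weak-$*$ limits of the Ces\`aro averages $\overline{\mu}^{(n)}*\delta_x$, invoking Theorem~C on the ergodic decomposition, and ruling out the finitely supported components; orbit closure (Proposition~\ref{orbitclosure}) is then immediate. Two points where your sketch is looser than the paper. First, the non-concentration step is carried out not by a contradiction with the derivative bound but by constructing a \emph{Margulis function}: uniform expansion gives, for small $\delta>0$, the drift inequality $\int d(f(x),f(y))^{-\delta}\,d\mu^{(n_0)}(f)\le c\,d(x,y)^{-\delta}+b$ with $c<1$ (Lemma~\ref{margulis}), and summing over a finite orbit $\mathcal N$ yields a proper function $f_{\mathcal N}$ on $M\setminus\mathcal N$ with $(\mu^{(n)}*\delta_x)(f_{\mathcal N})\le b_1$ for large $n$ (Corollary~\ref{orbitmargulis}). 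Markov's inequality then gives, for every $\varepsilon>0$, a neighborhood $\Omega_{\mathcal N,\varepsilon}$ of $\mathcal N$ with $(\overline{\mu}^{(n)}*\delta_x)(\Omega_{\mathcal N,\varepsilon})<\varepsilon$ eventually (Lemma~\ref{small}); note that your stated estimate ``mass $\le 1-c(\eta)$'' is too weak as written --- you need the mass to be made \emph{arbitrarily small}, not just bounded away from $1$, to conclude $\nu_\infty(\mathcal N)=0$. Second, to pass from ``each finite orbit has $\nu_\infty$-measure zero'' to ``the atomic part of the ergodic decomposition vanishes'' you need that there are only countably many finite $\Gamma$-orbits; the paper proves this (Proposition~\ref{countable}) as another consequence of the same Margulis function, by showing that two distinct finite orbits of the same size cannot be too close. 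Your heuristic that concentration near a finite orbit forces the derivative cocycle to contract is not quite the mechanism --- it is rather that expansion of the derivative linearizes to expansion of distances, which is exactly what the function $d(x,y)^{-\delta}$ detects.
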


Note that we could have replaced the word ``subsemigroup'' with ``subgroup'' to get a weaker statement. Also if $\ml{S}$ is uniformly expanding, then $\G$ cannot be cyclic (see Lemma \ref{nonrandom} below). An analogous statement has been proved in greater generality in the homogeneous setting by Eskin and Lindenstrauss \cite{EL}. 

In the setting of homogeneous dynamics, uniform expansion has been verified in some cases. For instance, let $G$ be a real semisimple Lie group with no compact factors and $\Lambda$ be a discrete subgroup of $G$. Let $\mu$ be a countably supported probability measure on $G$ whose support generates a Zariski dense subgroup of $G$. Then $\mu$ is uniformly expanding, see e.g. \cite[Lem. 4.1]{EMar}, the idea of which goes back to Furstenberg \cite{F2}. As a second example, one may consider the case of the $\mathrm{SL}(n, \Z)$-action on the $n$-torus $\T^n := \R^n / \Z^n$. Let $\mu$ be a finitely supported probability measure on $\mathrm{SL}(n, \Z)$ such that the support of $\mu$ generates a Zariski dense subgroup of $\mathrm{SL}(n, \R)$. Using the classical theory of product of random matrices (for instance in Goldsheid and Margulis \cite{GM}), one can show that $\mu$ is uniformly expanding (see e.g. the proof of Theorem 4.1.3 in \cite{LX} for the precise argument). Clearly uniform expansion is a $C^1$-open property, therefore any small $C^1$-perturbations of these examples also support uniformly expanding measures. 

\subsection{Verification of Uniform Expansion}

Theorem A and B are both proved by verifying uniform expansion and then applying Theorem C and D. Theorem A will be proved in Section \ref{smap} by verifying uniform expansion analytically. Theorem B will be proved in Section \ref{cvariety} by verifying uniform expansion numerically, using an algorithm described in Section \ref{computer}. The context and motivation will be provided in the respective sections.

Other than the fact that these examples are interesting in their own right, they are also chosen to illustrate how to overcome two difficulties in the verification of uniform expansion. 

First of all, as we saw in Theorem C, uniform expansion is a stronger criterion than positivity of Lyapunov exponent, and the latter is notoriously difficult to verify for one-parameter group actions without some sort of uniform hyperbolicity. The reason is that even strong expansion in the early stages of the dynamics can be cancelled out by strong contraction in the future, for instance when the dynamics hit a region where it behaves like a rotation, such ``backtracking'' phenomenon may occur. In our examples, there are small rotation regions for each individual map. Nonetheless we show that as long as the random dynamics enter these rotation regions with small enough probability, the overall dynamics is expanding on average. 

Secondly, it is clear that if the dynamics is generated by a single volume-preserving hyperbolic diffeomorphism, then uniform expansion never holds, since the stable direction is contracted by the dynamics. For higher rank actions, it is still possible that the contracting directions of the maps may overlap for some subset of points but not all. Note that this does not happen in the homogeneous setting, in the sense that if the contracting directions are separated at one point, then by homogeneity, they are separated at every point of the space. In our examples, the contracting directions may overlap in a codimension one subset, and again we show that uniform expansion occurs as long as the random dynamics enter a neighborhood of such subset with small enough probability. Proposition \ref{generalcriterion} illustrates that rotation regions and overlapping contracting directions are essentially the only two obstructions to uniform expansion. 

For Theorem A, we are able to verify uniform expansion directly since at each point, with high probability, the map has strong expansion in the same (horizontal) direction. Moreover, one can compute with high accuracy the separation of the contracting directions of the maps. These allow us to understand exactly where the rotation regions and overlapping contracting directions occur. In particular, for each point and each direction, we can obtain an upper bound on the probability that the map contracts in that direction after $n$ steps. Depending on how small the separation of the contracting directions is, one can then choose a suitable $N$ so that uniform expansion occurs. 

For Theorem B, however, the contracting directions of each map vary for different points on the space. In particular, we can no longer prove explicitly that backtracking occur with low probability (though we expect so). Therefore we can only check unifom expansion at each point on a fine enough grid, and then show that such expansion still occur at neighboring points using a $C^2$-bound. 

The paper is structured as follows: 
\begin{itemize}
\item In Section \ref{pexponent}, positivity of Lyapunov exponents for uniformly expanding systems (Theorem C (a)) is proved (Proposition \ref{positiveexponent}). 
\item In Section \ref{mrigidity}, classification of stationary measures of uniformly expanding systems (Theorem C (b)) is proved using a result of Brown and Rodriguez-Hertz \cite{BR} (Proposition \ref{measurerigidity}). 
\item In Section \ref{oclosures}, using the measure rigidity result in Section \ref{mrigidity}, an equidistribution result (Proposition \ref{equidistribution}) will be proved. The orbit closure classification (Theorem D) is then obtained as a corollary (Proposition \ref{orbitclosure}).
\item In Section \ref{gcriterion}, we introduce a geometric way to view uniform expansion and prove a general criterion for uniform expansion (Proposition \ref{generalcriterion}). 
\item In Section \ref{smap}, the setting of perturbation of the standard map is introduced, and uniform expansion is verified analytically in this setting (Proposition \ref{uniexpand}). This proves Theorem A. 
\item In Section \ref{computer}, an algorithm to check uniform expansion is presented. 
\item In Section \ref{cvariety}, the setting of the $\Out(F_2)$ action on character variety is introduced, and uniform expansion is verified using the algorithm introduced in Section \ref{computer}. This proves Theorem B.
\end{itemize}

\subsection*{Acknowledgements}
The author is grateful to his advisor Alex Eskin for introducing him to this circle of problems and for his helpful discussions, encouragement and patience. His advices and insights are invaluable to this work. He would also like to express his gratitudes to Aaron Brown for numerous insightful discussions. It is a pleasure to thank Kiho Park and Disheng Xu for reading over an earlier draft and suggesting helpful improvements to the paper. He would also like to thank Amie Wilkinson for helpful discussions about the proof of Proposition \ref{allofM}.

\section{Positive exponent} \label{pexponent}

We first recall the celebrated Oseledets theorem in the setting of random dynamical systems. Here we adopt the notation in \cite{BR} and define $f_{\og}^n := \og_{n-1} \circ \og_{n-2} \circ \cdots \circ \og_1 \circ \og_0$ for $\og = (\og_0, \og_1, \og_2, \ldots) \in \Diff^2(M)^\N$. Let $\sigma: \Diff^2(M)^\N \to \Diff^2(M)^\N$ be the left shift map given by $(\og_0, \og_1, \og_2, \ldots) \mapsto (\og_1, \og_2, \og_3, \ldots)$. 

\begin{proposition}[Random Oseledets multiplicative ergodic theorem]
Let $M$ be a closed smooth Riemannian manifold, $\mu$ be a measure on $\Diff^2(M)$ satisfying the moment condition (*). Let $\nu$ be an ergodic, $\mu$-stationary Borel probability measure. Then there are numbers $\ld_1(\nu) > \ld_2(\nu) > \cdots > \ld_\ell(\nu)$ such that for $\mu^\N$-almost every sequence $\og \in \Diff^2(M)^\N$ and $\nu$-almost every $x \in M$, there is a filtration
$$ T_x M = V_\og^1(x) \supsetneqq V_\og^2(x) \supsetneqq \cdots \supsetneqq V_\og^\ell(x) \supsetneqq V_\og^{\ell+1} = 0 $$
such that for $v \in V_\og^{k}(x) \setminus V_\og^{k+1}(x)$, 
$$ \lim_{n \to \infty} \frac{1}{n} \log \frac{\| D_x f_\og^n(v) \|}{\| v \|} = \ld_k(\nu). $$
The subspaces $V_\og^i(x)$ are invariant in the sense that
$$ D_x f_\og V_\og^k(x) = V_{\sigma(\og)}^k(f_\og(x)). $$
\end{proposition}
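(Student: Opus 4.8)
The plan is to deduce this statement from the classical (forward) Oseledets multiplicative ergodic theorem applied to the derivative cocycle of the skew product associated with the random dynamical system. First I would set up the skew product: let $\Sigma := \Diff^2(M)^\N$ equipped with the Bernoulli measure $\mu^\N$, and define $F : \Sigma \times M \to \Sigma \times M$ by $F(\og, x) := (\sigma \og, \og_0(x))$. The hypothesis that $\nu$ is $\mu$-stationary is exactly equivalent to $F$-invariance of the product measure $\mu^\N \times \nu$. Moreover, a standard fact (see e.g. Kifer's book on the ergodic theory of random transformations, or Furstenberg--Kifer) says that ergodicity of $\nu$ as a $\mu$-stationary measure is equivalent to ergodicity of the system $(\Sigma \times M, \mu^\N \times \nu, F)$, which I would invoke here.

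Next, consider the derivative cocycle $A(\og, x) := D_x \og_0 \colon T_x M \to T_{\og_0 x} M$. By the chain rule and the convention $f_\og^n = \og_{n-1} \circ \cdots \circ \og_0$, the $n$-step cocycle over $F$ is exactly $A^n(\og, x) = D_x f_\og^n$. The integrability hypothesis $\log^+ \|A\| \in L^1(\mu^\N \times \nu)$ required by the multiplicative ergodic theorem follows from the moment condition (*): using compactness of $M$ to compare the operator norm of $D_x f$ with the $C^1$-norm $|f|_{C^1}$ up to a fixed constant depending only on the metric, we get $\int \log^+ \|A(\og,x)\| \, d(\mu^\N \times \nu) \le \mathrm{const} + \int \log^+ |f|_{C^2} \, d\mu(f) < \infty$, and the analogous estimate with $f^{-1}$ controls $\log^+ \|A^{-1}\|$ should one prefer the two-sided version of the theorem.

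Then I would apply Oseledets' theorem. It produces, for $\mu^\N \times \nu$-almost every $(\og, x)$, a measurable filtration $T_x M = V_\og^1(x) \supsetneqq \cdots \supsetneqq V_\og^{\ell}(x) \supsetneqq V_\og^{\ell+1} = 0$ together with numbers $\ld_1 > \cdots > \ld_{\ell}$ realizing the stated growth rates on $V_\og^k(x) \setminus V_\og^{k+1}(x)$. The quantities $\ell$, $\ld_k$ and $\dim V_\og^k(x)$ are $F$-invariant, being determined by the asymptotics of $A^n$, which obeys the cocycle identity over $F$; hence by ergodicity they are $\mu^\N \times \nu$-almost everywhere constant, which defines the numbers $\ld_k(\nu)$. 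By Fubini's theorem, ``for $\mu^\N \times \nu$-a.e. $(\og,x)$'' is the same as ``for $\mu^\N$-a.e. $\og$ and $\nu$-a.e. $x$''. Finally, the equivariance $A(\og,x)\, V_\og^k(x) = V^k_{F(\og,x)}$ built into the MET reads, with $f_\og := \og_0 = f_\og^1$, as $D_x f_\og\big( V_\og^k(x)\big) = V_{\sigma(\og)}^k(f_\og(x))$, which is the asserted invariance.

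The only input that is not a formal manipulation is the equivalence between ergodicity of the stationary measure $\nu$ and ergodicity of the skew-product measure $\mu^\N \times \nu$; everything else is a direct citation of the classical MET together with Fubini's theorem. I therefore expect the main point to get right to be the bookkeeping of index and composition conventions --- so that $A^n$ over $F$ genuinely equals $D_x f_\og^n$ with the paper's ordering, and the filtration's naturality translates into the displayed invariance --- rather than any serious obstacle.
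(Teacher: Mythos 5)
Your proposal is correct and is essentially the standard derivation of the random Oseledets theorem from the deterministic one via the skew product $(\Sigma\times M,\ \mu^\N\times\nu,\ F)$ and the derivative cocycle; this is precisely the approach in the reference \cite[Prop.~I.3.1]{LQ} that the paper cites in lieu of a proof. Your bookkeeping of $A^n(\og,x)=D_x f_\og^n$, the integrability check from (*), and the translation of cocycle equivariance into $D_x f_\og\,V_\og^k(x)=V_{\sigma(\og)}^k(f_\og(x))$ are all in order.
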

\noindent For a proof of the theorem, see e.g. \cite[Prop. I.3.1]{LQ}. 

\begin{proposition} [Uniform positive exponent] \label{positiveexponent}
Let $M$ be a closed surface, $\mu$ be a uniformly expanding probability measure on $\Diff^2(M)$ satisfying (*). Then there exists a uniform constant $\ld_\mu > 0$, depending only on $\mu$, such that for \emph{all} $x \in M$, and $\mu^\N$-almost every $\og \in \Diff^2(M)^\N$, there exists $\ld(\og, x) \geq \ld_\mu$ such that
\begin{align*}
\liminf_{n \to \infty} \frac{1}{n} \log \| D_x f_\og^n \| = \ld(\og, x).
\end{align*}
In particular for all ergodic, $\mu$-stationary probability measure $\nu$, for $\nu$-almost every $x \in M$ and $\mu^\N$-almost every $\og$, the top Lyapunov exponent $\ld_1(\nu) = \ld(\og, x) \geq \ld_\mu > 0$. 
\end{proposition}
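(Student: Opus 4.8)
The plan is to apply a strong law of large numbers to the logarithmic norm cocycle evaluated along successive blocks of $N$ steps, $N$ being the integer in the definition of uniform expansion; one finds that $\ld_\mu = C/(2N)$ works, where $C$ is the constant in that definition. Since $\|D_x f_\og^n\| \geq \|D_x f_\og^n(v_0)\|$ for any unit $v_0 \in T_x M$, and since (by the chain rule and submultiplicativity of $C^1$-norms under composition on the compact surface $M$) the quantity $\big|\log\|D_x f_\og^n\| - \log\|D_x f_\og^{N\lfloor n/N\rfloor}\|\big|$ is controlled by finitely many i.i.d.\ terms of the form $\log^+|\og_j|_{C^1}+\log^+|\og_j^{-1}|_{C^1}$, hence $o(n)$ a.s.\ by the moment condition (*), it suffices to fix $x$ and a unit vector $v_0$ and to show
$$ \liminf_{n\to\infty} \tfrac{1}{nN}\log\|D_x f_\og^n(v_0)\| \geq \tfrac{C}{2N} \quad\text{for }\mu^\N\text{-a.e. }\og, $$
with $n$ ranging over multiples of $N$. (The same estimates give $\limsup_n \tfrac1n\log\|D_x f_\og^n\| \leq \E[\log^+|\og_0|_{C^1}] < \infty$, so the liminf is a finite number $\ld(\og,x)$.)

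First I would set up the block cocycle. For $k \geq 0$ write $F_k := \og_{kN+N-1}\circ\cdots\circ\og_{kN}$ (so $f_\og^{nN} = F_{n-1}\circ\cdots\circ F_0$ and the $F_k$ are i.i.d.\ with law $\mu^{(N)}$), $x_k := f_\og^{kN}(x)$, $v_k := D_x f_\og^{kN}(v_0)/\|D_x f_\og^{kN}(v_0)\|$, and $\ml{F}_{kN} := \sigma(\og_0,\ldots,\og_{kN-1})$, with respect to which $(x_k,v_k)$ is measurable. Setting $Y_k := \log\|D_{x_k}F_k(v_k)\|$, one has $\log\|D_x f_\og^{nN}(v_0)\| = \sum_{k=0}^{n-1}Y_k$, and, since $F_k$ is independent of $\ml{F}_{kN}$, the uniform expansion hypothesis applied at the point $x_k$ in the direction $v_k$ yields the \emph{pointwise} lower bound
$$ \E\big[Y_k \,\big|\, \ml{F}_{kN}\big] = \int_{\Diff^2(M)} \log\frac{\|D_{x_k}f(v_k)\|}{\|v_k\|}\, d\mu^{(N)}(f) > C. $$
Moreover $-G_k \leq Y_k \leq H_k$, where $G_k := \sum_{j=kN}^{kN+N-1}\log^+|\og_j^{-1}|_{C^1}$ and $H_k := \sum_{j=kN}^{kN+N-1}\log^+|\og_j|_{C^1}$ do not depend on $(x_k,v_k)$, are i.i.d.\ in $k$, and are integrable by (*).

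Then I would pass to a nonnegative, bounded-increment process. Put $W_k := Y_k + G_k \geq 0$, so $\E[W_k \mid \ml{F}_{kN}] > C + \E[G_0]$ and $0 \leq W_k \leq H_k + G_k$; choose $L$ with $\E\big[(H_0+G_0-L)^+\big] < C/2$ and set $W_k^L := \min(W_k,L) \in [0,L]$, so that still $\E[W_k^L \mid \ml{F}_{kN}] > C/2 + \E[G_0]$. The Doob decomposition with respect to $(\ml{F}_{nN})_{n}$ writes $\sum_{k=0}^{n-1}(W_k^L - C/2 - \E[G_0])$ as a martingale with increments bounded by $2L$ plus a nondecreasing predictable process; the strong law of large numbers for martingales with bounded increments makes the martingale part $o(n)$ a.s., whence $\liminf_n \tfrac1n\sum_{k=0}^{n-1}W_k^L \geq C/2 + \E[G_0]$, and a fortiori the same for $W_k$. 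Subtracting the $G_k$ and using the ordinary strong law ($\tfrac1n\sum_{k<n}G_k \to \E[G_0]$ a.s.) gives $\liminf_n\tfrac1n\sum_{k=0}^{n-1}Y_k \geq C/2$, i.e.\ $\liminf_n \tfrac1{nN}\log\|D_x f_\og^n(v_0)\| \geq C/(2N)$, as required. For the final assertion, let $\nu$ be ergodic and $\mu$-stationary; by the random Oseledets multiplicative ergodic theorem above, for $\nu$-a.e.\ $x$ and $\mu^\N$-a.e.\ $\og$ the limit $\lim_n\tfrac1n\log\|D_x f_\og^n\|$ exists and equals the top exponent $\ld_1(\nu)$, so comparing with the bound just proved yields $\ld_1(\nu) = \ld(\og,x) \geq \ld_\mu > 0$.

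The one genuinely delicate step will be the passage from the bound on the \emph{conditional expectation} $\E[Y_k\mid\ml{F}_{kN}] > C$ to an almost sure lower bound on $\tfrac1n\sum_k Y_k$ that is uniform over all starting points $x$: condition (*) supplies only $L^1$-control of the one-step distortion, which is not by itself enough for a martingale strong law, so the argument must first convert the block cocycle into a nonnegative process by shifting by the uniform lower bound $G_k$, and then truncate from above at a level chosen so as not to spoil the $>C$ (now $>C/2$) conditional-mean estimate. Everything else — the submultiplicativity bounds on $M$, the interpolation between multiples of $N$, and the identification of $\lim_n\tfrac1n\log\|D_x f_\og^n\|$ with $\ld_1(\nu)$ via Oseledets — is routine.
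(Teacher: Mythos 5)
Your proof follows the same basic route as the paper's sketch (and the detailed version in \cite[Lem.~4.3.5]{LX}): pass to the $N$-step block cocycle, use uniform expansion to bound the conditional expectation of each block increment below by $C$, apply a martingale strong law of large numbers, and interpolate from multiples of $N$ to general $n$ using the moment condition~(*). The genuinely different ingredient is your truncation step, and it repairs a real gap. The paper's sketch asserts that the centered process $\{S_n\}$ is a ``square integrable martingale'' and invokes the SLLN for such, but~(*) is only an $L^1$-moment condition on $\log^+|f|_{C^2}$, so the increments need not lie in $L^2$ and the square-integrable SLLN does not directly apply (though it is harmless in the paper's applications, where $\mu$ is finitely supported). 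Your shift-and-truncate device --- replace $Y_k$ by $W_k^L := \min(Y_k + G_k, L)$ with $G_k$ the i.i.d.\ lower envelope and $L$ chosen so the truncation degrades the conditional mean by at most $C/2$ --- gives a submartingale with uniformly bounded increments, to which the martingale SLLN applies unconditionally after Doob decomposition, and the shift is removed by the ordinary i.i.d.\ SLLN for the $G_k$. This is more careful than the paper's version, at the harmless cost of $\ld_\mu = C/(2N)$ rather than $C/N$. Your interpolation between $N$-blocks (controlled by $o(n)$ a.s.\ from the i.i.d.\ $\log^+$ terms) and the identification with $\ld_1(\nu)$ via Oseledets match the paper.
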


\begin{proof}[Sketch of Proof]
The point of this proposition is that assuming uniform expansion, Oseledets theorem holds for \emph{every} point $x \in M$ and almost every sequence $\og \in \Diff^2(M)^\N$, and the top exponent is positive. See Lemma 4.3.5 of \cite{LX} for a more refined version of this proposition, where it is shown that there is an Oseledets splitting for every point. Here we only need positivity of exponent. We include a sketch of the proof here for completeness. %\fixme{Outline}

Let $T^1 M$ be the unit tangent bundle of $M$. By definition of uniform expansion, there exists $C > 0$ and $N \in \N$ such that for all $(x, v) \in T^1 M$, 
$$ \int \log \| D_x f(v) \| d\mu^{(N)}(f) > C. $$

Let $(x, v_0) \in T^1 M$. For each $\og \in \Diff^2(M)^\N$ and $n \in \N$, let
$$ (x_n, v_n) = (x_n(\og), v_n(\og)) := \left( f_\og^n(x), 
\frac{D_x f_\og^n (v_0)}{\| D_x f_\og^n (v_0) \|} \right) $$
be the image of $(x, v_0)$ in $T^1 M$ after $n$ steps of the random dynamics following the sequence $\og$. 
For $k \geq 1$, consider the event 
$$ X_k(\og) := \log \| D_{x_{(k-1)N}} f_{\sigma^{(k-1)N}\og}^{N} (v_{(k-1)N}) \| - \int \log \| D_{x_{(k-1)N}} f(v_{(k-1)N}) \| d\mu^{(N)}(f). $$
Notice that
$$ X_k(\og) = \log \frac{\| D_x f_{\og}^{kN} (v_0) \|}{\| D_x f_{\og}^{(k-1)N} (v_0) \|}  - \int \log \| D_{x_{(k-1)N}} f(v_{(k-1)N}) \| d\mu^{(N)}(f). $$
Let $S_j = \sum_{k=1}^n X_k$. Then
$$ S_j(\og) = \log \| D_x f_\og^{jN} (v_0) \| - \sum_{k=1}^j \int \log \| D_{x_{(k-1)N}} f(v_{(k-1)N}) \| d\mu^{(N)}(f). $$
Thus by uniform expansion,
$$ \log \| D_x f_\og^{jN} (v_0) \| = S_j(\og) + \sum_{k=1}^j \int \log \| D_{x_{(k-1)N}} f(v_{(k-1)N}) \| d\mu^{(N)}(f) \geq S_j(\og) + jC. $$
The main observation is that the family $\{S_n\}_{n \in \N}$ form a square integrable martingale. Then by the strong law of large numbers for square integrable martingales, for $\mu^\N$-almost every $\og \in \Diff^2(M)^\N$, we have the limit  %Check square integrability
$$ \lim_{n \to \infty} \frac{S_n}{n} = 0. $$
Thus if we write $j = \lfloor n/N \rfloor$, then $\ds\lim_{n \to \infty} j/n = 1/N$, and we have for almost every $\og$,
\begin{align*}
\liminf_{n \to \infty} \frac{1}{n} \log \| D_x f_\og^n(v_0) \| &= \liminf_{n \to \infty} \frac{1}{n} \log \| D_{x_{jN}} f_{\sigma^{jN}\og}^{n-jN} (v_{jN}) \| + \liminf_{n \to \infty} \frac{1}{n} \log \|  D_x f_\og^{jN} (v_0) \| \\
&\geq \liminf_{n \to \infty} \left( 0+\frac{S_j(\og)}{n} + \frac{jC}{n} \right) \geq \frac{C}{N} > 0. 
\end{align*}
Hence we can take $\ld_\mu := C/N$.
\end{proof}

\section{Measure rigidity} \label{mrigidity}

We prove the measure rigidity result in this section. The precise statement was already proved in \cite[Thm. 4.1.4]{LX}. We include the proof here for completeness. 

The main input of the proof is a result of Brown and Rodriguez-Hertz \cite[Thm. 3.4]{BR}. This result provides a trichotomy for the ergodic $\mu$-stationary Borel probability measures $\nu$: either the stable distribution is non-random, $\nu$ is finitely supported or $\nu$ is an ergodic component of the volume on $M$. The uniform expansion condition eliminates the possibility that the stable distribution is non-random. The same condition also implies that the volume is $\G$-ergodic using a refined version of the classical Hopf argument inspired by \cite[Sect. 10]{DK}, as detailed in \cite[Prop. 4.4.1]{LX}. 

\subsection{Main statement}

\begin{proposition}[Measure Rigidity] \label{measurerigidity}
Let $M$ be a closed surface, $\G \subset \Diff^2(M)$ be a subgroup that preserve a smooth measure $m$ on $M$. Let $\mu$ be a uniformly expanding probability measure on $\Diff^2(M)$ with $\mu(\G) = 1$ satisfying (*). Let $\nu$ be an ergodic, $\mu$-stationary Borel probability measure on $M$. Then either $\nu$ is finitely supported or $\nu = m$. 
\end{proposition}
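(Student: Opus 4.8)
The plan is to invoke the trichotomy of Brown and Rodriguez-Hertz \cite[Thm. 3.4]{BR} and then use uniform expansion (together with Proposition \ref{positiveexponent}) to rule out all but the two desired alternatives. Their theorem, applied to the ergodic $\mu$-stationary measure $\nu$ on the surface $M$ (using that $\mu(\G)=1$, that $\G$ preserves the smooth measure $m$, and the moment condition (*)), asserts that at least one of the following holds: (i) $\lambda_1(\nu)=0$, i.e.\ $\nu$ has no positive Lyapunov exponent; (ii) the stable distribution $E^s_\omega(x)$ is \emph{non-random}, meaning it is $\omega$-independent and depends measurably only on $x$ (so it defines a $\G$-invariant measurable line field $m$-a.e., or rather $\nu$-a.e.); (iii) $\nu$ is finitely supported; or (iv) $\nu$ is an ergodic component of the smooth measure $m$. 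So the two things I must eliminate are (i) and (ii), and then I must upgrade (iv) to $\nu = m$.

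First I would dispose of (i): by Proposition \ref{positiveexponent}, uniform expansion forces $\lambda_1(\nu) \geq \lambda_\mu > 0$ for every ergodic $\mu$-stationary $\nu$, so (i) is impossible. Combined with volume preservation (the sum of the two Lyapunov exponents is zero, since $f_* m = m$ and $m$ is smooth), this gives a genuine hyperbolic splitting $T_xM = E^u_\omega(x)\oplus E^s_\omega(x)$ $\nu$-a.e. Next I would rule out (ii), the non-random stable distribution. Here the point is that a non-random stable line field would be a measurable $\G$-invariant section of the projectivized tangent bundle defined $\nu$-a.e.; but uniform expansion is exactly designed to preclude this. Concretely, if $x\mapsto E^s(x)$ were a measurable $\G$-invariant line field, then for $\nu$-a.e.\ $x$ and any unit $v$ spanning $E^s(x)$, the vector $D_xf(v)$ spans $E^s(fx)$ for all $f$ in the support, so iterating and integrating, $\int \log \|D_x f^N_\omega(v)\|\, d\mu^{(N)}(\omega)$ is controlled by the stable behaviour and in particular the Birkhoff-type averages $\frac1n\log\|D_xf^n_\omega(v)\|$ converge $\nu$-a.e.\ to $\lambda_2(\nu) < 0$; but uniform expansion demands $\int \log(\|D_xf(v)\|/\|v\|)\,d\mu^{(N)}(f) > C > 0$ for \emph{every} $(x,v)$, and integrating this against $\nu$ (using stationarity to telescope) yields a positive lower bound on $\tfrac1n\int\log\|D_xf^n_\omega(v)\|\,d\mu^\N\,d\nu$, contradicting negativity. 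I would phrase this cleanly via the cocycle/stationarity identity already used in the proof of Proposition \ref{positiveexponent}. This is the step I expect to be the main obstacle — one has to be careful that "non-random" in the sense of \cite{BR} really does give a genuine $\G$-invariant (not merely $\sigma$-invariant) measurable line field to which the pointwise uniform-expansion inequality applies; some care with the precise definition in \cite{BR} (and the measurable selection of $v \in E^s$) is needed, which is why the excerpt defers it to Section \ref{mrigidity}.

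Finally, having eliminated (i) and (ii), either $\nu$ is finitely supported (a permitted conclusion) or $\nu$ is an ergodic component of $m$. In the latter case I must show that $m$ itself is $\G$-ergodic, so that its only ergodic component is $m$ and hence $\nu = m$. As the excerpt indicates, this follows from a refined Hopf argument: uniform expansion gives positive exponent and a hyperbolic splitting $m$-a.e., and one runs the Hopf argument along stable and unstable manifolds (whose conditional measures are absolutely continuous, by the Ledrappier--Young / Pesin theory in this random setting) to conclude that $\G$-invariant functions are $m$-a.e.\ constant; I would cite \cite[Prop. 4.4.1]{LX} (following \cite[Sect. 10]{DK}) for this. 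Assembling the three steps yields the dichotomy: $\nu$ is finitely supported or $\nu = m$.
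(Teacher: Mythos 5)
Your proposal is correct and follows essentially the same route as the paper: it first uses Proposition \ref{positiveexponent} together with volume preservation to obtain hyperbolicity of $\nu$, then invokes the trichotomy of \cite[Thm.\ 3.4]{BR}, eliminates the non-random stable distribution exactly as in Lemma \ref{nonrandom}, and finally appeals to the Hopf-argument ergodicity result (Proposition \ref{allofM}, i.e.\ \cite[Prop.\ 4.4.1]{LX}) to upgrade the ``restriction of $m$ to a positive volume set'' alternative to $\nu = m$. The one cosmetic difference is that you present ``zero top exponent'' as a fourth alternative of the Brown--Rodriguez Hertz theorem, whereas the version cited in the paper takes hyperbolicity of $\nu$ as a hypothesis; either way, Proposition \ref{positiveexponent} disposes of it.
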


\noindent Following \cite{BR}, we write
$$ E_\og^s(x) := \bigcup_{\ld_j < 0} V_\og^j(x) = \left\{ v \in T_x M : \limsup_{n \to \infty} \frac{1}{n} \log \frac{\| D_x f_\og^n(v) \|}{\| v \|} < 0 \right\}. $$
for the \emph{stable Lyapunov subspace} for the word $\og$ at the point $x \in M$. We say that the stable distribution is \emph{non-random} if there exists $\mu$-almost surely invariant $\nu$-measurable subbundle $\hat{V} \subset TM$ such that $\hat{V}(x) = E_\og^s(x)$ for $(\mu^\N \times \nu)$-almost every $(\og, x)$, i.e. $Df(E_{\og}^s(x)) = E_\og^s(f(x))$ for $\mu$-a.e. $f \in \Diff^2(M)$ and $\nu$-a.e. $x \in M$. 

Given a smooth probability measure $m$ on $M$, let $\Diff^2_m(M) := \{f \in \Diff^2(M) \mid f_* m = m\}$. We recall the theorem of Brown and Rodriguez Hertz.

\begin{theorem} \cite[Thm. 3.4]{BR}
Let $M$ be a closed surface, $\G \subset \Diff^2(M)$ be a subgroup that preserve a smooth measure $m$ on $M$. Let $\mu$ be a uniformly expanding probability measure on $\Diff^2_m(M)$ with $\mu(\G) = 1$ satisfying (*). Let $\nu$ be an ergodic, hyperbolic $\mu$-stationary Borel probability measure on $M$. Then either
\begin{enumerate}[label=(\arabic*)]
\item $\nu$ has finite support,
\item the stable distribution $E_\og^s(x)$ is non-random, or
\item $\nu$ is - up to normalization - the restriction of $m$ to a positive volume subset. 
\end{enumerate}
\end{theorem}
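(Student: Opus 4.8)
\medskip
\noindent\textbf{Proof strategy.}
The plan is to run the ``exponential drift'' scheme of Benoist--Quint \cite{BQ1}, in the form adapted to nonlinear surface dynamics by Eskin--Mirzakhani \cite{EM} and Brown--Rodriguez-Hertz \cite{BR}. First I would pass to the natural extension of the random system: set $\Sigma := \Diff^2_m(M)^{\Z}$ with the shift $\sigma$, form the skew product $F(\og, x) := (\sigma\og, \og_0 x)$ on $\Sigma \times M$, and lift $\nu$ to the unique $F$-invariant measure $\hat\nu$ with marginal $\mu^{\Z}$ on $\Sigma$ that induces $\nu$ on the $M$-fibre along the future coordinates. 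Since every element of $\G$ preserves $m$ we have $\log|\det D_x f| \equiv 0$, hence $\ld_1(\nu) + \ld_2(\nu) = 0$; as $\nu$ is hyperbolic this forces $\ld_1(\nu) > 0 > \ld_2(\nu)$, and Oseledets provides measurable line fields $E^s_\og(x), E^u_\og(x) \subset T_x M$ together with, via Pesin--Lyapunov charts, stable and unstable manifolds $W^s_\og(x), W^u_\og(x) \subset M$, the unstable leaf depending only on the past of $\og$. Let $\hat\nu^u$ denote the family of conditional measures of $\hat\nu$ on the measurable partition into unstable leaves. I would then prove the following dichotomy: \emph{if $E^s$ is not non-random, then $\hat\nu^u$ is either purely atomic or equivalent to arclength on $W^u$.} The three conclusions of the theorem correspond, respectively, to $E^s$ being non-random, $\hat\nu^u$ atomic, and $\hat\nu^u$ Lebesgue.

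\medskip
The heart of the matter is this dichotomy, i.e.\ excluding the intermediate case in which $\hat\nu^u$ is non-atomic but singular (equivalently, by Ledrappier--Young theory, the case $0 < h_{\hat\nu}(F) < \ld_1(\nu)$). Suppose it occurs. This is precisely where the hypothesis that $E^s$ is \emph{not} non-random enters: it guarantees that along $\hat\nu$-typical orbits the stable direction genuinely fluctuates and cannot be absorbed into a globally invariant subbundle, which supplies the transverse freedom needed to run the drift. One then picks a $\hat\nu$-generic $(\og, x)$ and a second point $y$ on $W^u_\og(x)$, drawn $\hat\nu^u$-at-random and very close to $x$, and pushes the pair forward by $F^n$. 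By Poincar\'e recurrence one chooses return times $n_k \to \infty$ along which $(\og, x)$ returns to a fixed Pesin block; at those times one \emph{matches} the stable components of the two orbits by modifying the past of $\og$ on a set of controlled (small) $\mu^{\Z}$-measure --- this stable-matching step is what converts the failure of non-randomness into usable randomness. The $W^u$-distance $d(F^{n_k}x, F^{n_k}y)$ then grows like $e^{\ld_1(\nu)\, n_k}$; renormalizing in the unstable Lyapunov chart and passing to a subsequence (using compactness of $M$), the displacement converges to a \emph{nonzero} vector, and a bounded-distortion estimate along the orbit segments shows that translation by that vector preserves $\hat\nu^u$. A measure on a one-dimensional leaf with a nontrivial translation symmetry is locally Lebesgue, contradicting singularity. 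This proves the dichotomy.

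\medskip
It then remains to extract the three conclusions. If $\hat\nu^u$ is atomic then, since $\ld_2(\nu) < 0$ makes the \emph{stable} conditionals atomic as well, $\hat\nu$ is atomic on $M$-fibres, so $\nu$ has atoms; the finite set $A$ of $\nu$-atoms of maximal mass satisfies $f(A) = A$ for $\mu$-a.e.\ $f$ by $\mu$-stationarity, whence ergodicity of $\nu$ forces $\nu(A) = 1$ and $\nu$ is finitely supported --- conclusion (1). If $\hat\nu^u$ is equivalent to arclength, then $\hat\nu$ is $u$-SRB, i.e.\ $h_{\hat\nu}(F) = \ld_1(\nu)$; combined with $\ld_1(\nu) = -\ld_2(\nu)$ this also gives the $s$-SRB property, so $\hat\nu$ is absolutely continuous along $M$-fibres and $\nu \ll m$. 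As $\G$ preserves $m$, the measure $m$ is itself $\mu$-stationary, and a Radon--Nikodym argument ($d\hat\nu/d\hat m$ is $F$-invariant, hence $\hat\nu$-a.e.\ constant by ergodicity) identifies $\nu$ with the normalized restriction of $m$ to a positive-volume set --- conclusion (3). Failing both, $E^s$ is non-random, which is conclusion (2).

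\medskip
The main obstacle is, without question, the exponential-drift step. With no ambient group available, the entire Benoist--Quint construction must be carried out inside the measurable Pesin charts, which demands uniform control of the nonlinear distortion along orbit segments of unbounded length; and the genuinely delicate point is the stable-matching, where one trades the failure of non-randomness of $E^s$ for an admissible modification of the past of $\og$ with quantitative control on both its $\mu^{\Z}$-measure and the resulting coupling, uniformly over a Pesin block. Fitting together the stopping times, the size of the modified set, and the distortion bounds is the technical crux of the whole argument.
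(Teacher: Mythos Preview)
The paper does not prove this theorem at all: it is quoted verbatim from Brown--Rodriguez Hertz \cite[Thm.~3.4]{BR} and used as a black box (``The main input of the proof is a result of Brown and Rodriguez-Hertz''). There is therefore no ``paper's own proof'' to compare your proposal against; the paper's contribution in this section is only Lemma~\ref{nonrandom} (uniform expansion rules out alternative~(2)) and Proposition~\ref{allofM} (alternative~(3) forces $\nu = m$), both of which sit downstream of the cited theorem.

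That said, your sketch is a faithful high-level outline of the actual argument in \cite{BR}: pass to the two-sided skew product, invoke Pesin theory and the Ledrappier--Young entropy dichotomy on unstable conditionals, and run the exponential drift of \cite{BQ1,EM} to rule out the intermediate case. One point to tighten: the reduction ``$\hat\nu^u$ atomic $\Rightarrow$ $\nu$ has atoms'' does not follow simply from ``$\ld_2<0$ makes the stable conditionals atomic as well''; in \cite{BR} this step goes through a separate argument (positive fiber entropy would already force non-atomic unstable conditionals via the entropy formula, so atomic $\hat\nu^u$ means zero fiber entropy, and then one argues that $\nu$ itself must have atoms --- or else one is back in the non-random-stable case). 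Your identification of the stable-matching/time-change as the technical crux is correct.
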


It remains to refine the conclusion of this theorem using the condition of uniform expansion. We will eliminate the second possibility in the next lemma, and refine the third possibility in the next subsection.

\begin{lemma} \label{nonrandom}
If $\mu$ is uniformly expanding, then the stable distribution is not non-random. 
\end{lemma}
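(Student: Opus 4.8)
The plan is to argue by contradiction: suppose the stable distribution is non-random, so there is a $\mu$-almost surely invariant $\nu$-measurable subbundle $\hat V \subset TM$ with $\hat V(x) = E_\og^s(x)$ for $(\mu^\N\times\nu)$-a.e. $(\og,x)$. Since $\nu$ is ergodic and hyperbolic (by Proposition \ref{positiveexponent} the top exponent is positive, and in the two-dimensional setting the only way to have a nontrivial stable subspace is $\dim E_\og^s = 1$), the subbundle $\hat V$ is a measurable line field. The key point is that $\hat V$ being non-random means it is $Df$-invariant for $\mu$-a.e. $f$: $D_x f(\hat V(x)) = \hat V(f(x))$ for $\nu$-a.e.\ $x$. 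Vectors in $\hat V(x)$ are contracted along almost every forward trajectory, so they cannot satisfy the uniform expansion estimate — and here is the contradiction.

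More precisely, first I would unwind what uniform expansion gives at the level of a single non-random line field. By definition of $E_\og^s(x)$, for $\nu$-a.e.\ $x$, a unit vector $v(x)$ spanning $\hat V(x)$ satisfies $\limsup_{n\to\infty} \frac1n \log \|D_x f_\og^n(v(x))\| < 0$ for $\mu^\N$-a.e.\ $\og$. On the other hand, I want to run the same martingale/Birkhoff-type computation as in the proof of Proposition \ref{positiveexponent}, but now starting from the vector $v(x)$ and \emph{tracking it inside the invariant line field}. Because $\hat V$ is $\mu$-a.s.\ invariant, the normalized image $v_n(\og) := D_x f_\og^n(v(x))/\|D_x f_\og^n(v(x))\|$ again spans $\hat V(x_n)$, so the pair $(x_n, v_n)$ is, up to sign, a point of the subbundle. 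Hence at every step the quantity $\int \log \|D_{x_{(k-1)N}} f(v_{(k-1)N})\|\,d\mu^{(N)}(f)$ is bounded below by $C$ by uniform expansion (applied to the unit vector $v_{(k-1)N}$). Forming $S_j$ as the martingale of centered increments exactly as in Proposition \ref{positiveexponent}, the strong law of large numbers for $L^2$-martingales gives $S_j/j \to 0$ for $\mu^\N$-a.e.\ $\og$, and therefore
\begin{align*}
\liminf_{n\to\infty} \frac1n \log \|D_x f_\og^n(v(x))\| \;\geq\; \frac{C}{N} \;>\; 0
\end{align*}
for $\nu$-a.e.\ $x$ and $\mu^\N$-a.e.\ $\og$. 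This directly contradicts $v(x) \in E_\og^s(x)$, which forces that liminf to be negative (indeed the limsup is negative). Therefore the stable distribution cannot be non-random.

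There is one technical wrinkle to handle carefully, and that is the square-integrability of the martingale increments $X_k$: this is where the moment condition (*) enters, since $\log^+ |f|_{C^2}$ and $\log^+|f^{-1}|_{C^2}$ control $\big|\log \|D_x f(v)\|\big|$ uniformly in $(x,v)\in T^1M$, making each $X_k$ bounded in $L^2$ (in fact with a uniform bound independent of $k$, using the Markov/stationarity structure of the sequence $(\og, \nu)$). A second point that needs a line of justification is measurability: one must know that the non-random subbundle $\hat V$ can be realized by a $\nu$-measurable choice of unit vector $v(x)$, and that $(x,\og) \mapsto v_n(\og)(x)$ is jointly measurable so that the cocycle computation and the SLLN apply on the skew product $(\Diff^2(M)^\N \times M, \mu^\N \times \nu, \sigma \times (\text{induced map}))$; this is routine given that $\hat V$ is $\nu$-measurable and $Df$ depends measurably on $f$. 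The main obstacle, such as it is, is really just making sure the martingale argument of Proposition \ref{positiveexponent} is applied along a trajectory that \emph{stays inside} the invariant bundle — once one observes that invariance of $\hat V$ is exactly what keeps $v_n$ a unit vector in the bundle at each step, uniform expansion applies verbatim and the contradiction is immediate.
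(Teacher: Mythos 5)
Your argument is correct, and the contradiction is obtained by a genuinely different route from the paper's. The paper's proof stays at the level of integrals over words: for a fixed $x$ and a nonzero $v \in \hat V(x)$, the definition of the stable subspace (combined with a dominated-convergence step, glossed in the phrase ``by taking average'') gives $\int \log\frac{\|D_x f(v)\|}{\|v\|}\,d\mu^{(n)}(f) < 0$ for large $n$, which directly contradicts the chained uniform-expansion estimate $\int \log\frac{\|D_x f(v)\|}{\|v\|}\,d\mu^{(kN)}(f) > kC$ for all $k$. You instead re-run the martingale strong law from Proposition \ref{positiveexponent} for the fixed starting pair $(x, v(x))$, obtaining a pointwise-in-$\og$ statement $\liminf_n \frac1n \log\|D_x f_\og^n(v(x))\| \geq C/N > 0$ for $\mu^\N$-a.e.\ $\og$, and set it against the pointwise-in-$\og$ negative $\limsup$ from $v(x) \in E_\og^s(x)$; the two sets of full measure intersect and yield the contradiction. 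Your route is a bit longer but sidesteps the interchange-of-limits step in the averaging argument, at the price of re-invoking the SLLN machinery. One remark: your insistence on tracking the trajectory inside the invariant bundle is superfluous. Uniform expansion gives $\int \log\|D_{x'} f(w)\|\,d\mu^{(N)}(f) > C$ for \emph{every} $(x', w) \in T^1M$, not just those lying in $\hat V$, so the martingale argument of Proposition \ref{positiveexponent} applies verbatim to any starting unit vector regardless of invariance. The only place the non-randomness of the bundle (and hence its invariance) is genuinely used is in guaranteeing that, for $\nu$-a.e.\ $x$ and $\mu^\N$-a.e.\ $\og$, one has $\hat V(x) = E_\og^s(x)$, which supplies the negative $\limsup$ side of the contradiction.
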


\begin{proof}
Assume that the stable distribution $E_\og^s(x)$ is non-random, i.e. there is a $\mu$-almost surely invariant subbundle $\hat{V} \subset TM$ with $\hat{V}(x) = E_\og^s(x)$ for $(\mu^{\N} \times \nu)$-a.e. $(\og, x)$. By definition of the stable distribution, for $\nu$-almost every $x \in M$, for all large enough $n$, we have $\log (\| D_x f_w^n (v) \| / \| v \|) < 0$ for all nonzero $v \in E_\og^s(x)$. Hence by taking average, for $\nu$-almost all $x \in M$, and for all nonzero $v \in \hat{V}(x)$, we have 
$$ \int_{\Diff^2(M)} \log \frac{\| D_x f(v) \|}{\| v \|} d\mu^{(n)}(f) < 0 $$
for all large enough $n$. However, this contradicts the uniform expansion property of $\mu$, as it is striaghtforward from definition that there exists $C > 0$ and $N \in \N$ such that for all $x \in M$, nonzero $v \in T_xM$ and $k \in \N$, 
$$ \int_{\Diff^2(M)} \log \frac{\| D_x f(v) \|}{\| v\|} d\mu^{(kN)}(f) > kC. $$
\end{proof}

\subsection{Ergodicity}

The main theorem of \cite[Thm. 3.1]{BR} did not assume the existence of a smooth invariant measure, in which case the third possibility is that the stationary measure is SRB (see \cite[Def. 6.8]{BR} for a precise definition). The existence of a smooth invariant measure $m$ allows the authors to refine the third possibility to being a restriction of $m$ to a positive volume subset using a local ergodicity argument (see \cite[Ch. 13]{BR}), as stated above. 

In this section, using uniform expansion, we further refine the third possibility to show that the stationary measure has to be the smooth invariant measure $m$.

\begin{proposition} \label{allofM}
Let $M$ be a closed (connected) surface, $\mu$ be a Borel probability measure on $\Diff^2_m(M)$. Suppose there exists a positive volume subset $A \subset M$ such that $\nu := \frac{1}{m(A)}m|_A$ is an ergodic $\mu$-stationary Borel probability measure. If $\mu$ is uniformly expanding, then in fact $\nu = m$. 
\end{proposition}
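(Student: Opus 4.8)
The plan is to show that the support of $\nu$, which we may take to be a positive-volume closed set, is all of $M$; since $\nu = \frac{1}{m(A)}m|_A$ and $m$ is equivalent to the Riemannian volume, this forces $A = M$ up to a null set and hence $\nu = m$. Let me write $A$ for a fixed positive-volume set with $\nu = \frac{1}{m(A)} m|_A$, and let $\og_0, \og_1, \dots$ range over the support of $\mu$. The key structural fact is that $\nu$-stationarity, together with $m$-invariance of each $f$ in the support of $\mu$, makes the set $A$ behave almost like an invariant set: for $\mu$-a.e.\ $f$, pushing forward gives $f_*(m|_A) = m|_{f(A)}$, and stationarity $\int f_* \nu \, d\mu(f) = \nu$ combined with mutual singularity considerations (all the measures $m|_{f(A)}$ are absolutely continuous with respect to $m$ with density bounded by $1/m(A)$) forces $m(f(A) \triangle A) = 0$ for $\mu$-a.e.\ $f$. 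Thus $A$ is, up to $m$-null sets, invariant under every element of the support of $\mu$, hence under the semigroup $\G$ they generate, and also under inverses if we are in the group case — but we only need the semigroup statement.

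Next I would upgrade this ``measurable invariance'' to a genuine topological statement using uniform expansion. The point is that uniform expansion (via Proposition \ref{positiveexponent}) gives positive Lyapunov exponent at \emph{every} point $x \in M$ for $\mu^{\N}$-a.e.\ $\og$, so the random dynamics is nonuniformly hyperbolic everywhere, not just on $\supp \nu$. I would then invoke the local ergodicity / absolute continuity machinery already used in \cite[Ch.\ 13]{BR}: because every point has a positive exponent, the system has local stable and unstable manifolds $\mu^{\N}$-a.s., and the Hopf-type argument (the refined version cited from \cite[Sect.\ 10]{DK} and \cite[Prop.\ 4.4.1]{LX}) shows that $m$-a.e.\ two nearby points can be joined by a chain of stable and unstable manifold pieces staying inside a small ball. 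Since $A$ is (a.e.) invariant under the full random dynamics in both the forward stable and backward unstable directions, saturating $A$ by such local manifolds shows that the set of density points of $A$ is open modulo $m$-null sets; as $M$ is connected, either $m(A) = 0$ or $m(M \setminus A) = 0$, and the former is excluded by hypothesis. This gives $\nu = m$.

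I expect the main obstacle to be making the second step rigorous: specifically, transferring the hyperbolicity and the local product structure from ``$\nu$-a.e.\ point'' (where \cite{BR} naturally operates) to ``$m$-a.e.\ point'' — this is exactly where uniform expansion, rather than mere positivity of exponent along $\nu$, is essential, and it is the reason the proposition is stated for uniformly expanding $\mu$. One has to check that the stable/unstable manifolds through $m$-a.e.\ point are genuinely absolutely continuous and that the Hopf chains can be built within $A$ using its invariance; the invariance of $A$ is only known modulo $m$-null sets, so some care is needed to ensure the null sets do not obstruct the chaining (this is handled, as in \cite{DK, LX}, by working with a full-measure set of ``good'' points and using that the holonomies are absolutely continuous). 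Granting the machinery of \cite[Ch.\ 13]{BR} and \cite[Prop.\ 4.4.1]{LX}, the argument reduces to the bookkeeping of null sets and the connectedness of $M$.
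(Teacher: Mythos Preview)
Your outline has the right shape---a Hopf-type argument powered by uniform expansion so that hyperbolicity holds at \emph{every} point---but the mechanism you describe is not the one that actually works here, and your appeal to \cite[Prop.~4.4.1]{LX} is circular: that proposition \emph{is} the present statement, so invoking it as ``machinery to be granted'' leaves nothing proved.

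The substantive gap is your repeated reference to ``stable and unstable manifold pieces.'' In this random setting the paper (following \cite{DK} and \cite{LX}) does \emph{not} run the classical stable/unstable Hopf chain. Instead it uses only \emph{stable} manifolds, but for several different words $\og,\og',\og''$. Uniform expansion enters precisely to guarantee that for a positive-measure set of word pairs the stable directions $E^s_\og(x)$ and $E^s_{\og'}(x)$ are uniformly transverse (Lemma~\ref{uniformangle}) and that the local stable leaves have uniformly controlled length and curvature (Lemma~\ref{uniformsize}). With these, one connects points inside a small ball via $W^s_{\og}$ then $W^s_{\og'}$ then $W^s_{\og''}$, transferring membership in the basin $B(\nu)$ along each hop (Lemma~\ref{basin}). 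There is no unstable lamination in the argument, and no need to control $\check\mu$ or backward dynamics; your sketch does not supply the transversality that makes the chain work.

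A secondary difference: rather than saturating the set $A$ directly (where, as you note, the invariance is only modulo null sets and the bookkeeping is delicate), the paper works with the pointwise-defined basin $B(\nu)$ and shows it has density bounded below by a uniform $c>0$ in every small ball (Lemma~\ref{goodlemma}), which with connectedness forces $\vol(B(\nu))=1$ and hence $\nu=m$ (Lemma~\ref{ergodicity}). Your first paragraph---that $A$ is $\G$-invariant mod $m$-null sets---is correct, but it is not used; the basin framework sidesteps the null-set chaining problem you anticipated.
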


This is proved in \cite[Prop. 4.4.1]{LX} based on ideas from \cite[Sect. 10]{DK}. For completeness we give a detailed outline of the proof. %\fixme{SUMMARIZE THE PROOFFFFFF}

The main idea of the proof is to perform a version of the classical Hopf argument. Rather than transversing along the stable and unstable leaves as in the setting of Anosov systems, the argument goes by transversing along the stable leaves $W_\og^s(x)$ and $W_{\og'}^s(x)$ of two distinct words $\og, \og'$ with suitable geometric and dynamical properties.

\subsubsection{Classical facts about the stable manifolds of a random system}

We first collect some standard facts about stable manifolds of a random dynamical system. 

Given $x \in M$ and $\og \in \Diff^2(M)^\N$, let
$$ W_\og^s(x) := \left\{y \in M \mid \limsup_{n \to \infty} \frac{1}{n} \log d(f_\og^n(x), f_\og^n(y)) < 0 \right\}. $$
There exists a $(\mu^\N \times \vol)$-co-null set $\Ld \subset \Diff^2(M)^\N \times M$ such that $W_{\og}^s(x)$ is a $C^2$-embedded curve in $M$ for all $(\og, x) \in \Ld$. We call $W_\og^s(x)$ the \emph{global stable manifold} at $x$ for $\og$. 

We define local stable manifolds using the classical stable manifold theorem (we only list properties needed for our purpose). 
\begin{theorem}[Local stable manifold theorem] \label{localstable}
Let $\ld_\mu > 0$ be the constant from Proposition \ref{positiveexponent}. For every $0 < \ep < \ld_\mu / 200$, for $\mu^\N$-almost every word $\og \in \Diff^2(M)^\N$, there exists a full volume set $\Ld_\og \subset M$ and a measurable family of \emph{local stable manifolds} $\{W_{\og, \loc}^s(x)\}_{x \in \Ld_\og}$ with the following properties:
\begin{enumerate}[label=(\alph*)]
\item $W_{\og, \loc}^s(x)$ is a $C^2$ embedded curve, i.e. the image of a $C^2$ embedding $\psi: (-1, 1) \to M$.
\item $T_x W_{\og, \loc}^s(x) = E_\og^s(x)$. 
\item for $n \geq 0$, $f_\og^n(W_{\og, \loc}^s(x)) \subset W_{\og, \loc}^s(f_\og^n(x))$.
\item for $y, z \in W_{\og, \loc}^s(x)$ and $n \geq 0$, 
$$ d(f_\og^n(y), f_\og^n(z)) \leq L(\og, x) e^{(-\ld_\mu + \ep)n} d(y, z), $$
where $L: \Diff^2(M)^\N \times M \to [1, \infty)$ is a Borel measurable function such that for all $x \in \Ld_\og$ and $n \geq 0$,
$$ L(\sigma^n(\og), f_\og^n(x)) \leq e^{n \ep} L(\og, x). $$
Here $\sigma: \Diff^2(M)^\N \to \Diff^2(M)^\N$ is the left shift given by $\sigma(\og)_n := \og_{n+1}$.
\item $W_\og^s(x) = \ds\bigcup_{n \geq 0} (f_\og^n)^{-1}(W^s_{\sigma^n(\og), \loc}(f_\og^n(x)))$.
\end{enumerate}
\end{theorem}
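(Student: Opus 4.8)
The plan is to recognize Theorem \ref{localstable} as the random (nonuniformly hyperbolic) stable manifold theorem specialized to the volume-preserving setting in which it is used, so the real content is to check that the hypotheses of that theorem are met. For the construction itself I would cite \cite[Ch.~III--IV]{LQ} (or \cite[\S4]{BR}) and only spell out the reductions.

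First I would establish nonuniform hyperbolicity on a full volume set. Since every map in the support of $\mu$ preserves the smooth measure $m$, the measure $m$ is $\mu$-stationary (indeed $\mu$-a.s.\ invariant), so the random Oseledets theorem applies with $\nu = m$: for $\mu^\N$-a.e.\ $\og$ and $m$-a.e.\ $x$ we obtain an Oseledets filtration with exponents $\ld_1(m) > \cdots > \ld_\ell(m)$, and by Proposition \ref{positiveexponent} we have $\ld_1(m) \ge \ld_\mu > 0$. Writing $\rho := dm/d\vol$, which is bounded above and below on the compact surface $M$, the Jacobian relative to Riemannian volume telescopes to $|\det D_x f_\og^n| = \rho(x)/\rho(f_\og^n(x))$, hence $\frac1n \log |\det D_x f_\og^n| \to 0$. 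Since $\dim M = 2$, the multiplicity-weighted sum of exponents is $0$, so there are exactly two exponents, each of multiplicity one, with $\ld_2(m) = -\ld_1(m) \le -\ld_\mu < 0$. Thus $E_\og^s(x) = V_\og^2(x)$ is a well-defined line on a full-$m$-measure (equivalently full volume, as $m \sim \vol$) set, and $m$ is hyperbolic.

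Next I would invoke the stable manifold theorem. Given the moment bound $(*)$ and the nonuniform hyperbolicity just obtained, the Lyapunov (Pesin) chart construction — carried out in the random setting in \cite[Ch.~IV]{LQ} — produces, for $\mu^\N$-a.e.\ $\og$, a full volume set $\Ld_\og$ together with $C^2$ embedded curves $W_{\og,\loc}^s(x)$, $x \in \Ld_\og$, tangent to $E_\og^s(x)$, forward invariant as in (c), and satisfying the exponential contraction (d) with rate $-\ld_\mu + \ep$ — uniform in $x$ thanks to the uniform exponent bound of Proposition \ref{positiveexponent}. The function $L(\og, x)$ is essentially the reciprocal of the size of the Pesin block at $(\og, x)$, and the inequality $L(\sigma^n(\og), f_\og^n(x)) \le e^{n\ep} L(\og, x)$ is the standard temperedness of that size along orbits. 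Property (e) is then the usual identification of the global stable set $W_\og^s(x)$ — which, by the uniform negativity of $\ld_2(m)$, coincides with the set of $y$ for which $\frac1n \log d(f_\og^n(x), f_\og^n(y))$ is eventually negative — with the increasing union $\ds \bigcup_{n \ge 0} (f_\og^n)^{-1}\big(W_{\sigma^n(\og), \loc}^s(f_\og^n(x))\big)$.

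The main obstacle, and essentially the only delicate point, is that the conclusions must hold for a \emph{full volume} set of base points with \emph{tempered} (rather than merely measurable) constants in (d). Both come for free here: $m$ itself is the stationary measure under consideration, and volume-preservation together with Proposition \ref{positiveexponent} bounds the second exponent above by $-\ld_\mu$ uniformly, so nothing is lost by working directly with $\vol$; the temperedness of $L$ is intrinsic to the chart construction. I would also flag the minor point that the Oseledets splitting — hence the direction $E_\og^s(x)$ — is only defined $m$-a.e., not at every $x$ (only the positivity statement of Proposition \ref{positiveexponent} is everywhere), which is exactly why $\Ld_\og$ is full measure rather than all of $M$.
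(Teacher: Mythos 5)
Your proposal is correct and takes essentially the same approach as the paper, which likewise does not give a proof but simply cites \cite[Ch.~7]{BP} (deterministic case) and \cite[Ch.~III.3]{LQ} (random case) for the stable manifold theorem; your reduction to checking the hypotheses (volume preservation forcing $\ld_2 = -\ld_1 \le -\ld_\mu$ on a full volume set, temperedness of $L$ from the Pesin chart construction) is the content the paper leaves implicit. One minor point worth tightening if you wanted to write this out: the random Oseledets theorem as stated in the paper assumes $\nu$ ergodic, and $m$ need not be ergodic for the random system, so you should pass to ergodic components of $m$; the uniform bound $\ld_1 \ge \ld_\mu$ from Proposition \ref{positiveexponent} and the determinant-zero argument hold on each component, so this costs nothing.
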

\noindent We refer to \cite[Ch. 7]{BP} for a treatment in the deterministic setting, and \cite[Ch. III.3]{LQ} in the random setting.

\begin{definition}[Measures on stable leaves] We recall the following notions related to the induced volume measure on the local stable manifolds.
\begin{enumerate}
\item Given $r > 0$ and $(\og, x) \in \Ld$, let $W_{\og, r}^s(x) := \{y \in W_\og^s(x) \mid d_{W^s}(x, y) < r\}$, where $d_{W^s}$ is the Riemannian distance along the $C^2$-curve $W_\og^s(x)$.
\item Given a $C^1$-curve $\g$ on $M$, there is a natural measure on $\g$ induced by the restriction of the Riemannian metric on $M$ to $\g$. We call this measure the \emph{leaf-volume} of $\g$, denoted $\vol_\g$. 
\item Given a measurable subset $T \subset W_\og^s(x)$ for some word $\og$ and point $x \in M$, we write 
$$ \vol_{W^s}(T) := \vol_{W_\og^s(x)}(T), $$
as the dependence on $\og$ and $x$ is clear from the definition of $T$. 
\item Unless otherwise specified, ``almost every'' point on $\g$ means almost every point with respect to the leaf-volume. 
\end{enumerate}
\end{definition}

We will also need the standard fact that for $(\mu^\N \times \vol)$-almost every $(\og, x)$, the stable manifold $W_\og^s(x)$ satisfies two versions of absolute continuity that we will describe in the next lemma.

By Lusin theorem and Theorem \ref{localstable} (a), for all $\delta > 0$, there exists a measurable subset $Q \subset M$ with $\vol(Q) > 1 - \delta$ such that $W_{\og, \loc}^s(y)$ varies continuously in $y \in Q$ in the $C^2$ topology.

\begin{lemma}[Absolute Continuity]\label{uniformabsolutecontinuity} 
For $(\mu^\N \times \vol)$-almost every $(\og, x) \in \Diff^2(M)^\N \times M$, for sufficiently small $R > 0$, the family of local stable manifolds $\ml{F} := \{W_{\og, \loc}^s(y)\}_{y \in Q \cap B(x, R)}$ satisfies the following properties:
\begin{enumerate}
\item For all $y \in Q \cap B(x, R)$, $W_{\og, \loc}^s(y)$ intersects $\pl B(x, R)$ at two points.
\item For $y, y' \in Q \cap B(x, R)$, if $y' \in W_{\og, \loc}^s(y)$, then $W_{\og, \loc}^s(y) \cap B(x, R) = W_{\og, \loc}^s(y') \cap B(x, R)$.
\end{enumerate}
Then the following two versions of absolute continuity hold (we write $\ml{F}(y)$ for the element in $\ml{F}$ containing the point $y$). 
\begin{enumerate}[label = (AC\arabic*)]
\item Let $\g_1$ and $\g_2$ be two $C^1$-curves in $B(x, R)$ everywhere uniformly transverse to $\ml{F}$.
Let 
$$ T_1 := \g_1 \cap \bigcup_{y \in \g_2} \ml{F}(y), \quaddd \text{ and } \quaddd T_2 := \g_2 \cap \bigcup_{y \in T_1} \ml{F}(y). $$
Define the holonomy map $h_\ml{F}: T_1 \to T_2$ given by ``sliding'' along the leaves in $\ml{F}$, i.e. $h_\ml{F}(y)$ is the only point in $\g_2 \cap \ml{F}(y)$ for all $y \in T_1$.

Then on $T_2$, we have
$$ \vol_{\g_2} \ll (h_\ml{F})_* \vol_{\g_1}. $$
%\item Given a Borel subset $A \subset B(x, R)$ and a $C^1$ curve $\g \subset B(x, R)$ uniformly transverse to $\ml{F}$, if there is a positive $\vol_\g$-measure set $T \subset \g \cap Q$ such that $\vol_{\ml{F}(y)}(A \cap \ml{F}(y)) > 0$ for all $y \in T$, then the union
%$$ \vol\left( \bigcup_{y \in T} (A \cap \ml{F}(y)) \right) > 0. $$
\item For any Borel subset $A \subset M$, we have
$$ \vol(A) = 0 \quaddd \Leftrightarrow \quaddd \vol_{W_\og^s(y)}(A \cap W_\og^s(y)) = 0 \quad \text{ for } \quad \vol\text{-a.e.} \;\; y \in M. $$
\end{enumerate}
\end{lemma}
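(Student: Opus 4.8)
The plan is to dispatch the two geometric bullet points and the two absolute-continuity statements separately: (1) and (2) are elementary consequences of Theorem \ref{localstable} together with the $C^2$-continuity of the leaves on $Q$, while (AC1) is the random-dynamics version of Pesin's absolute continuity theorem and (AC2) a Fubini-type corollary of it; for (AC1) I would ultimately cite \cite[Ch. III.3]{LQ} (or \cite[Ch. 8]{BP} in the deterministic case), but let me indicate the argument. Fix a conull set of pairs $(\og, x)$ to which Theorem \ref{localstable} applies and at which $x$ is a density point of $Q$. Since the $W_{\og,\loc}^s(y)$ are $C^2$ curves with $T_y W_{\og,\loc}^s(y) = E_\og^s(y)$ varying $C^2$-continuously for $y \in Q$, they have uniformly bounded geometry (in particular size bounded below) on a compact subset of $Q$; choosing normal coordinates at $x$ with $E_\og^s(x)$ horizontal, for all small $R$ every leaf $W_{\og,\loc}^s(y)$ with $y \in Q \cap B(x,R)$ is, inside $B(x,R)$, a near-horizontal graph of small curvature, hence meets $\pl B(x,R)$ in exactly two points; this gives (1). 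For (2), if $y' \in W_{\og,\loc}^s(y)$ then $d(f_\og^n y, f_\og^n y') \to 0$ exponentially by Theorem \ref{localstable}(d), so $W_{\og,\loc}^s(y)$ and $W_{\og,\loc}^s(y')$ coincide near $y$, and a connectedness argument inside $B(x,R)$ propagates the coincidence to all of $W_{\og,\loc}^s(y) \cap B(x,R)$.

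For (AC1) the idea is to conjugate by the dynamics and exploit the contraction along leaves. Put $\g_i^{(n)} := f_\og^n(\g_i)$; by Theorem \ref{localstable}(c), $f_\og^n$ maps each leaf of $\ml{F}$ into a local stable manifold at the image point, and by Theorem \ref{localstable}(d) the images of the leaves shrink to points geometrically fast. Since $h_\ml{F} = (f_\og^n)^{-1} \circ h_n \circ f_\og^n$, where $h_n$ denotes holonomy between $\g_1^{(n)}$ and $\g_2^{(n)}$ along the images of the leaves, the chain rule gives
$$ \mathrm{Jac}\, h_\ml{F}(y) = \frac{\mathrm{Jac}(f_\og^n|_{\g_1})(y)}{\mathrm{Jac}(f_\og^n|_{\g_2})(h_\ml{F}(y))}\cdot \mathrm{Jac}\, h_n(f_\og^n(y)). $$
As $n \to \infty$ the factor $\mathrm{Jac}\, h_n \to 1$, since the connecting leaves shrink, while the ratio converges to a positive finite limit: writing each one-step Jacobian as a product over the $n$ steps, the $k$-th ratio is $1 + O(e^{-ck})$ for some $c > 0$, because $f_\og^k(y)$ and $f_\og^k(h_\ml{F}(y))$ lie on a common stable leaf (hence stay exponentially close) while each $\og_k$ is $C^2$ with subexponentially growing norm by (*). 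Thus $\mathrm{Jac}\, h_\ml{F}$ exists and is positive and finite, so $\vol_{\g_2} \ll (h_\ml{F})_*\vol_{\g_1}$ (indeed the two are equivalent under the holonomy). I would carry out the distortion bookkeeping by quoting \cite[Ch. III.3]{LQ} rather than reproducing it.

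Finally (AC2) follows by disintegration. Fix a $C^1$ transversal $\g$ to $\ml{F}$ inside $B(x,R)$; by (AC1) the $\ml{F}$-holonomies between $\g$ and the other transversals are absolutely continuous with positive density, so the conditional measures of $\vol|_{B(x,R)}$ on the leaves $\ml{F}(y)$ are equivalent to the leaf-volumes and the factor measure on $\g$ is equivalent to $\vol_\g$. Hence for a Borel set $A \subset B(x,R)$ one has $\vol(A) = 0$ iff $\vol_{\ml{F}(y)}(A \cap \ml{F}(y)) = 0$ for $\vol_\g$-a.e.\ $y \in \g$, and sliding along leaves makes this equivalent to the same statement for $\vol$-a.e.\ $y \in B(x,R)$. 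Covering $M$ up to a $\vol$-null set by countably many such foliation boxes (letting $\delta \to 0$, so that $\vol(Q) \to 1$) and using Theorem \ref{localstable}(e) to pass from local to global stable manifolds yields the stated equivalence. The main obstacle is (AC1): properties (1), (2) and the deduction of (AC2) are essentially bookkeeping, whereas (AC1) is the genuine absolute continuity of the nonuniformly hyperbolic random stable foliation, and is where the distortion control afforded by the uniform positivity of the Lyapunov exponent (Proposition \ref{positiveexponent}) enters.
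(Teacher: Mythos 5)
The paper gives no proof of this lemma; it treats Pesin absolute continuity for random nonuniformly hyperbolic systems as a known theorem and simply points to \cite[Ch. 8]{BP}, \cite[Sect. 4.2]{LY}, and \cite[Sect. III.5]{LQ}. Your sketch is a correct reconstruction of what those references do --- density-point and graph-transform picture for (1)--(2), the Jacobian-ratio chain-rule computation for the holonomy in (AC1), and disintegration into foliation boxes plus Theorem \ref{localstable}(e) for (AC2) --- and since you also defer the actual distortion bookkeeping in (AC1) to \cite{LQ}, in substance you and the paper are taking the same route.

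One small inaccuracy in your framing: the distortion control in (AC1) does not rely on the \emph{uniform} positivity of the exponent from Proposition \ref{positiveexponent}. What the Pesin-theory argument uses there is the moment condition (*) (to keep one-step $C^2$ norms subexponential along $\mu^\N$-a.e.\ word, via Borel--Cantelli) together with exponential contraction along a.e.\ stable leaf. Proposition \ref{positiveexponent} is rather what guarantees that a nontrivial stable direction exists at $(\mu^\N \times \vol)$-a.e.\ pair (not merely $\nu$-a.e.\ for a single stationary $\nu$), so that the conclusion of the lemma is not vacuous. Also, in your argument for property (2) the two local leaves coincide near the common point $y'$ (not near $y$); the subsequent connectedness propagation inside $B(x,R)$ then needs property (1) (both leaves cross $B(x,R)$ as graphs) to conclude full agreement, which you have but should invoke explicitly.
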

See \cite[Ch. 8]{BP} for a statement in the case of deterministic systems, \cite[Sect. 4.2]{LY} or \cite[Sect. III.5]{LQ} for a statement in the case of random systems. %In terms of Pesin sets (see e.g. \cite[Ch. 8]{BP}), if $x$ is in a fixed Pesin set $\Ld_C$ with growth parameter $C$, then the assumptions in the lemma hold for $Q := \Ld_C \cap B(x, R)$. 

\subsubsection{Implications of uniform expansion}

One consequence of uniform expansion is uniform control on the angles between stable directions of different words.

\begin{lemma}[Uniform avoidance of the stable direction] \cite[Prop. 4.4.4]{LX} \cite[Prop. 3]{Z} \label{uniformangle}
If $\mu$ is uniformly expanding, then there exists $\alpha > 0$ with the following property: \\
for any $(x, v) \in T^1 M$, there exists a subset $\G_{x, v} \subset \Diff^2(M)^\N$ with $\mu^\N(\G_{x, v}) > 0.99$ such that, for any $\og \in \G_{x, v}$, 
$$ \measuredangle (E_\og^s(x), v) > \alpha. $$
\end{lemma}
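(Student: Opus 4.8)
The plan is to deduce the uniform angle bound from the uniform positivity of the top Lyapunov exponent along \emph{every} orbit, established in Proposition \ref{positiveexponent}, combined with a second-moment / Chebyshev argument. The key point is that if $v$ were $\alpha$-close to the stable subspace $E_\og^s(x)$ for a set of words $\og$ of measure more than a definite amount, then the vector $v$ would be contracted (or at least fail to expand at the uniform rate $\ld_\mu$) along too many words, and averaging this against the complementary set of words where $v$ does expand would still contradict the uniform expansion inequality $\int \log(\|D_xf(v)\|/\|v\|)\,d\mu^{(N)}(f) > C$ once $N$ is taken large and $\alpha$ small.

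First I would fix $(x,v)\in T^1M$ and consider, for each $n$, the random variable $\og \mapsto \tfrac1n\log\|D_xf_\og^n(v)\|$. By Proposition \ref{positiveexponent}, for every $x$ and $\mu^\N$-a.e.\ $\og$ this quantity has $\liminf \ge \ld_\mu$; in particular, using the moment condition (*) to get uniform integrability of the increments, for a large fixed $n = n_0$ (depending only on $\mu$) the set
$$ G_{x,v} := \left\{ \og : \tfrac1{n_0}\log\|D_x f_\og^{n_0}(v)\| > \tfrac{\ld_\mu}{2} \right\} $$
has $\mu^\N(G_{x,v}) > 0.995$, uniformly in $(x,v)$. (This uniform lower bound on the measure is where Proposition \ref{positiveexponent}, applied at \emph{every} point, is essential; a pointwise-a.e.\ statement would not suffice.) Next I would relate membership in $G_{x,v}$ to the angle $\measuredangle(E_\og^s(x), v)$: decompose $v = v^s + v^u$ according to the Oseledets splitting $E_\og^s(x) \oplus E_\og^u(x)$ at $x$ (here $\ell = 2$ and, by Proposition \ref{positiveexponent}, the top exponent is positive so the splitting is genuinely into a contracting and an expanding line whenever $v$ is hyperbolic; the degenerate cases are handled separately). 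If $\measuredangle(E_\og^s(x), v) \le \alpha$ then $\|v^u\| \lesssim \sin\alpha$, so $\|D_xf_\og^{n_0}(v)\| \le \|D_xf_\og^{n_0}(v^s)\| + \|D_xf_\og^{n_0}(v^u)\|$ is dominated, up to the tempered constants $L(\og,x)$ from Theorem \ref{localstable}, by $e^{(-\ld_\mu+\ep)n_0} + \sin\alpha\cdot e^{(\ld_1+\ep)n_0}$. On the (large-measure) set where $L(\og,x)$ is bounded by a fixed constant, choosing $\alpha$ small enough (as a function of $n_0$, $\ld_\mu$, and the $C^2$-bounds) forces $\tfrac1{n_0}\log\|D_xf_\og^{n_0}(v)\| < \tfrac{\ld_\mu}{2}$, i.e.\ $\og \notin G_{x,v}$. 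Taking $\G_{x,v}$ to be the intersection of $G_{x,v}$ with the set where $L(\og,x)$ is controlled gives $\mu^\N(\G_{x,v}) > 0.99$ and the desired angle bound for all $\og \in \G_{x,v}$.

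The main obstacle is making the dependence on $(x,v)$ genuinely \emph{uniform}: both the choice of $n_0$ with $\mu^\N(G_{x,v})$ close to $1$, and the bound on the tempered function $L(\og,x)$, must be arranged with constants independent of the base point and direction. For the first, I would invoke the uniform version of Oseledets from \cite[Lem. 4.3.5]{LX} (or re-run the martingale argument in the proof of Proposition \ref{positiveexponent}, which already gives uniform-in-$x$ control of $S_n/n$) together with a uniform-integrability estimate coming from (*) to upgrade the a.e.\ convergence to a uniform bound on the measure of $G_{x,v}$ at a finite time $n_0$. For the second, I would use that $L$ can be taken measurable with $\int L\,d(\mu^\N\times\vol) < \infty$ and, more to the point, that compactness of $T^1M$ together with the $C^2$ moment bound lets one find a single constant bounding $L(\og,x)$ off a set of $\og$ of measure $< 0.005$, uniformly in $x$. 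Once these two uniformities are in hand the rest is the elementary trigonometric estimate above. I note this lemma is exactly \cite[Prop. 4.4.4]{LX} and \cite[Prop. 3]{Z}, so I would ultimately cite those for the details and present here only the outline just given.
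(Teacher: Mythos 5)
The paper itself gives no proof of Lemma \ref{uniformangle}, only the citations to \cite[Prop.\ 4.4.4]{LX} and \cite[Prop.\ 3]{Z}, so there is no in-text argument to compare against; I will assess your outline on its own merits.

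Your high-level strategy -- run the uniform martingale estimate behind Proposition \ref{positiveexponent} to get $\mu^\N(G_{x,v})>0.995$ at a fixed large $n_0$ uniformly in $(x,v)$, then argue that membership in $G_{x,v}$ is incompatible with $v$ being $\alpha$-close to $E_\og^s(x)$ -- is the right shape, but the step where you close the loop has two genuine gaps. First, the estimate $\|v^u\|\lesssim\sin\alpha$ is false as stated: decomposing $v=v^s+v^u$ with respect to the (generally non-orthogonal) Oseledets splitting and projecting onto $(E_\og^s(x))^\perp$ gives $\|v^u\|\le \sin\alpha/\sin\gamma_\og(x)$, where $\gamma_\og(x)=\measuredangle(E_\og^s(x),E_\og^u(x))$ is the Pesin angle, and this angle degenerates off Pesin blocks. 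Second, and more seriously, the claim that ``compactness of $T^1M$ together with the $C^2$ moment bound lets one find a single constant bounding $L(\og,x)$ off a set of $\og$ of measure $<0.005$, uniformly in $x$'' is the technical crux of the lemma and is asserted, not proved. The integrability $\int L\,d(\mu^\N\times\vol)<\infty$ only controls $L$ at $(\mu^\N\times\vol)$-generic $(\og,x)$ and says nothing about the conditional law of $L(\cdot,x)$ at an individual, volume-null $x$; and since $L$ is merely measurable, compactness of $T^1M$ offers no help. If a uniform-in-$x$ bound on the tempered function were available for free, the lemma would follow almost immediately from the Pesin-block picture, so your outline quietly assumes the hard part. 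To repair it one would need to derive finite-time control of the cocycle along $E_\og^s(x)$ (and of the Pesin angle) with rates uniform over all $x$ directly from the uniform expansion hypothesis -- essentially by running a further quantitative martingale/Chebyshev estimate on the projectivized cocycle, in the spirit of the proof of Proposition \ref{positiveexponent} -- rather than importing the measurable constant $L(\og,x)$ from Theorem \ref{localstable}, which carries no such $x$-uniformity.
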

%\begin{remark} \label{smallenoughangle}
%Note that if Lemma \ref{uniformangle} holds for $\alpha > 0$, it also holds for the same set $\G_{x, v}$ for all positive $\alpha' < \alpha$. Therefore we can take $\alpha$ sufficiently small if necessary.
%\end{remark}

Another property of uniformly expanding systems is that for every point on the surface, the dynamics exhibit uniform hyperbolicity for a large proportion of words. This implies uniform control on the lengths and curvatures of the local stable manifolds. 

\begin{lemma}[Uniform control of the local stable manifolds] \cite[Prop. 4.4.9]{LX} \cite[Prop. 3]{Z} \label{uniformsize}
If $\mu$ is uniformly expanding, then there exist a constant $\ell = \ell(\mu) > 0$ with the following properties: \\
for any $x \in M$, there exists a subset $\Ld_x \subset \Diff^2(M)^\N$ with $\mu^\N(\Ld_x) > 0.99$ such that for all $\og \in \Ld_x$,
\begin{enumerate}[label=(\roman*)]
\item $W_{\og, \ell}^s(x) \subset \subset W_{\og, \loc}^s(x)$,
\item the \emph{angle change} of the curve $\exp_x^{-1}(W_{\og, \ell}^s(x))$ is less than $\alpha/100$.
\end{enumerate}
Here $\alpha$ is as in Lemma \ref{uniformangle}, and for a $C^1$-curve $\g: [a, b] \to \R^2$, the \emph{angle change} of $\g$ is 
$$ \max_{t, s \in [a, b]} \measuredangle(\g'(t), \g'(s)). $$
The notation $A \subset \subset B$ in (i) means $A$ is compactly contained in $B$, i.e. the closure of $A$ is compact and is contained in $B$. Note that (i) implies that the leaf-volume of $W_{\og, \ell}^s(x)$ is at least $2\ell$ since the condition implies, in particular, that $W_{\og, \ell}^s(x) \subsetneq W_\og^s(x)$. 
\end{lemma}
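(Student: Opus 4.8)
The plan is to prove a quantitative, uniform-in-$x$ refinement of the local stable manifold theorem, obtained by feeding the uniform hyperbolicity furnished by the uniform expansion hypothesis into the classical Hadamard--Perron graph transform and tracking that every constant that appears depends only on $\mu$ (and on the fixed, compact geometry of $M$). The key point is that uniform expansion --- as opposed to mere positivity of the top exponent --- is exactly what makes the size of the resulting local manifolds bounded below uniformly over $x\in M$ and over a large-measure set of words $\og$.

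\emph{Step 1: uniform hyperbolicity at every point for most words.} Fix $x\in M$ and a unit vector $v_0\in T_x^1M$, and recall the mean-zero martingale $\{S_j\}$ built from $\log\|D_xf_\og^{jN}(v_0)\|$ in the proof of Proposition~\ref{positiveexponent}. Its increments are bounded, uniformly in $x$, in terms of $\log^+$ of the $C^2$-norms of the maps, so a maximal inequality (after, if necessary, truncating the $C^2$-norms, which costs only a set of words of small $\mu^\N$-measure by the moment condition~(*)) combined with the uniform drift $\ge jC$ coming from uniform expansion yields uniform constants $C_0\ge1$, $\lambda_0\in(0,\ld_\mu]$ and $N\in\N$, and for each $x$ a set $\Lambda_x^{(1)}$ with $\mu^\N(\Lambda_x^{(1)})>1-10^{-3}$, such that $\|D_xf_\og^n(v_0)\|\ge C_0^{-1}e^{\lambda_0 n}$ for every $n\ge0$ and every $\og\in\Lambda_x^{(1)}$. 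Since $m$ is smooth and $M$ compact, $|\det D_xf_\og^n|$ stays within a fixed multiplicative factor of $1$; hence, writing $\sigma_1^{(n)}\ge\sigma_2^{(n)}$ for the singular values of $D_xf_\og^n$, we get $\sigma_2^{(n)}\le C_1e^{-\lambda_0 n}$, the uniform gap $\sigma_1^{(n)}/\sigma_2^{(n)}\ge C^{-1}e^{2\lambda_0 n}$ forces the direction of greatest contraction of $D_xf_\og^n$ to converge to a well-defined line $E_\og^s(x)$, and $\|D_xf_\og^n|_{E_\og^s(x)}\|\le C_1e^{-\lambda_0 n}$ for all $n$. Intersecting with a further set of words whose first finitely many $C^2$-norms are bounded --- again of measure $>1-10^{-3}$ by~(*) --- produces a set $\Lambda_x$, $\mu^\N(\Lambda_x)>0.99$, on which the Pesin/tempering constant $L(\og,x)$ of Theorem~\ref{localstable}(d) admits a bound depending only on $\mu$. (This is essentially the uniform-in-$x$ strengthening of \cite[Lem.~4.3.5]{LX}.)

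\emph{Step 2: uniform graph transform, and deducing (i) and (ii).} For $\og\in\Lambda_x$, run the standard stable-manifold construction in exponential charts along the orbit $(f_\og^n(x))_{n\ge0}$: the contraction rate $\lambda_0$, the uniform bound on $L(\og,x)$, the uniform transversality of $E_\og^s$ from the expanding cone, and the controlled $C^2$-size of the maps make the graph transform a contraction on a ball of a radius depending only on $\mu$. This produces a $C^2$ local stable manifold $W_{\og,\loc}^s(x)$ that contains an intrinsic neighbourhood of $x$ of radius $\ge\ell_1$, is tangent to $E_\og^s(x)$ at $x$, and whose unit tangent varies along the manifold with a modulus of continuity bounded by a uniform constant; all of $\ell_1$ and these bounds depend only on $\mu$. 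Now set $\ell:=\min(\ell_1,\ell_2)$, where $\ell_2>0$ is chosen small enough that any sub-arc of $W_{\og,\loc}^s(x)$ of intrinsic radius $\ell_2$ about $x$ has angle change $<\alpha/100$ (possible by the uniform $C^1$-control and the fact that $\alpha$ depends only on $\mu$). Then $\ell=\ell(\mu)$; property~(i) holds because $\ell\le\ell_1$ forces the closed intrinsic $\ell$-ball $W_{\og,\ell}^s(x)$ to be a compact arc sitting inside the open intrinsic $\ell_1$-ball, hence inside $W_{\og,\loc}^s(x)$; and property~(ii) holds by the choice of $\ell_2$.

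The main obstacle is Step~1: upgrading the almost-sure, asymptotic positivity of the exponent to an estimate that is simultaneously (a) uniform over all $x\in M$, (b) valid for a definite proportion $>0.99$ of words, and (c) valid for \emph{all} $n\ge0$ rather than only large $n$. This is precisely where the uniform expansion hypothesis is indispensable, and the technical core is the maximal inequality for $\{S_j\}$, which must be run with only the moment control provided by~(*) (hence the truncation). Once Step~1 is in hand, Step~2 is a routine rerun of the quantitative local stable manifold theorem with careful book-keeping of constants; see \cite[Ch.~7--8]{BP} and \cite[Ch.~III.3]{LQ} for the underlying construction.
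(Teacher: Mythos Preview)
The paper does not supply a proof of this lemma; it is simply quoted from \cite[Prop.~4.4.9]{LX} and \cite[Prop.~3]{Z}. Your outline is essentially the argument behind those references: upgrade the martingale estimate of Proposition~\ref{positiveexponent} via a maximal/large-deviation inequality to a bound $\|D_xf_\og^n\|\ge C_0^{-1}e^{\lambda_0 n}$ valid for \emph{all} $n\ge0$ on a set of words of measure $>0.99$ uniformly in $x$, use the (near) volume preservation to extract a uniformly contracting stable line, and feed these uniform data into the quantitative graph-transform construction. This is the correct strategy and matches the cited sources.

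One step deserves a little more care than you give it. A uniform bound on the tempering constant $L(\og,x)$ on $\Lambda_x$ requires subexponential control of $|\og_n|_{C^2}$ for \emph{all} $n$, not just finitely many; under~(*) this comes from a Borel--Cantelli/ergodic argument (for any $\ep>0$, $\log^+|\og_n|_{C^2}\le n\ep$ for all large $n$ off a set of words of arbitrarily small measure), rather than from a finite truncation alone. With that refinement your Step~1 delivers exactly the uniform Pesin block needed for Step~2, and the rest goes through as you describe.
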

We say that $W_{\og, \loc}^s(x)$ is a \emph{nice curve} if $\og \in \Ld_x$. 

We shall use these constants $\alpha$ and $\ell$, which depend only on $\mu$, later in the proof. The set $\Ld_x$ of words in Lemma \ref{uniformsize} will also appear a few times in the proof.

\subsubsection{Basin of $\nu$}

We will consider the classical notion of a basin of $\nu$ in this random setting, and remark that to show that $\nu = m$, it suffices to show that the basin $B(\nu)$ has full volume. This will be used in {\bf Step 1} below.

\begin{definition}
Given $x \in M$, $\og \in \Diff^2(M)^\N$ and a continuous function $\varphi: M \to \R$, define the $\og$-\emph{Birkhoff average} of $\varphi$ at $x$ as
$$ S_\og(\varphi)(x) := \lim_{n \to \infty} \frac{1}{n} \sum_{j = 0}^{n-1} \varphi(f_{\og}^j(x)) $$
if the limit on the right exists.
\end{definition}

\begin{definition}
Given an ergodic $\mu$-stationary measure $\nu$ on $M$, define the \emph{basin} of $\nu$, denoted $B(\nu) \subset M$, as the set of points $x \in M$ such that for any continuous function $\varphi: M \to \R$ and $\mu^\N$-almost every $\og \in \Diff^2(M)^\N$, 
$$ S_\og(\varphi)(x) = \int_M \varphi \; d\nu. $$
\end{definition}

\begin{lemma} \label{ergodicity}
If $\vol(B(\nu)) = 1$, then $\nu = m$. 
\end{lemma}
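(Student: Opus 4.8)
The plan is to encode the random dynamics as a single deterministic skew product and then transfer the Birkhoff-type information contained in $B(\nu)$ from the fibre measure $m$ over to $\nu$ by a soft integration argument, using only that $\mu$ is supported on $m$-preserving diffeomorphisms and that $M$ is compact.

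First I would set up the skew product $F \colon \Diff^2(M)^\N \times M \to \Diff^2(M)^\N \times M$, $F(\og, x) := (\sigma(\og), \og_0(x))$, so that $F^j(\og, x) = (\sigma^j(\og), f_\og^j(x))$, and hence $\varphi(f_\og^j(x)) = (\varphi \circ \pi_M)\big(F^j(\og, x)\big)$ for every $\varphi \in C(M)$, where $\pi_M$ is the projection onto $M$. The structural fact I need is that $\mu^\N \times m$ is $F$-invariant: $\mu^\N$ is $\sigma$-invariant because it is a product measure, and integrating a test function over the $M$-fibre while using that $\og_0$ preserves $m$ shows $F_*(\mu^\N \times m) = \mu^\N \times m$. (For comparison $\mu^\N \times \nu$ is $F$-invariant exactly because $\nu$ is $\mu$-stationary --- this is the source of the ``easy'' inclusion that $\nu$-a.e.\ point lies in $B(\nu)$ --- but only the $m$-version is needed here.)

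Then, fixing $\varphi \in C(M)$ and pushing $\mu^\N \times m$ forward under $\pi_M$, invariance gives $\int (\varphi \circ \pi_M) \circ F^j \, d(\mu^\N \times m) = \int_M \varphi \, dm$ for every $j$, so that for every $n$,
$$ \int_{\Diff^2(M)^\N \times M} \left( \frac{1}{n} \sum_{j=0}^{n-1} \varphi\big(f_\og^j(x)\big) \right) d(\mu^\N \times m)(\og, x) \; = \; \int_M \varphi \, dm . $$
Now I would feed in the hypothesis: since $\vol(B(\nu)) = 1$ and $m$ is equivalent to $\vol$, the set $B(\nu)$ has full $m$-measure, so by Fubini, for $(\mu^\N \times m)$-a.e.\ $(\og, x)$ the limit $S_\og(\varphi)(x) = \lim_{n} \frac{1}{n}\sum_{j=0}^{n-1} \varphi(f_\og^j(x))$ exists and equals $\int_M \varphi \, d\nu$. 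The averages are bounded by $\|\varphi\|_\infty$ because $M$ is compact, so dominated convergence lets me take $n \to \infty$ in the displayed identity to get $\int_M \varphi \, dm = \int_M \varphi \, d\nu$. As $\varphi \in C(M)$ was arbitrary and $m, \nu$ are Borel probability measures, the Riesz representation theorem yields $\nu = m$.

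The argument is entirely soft and I do not expect a genuine obstacle; the two points deserving a little care are (i) the verification that $\mu^\N \times m$ is $F$-invariant, which is precisely where the assumption $\mu(\Diff^2_m(M)) = 1$ is used, and (ii) the application of Fubini to the defining property of $B(\nu)$, where one uses the separability of $C(M)$ so that $B(\nu)$ is seen to be measurable (alternatively one works, for each fixed $\varphi$, with the manifestly measurable superset $\{x : S_\og(\varphi)(x) = \int_M \varphi \, d\nu \text{ for } \mu^\N\text{-a.e.\ } \og\}$). Note that neither ergodicity of $\nu$ nor uniform expansion enters this lemma: those hypotheses are used elsewhere, via the Hopf-type argument, to establish the premise $\vol(B(\nu)) = 1$.
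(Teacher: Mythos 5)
Your proof is correct and follows essentially the same route as the paper: both boil down to the identity $\int \varphi\, dm = \int \varphi\, d\nu$ obtained from the full $m$-measure of $B(\nu)$ plus a statement that the $m$-average of the Birkhoff average equals $\int \varphi\, dm$. The only difference is packaging: the paper cites the pointwise ergodic theorem for the skew product $F$ acting on $(\Diff^2(M)^\N \times M, \mu^\N \times m)$ to obtain the function $\overline{\varphi}$ with $\int \overline{\varphi}\, dm = \int \varphi\, dm$, whereas you observe that the pointwise limit is already guaranteed on $B(\nu)$, and then derive the integral identity more elementarily by combining $F$-invariance of $\mu^\N \times m$ (integrating each term of the finite Birkhoff sum) with dominated convergence. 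This sidesteps the ergodic theorem at the cost of a small Fubini/measurability check, and is a perfectly valid variant; the underlying mechanism is the same.
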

 
\begin{proof}
Assume that $\mathrm{vol}(B(\nu)) = 1$. Then $m(B(\nu)) = 1$. Let $\varphi \in C^0(M)$. By the pointwise ergodic theorem (and the argument in the proof of Lemma \ref{wregular}), there exists a function $\overline{\varphi}(x)$ such that for $(\mu^\N \times m)$-a.e. $(\og, x)$, 
$$ S_\og(\varphi)(x) = \overline{\varphi}(x) \quaddd \text{ and } \quaddd \int \overline{\varphi}(x) \; dm(x) = \int \varphi(x) \; dm(x). $$
On the other hand, by definition of the basin $B(\nu)$, for all $x \in B(\nu)$, we have
$$ S_\og(\varphi)(x) = \int \varphi \;d\nu. $$
Therefore $\overline{\varphi}(x) = \int \varphi \; d\nu$ for all $x \in B(\nu)$. But since $m(B(\nu)) = \mathrm{vol}(B(\nu)) = 1$, we have
$$ \int \varphi(x) \; dm(x) = \int \overline{\varphi}(x) \; dm(x) = \int_{B(\nu)} \overline{\varphi}(x) \; dm(x) = \int \varphi(x) \; d\nu(x). $$
Since $\varphi$ is arbitrary, we have $\nu = m$.
\end{proof}

\subsubsection{Reduction to a local argument via Lebesgue density theorem}

By Lemma \ref{ergodicity}, it suffices to argue that the basin has full volume. In this section, we argue that it suffices to show that in every (uniformly) small enough ball, the basin either has zero density or has density bounded from below by a positive uniform constant. This allows us to reduce the problem to a local argument in a small ball. This will be used in {\bf Step 2} below.

\begin{definition}[Density]
Given a Borel measurable subset $U \subset M$, a point $x \in M$ and $r > 0$, define the \emph{density} of $U$ in the ball $B(x, r)$ as
$$ \vol(U : B(x, r)) := \frac{\vol(U \cap B(x, r))}{\vol(B(x, r))}. $$
\end{definition}

\begin{lemma} \label{goodlemma}
Assume that a measurable subset $U \subset M$ satisfies the following: there exist $c > 0$ and $R_0 > 0$ such that for all $x \in M$ and positive $r < R_0$, either
$$ \vol(U: B(x, r)) = 0 \quaddd \text{ or } \quaddd \vol(U: B(x, r)) > c. $$
Then $\mathrm{vol}(U) = 0$ or $1$. 
\end{lemma}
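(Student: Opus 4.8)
\medskip

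The plan is to run a standard Lebesgue-density argument, exploiting the dichotomy in the hypothesis to upgrade a "large density somewhere" into "full density almost everywhere". First I would dispose of the trivial case: if $\vol(U:B(x,r)) = 0$ for every $x \in M$ and every $r < R_0$, then covering $M$ by finitely many such balls gives $\vol(U) = 0$, and we are done. So assume there exist $x_0 \in M$ and $r_0 < R_0$ with $\vol(U:B(x_0,r_0)) > 0$; by the hypothesis this forces $\vol(U:B(x_0,r_0)) > c$, hence in particular $\vol(U) > 0$.

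\medskip

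Now suppose for contradiction that $0 < \vol(U) < 1$, so that $\vol(M \setminus U) > 0$ as well. By the Lebesgue density theorem applied to the measure $\vol$ on the Riemannian manifold $M$ (which is a doubling/Vitali space, so the density theorem holds), almost every point of $M \setminus U$ is a density point of $M \setminus U$; equivalently, for $\vol$-a.e. $y \in M \setminus U$ we have $\lim_{r \to 0} \vol(U:B(y,r)) = 0$. Pick one such density point $y_0$ (it exists since $\vol(M \setminus U) > 0$). Then for all sufficiently small $r$, $\vol(U:B(y_0,r)) < c$, and since this density is not equal to $0$ in general we cannot immediately conclude — but the hypothesis says the only alternative to being $> c$ is being exactly $0$. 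Hence for all small enough $r < R_0$, $\vol(U:B(y_0,r)) = 0$, i.e. $\vol(U \cap B(y_0,r)) = 0$ for all small $r$.

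\medskip

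Symmetrically, apply the Lebesgue density theorem to $U$ itself: since $\vol(U) > 0$, $\vol$-a.e. point of $U$ is a density point of $U$, so there is a point $x_1 \in U$ with $\lim_{r \to 0}\vol(U:B(x_1,r)) = 1$; in particular for small $r$ this density exceeds $c$, which is consistent, so no contradiction yet from a single point. The contradiction instead comes from connectedness: let $V := \{x \in M : \vol(U \cap B(x,r)) = 0 \text{ for all sufficiently small } r > 0\}$ and $V' := \{x \in M : \vol((M\setminus U) \cap B(x,r)) = 0 \text{ for all sufficiently small } r\}$. The hypothesis (applied to $U$ and, symmetrically, noting the same dichotomy must be checked for $M \setminus U$ — here is the one place I must be slightly careful, see below) shows that for every $x \in M$ and every small $r$, $\vol(U:B(x,r))$ is either $0$ or $>c$, so by a density-point argument every $x$ lies in $V \cup V'$; both $V$ and $V'$ are open, disjoint, and nonempty (the density points of $M\setminus U$ lie in $V$, those of $U$ lie in $V'$), contradicting the connectedness of $M$. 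Therefore $\vol(U) \in \{0,1\}$.

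\medskip

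The main obstacle is the asymmetry in the hypothesis: it gives the $0$-or-$>c$ dichotomy for $U$, but the argument above also wants something like it for $M \setminus U$ to conclude $V' $ is open. This is resolved by observing that $x \in V'$ iff $x$ is a density point of $M\setminus U$ in the weak sense that $\vol(U:B(x,r))\to 0$; and if $\vol(U:B(x,r))$ is, for each small $r$, either $0$ or $>c$, then $\vol(U:B(x,r))\to 0$ forces $\vol(U:B(x,r)) = 0$ for all small $r$, so $V' = V$ in fact — there is really only one open set, namely $V$, together with its complement. So the clean statement is: $V$ is open (immediate from the definition, since if $\vol(U\cap B(x,r_0))=0$ then $\vol(U\cap B(x',r))=0$ for $x'$ near $x$ and $r$ small), $V$ contains all density points of $M \setminus U$ hence is nonempty when $\vol(U)<1$; and $M \setminus V$ is open because if $x\notin V$ then $\vol(U:B(x,r))>c$ for a sequence of small $r$, and by the dichotomy plus the fact that densities vary continuously in $x$ this persists for nearby $x'$, so $x'\notin V$; and $M\setminus V$ is nonempty when $\vol(U)>0$ as it contains the density points of $U$. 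Connectedness of $M$ then forces $\vol(U)=0$ or $\vol(U)=1$.
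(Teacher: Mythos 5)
Your approach is at heart the same as the paper's: use the Lebesgue density theorem to produce a point where the density of $U$ is high and a point where it is low, then exploit the connectedness of $M$ and the continuity of $x \mapsto \vol(U:B(x,r))$ to force a density value in the forbidden range $(0,c]$. The paper phrases this as an intermediate-value argument for a single continuous function; you phrase it as a disconnection of $M$ into two disjoint open sets. These are equivalent formulations of the same idea, not genuinely different routes.

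There is, however, a gap in your final paragraph where you claim $M \setminus V$ is open. You define $V = \{x : \vol(U \cap B(x,r)) = 0 \text{ for all sufficiently small } r\}$ and argue that if $x \notin V$ then $\vol(U:B(x,r)) > c$ for a sequence of small $r$, and that "this persists" for nearby $x'$. But continuity in $x$ is only available for each fixed $r$, and the modulus of continuity degenerates as $r \to 0$; so from $\vol(U:B(x,r_n)) > c$ along $r_n \to 0$ you can only transfer finitely many of these inequalities to a nearby $x'$. Knowing $\vol(U:B(x',r_1)) > c$ for a single $r_1$ does not by itself show $x' \notin V$: a priori $U$ could meet $B(x',r_1)$ only in the annulus $B(x',r_1)\setminus B(x',\rho)$ and have zero mass in all smaller balls. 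To close the gap, observe that $r \mapsto \vol(U:B(x,r))$ is continuous (Riemannian volume gives no mass to spheres) and takes values in $\{0\} \cup (c,1]$ by hypothesis, hence is either identically $0$ or identically $> c$ on $(0,R_0)$. Then $V = \{x : \vol(U:B(x,r_0)) = 0\}$ for any fixed $r_0 < R_0$, and $M\setminus V = \{x : \vol(U:B(x,r_0)) > c\}$, both open by continuity in $x$. The paper sidesteps all of this by fixing a single $r < R_0$ small enough that the Lebesgue density theorem already gives a point with density $> 1-c$ and a point with density $< c/4$ at that $r$, and then applies the intermediate value theorem to $x \mapsto \vol(U:B(x,r))$. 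One more small point: the sentence "so $V' = V$ in fact" is incorrect as written (if $\vol(U \cap B(x,r)) = 0$ then $\vol((M\setminus U) \cap B(x,r)) = \vol(B(x,r)) > 0$, so $V$ and $V'$ are disjoint, not equal); what you mean, and what your final cleanup correctly asserts, is that $V'$ ends up playing the role of $M \setminus V$, so there is really only one set $V$ and its complement to analyze.
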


\begin{proof}
Assume the contrary that $\mathrm{vol}(U) \in (0, 1)$. Clearly the assumption continues to hold if we decrease $c$. Thus without loss of generality assume that $0 < c < 1/2$. 

Since $\vol(U)$ and $\vol(U^c)$ are both positive by assumption, by Lebesgue density theorem, there exist $y \in U$, $z \in U^c$ and $r \in (0, R_0)$ such that 
$$ \vol(U: B(y, r)) > 1 - c \quaddd \text{ and } \quaddd \vol(U: B(z, r)) < c/4. $$

Now observe that the function $x \mapsto \vol(U: B(x, r))$ is continuous in $x \in M$ for fixed $U \subset M$ and $r > 0$. Since $M$ is connected, there exists $x \in M$ such that $\vol(U: B(x, r)) = c/2$. This yields a contradiction.
\end{proof}

In the rest of this section, we shall find uniform constants $c > 0$ and $R_0 > 0$ so that the assumptions of Lemma \ref{goodlemma} hold for the basin $U = B(\nu)$. 

\subsubsection{Regular points}

Similar to the proof of ergodicity in \cite[Sect. 10]{DK}, we define a notion of regular points, and show that almost every point on $M$ is regular. This will be used in {\bf Step 3} of the main argument.
% Intuitively, a point $x$ is \emph{regular} if for almost every word $\og$, almost every point $y$ on the stable leaf $W_\og^s(x)$ has the same Birkhoff average along $\og$ and along $\og'$ for almost every other word $\og'$. 

Informally, the notions of regular points can be summarized as follows: for $x \in M$ and $\og \in \Diff^2(M)^\N$, 
\begin{enumerate}
\item $x$ is $\og$-\emph{regular} if the $\og$-Birkhoff averages at $x$ agree with the $\og'$-Birkhoff averages at $x$ for $\mu^\N$-a.e. $\og'$.
\item $x$ is \emph{regular} if for $\mu^\N$-a.e. $\og$, $x$ is $\og$-regular and almost every $y \in W_{\og}^s(x)$ is $\og$-regular.
\end{enumerate}

\begin{definition}
For $\og \in \Diff^2(M)^\N$, a point $x \in M$ is called $\og$-\emph{regular} if for $\mu^\N$-almost every $\og' \in \Diff^2(M)^\N$, for any continuous function $\varphi: M \to \R$, we have
$$ S_\og(\varphi)(x) = S_{\og'}(\varphi)(x) $$
(in particular the Birkhoff averages exist).
\end{definition}

\begin{remark} \label{manyregular}
Note that if $x$ is $\og$-regular, then for $\mu^\N$-almost every $\og' \in \Diff^2(M)^\N$, $x$ is $\og'$-regular.
\end{remark}

\begin{lemma} \cite[Cor. I.2.2, Page 24]{Kif} \label{wregular}
For $\mu^\N \times \mathrm{vol}$-almost every $(\og, x) \in \Diff^2(M)^\N \times M$, $x$ is $\og$-regular.
\end{lemma}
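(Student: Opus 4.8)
The plan is to realize the $\og$‑Birkhoff averages $S_\og(\varphi)(x)$ as ordinary Birkhoff averages for the associated skew product, show that the resulting limit function is independent of $\og$, and then deduce $\og$‑regularity by a routine measure‑theoretic argument.

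First I would pass to the skew product $F\colon \Diff^2(M)^\N\times M\to\Diff^2(M)^\N\times M$, $F(\og,x):=(\sigma\og,\og_0(x))$, so that $F^n(\og,x)=(\sigma^n\og,f_\og^n(x))$ and $S_\og(\varphi)(x)$ is the Birkhoff average along $F$ of the observable $(\og,x)\mapsto\varphi(x)$. Since each element of $\G$ preserves $m$ (in particular $m$ is $\mu$‑stationary), a direct computation shows that $F$ preserves $\mu^\N\times m$; and as $\vol$ and $m$ are equivalent it suffices to prove the statement with $\vol$ replaced by $m$. I would then fix a countable dense set $\{\varphi_k\}\subset C(M)$ (possible since $M$ is compact metric) and apply Birkhoff's pointwise ergodic theorem to $(F,\mu^\N\times m)$ with observable $(\og,x)\mapsto\varphi_k(x)$: for $(\mu^\N\times m)$‑a.e.\ $(\og,x)$, $S_\og(\varphi_k)(x)$ exists and equals a bounded $F$‑invariant function $g_k(\og,x)$.

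The substantive step, which I expect to be the only delicate point, is to show that $g_k$ depends only on $x$ modulo null sets. Here I would set $h_k:=\E[g_k\mid x]$ and argue as follows. Using $g_k=g_k\circ F^n$ together with the independence of $\sigma^n\og$ from $(\og_0,\dots,\og_{n-1})$ and $x$, one computes $\E[g_k\mid\sigma(\og_0,\dots,\og_{n-1})\vee\sigma(x)]=h_k(f_\og^n(x))$; since $\sigma(\og_0,\dots,\og_{n-1})\vee\sigma(x)$ increases to the full $\sigma$‑algebra, Lévy's martingale convergence theorem gives $h_k(f_\og^n(x))\to g_k(\og,x)$ a.e. On the other hand, taking conditional expectation with respect to $\sigma(x)$ yields $P^n h_k=h_k$ for all $n$, where $P\psi(x)=\int\psi(fx)\,d\mu(f)$; in particular $h_k$ is $P$‑harmonic, and since $m$ is $\mu$‑stationary the pointwise inequality $(Ph_k)^2\le P(h_k^2)$ integrates over $m$ to an equality, forcing $h_k\circ f=h_k$ $m$‑a.e.\ for $\mu$‑a.e.\ $f$. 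Consequently $h_k(f_\og^n(x))=h_k(x)$ for $(\mu^\N\times m)$‑a.e.\ $(\og,x)$ and every $n$, so passing to the limit $g_k(\og,x)=h_k(x)$ a.e.\ (This recovers \cite[Cor.\ I.2.2]{Kif}.) Everything else is bookkeeping.

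Finally, let $E:=\bigcap_k\{(\og,x):S_\og(\varphi_k)(x)\ \text{exists and equals}\ h_k(x)\}$, a conull set. For $(\og,x)\in E$ and arbitrary $\varphi\in C(M)$, uniform approximation by the $\varphi_k$ together with the bound $\big|\tfrac1n\sum_{j<n}(\varphi-\varphi_k)(f_\og^j(x))\big|\le\|\varphi-\varphi_k\|_\infty$ shows $S_\og(\varphi)(x)$ exists and equals $\overline\varphi(x):=\lim_k h_k(x)$, independently of $\og$. By Fubini, for $m$‑a.e.\ $x$ the slice $E_x=\{\og:(\og,x)\in E\}$ is $\mu^\N$‑conull, so for such $x$ and every $\og\in E_x$ one has $S_\og(\varphi)(x)=\overline\varphi(x)=S_{\og'}(\varphi)(x)$ for all $\varphi\in C(M)$ and all $\og'\in E_x$; that is, $x$ is $\og$‑regular. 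Hence $x$ is $\og$‑regular for $(\mu^\N\times m)$‑a.e.\ — equivalently $(\mu^\N\times\vol)$‑a.e.\ — $(\og,x)$.
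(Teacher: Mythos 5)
The paper does not prove this lemma; it is stated as a citation to Kifer's book \cite[Cor.\ I.2.2]{Kif}, so there is no in-paper argument to compare against. Your self-contained reconstruction is correct, and it is the natural route: realize the $\og$-Birkhoff averages as Birkhoff averages for the skew product $F(\og,x)=(\sigma\og,\og_0 x)$ on $(\Diff^2(M)^\N\times M,\ \mu^\N\times m)$, apply the pointwise ergodic theorem to a countable dense family of observables, and show the invariant limit function $g_k$ depends only on $x$. The delicate step you flag --- proving $g_k(\og,x)=h_k(x)$ with $h_k:=\E[g_k\mid x]$ --- is handled correctly: the martingale $\E[g_k\mid \sigma(\og_0,\dots,\og_{n-1})\vee\sigma(x)]=h_k(f_\og^n(x))$ converges to $g_k$ by L\'evy's theorem, $h_k$ is $P$-harmonic, stationarity of $m$ turns the Cauchy--Schwarz inequality $(Ph_k)^2\le P(h_k^2)$ into an $L^2(m)$ equality, and equality in Cauchy--Schwarz forces $h_k\circ f=h_k$ for $\mu$-a.e.\ $f$, $m$-a.e.\ $x$; an induction in $n$ (using either invariance of $m$, as available here, or just stationarity) then gives $h_k(f_\og^n(x))=h_k(x)$ a.e., and passing to the limit yields $g_k=h_k\circ\pi_M$ a.e. The closing density/Fubini argument to upgrade from the countable family $\{\varphi_k\}$ to all of $C(M)$ and to read off $\og$-regularity is routine and correct. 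In short: no gap, and this is essentially Kifer's proof reconstructed.
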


\begin{definition}
A point $x \in M$ is called \emph{regular} if for $\mu^\N$-almost every word $\og \in \Diff^2(M)^\N$, $x$ is $\og$-regular and almost every point $y \in W_\og^s(x)$ is $\og$-regular. 
\end{definition}

It can be shown using Lemma \ref{wregular} and absolute continuity of the stable manifolds that almost every point on $M$ is regular.

\begin{lemma} \cite[Lem. 4.4.18]{LX} \label{almostregular}
$\mathrm{vol}$-almost every point $x \in M$ is regular. 
\end{lemma}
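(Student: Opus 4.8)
The plan is to deduce this from Lemma~\ref{wregular} together with the absolute continuity statement (AC2) of Lemma~\ref{uniformabsolutecontinuity}, via two applications of Fubini's theorem.

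First, set $R := \{(\og, x) \in \Diff^2(M)^\N \times M : x \text{ is } \og\text{-regular}\}$. By Lemma~\ref{wregular}, $(\mu^\N \times \vol)(R) = 1$, so by Fubini, for $\mu^\N$-a.e.\ $\og$ the slice $R_\og := \{x : x \text{ is } \og\text{-regular}\}$ has full volume; equivalently, the set $N_\og := M \setminus R_\og$ of non-$\og$-regular points satisfies $\vol(N_\og) = 0$. Fix such an $\og$ for which, in addition, the absolute continuity statement (AC2) holds (still a $\mu^\N$-conull set of $\og$, in particular $W_\og^s(y)$ is a $C^2$ curve for a.e.\ $y$). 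Applying (AC2) with $A = N_\og$ and using $\vol(N_\og)=0$, we obtain: for $\vol$-a.e.\ $y \in M$, $\vol_{W_\og^s(y)}(N_\og \cap W_\og^s(y)) = 0$, i.e.\ leaf-volume-almost every point of $W_\og^s(y)$ is $\og$-regular.

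Now set $G := \{(\og, x) : x \text{ is } \og\text{-regular and leaf-volume-a.e.\ } y \in W_\og^s(x) \text{ is } \og\text{-regular}\}$. For each $\og$ in the conull set above, the slice $G_\og$ contains $R_\og$ intersected with the full-volume set of $y$'s produced in the previous step, hence $\vol(G_\og) = 1$. By Fubini, $(\mu^\N \times \vol)(G) = 1$, and applying Fubini once more in the other variable, for $\vol$-a.e.\ $x \in M$ the slice $G^x := \{\og : (\og, x) \in G\}$ has full $\mu^\N$-measure. By the definition of a regular point, this says precisely that $\vol$-a.e.\ $x \in M$ is regular.

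The only point that genuinely requires care — rather than a real obstacle — is measurability: one must verify that $R$ and, especially, $G$ are measurable subsets of $\Diff^2(M)^\N \times M$, so that Fubini applies. For $G$ this rests on the measurable dependence of the global stable manifold $W_\og^s(x)$ and its leaf-volume on $(\og, x)$ (standard from the stable manifold theorem; cf.\ Theorem~\ref{localstable} and \cite[Ch.~III.3]{LQ}), which makes $(\og, x) \mapsto \vol_{W_\og^s(x)}(N_\og \cap W_\og^s(x))$ measurable; since $W_\og^s(x)$ is constant along each stable leaf, the second defining condition of $G$ is leaf-saturated and the analysis reduces to the single-leaf statement handled by (AC2). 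Everything else is routine Fubini bookkeeping.
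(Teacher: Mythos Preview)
Your proof is correct and follows essentially the same approach as the paper: both arguments use Lemma~\ref{wregular} and Fubini to conclude that for $\mu^\N$-a.e.\ $\og$ the set of non-$\og$-regular points has zero volume, then invoke (AC2) to transfer this to zero leaf-volume on almost every stable leaf, and finally apply Fubini in the other variable. Your presentation is slightly more explicit about the Fubini bookkeeping and the measurability caveat, but the underlying argument is the same.
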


\begin{proof}
We need to show that the set
$$ B_1 = \{(\og, x) \in \Diff^2(M)^\N \times M \mid \vol_{W^s}(\{y \in W_\og^s(x) \mid y \text{ is not } \og\text{-regular}\}) > 0\} $$
has $\mu^\N \times \mathrm{vol}$-measure zero.
By Lemma \ref{wregular}, for $\mu^\N$-almost every word $\og$, we have
$$ \vol(\{y \in M \mid y \text{ is not } \og\text{-regular}\}) = 0. $$
By absolute continuity of the foliation $W_\og^s$ (Lemma \ref{uniformabsolutecontinuity} (AC2), ignore a null set of words $\og$ if necessary), for $\vol$-almost every point $x \in M$, we have
$$ \vol_{W^s}(\{y \in W_\og^s(x) \mid y \text{ is not } \og\text{-regular}\}) = 0. $$
This is enough to show that $B_1$ has measure zero.
\end{proof}

%\begin{definition}
%A point $x \in M$ is called \emph{very regular} if $x$ is regular and for $\mu^\N$-almost every word $\og \in \Diff^2(M)^\N$, $x$ is $\og$-regular and almost every point $y \in W_\og^s(x)$ is $\og$-regular and regular.
%\end{definition}
%
%\begin{remark}
%Note that it is equivalent to define that $x$ is very regular if $x$ is regular and for $\mu^\N$-almost every word $\og \in \Diff^2(M)^\N$, almost every point $y \in W_\og^s(x)$ is regular (perhaps for a different set of words $\og$). 
%\end{remark}
%
%\begin{lemma}
%\label{almostveryregular}
%$\mathrm{vol}$-almost every point $x \in M$ is very regular. 
%\end{lemma}
%
%\begin{proof}
%It suffices to show that the set
%$$ B_2 = \{(\og, x) \in \Diff^2(M)^\N \times M \mid \vol_{W^s}(\{y \in W_\og^s(x) \mid y \text{ is not regular}\}) > 0\} $$
%has $\mu^\N \times \mathrm{vol}$-measure zero. Since then $B_1 \cup B_2$ also has measure zero and contains the exceptional pairs $(\og, x)$ to the statement. We know that the set of regular points has full volume by Lemma \ref{almostregular}. For almost every word $\og$, by absolute continuity of the stable foliation $W_\og^s$ (Lemma \ref{uniformabsolutecontinuity} (AC2)), we know that for almost every $x \in M$,
%$$ \vol_{W^s} (\{y \in W_\og^s(x) \mid y \text{ is not regular}\}) = 0. $$
%Thus $B_2$ has measure zero.
%\end{proof}

The following lemma is a direct consequence of the definitions, and will be used repeatedly in {\bf Step 6}.

\begin{lemma}\cite[Lem. 4.4.19]{LX} \label{basin}
Given an ergodic $\mu$-stationary measure $\nu$ on $M$ and $\og \in \Diff^2_m(M)^\N$, if $x, y \in M$ are both $\og$-regular and $y \in W_\og^s(x)$, then $x \in B(\nu)$ if and only if $y \in B(\nu)$. 
\end{lemma}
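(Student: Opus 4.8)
The plan is to unwind the definitions of $\og$-regularity and of the basin $B(\nu)$, and to use the defining dynamical property of the stable manifold $W_\og^s(x)$, namely that $d(f_\og^n(x), f_\og^n(y)) \to 0$ (in fact exponentially) for $y \in W_\og^s(x)$. The key observation is that convergence of Birkhoff orbits forces equality of Birkhoff averages of any continuous function, because continuous functions on the compact surface $M$ are uniformly continuous.

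\medskip
\noindent First I would fix an ergodic $\mu$-stationary measure $\nu$, a word $\og \in \Diff^2_m(M)^\N$, and points $x, y \in M$ that are both $\og$-regular with $y \in W_\og^s(x)$. Let $\varphi \colon M \to \R$ be continuous. Since $M$ is compact, $\varphi$ is uniformly continuous and bounded. By definition of $W_\og^s(x)$, there is $\beta < 0$ with $\limsup_{n\to\infty}\frac1n\log d(f_\og^n(x), f_\og^n(y)) < \beta$, so $d(f_\og^n(x), f_\og^n(y)) \to 0$ as $n \to \infty$; hence $\varphi(f_\og^n(x)) - \varphi(f_\og^n(y)) \to 0$ by uniform continuity. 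A vanishing sequence has vanishing Cesàro average, so
$$ \lim_{n\to\infty} \frac1n \sum_{j=0}^{n-1} \bigl( \varphi(f_\og^j(x)) - \varphi(f_\og^j(y)) \bigr) = 0. $$
Since $x$ and $y$ are $\og$-regular, both $S_\og(\varphi)(x)$ and $S_\og(\varphi)(y)$ exist (that is part of the definition of $\og$-regularity); therefore $S_\og(\varphi)(x) = S_\og(\varphi)(y)$.

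\medskip
\noindent Next I would promote this from the single word $\og$ to almost every word. Suppose $x \in B(\nu)$. By definition of the basin, for $\mu^\N$-a.e.\ word $\og'$ we have $S_{\og'}(\varphi)(x) = \int_M \varphi\, d\nu$ for every continuous $\varphi$. Because $x$ is $\og$-regular, $S_\og(\varphi)(x) = S_{\og'}(\varphi)(x)$ for $\mu^\N$-a.e.\ $\og'$, and combining with the previous display gives $S_\og(\varphi)(x) = \int_M \varphi\, d\nu$. By the paragraph above, $S_\og(\varphi)(y) = S_\og(\varphi)(x) = \int_M \varphi\, d\nu$. Finally, since $y$ is also $\og$-regular, by Remark \ref{manyregular} it is $\og'$-regular for $\mu^\N$-a.e.\ $\og'$, so $S_{\og'}(\varphi)(y) = S_\og(\varphi)(y) = \int_M \varphi\, d\nu$ for $\mu^\N$-a.e.\ $\og'$; as $\varphi$ was arbitrary, $y \in B(\nu)$. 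The reverse implication is symmetric, exchanging the roles of $x$ and $y$ (noting $W_\og^s$ defines an equivalence relation on the regular set, so $x \in W_\og^s(y)$ as well, or simply observing the argument only used $d(f_\og^n(x), f_\og^n(y)) \to 0$, which is symmetric).

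\medskip
\noindent I do not anticipate a serious obstacle here; the statement is essentially bookkeeping once one has the stable-manifold contraction and uniform continuity in hand. The only point requiring a little care is making sure the "for $\mu^\N$-a.e.\ $\og'$" quantifiers line up: one uses that a countable intersection of conull sets is conull, but since $\varphi$ ranges over all of $C^0(M)$ which is not countable, one should either invoke separability of $C^0(M)$ (pass to a countable dense set of test functions and approximate) or simply note that the definition of $\og$-regularity already packages the "for all $\varphi$" inside the "for a.e.\ $\og'$", so no extra intersection over $\varphi$ is needed. This mild subtlety is the only place the argument is not completely mechanical.
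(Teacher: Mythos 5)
Your proof is correct and follows essentially the same route as the paper: the chain $S_{\og'}(\varphi)(x) = S_\og(\varphi)(x) = S_\og(\varphi)(y) = S_{\og'}(\varphi)(y)$, with the middle equality coming from stable-manifold contraction plus uniform continuity of $\varphi$ on compact $M$, and the outer equalities from $\og$-regularity of $x$ and $y$. Your closing remark about quantifier order is also right — the definition of $\og$-regularity places "for all $\varphi$" inside "for a.e.\ $\og'$", so no separability argument is needed.
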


\begin{proof}
For any continuous function $\varphi: M \to \R$, and for $\mu^\N$-almost every $\og' \in \Diff^2(M)^\N$, we have
$$ S_{\og'}(\varphi)(x) = S_{\og}(\varphi)(x) = S_{\og}(\varphi)(y) = S_{\og'}(\varphi)(y), $$
where the second equality uses the fact that $y \in W_\og^s(x)$, and $M$ is compact so that $\varphi$ is uniformly continuous. The first and third equalities use the fact that $x$ and $y$ are $\og$-regular. Therefore the leftmost term equals $\int \varphi\;d\nu$ if and only if the rightmost term equals $\int \varphi\;d\nu$.
\end{proof}

\subsubsection{Basic setup of the Hopf argument}

Using Lemma \ref{uniformangle} and \ref{uniformsize}, we can set up the Hopf argument in a small local ball $B(x_0, r)$ containing a regular point $x$ by finding two words $\og, \og' \in \Diff^2(M)^\N$ whose local stable manifolds through $x$ have nice geometric and dynamical properties. Throughout this subsection we shall fix $x_0 \in M$ and $r > 0$. 
%For $x \in B(x_0, r)$, $\og \in \Diff^2(M)^\N$ and $L > 0$, define 
%\begin{align*}
%% W_{\og, \loc}^s(x) &:= \text{the connected component of }W_\og^s(x) \cap B(x_0, r) \text{ that contains } x, \\
%W_{\og, L}^s(x) &:= \{y \in W_\og^s(x) \mid d_{W^s}(x, y) < L\}. 
%\end{align*}
We first give an outline of the main argument (see Figure \ref{unifacpic} for an illustration). 
\begin{enumerate}[label = {\bf Step \arabic*:}]
\item By Lemma \ref{ergodicity}, to show that $\nu = m$, it suffices to show that $\vol(B(\nu)) = 1$.
\item By Lemma \ref{goodlemma}, to show that $\vol(B(\nu)) = 1$, it suffices to show that for some uniform constants $R_0 > 0$ and $c > 0$, for all $x_0 \in M$ and $r < R_0$, either $\vol(B(\nu): B(x_0, r)) = 0$ or $\vol(B(\nu): B(x_0, r)) > c$. We will choose $R_0$ in subsection \ref{choiceR}. We fix $x_0 \in M$ and $r < R_0$ in the rest of the outline. 
\item Assume that $\vol(B(\nu): B(x_0, r)) > 0$. Choose a regular point $x$ in $B(\nu) \cap B(x_0, r)$.
\item Choose words $\og, \og'$ and a subset $T \subset W_{\og, \loc}^s(x) \cap B(x_0, r)$ with positive leaf-volume such that for all $y \in T$, 
\begin{enumerate}[label=(\roman*)]
\item $W_{\og, \loc}^s(x)$ and $W_{\og', \loc}^s(y)$ are nice curves (in the sense of Lemma \ref{uniformsize}) and uniformly transverse;
\item $x$ and $y$ are $\og$-regular; 
\item $y$ and almost every $z \in W_{\og', \loc}^s(y)$ are $\og'$-regular.
\end{enumerate}
We will choose $\og, \og'$ and $T$ in subsection \ref{choicew}.
\item Construct a good set $U' \subset B(x_0, r)$ with (uniformly) positive density in $B(x_0, r)$, a word $\og''$, and a subset $T' \subset T$ with positive leaf-volume such that for all $p \in U'$, 
\begin{enumerate}[label=(\roman*)]
\item $W_{\og'', \loc}^s(p)$ is a nice curve, and is uniformly transverse to the family $\{W_{\og', \loc}^s(y)\}_{y \in T'}$. 
\item $p$ is $\og''$-regular, 
\item the set of intersection points between $W_{\og'', \loc}^s(p)$ and $\{W_{\og', \loc}^s(y)\}_{y \in T'}$ that are both $\og'$-regular and $\og''$-regular has positive leaf-volume in $W_{\og'', \loc}^s(p)$.
\end{enumerate}
We will choose $T'$ in subsection \ref{choiceT}. We will choose the word $\og''$, the set $U'$ and the uniform positive lower bound $c_3$ on the density of $U'$ in subsection \ref{choiceU}.
\item Apply Lemma \ref{basin} to show that $U' \subset B(x_0, r)$ is contained in the basin $B(\nu)$. In fact, for $p \in U'$,
\begin{enumerate}[label=(\roman*)]
\item $x \in B(\nu)$ by {\bf Step 3}.
\item Let $y \in T' \subset T \subset W_{\og, \loc}^s(x)$. Both $x$ and $y$ are $\og$-regular, so by (i) and Lemma \ref{basin}, $y \in B(\nu)$.
\item Let $z \in W_{\og', \loc}^s(y)$ for some $y \in T'$. Suppose that $z$ is both $\og'$-regular and $\og''$-regular. By (ii), $y \in B(\nu)$. Since $y$ is $\og'$-regular, by Lemma \ref{basin}, $z \in B(\nu)$. 
\item By {\bf Step 5}, a positive leaf-volume set of points $z$ in $W_{\og'', \loc}^s(p)$ are in $W_{\og', \loc}^s(y)$ for some $y \in T'$, and are $\og'$-regular and $\og''$-regular. By (iii), $z \in B(\nu)$. Since $p$ is $\og''$-regular, by Lemma \ref{basin}, $p \in B(\nu)$.
\end{enumerate}
This concludes the argument, since $U' \subset B(x_0, r) \cap B(\nu)$ and has (uniformly) positive density in $B(x_0, r)$.
\end{enumerate}
In the rest of this section, we shall make {\bf Step 4-6} precise by choosing the appropriate parameters.

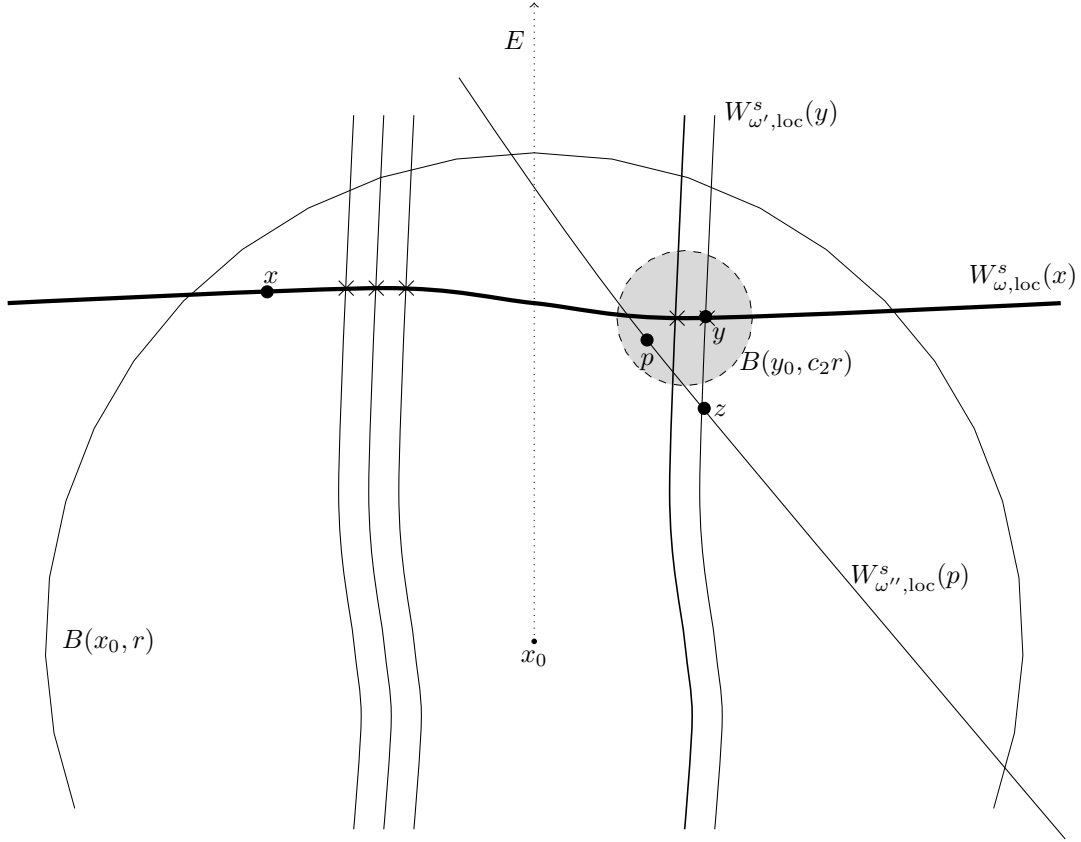
\begin{figure}[ht]
\centering
\begin{tikzpicture}
\draw [ultra thin] [domain=-20:200] plot ({6.5*cos(\x)}, {6.5*sin(\x)});
\node [right] at (-6.4, 0) {$B(x_0, r)$};
\fill[fill = gray!30] (2, 4.3) circle (0.9);
\draw [dashed, ultra thin] [domain=-180:180] plot ({2+0.9*cos(\x)}, {4.3+0.9*sin(\x)});
\node [right] at (2.6, 3.7) {$B(y_0, c_2 r)$};
\draw [ultra thick] plot [smooth] coordinates {(-7, 4.5) (-2, 4.7) (0, 4.5) (2, 4.3) (7, 4.5)}; % W_\og^s(x)
\draw [thin] plot [smooth] coordinates {(-1, 7.5) (1.5, 4.01) (7.06, -2.63)}; % W_{\og''}^s(p)
\draw [thin] plot [smooth] coordinates {(-2.4, -2.5) (-2.3, -1) (-2.4, 0) (-2.6, 2) (-2.4, 7)}; %W_{\og'}^s(y)1
\draw [thin] plot [smooth] coordinates {(-2, -2.5) (-1.9, -1) (-2, 0) (-2.2, 2) (-2, 7)}; %W_{\og'}^s(y)2
\draw [thin] plot [smooth] coordinates {(-1.6, -2.5) (-1.5, -1) (-1.6, 0) (-1.8, 2) (-1.6, 7)}; %W_{\og'}^s(y)2.5
\draw [semithick] plot [smooth] coordinates {(2, -2.5) (2.1, -1) (2, 0) (1.8, 2) (2, 7)}; %W_{\og'}^s(y)3
\draw [thin] plot [smooth] coordinates {(2.4, -2.5) (2.5, -1) (2.4, 0) (2.2, 2) (2.4, 7)}; %W_{\og'}^s(y)4
\draw [dotted, ->] (0, 0) -- (0, 8.5);
%\draw [ultra thin, dashed, ->] (1.6, 8.5) -- (1.6, 4.6);
%\node [above] at (1.6, 8.5) {$U'$};
\node [left] at (0, 8) {$E$};
\node [below] at (0, 0) {$x_0$};
\node [above] at (-3.5, 4.65) {$x$};
\node [below] at (2.46, 4.3) {$y$};
\node [right] at (2.26, 3.08) {$z$};
\node [below] at (1.5, 3.95) {$p$};
\node [above] at (6.5, 4.5) {$W_{\og, \loc}^s(x)$};
\node [above] at (5, 0.5) {$W_{\og'', \loc}^s(p)$};
\node [right] at (2.4, 7) {$W_{\og', \loc}^s(y)$};
\draw [fill] (0, 0) circle [radius=0.03]; % x_0
\draw [fill] (-3.55, 4.65) circle [radius=0.08]; % x
\draw [fill] (2.28, 4.32) circle [radius=0.08]; % y
\draw [fill] (2.26, 3.10) circle [radius=0.08]; % z
\draw [fill] (1.5, 4.01) circle [radius=0.08]; % p
\path (-2.5, 4.7) pic[black] {cross=4pt};
\path (-2.1, 4.7) pic[black] {cross=4pt};
\path (-1.7, 4.7) pic[black] {cross=4pt};
\path (2.3, 4.3) pic[black] {cross=4pt};
\path (1.9, 4.3) pic[black] {cross=4pt};
\end{tikzpicture}
\caption{Illustration of the main argument ($U'$ is a subset of the shaded region $B(y_0, c_2 r)$ with positive density; $ \times \in T \subset W_{\og, \loc}^s(x)$)}
\label{unifacpic}
\end{figure}

\subsubsection{Choice of the radius $R_0$} \label{choiceR}

We choose $R_0 = R_0(\alpha, \ell) > 0$ with the following properties: for positive $r < R_0$ and $y \in B(x_0, r)$,
\begin{enumerate}
\item (Angle between off center tangent vectors) 
\begin{itemize}
\item for $v, w \in T_y M$, if $\measuredangle(v, w) > \alpha$, then $\measuredangle(D_y \exp_{x_0}^{-1} v, D_y \exp_{x_0}^{-1} w) > \alpha / 2. $
\item for $v, w \in T_y M$, if $\measuredangle(v, w) > \alpha / 4$, then $\measuredangle(D_y \exp_{x_0}^{-1} v, D_y \exp_{x_0}^{-1} w) > \alpha / 8. $
\end{itemize}
\item (Angle change of curves) given a $C^2$-curve $\g_1 \subset B(x_0, r)$ through $y$, if $\exp_y^{-1} \g_1$ has angle change less than $\alpha/100$, then $\exp_{x_0}^{-1} \g_1$ has angle change less than $\alpha/99$.
\item Also choose $R_0 < \ell / 10$, where $\ell = \ell(\mu)$ is the constant from Lemma \ref{uniformsize}. 
\end{enumerate}
Such conditions hold for small enough $r$ such that for $y \in B(x_0, r)$, the map $D_y \exp_{x_0}^{-1} : T_y M \to TT_{x_0}M$ is close enough to the identity (using the $C^2$ assumption and compactness of the manifold, the appropriate constants depend only on $\alpha$, $\ell$ and the geometry of the smooth Riemannian manifold $M$, in particular $R_0$ can be taken independent of $x_0$). 

\subsubsection{Choice of the words $\og, \og'$, the set $T \subset W_{\og, \loc}^s(x)$ and the constant $c_1$ (for Step 4)}  \label{choicew}
\begin{lemma} \cite[Lem. 4.4.20]{LX} \label{niceset}
For any $x_0 \in M$ and positive $r < R_0$ (from subsection \ref{choiceR}), let $x \in B(x_0, r) \cap B(\nu) \setminus \{x_0\}$ be a regular point. Then there exist words $\og, \og' \in \Diff^2(M)^\N$, a subset $T \subset W_{\og, \loc}^s(x) \cap B(x_0, r)$ and a constant $0 < c_1 = c_1(\alpha) < 1$ with the following properties.
\begin{enumerate}
\item $x$ is $\og$-regular,
\item $W_{\og, \loc}^s(x)$ is a nice curve, i.e. $\og \in \Ld_x$ as in Lemma \ref{uniformsize},
\item the set of $\og'$-regular points has full volume in $M$,
\item the leaf-volume of $T \subset W_\og^s(x)$ is at least $c_1 r$,
\item for any $y \in T$, 
\begin{enumerate}
\item $y$ is $\og$-regular and $\og'$-regular,
\item $W_{\og', \loc}^s(y)$ is a nice curve.
\item $d(y, \pl B(x_0, r)) > c_1 r$,
\item $\measuredangle(E_{\og}^s(y), E_{\og'}^s(y)) > \alpha$,
%\item almost every $z \in W_{\og', \loc}^s(y)$ is $\og'$-regular.
\end{enumerate}
\end{enumerate}
\end{lemma}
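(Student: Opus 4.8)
The plan is to produce the words $\og, \og'$ and the set $T$ by a density argument, repeatedly intersecting co-null/large-measure sets of words and then pulling back positive-volume information along stable manifolds. The key observation is that the regular point $x \in B(\nu) \cap B(x_0,r)$ comes with the property that for $\mu^\N$-a.e. word $\eta$, $x$ is $\eta$-regular and $\vol_{W^s}$-a.e. point of $W_\eta^s(x)$ is $\eta$-regular. Combining this with Lemma \ref{uniformsize} (which gives $\mu^\N(\Ld_x) > 0.99$, so that $W_{\og,\loc}^s(x)$ is a nice curve with leaf-volume at least $2\ell$ and small angle change), the set of words $\og$ for which simultaneously (1) $x$ is $\og$-regular, (2) $\og \in \Ld_x$, and (3) $\vol_{W^s}$-a.e. $y \in W_\og^s(x)$ is $\og$-regular, has $\mu^\N$-measure at least $0.99$, in particular is nonempty. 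Fix such an $\og$. This handles items 1, 2 and the ``$\og$-regular'' halves of item 5.

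Next I would choose $\og'$. The set of $\og'$ for which the $\og'$-regular points have full volume in $M$ is co-null by Lemma \ref{wregular} (after integrating out, cf. Remark \ref{manyregular}/Lemma \ref{almostregular}), so item 3 is automatic for a.e. $\og'$; I just need to also arrange, for points $y$ ranging over a positive-leaf-volume subset of $W_{\og,\loc}^s(x) \cap B(x_0,r)$, that $\og' \in \Ld_y$ (so $W_{\og',\loc}^s(y)$ is nice), that $y$ is $\og'$-regular, and the crucial transversality $\measuredangle(E_\og^s(y), E_{\og'}^s(y)) > \alpha$. Here is where Lemma \ref{uniformangle} enters: for each fixed $y$ and each fixed unit vector $v$ — take $v$ to be the unit vector spanning $E_\og^s(y) = T_y W_{\og,\loc}^s(x)$ (legitimate because $y$ lies on $x$'s nice stable curve, so by Theorem \ref{localstable}(b) and (c) the tangent line there is exactly the stable line $E_\og^s(y)$) — there is a set $\G_{y,v}$ of words with $\mu^\N(\G_{y,v}) > 0.99$ along which $\measuredangle(E_{\og'}^s(y), v) > \alpha$. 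Intersecting $\G_{y,v}$ with $\Ld_y$ and with the co-null set of words making $y$ both $\og'$-regular and lying in the full-volume good set gives, for each such $y$, a set of admissible $\og'$ of measure $> 0.97$.

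The remaining task is to extract a single $\og'$ that works for a positive-leaf-volume set $T$ of $y$'s simultaneously, together with the margin condition $d(y, \partial B(x_0,r)) > c_1 r$ and the leaf-volume lower bound $\vol_{W^s}(T) \geq c_1 r$. For the margin, first shrink $W_{\og,\loc}^s(x) \cap B(x_0,r)$ to the sub-curve of points at distance $> r/4$ from $\partial B(x_0,r)$; since $x \in B(x_0,r)$ and $r < R_0 < \ell/10$ while the nice curve has leaf-length at least $2\ell$, this sub-curve has leaf-volume bounded below by a uniform multiple of $r$ (a constant depending only on $\ell$, hence we can absorb it into $c_1$). Call this curve $\gamma$. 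Now consider the set $E \subset \gamma \times \Diff^2(M)^\N$ of pairs $(y, \og')$ satisfying all the required $\og'$-conditions above; by the previous paragraph each fiber over $y$ has $\mu^\N$-measure $> 0.97$, so by Fubini there is a single word $\og'$ whose fiber $T := \{ y \in \gamma : (y,\og') \in E\}$ has leaf-volume at least $0.97 \cdot \vol_{W^s}(\gamma) \geq c_1 r$, for a suitable uniform $c_1 = c_1(\alpha)$ (the constants $\ell, \alpha$ depend only on $\mu$). For this $\og'$, items 3, 4, and 5(a)(b)(c)(d) all hold by construction, and item 5(d) uses that $D_y\exp$-type distortions are irrelevant here since the angle condition is stated intrinsically on $TM$.

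The main obstacle I expect is the measurability/Fubini bookkeeping needed in the last step: one must know that $E$ is a jointly measurable subset of $\gamma \times \Diff^2(M)^\N$, which requires that $y \mapsto E_\og^s(y)$ and $(y, \og') \mapsto E_{\og'}^s(y)$ are measurable (standard from Oseledets, but needs the global stable manifold machinery of Theorem \ref{localstable}), and that the maps $(y,\og') \mapsto \mathbf{1}[\og' \in \Ld_y]$, $(y,\og')\mapsto \mathbf{1}[y \text{ is } \og'\text{-regular}]$ are measurable. A secondary subtlety is making sure all the ``$0.99$'' and ``full-volume'' sets of words, which a priori are attached to different base points, can be intersected coherently — this is exactly what forces the Fubini argument rather than a naive countable intersection, and care is needed because there is a continuum of points $y$.
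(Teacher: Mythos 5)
Your proposal follows essentially the same strategy as the paper (large-measure sets of words plus Fubini to extract a single $\og'$), but there is one genuine gap in the construction of the sub-curve $\gamma$.

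The gap concerns item 5(c), the margin condition $d(y, \partial B(x_0,r)) > c_1 r$. You write that since $r < R_0 < \ell/10$ and the nice curve $W_{\og,\loc}^s(x)$ has leaf-length at least $2\ell$, the sub-curve of points at distance $> r/4$ from $\partial B(x_0,r)$ automatically has leaf-volume comparable to $r$. This is false in general: $x$ may lie arbitrarily close to $\partial B(x_0,r)$, and if the stable direction $E_\og^s(x)$ happens to be nearly tangent to the concentric sphere $\partial B(x_0, d(x,x_0))$ at $x$ (equivalently, nearly orthogonal to the radial direction $v$ from $x$ to $x_0$), then, because the nice curve has angle change at most $\alpha/100$, the whole local leaf hugs that sphere and never enters the region at distance $> r/4$ from $\partial B(x_0,r)$. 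The quantity controlling how quickly the leaf penetrates into the ball is precisely $\measuredangle(E_\og^s(x), v^\perp)$, and without a lower bound on it, no uniform $c_1$ exists. The paper fixes this by invoking Lemma \ref{uniformangle} a second time, \emph{at the stage of choosing $\og$}: with $v$ the initial vector of the geodesic from $x$ to $x_0$ and $v^\perp$ its orthogonal complement, at least $99\%$ of words $\og$ satisfy $\measuredangle(E_\og^s(x), v^\perp) > \alpha$ in addition to your conditions (1)--(3); intersecting, one still has $\geq 98\%$ of words, and it is this extra transversality — combined with the angle-change bound from Lemma \ref{uniformsize} and elementary Euclidean geometry in $\exp_{x_0}^{-1}(B(x_0,r))$ — that produces a segment $\gamma$ with $\vol_{W^s}(\gamma) > 2c_1 r$ and $d(y,\partial B(x_0,r)) > c_1 r$ for all $y \in \gamma$, with $c_1 = c_1(\alpha)$. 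You did use Lemma \ref{uniformangle} for the transversality $\measuredangle(E_\og^s(y), E_{\og'}^s(y)) > \alpha$, but missed this earlier application; this is why you also ended up with the wrong dependency ``$c_1$ depending on $\ell$'' in one place, where the correct dependency is on $\alpha$.

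Apart from this, the rest of your argument — the selection of $\og'$ and $T$ via Fubini on the set of pairs $(y,\og')$ satisfying the $\og'$-conditions over the base curve $\gamma$, the intersection of the various $>0.97$ and co-null word sets, and the identification of joint measurability as the bookkeeping point to check — matches the paper's proof.
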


\begin{proof}
We have the following properties of $x$:
\begin{enumerate}[label=(\roman*)]
\item for $\mu^\N$-a.e. $\og$, $x$ is $\og$-regular and almost every $y \in W_\og^s(x)$ is $\og$-regular since $x$ is regular.
\item for at least $99\%$ of the words $\og$ (with respect to $\mu^\N$), $W_{\og, \loc}^s(x)$ is a nice curve by Lemma \ref{uniformsize}.
\end{enumerate}
Note that $x \neq x_0$. Let $v$ be the initial vector in $T_x M$ of the geodesic from $x$ to $x_0$, and $v^\perp \in \P(T_x M)$ be the orthogonal complement of $v$ in $T_x M$.
\begin{enumerate}[label=(\roman*), resume]
\item for at least $99\%$ of the words $\og$ (with respect to $\mu^\N$), $\measuredangle(E_\og^s(x), v^\perp) > \alpha$ by Lemma \ref{uniformangle}. 
\end{enumerate}
{\bf Choice of $\og$:} Let $\og$ be one of the $99\%$ words that satisfy (i), (ii) and (iii). Since $W_{\og, \loc}^s(x)$ is a nice curve, it contains an $\ell$-neighborhood of $x$ with $\ell > 10r$, and we have a uniform bound on angle change of $\exp_x^{-1}(W_{\og, \ell}^s(x))$. \\

\noindent{\bf Choice of $c_1$:} Using Euclidean geometry, (iii) implies that there exist $c_1 = c_1(\alpha) > 0$ and a $C^2$-segment $\g \subset W_{\og, \loc}^s(x)$ such that 
\begin{enumerate}[label=(\roman*), resume]
\item $\vol_{W^s}(\g) > 2c_1 r$, 
\item for all $y \in \g$, $d(y, \pl B(x_0, r)) > c_1 r$. 
\end{enumerate}

Now for the almost every $y \in \g$ that is $\og$-regular, we have the following properties of $y$:
\begin{enumerate}[label=(\roman*$_y$), resume]
\item for $\mu^\N$-a.e. $\og'$, $y$ is $\og'$-regular by Remark \ref{manyregular}.
\item for at least $99\%$ of the words $\og'$ (with respect to $\mu^\N$), $W_{\og', \loc}^s(y)$ is a nice curve by Lemma \ref{uniformsize}.
\item for at least $99\%$ of the words $\og'$ (with respect to $\mu^\N$), $\measuredangle(E_\og^s(y), E_{\og'}^s(y)) > \alpha$ by Lemma \ref{uniformangle}. 
\end{enumerate}
Now consider the set 
$$ G := \{(\og', y) \in \Diff^2(M)^\N \times \g \mid y \text{ is $\og$-regular, and } \og' \text{ satisfies (vi$_y$), (vii$_y$), (viii$_y$)}\}. $$
Almost every $y \in \g$ is $\og$-regular by (i). For each $y \in \g$, at least $98\%$ of the words $\og'$ satisfy (vi$_y$), (vii$_y$), (viii$_y$). Thus by Fubini's theorem, 
$$ \mu^\N \times \vol_{W^s}(G) \geq 0.98 \;\vol_{W^s}(\g). $$
By Fubini's theorem again,
\begin{enumerate}[label=(\roman*), resume]
\item for at least $96 \%$ of the words $\og'$, 
$$ \vol_{W^s}(\{y \in \g \mid y \text{ is $\og$-regular, and } \og' \text{ satisfies (vi$_y$), (vii$_y$), (viii$_y$)}\}) > 0.5 \;\vol_{W^s}(\g) > c_1 r. $$
\end{enumerate}
Here in the last inequality, we have used (iv). Recall that 
\begin{enumerate}[label=(\roman*), resume]
\item for $\mu^\N$-almost every word $\og'$, $ \vol(\{z \in M \mid z \text{ is }\og'\text{-regular}\}) = 1$ by Lemma \ref{wregular}.
\end{enumerate}
{\bf Choice of $\og'$ and $T$:} Let $\og'$ be one of at least $96\%$ words that satisfy (ix) and (x). Let
$$ T := \{y \in \g \mid y \text{ is $\og$-regular, and } \og' \text{ satisfies (vi$_y$), (vii$_y$), (viii$_y$)}\}. $$
We can verify each property:
\begin{enumerate}
\item This follows from (i).
\item This follows from (ii).
\item This follows from (x).
\item This follows from (ix). 
\item for $y \in T$, 
\begin{enumerate}%[label=(\alph*)]
\item This follows from the definition of $T$ and (vi$_y$).
\item This follows from (vii$_y$).
\item This follows from (v).
\item This follows from (viii$_y$).
\end{enumerate}
\end{enumerate}
\end{proof}

\subsubsection{Choice of the direction $E \in \Bbb{P}(T_{x_0}M)$, the ball $B(y_0, c_2 r)$, and the set $T' \subset T \subset W_{\og, \loc}^s(x)$} \label{choiceT}
Let $r < R_0$. Now lift the ball $B(x_0, r)$ to the tangent space at $x_0$ via the inverse of the exponential map $\exp_{x_0}^{-1}$. Let $x \in B(x_0, r)$. Recall that
\begin{enumerate}
\item since $W_{\og, \loc}^s(x)$ is a nice curve by Lemma \ref{niceset} (2), the angle change of $\exp_x^{-1}W_{\og, \ell}^s(x)$ is less than $\alpha/100$.
\item Also by Lemma \ref{niceset} (5d), for any $y \in T$, $\measuredangle(E_\og^s(y), E_{\og'}^s(y)) > \alpha$.
\end{enumerate}
By the choice of $R_0$, we have
\begin{enumerate}
\item the angle change of $\exp_{x_0}^{-1}W_{\og, \ell}^s(x)$ is less than $\alpha / 99$,
\item for all $y \in T$, $\measuredangle(\exp_{x_0}^{-1} W_{\og}^s(y), \exp_{x_0}^{-1} W_{\og'}^s(y)) > \alpha / 2$. % (note that $W_{\og, \loc}^s(x)$ and $W_{\og'}^s(y)$ intersect at $y$).
\end{enumerate}

\noindent{\bf Choice of $E$:} By compactness of $\Bbb{P}(T_{x_0}M)$ and $\vol_{W^s}(T) > 0$, there exists a direction $E \in \Bbb{P}(T_{x_0}(M))$ such that 
\begin{enumerate}
\item $\vol_{W^s}(\{y \in T \mid \measuredangle(E, \exp_{x_0}^{-1} W_{\og'}^s(y)) < \alpha/100\}) > 0$, and
\item for each $E' \in \Bbb{P}(T_{x_0}(M))$ with $\measuredangle(E, E') < \alpha/100$, and each tangent vector $v$ to the curve $\exp_{x_0}^{-1} W_{\og, \loc}^s(x)$ on $T_{x_0} M$, we have $\measuredangle(E', v) > \alpha / 4$.
\end{enumerate}

\noindent{\bf Choice of $c_2$:} Now take a constant $c_2 = c_2(\alpha, c_1) > 0$ small enough so that $c_2 < c_1/2$ and the following property holds: for any $y_0 \in M$ and $z_1, z_2 \in B(y_0, c_2 r)$, if two $C^1$-curves $\g_1$ and $\g_2$ on $M$ satisfy the following properties:
\begin{enumerate}
\item $z_1 \in \g_1$ and $z_2 \in \g_2$,
\item $\g_i$ contains an $(c_1 r/2)$-neighborhood (within the curve) of $z_i$ for $i = 1, 2$,
\item the angle changes of $\exp_{y_0}^{-1}\g_1$ and $\exp_{y_0}^{-1}\g_2$ are less than $\alpha/99$,
\item $\measuredangle(\exp_{y_0}^{-1}\g_1, \exp_{y_0}^{-1}\g_2) > \alpha / 8$,
\end{enumerate}
then $\g_1$ and $\g_2$ intersect at least once. \\

\noindent{\bf Choice of $y_0$:} Take $y_0 \in T$ such that 
$$ \vol_{W^s}(\{y \in T \cap B(y_0, c_2 r) \mid \measuredangle(E, \exp_{x_0}^{-1} W_{\og'}^s(y)) < \alpha/100\}) > 0. $$
%Here the dependence of $\delta'$ on $\alpha$ comes from the fact that we are using the Riemmanian metric $d$ not the arc metric $d_{W^s}$ in the definition of $B(y_0, c_2 r)$, so need to use that $W^s_{\og, \ell}(x)$ has bounded angle change.\\

\noindent{\bf Choice of $T'$:} Let $T' := \{y \in T \cap B(y_0, c_2 r) \mid \measuredangle(E, \exp_{x_0}^{-1} W_{\og'}^s(y)) < \alpha/100\}$. Then $\vol_{W^s}(T') > 0$. 

\subsubsection{Choice of the good set $U' \subset B(x_0, r)$, the word $\og''$ and the constant $c_3$ (for Step 5)} \label{choiceU}

\begin{lemma} \label{betterset}
Define $R_0$ as in subsection \ref{choiceR}, the words $\og, \og'$ as in subsection \ref{choicew}, and $E, y_0, c_2, T'$ as in subsection \ref{choiceT}. Let $r < R_0$. Then there exists a uniform constant $c_3 = c_3(c_2) > 0$, a measurable set $U' \subset B(x_0, r)$ with $\vol(U') > c_3 \vol(B(x_0, r))$ and a word $\og''$ such that for all $p \in U'$, 
\begin{enumerate}[label=(\alph*)]
\item $W_{\og'', \loc}^s(p)$ is a nice curve.
\item almost every $z \in W_{\og''}^s(p)$ is $\og'$-regular, where $\og'$ is the chosen word in subsection \ref{choicew}.
\item $p$ is $\og''$-regular and almost every point in $W^s_{\og''}(p)$ is $\og''$-regular.
\item The angle
$$ \measuredangle(D_p \exp_{x_0}^{-1} E_{\og''}^s(p), E) > \alpha / 2, $$
where $E \in \Bbb{P}(T_{x_0}M)$ is the direction chosen in subsection \ref{choiceT}.
\end{enumerate}

\end{lemma}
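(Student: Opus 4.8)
\emph{Proof proposal.} The plan is to build $\og''$ and $U'$ by two successive applications of Fubini's theorem inside the small ball $B(y_0, c_2 r)$, which is contained in $B(x_0, r)$ because $c_2 < c_1/2$ while $d(y_0, \pl B(x_0, r)) > c_1 r$ by Lemma \ref{niceset}(5c). First I would show that for $\vol$-almost every $p \in B(y_0, c_2 r)$ the set of words $\og''$ for which all four conclusions (a)--(d) hold at $p$ has $\mu^\N$-measure at least $0.98$; then a second Fubini argument extracts one word that works for a definite proportion of such $p$, and that proportion is taken to be $U'$.

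For the pointwise step, fix $p \in B(y_0, c_2 r)$. Since $r < R_0$ the differential $D_p \exp_{x_0}^{-1}$ is invertible, so using the canonical identification $T_{\exp_{x_0}^{-1}(p)}(T_{x_0}M) \cong T_{x_0}M$ I define the line $w_p := (D_p \exp_{x_0}^{-1})^{-1}(E) \in \P(T_p M)$, where $E$ is the direction fixed in subsection \ref{choiceT}; continuity of $p \mapsto w_p$ handles the measurability of everything below. Now I would arrange the four conclusions as follows.
\begin{itemize}
\item (a) By Lemma \ref{uniformsize}, there is a set $\Ld_p$ with $\mu^\N(\Ld_p) > 0.99$ so that $\og'' \in \Ld_p$ makes $W_{\og'', \loc}^s(p)$ a nice curve.
\item (d) Applying Lemma \ref{uniformangle} to $(p, w_p) \in T^1 M$ gives a set $\G_{p, w_p}$ with $\mu^\N(\G_{p, w_p}) > 0.99$ on which $\measuredangle(E_{\og''}^s(p), w_p) > \alpha$; since $T_p W_{\og'', \loc}^s(p) = E_{\og''}^s(p)$ and $D_p \exp_{x_0}^{-1}$ sends $w_p$ to $E$, property 1 of the choice of $R_0$ in subsection \ref{choiceR} upgrades this to $\measuredangle(D_p \exp_{x_0}^{-1} E_{\og''}^s(p), E) > \alpha/2$, which is (d).
\item (b) The word $\og'$ fixed in subsection \ref{choicew} satisfies $\vol(\{z : z \text{ is not } \og'\text{-regular}\}) = 0$ by Lemma \ref{niceset}(3); by absolute continuity (AC2) of the foliation $W_{\og''}^s$ (Lemma \ref{uniformabsolutecontinuity}), for $\mu^\N$-a.e.\ $\og''$ and $\vol$-a.e.\ $p$, a.e.\ $z \in W_{\og''}^s(p)$ is $\og'$-regular, so by Fubini the pairs $(\og'', p)$ failing (b) form a $(\mu^\N \times \vol)$-null set.
\item (c) By Lemma \ref{almostregular}, $\vol$-a.e.\ $p$ is regular, and for a regular $p$ the words $\og''$ such that $p$ and $\vol$-a.e.\ point of $W_{\og''}^s(p)$ are $\og''$-regular form a set of full $\mu^\N$-measure.
\end{itemize}
Intersecting these four word-sets, for $\vol$-a.e.\ $p \in B(y_0, c_2 r)$ at least a $0.98$ fraction of words (in $\mu^\N$) is good at $p$.

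For the combination step, set $\Omega := \{(\og'', p) \in \Diff^2(M)^\N \times B(y_0, c_2 r) : \text{(a)--(d) hold at } p \text{ for } \og''\}$ and $V := \vol(B(y_0, c_2 r))$. The preceding paragraph gives $(\mu^\N \times \vol)(\Omega) = \int_{B(y_0, c_2 r)} \mu^\N(\Omega_p)\, d\vol(p) \geq 0.98\, V$. Since $\vol(\Omega^{\og''}) \leq V$ for every $\og''$, the identity $\int \vol(\Omega^{\og''})\, d\mu^\N(\og'') = (\mu^\N \times \vol)(\Omega) \geq 0.98\, V$ forces $\mu^\N(\{\og'' : \vol(\Omega^{\og''}) \geq 0.9\, V\}) > 0$; I fix one such $\og''$ and set $U' := \Omega^{\og''} \subset B(y_0, c_2 r) \subset B(x_0, r)$, so that every $p \in U'$ satisfies (a)--(d). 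Finally, since $M$ is a closed surface and $c_2 < 1$, volumes of metric balls of radii $c_2 r$ and $r$ (both $< R_0$) are comparable with ratio constant $\kappa = \kappa(c_2, M) > 0$, so $\vol(U') \geq 0.9\, V \geq 0.9\kappa\, \vol(B(x_0, r))$, and we take $c_3 := 0.9\kappa = c_3(c_2) > 0$.

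The routine parts are the measurability bookkeeping (covered by continuity of $p \mapsto w_p$) and the angle-distortion estimates relating $\exp_{x_0}^{-1}$ to intrinsic angles (covered by the choice of $R_0$). The only genuinely delicate point is that all four requirements be simultaneously achievable with total failure probability strictly below $1$: this succeeds precisely because (a) and (d) each fail with probability below $0.01$ while (b) and (c) fail only on $\mu^\N$-null sets of words, leaving a definite $98\%$ of good words at a typical $p$ — and it is exactly this slack that the second Fubini step consumes when extracting the single word $\og''$.
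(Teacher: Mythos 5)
Your proof is correct and takes essentially the same route as the paper: you establish the same four pointwise/wordwise facts from Lemmas \ref{uniformsize}, \ref{uniformangle}, \ref{almostregular}, and (AC2) together with Lemma \ref{niceset}(3), conclude that a $98\%$ fraction of words is good at $\vol$-a.e.\ $p$, and then extract a single $\og''$ by a Fubini/Markov argument over $B(y_0, c_2 r)$. The only cosmetic differences are the precise Markov threshold ($0.9V$ rather than the paper's $0.5\,\vol(B(y_0,c_2 r))$) and your more explicit pullback $w_p = (D_p\exp_{x_0}^{-1})^{-1}(E)$ when invoking Lemma \ref{uniformangle} for item (d), neither of which changes the substance.
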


\begin{proof}
We first collect a few facts that hold for $\vol$-almost every points $p \in M$ and a large set of words $\og''$.
\begin{enumerate}[label=(\alph*)]
\item By Lemma \ref{uniformsize}, for any $p \in M$, for at least $99\%$ of the words $\og''$, $W_{\og'', \loc}^s(p)$ is a nice curve.
\item By (AC2) and Lemma \ref{niceset}(3), for $\vol$-almost every $p \in M$ and $\mu^\N$-a.e. $\og''$, almost every $z \in W_{\og''}^s(p)$ is $\og'$-regular.
\item By Lemma \ref{almostregular}, $\vol$-almost every point $p \in M$ is regular, i.e. for $\mu^\N$-a.e. $\og''$, $p$ is $\og''$-regular and almost every point in $W^s_{\og''}(p)$ is $\og''$-regular.
\item By Lemma \ref{uniformangle} and the choice of $R_0$, for any $p \in B(x_0, r)$, for at least $99\%$ of the words $\og''$,
$$ \measuredangle(D_p \exp_{x_0}^{-1} E_{\og''}^s(p), E) > \alpha / 2, $$
where $E \in \Bbb{P}(T_{x_0}M)$ is the direction in subsection \ref{choiceT}.
\end{enumerate}
Hence for $\vol$-a.e. $p \in B(x_0, r)$, there are at least $98\%$ of the words $\og''$ such that (a)-(d) hold. Now consider the small ball $B(y_0, c_2 r)$ chosen in subsection \ref{choiceT}. Since $c_2 < c_1$ and $d(y_0, \pl B(x_0, r)) > c_1 r$ (since $y_0 \in T$), $B(y_0, c_2 r) \subset B(x_0, r)$. \\

\noindent{\bf Choice of $U'$ and $\og''$:} By Fubini's theorem, there exists a word $\og''$ such that the subset 
$$ U' := \{p \in B(y_0, c_2 r) \mid \text{ (a)-(d) hold for } p \text{ with respect to } \og''\} \subset B(x_0, r) $$
has volume $\vol(U') > 0.5 \;\vol(B(y_0, c_2 r))$, where $B(y_0, c_2 r)$ is the ball from subsection \ref{choiceT}. \\

\noindent{\bf Choice of $c_3$:} Now we can take a uniform constant $c_3 = c_3(c_2) > 0$ such that $\vol(U') > c_3 \vol(B(x_0, r))$. 
\end{proof}

The set $U'$ and the word $\og''$ are related to the $\og'$-local stable curves through $T'$ in the following manner.

\begin{lemma} \label{intersection}
Define $T' \subset W_{\og, \loc}^s(x)$ as in subsection \ref{choiceT}. Let $U := \bigcup_{y \in T'} W_{\og', \loc}^s(y)$. Then for all $p \in U'$, 
$$ \vol_{W^s}(\{z \in W_{\og'', \loc}^s(p) \cap U \mid z \text{ is } \og'\text{-regular and } \og''\text{-regular}\}) > 0. $$
\end{lemma}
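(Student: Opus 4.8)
The plan is to fix an arbitrary $p \in U'$ and argue in three moves: (i) for every $y \in T'$ the local stable curve $W_{\og'', \loc}^s(p)$ actually crosses $W_{\og', \loc}^s(y)$; (ii) by absolute continuity of the $\og'$-stable holonomy this crossing has positive transverse measure, so $W_{\og'', \loc}^s(p) \cap U$ has positive leaf-volume; (iii) $\og'$- and $\og''$-regularity of leaf-almost-every point of $W_{\og'', \loc}^s(p)$ is immediate from Lemma \ref{betterset}, and intersecting these sets gives the claim.

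For (i), fix $y \in T'$. Both $W_{\og'', \loc}^s(p)$ and $W_{\og', \loc}^s(y)$ are nice curves (Lemma \ref{betterset}(a) and Lemma \ref{niceset}(5b)), so each contains a leaf-segment of length $\ell > 10 R_0$ about its center; the centers $p \in U' \subset B(y_0, c_2 r)$ and $y \in T' \subset B(y_0, c_2 r)$ both lie in the small ball; and by niceness their images under $\exp_{x_0}^{-1}$, along the relevant $O(r)$-length portions, have angle change $< \alpha/99$ by the choice of $R_0$. For the angle between them, Lemma \ref{betterset}(d) says the tangent of $\exp_{x_0}^{-1} W_{\og'', \loc}^s(p)$ at $\exp_{x_0}^{-1}(p)$ makes angle $> \alpha/2$ with $E$, while by the definition of $T'$ the tangent of $\exp_{x_0}^{-1} W_{\og', \loc}^s(y)$ at $\exp_{x_0}^{-1}(y)$ makes angle $< \alpha/100$ with $E$; adding the angle-change bounds along each curve shows the two lifts are everywhere transverse with angle $> \alpha/8$. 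Translating this into the chart $\exp_{y_0}^{-1}$ (legitimate since $d(x_0, y_0) < r$, the segments involved have length $O(r)$, and the constants of subsection \ref{choiceR} are uniform in the base point), the curves $W_{\og'', \loc}^s(p)$ and $W_{\og', \loc}^s(y)$ satisfy the hypotheses of the intersection criterion defining $c_2$ in subsection \ref{choiceT}, and therefore meet.

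For (ii), work inside a small ball around $y_0$ and set $\ml{F} = \{W_{\og', \loc}^s(y)\}_{y \in T'}$, a measurable family of $C^2$-curves which (by Lemma \ref{uniformabsolutecontinuity}) are pairwise disjoint or coinciding, and which are uniformly transverse to $W_{\og, \loc}^s(x) \supset T'$ by Lemma \ref{niceset}(5d) and, by (i), to $W_{\og'', \loc}^s(p)$. Apply the absolute continuity statement (AC1) of Lemma \ref{uniformabsolutecontinuity} for the word $\og'$ with $\g_1 = W_{\og'', \loc}^s(p)$ and $\g_2 = W_{\og, \loc}^s(x)$: by (i) every $y \in T'$ lies on a leaf of $\ml{F}$ that meets $\g_1$, so $T' \subset T_2$, and the inclusion $\vol_{\g_2} \ll (h_\ml{F})_* \vol_{\g_1}$ on $T_2$ forces $\vol_{\g_1}\big(h_\ml{F}^{-1}(T')\big) = (h_\ml{F})_* \vol_{\g_1}(T') > 0$ since $\vol_{W^s}(T') > 0$. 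As $h_\ml{F}^{-1}(T') \subset W_{\og'', \loc}^s(p) \cap \bigcup_{y \in T'} W_{\og', \loc}^s(y) = W_{\og'', \loc}^s(p) \cap U$, this gives $\vol_{W^s}\big(W_{\og'', \loc}^s(p) \cap U\big) > 0$.

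For (iii), Lemma \ref{betterset}(b) gives that leaf-almost-every $z \in W_{\og''}^s(p)$ is $\og'$-regular and Lemma \ref{betterset}(c) gives that leaf-almost-every such $z$ is $\og''$-regular; hence the points of $W_{\og'', \loc}^s(p)$ that are both $\og'$- and $\og''$-regular form a full-leaf-volume subset, whose intersection with the positive-leaf-volume set $W_{\og'', \loc}^s(p) \cap U$ is the desired set. I expect step (i) to be the genuinely delicate point: verifying that the two families of stable curves really cross is exactly where the earlier fine-tuned choices of $R_0$, $E$, $y_0$ and $c_2$ come into play, and keeping the angle bookkeeping straight across the different exponential charts takes some care. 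Step (ii) is standard once the direction of the absolute-continuity inequality is tracked correctly, and step (iii) is immediate.
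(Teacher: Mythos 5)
Your proof is correct and follows essentially the same route as the paper: verify the hypotheses of the $c_2$-intersection criterion from subsection \ref{choiceT} for the two local stable leaves, apply (AC1) with transversals $W_{\og'',\loc}^s(p)$ and $W_{\og,\loc}^s(x)$ along the $\og'$-stable family $\{W_{\og',\loc}^s(y)\}_{y\in T'}$ to push the positive leaf-volume of $T'$ over to $W_{\og'',\loc}^s(p)\cap U$, and finish with the full-leaf-volume regularity from Lemma~\ref{betterset}(b,c). One small bookkeeping point in step~(i): you claim the transversality angle is $>\alpha/8$ already in the $\exp_{x_0}^{-1}$ chart and then ``translate'' to $\exp_{y_0}^{-1}$, but the $c_2$-criterion needs $>\alpha/8$ after that chart change; your own estimate $\alpha/2-\alpha/100-2\alpha/99$ in fact gives $>\alpha/4$ in $\exp_{x_0}^{-1}$, and stating it that way (as the paper does) is what safely absorbs the loss in passing to the $\exp_{y_0}^{-1}$ chart.
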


\begin{proof}
Let $p \in U'$ and $y \in T'$. Note that $p, y \in B(y_0, c_2 r)$. Let $z_1 = p$ and $z_2 = y$. We verify properties 1-4 in the choice of $c_2$ in subsection \ref{choiceT} for the local stable curves 
$$ \g_1 := W_{\og'', c_1 r / 2}^s(p) \subset W_{\og'', \loc}^s(p) \quaddd \text{ and } \quaddd \g_2 := W_{\og', c_1 r / 2}^s(y) \subset W_{\og', \loc}^s(y). $$
Note that since $c_2 < c_1 / 2$,  $d(y_0, \pl B(x_0, r)) > c_1 r$ (since $y_0 \in T$) and $p, y \in B(y_0, c_2 r)$, we have $\g_1, \g_2 \subset B(x_0, r)$.
\begin{enumerate}
\item Clearly $p \in \g_1$ and $y \in \g_2$.
\item By definition of $\g_1$ and $\g_2$, $\g_i$ is the $(c_1 r / 2)$-neighborhood of $z_i$ in the local stable curve.
\item Note that $W_{\og'', \loc}^s(p)$ and $W_{\og', \loc}^s(y)$ are nice curves by Lemma \ref{betterset}(a) and Lemma \ref{niceset} (5b), so $\g_1$ and $\g_2$ have bounded angle change in their respective tangent spaces. Now using the choice of $R_0$ applied to the tangent space at $y_0$, we conclude the bound on angle changes in $T_{y_0} M$. 
\item By the choice of $R_0$, $E$ and $T'$, one can readily verify that $\measuredangle(\exp_{x_0}^{-1} \g_1, \exp_{x_0}^{-1}\g_2) > \alpha / 4$. Apply the choice of $R_0$ again, $\measuredangle(\exp_{y_0}^{-1} \g_1, \exp_{y_0}^{-1}\g_2) > \alpha / 8$. 
\end{enumerate}
Therefore properties 1-4 in the choice of $c_2$ are satisfied, thus $W_{\og'', \loc}^s(p)$ intersects $W_{\og', \loc}^s(y)$ for all $y \in T'$, with angle at least $\alpha / 4$ on $T_{x_0} M$. 

Now $W_{\og, \loc}^s(x)$ is uniformly transverse to $W_{\og', \loc}^s(y)$ for $y \in T'$ by Lemma \ref{niceset} (5d). Apply (AC1) to the holonomy $h_{W^s}$ between the transversals $W_{\og'', \loc}^s(p)$ and $W_{\og, \loc}^s(x)$ along the family of local stable curves $\{W_{\og', \loc}^s(y)\}_{y \in T'}$. By the previous paragraph, $h_{W^s}$ is a bijection from $W_{\og'', \loc}^s(p) \cap U$ to $T' \subset W_{\og, \loc}^s(x)$. Since $T'$ has positive leaf-volume in $W_{\og, \loc}^s(x)$, by (AC1), $W_{\og'', \loc}^s(p) \cap U$ has positive leaf-volume. 

\noindent Now the conclusion holds since almost every point in $W_{\og'', \loc}^s(p)$ is $\og'$-regular and $\og''$-regular by Lemma \ref{betterset} (b, c).
\end{proof}

\subsubsection{Conclude the proof of Proposition \ref{allofM} and Proposition \ref{measurerigidity}}

\begin{proof}[Proof of Proposition \ref{allofM}]
The proof goes by performing the Hopf argument in a local ball $B(x_0, r)$ with $r < R_0$, combining the pieces built in previous sections.
\begin{enumerate}[label = {\bf Step \arabic*:}]
\item {\bf It suffices to show that the basin $B(\nu)$ has full volume.} \\
By Lemma \ref{ergodicity}, to show that $\nu = m$, it suffices to show that $\vol(B(\nu)) = 1$. 
\item {\bf It suffices to show that the basin has nontrivial density in each small local ball $\mk{B} = B(x_0, r)$.} \\
Note that $\vol(B(\nu)) \geq \vol(A) > 0$ since a full volume subset of $A$ is in the basin $B(\nu)$ by the pointwise ergodic theorem and that $\nu = \frac{1}{m(A)}m|_A$. By Lemma \ref{goodlemma}, to show that $B(\nu)$ has full volume, it suffices to show that there exist $c > 0$ and $R_0 > 0$ such that for all $x_0 \in M$ and positive $r < R_0$ that satisfy $\mathrm{vol}(B(x_0, r) \cap B(\nu)) > 0$, we have
$$ \mathrm{vol}(B(x_0, r) \cap B(\nu)) > c \;\mathrm{vol}(B(x_0, r)). $$
We choose $R_0$ as in subsection \ref{choiceR}, and will choose $c = c_3$ from subsection \ref{choiceU} in {\bf Step 6}. In particular $R_0 < \ell /10$.

In the rest of the proof we fix $x_0 \in M$ and $r \in (0, R_0)$. Let $\mk{B} := B(x_0, r)$. 
%Also for a word $\og$ and a point $x \in M$, recall the definition
%$$ W_{\og, \loc}^s(x) := \text{ the connected component of } W_\og^s(x) \cap \mk{B} \text{ that contains } x. $$
\item {\bf Choose a regular point $x$ in the local ball $\mk{B}$.} \\
By Lemma \ref{almostregular}, the set of regular points in $M$ has full volume. Thus for fixed $x_0 \in M$ and $r < R_0$ with $\mathrm{vol}(B(x_0, r) \cap B(\nu)) > 0$, one can choose a regular point $x \in B(x_0, r) \cap B(\nu) \setminus \{x_0\}$.
\item {\bf Choose two words $\og, \og'$ with transverse local stable manifolds in $\mk{B}$.} \\
Choose words $\og, \og' \in \Diff^2(M)^\N$ as in subsection \ref{choicew} and a subset $T' \subset W_{\og, \loc}^s(x)$ as in subsection \ref{choiceT}. \\
Let
$$ U := \bigcup_{y \in T'} W_{\og', \loc}^s(y). $$
%By (AC2), $\vol(U) > 0$.
\item {\bf Choose a good set $U'$ with positive density in $\mk{B}$, a word $\og''$ and a subset $T' \subset T$ with positive leaf-volume}. \\
We choose the good set $U' \subset \mk{B}$, the word $\og''$ and the subset $T' \subset T$ as in subsection \ref{choiceU}.
\item {\bf The good set $U'$ is contained in the basin $B(\nu)$}. \\
Let $p \in U'$. Now we claim that $p \in B(\nu)$. In fact
\begin{enumerate}[label=(\roman*)]
\item $x \in B(\nu)$ by the choice in {\bf Step 3}.
\item For all $y \in T'$, note that $T' \subset W_\og^s(x)$ and $x, y$ are $\og$-regular by Lemma \ref{niceset} (1, 5a). Therefore by Lemma \ref{basin}, $y \in B(\nu)$. 
\item Suppose $z \in W_{\og'', \loc}^s(p) \cap U$ is $\og'$-regular. By the definition of $U$, there exists $y \in T'$ such that $z \in W_{\og', \loc}^s(y)$. Recall that $y \in T'$ is $\og'$-regular from Lemma \ref{niceset} (5a). Therefore by Lemma \ref{basin}, $z \in B(\nu)$. 
\item By Lemma \ref{intersection}, the set of points in $W_{\og'', \loc}^s(p) \cap U$ that are $\og'$-regular and $\og''$-regular has positive leaf-volume. Let $z$ be one such point. Note that $p \in W_{\og''}^s(z)$, and $p$ is $\og''$-regular by Lemma \ref{betterset}(c). Therefore by Lemma \ref{basin}, $p \in B(\nu)$. 
\end{enumerate}
Therefore $U' \subset \mk{B} \cap B(\nu)$, hence
$$ \vol(\mk{B} \cap B(\nu)) \geq \vol(U') > c_3 \;\vol(\mk{B}) $$
by Lemma \ref{betterset}, as desired.
\end{enumerate}
\end{proof}

\begin{proof}[Proof of Proposition \ref{measurerigidity}] 
Since $\mu$ is uniformly expanding, by Proposition \ref{positiveexponent}, any ergodic $\mu$-stationary measure $\nu$ has positive Lyapunov exponent. Hence in the case of volume-preserving diffeomorphisms on surfaces, it is hyperbolic. Now by \cite[Thm. 3.4]{BR}, either $\nu$ is finitely supported, the stable distribution is non-random, or $\nu$ is the restriction of $m$ to a positive volume subset. By Lemma \ref{nonrandom}, the second possibility is eliminated. In the third possibility, by Proposition \ref{allofM}, we have $\nu = m$. The result follows. 
\end{proof}

%The next lemma gives an equivalent formulation of uniform expansion.
%\begin{lemma} \label{ballUE}
%Let $M$ be a closed surface, $\mu$ be a Borel probability measure on $\Diff^2(M)$ satisfying (*). Then $\mu$ is uniformly expanding if and only if there exists $C > 0$ and $N \in \N$ such that for all $x \in M$ and $v \in T_x M$, 
%$$ \frac{1}{N} \sum_{n = 0}^{N-1} \int_{\Diff^2(M)} \log \frac{ \| D_x f(v) \|}{\| v \|} d\mu^{(n)}(f) > C. $$
%\end{lemma}

\subsection{Comparison with Brown-Rodriguez Hertz}

The following proposition may be viewed as a motivation for the assumption of uniform expansion, in view of the theorem \cite[Thm. 3.4]{BR}.

\begin{proposition} \label{notue}
Let $M$ be a closed surface, $\mu$ be a Borel probability measure on $\Diff^2(M)$. If $\mu$ is not uniformly expanding, then there exists an ergodic $\mu$-stationary measure $\nu$ on $M$ and a $\mu$-almost surely invariant $\nu$-measurable subbundle $\hat{V} \subset TM$ in which the top Lyapunov exponent is nonpositive. 

In particular, if $\mu$ is supported on $\Diff^2_m(M)$ for some smooth measure $m$ on $M$, then $\mu$ is uniformly expanding if and only if every ergodic $\mu$-stationary measure $\nu$ on $M$ has a positive Lyapunov exponent and the stable distribution is not non-random with respect to $\nu$. 
\end{proposition}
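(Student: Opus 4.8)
The plan is to first prove the first assertion, then read off the equivalence. I would work on the compact projectivized tangent bundle $\bb{P}(TM)$, on which $\Diff^2(M)$ acts by $f\cdot(x,[v])=(f(x),[D_xf(v)])$, and set
$$ \Phi_N(x,[v]):=\int_{\Diff^2(M)}\log\frac{\|D_xf(v)\|}{\|v\|}\,d\mu^{(N)}(f),\qquad \Phi:=\Phi_1. $$
Assume the first-order moment condition $\int\big(\log^+|f|_{C^1}+\log^+|f^{-1}|_{C^1}\big)\,d\mu<\infty$ (needed already for the uniform-expansion integral to be well defined, and implied by (*)); then each $\Phi_N$ is real-valued and bounded, and lower semicontinuous by Fatou's lemma applied, for each fixed $f$, to the nonnegative function $(x,[v])\mapsto\log\frac{\|D_xf(v)\|}{\|v\|}+\log^+|f^{-1}|_{C^1}$. (When $\mu$ is finitely supported, as in the applications, $\Phi_N$ is simply continuous.)

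The heart of the matter is to produce a $\mu$-stationary measure on $\bb{P}(TM)$ with nonpositive fiberwise exponent. Failure of uniform expansion says precisely that for every $N$ and every $C>0$ there is a point with $\Phi_N\le C$, so lower semicontinuity on the compact space $\bb{P}(TM)$ gives $\xi_N=(x_N,[v_N])$ with $\Phi_N(\xi_N)=\min_{\bb{P}(TM)}\Phi_N\le0$. I would then form $\hat\nu_N:=\frac1N\sum_{j=0}^{N-1}\mu^{(j)}*\delta_{\xi_N}$ on $\bb{P}(TM)$: a telescoping computation from the cocycle identity gives $\int\Phi\,d\hat\nu_N=\tfrac1N\Phi_N(\xi_N)\le0$, while $\|\mu*\hat\nu_N-\hat\nu_N\|_{TV}\le 2/N$. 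A weak-$*$ limit $\hat\nu$ of a subsequence $(\hat\nu_{N_k})$ is then $\mu$-stationary, and lower semicontinuity together with boundedness below of $\Phi$ give $\int\Phi\,d\hat\nu\le\liminf_k\int\Phi\,d\hat\nu_{N_k}\le0$. Passing to an ergodic component on which $\int\Phi$ is still $\le0$ (allowed since $\Phi$ is bounded), I obtain an ergodic $\mu$-stationary measure $\hat\nu$ on $\bb{P}(TM)$ with fiberwise exponent $\ld(\hat\nu):=\int\Phi\,d\hat\nu\le0$.

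To extract the subbundle, put $\nu:=\pi_*\hat\nu$, an ergodic $\mu$-stationary measure on $M$ with Oseledets exponents $\ld_1(\nu)\ge\ld_2(\nu)$; by the Birkhoff theorem for the additive cocycle $\log(\|D_xf_\og^n(v)\|/\|v\|)$ over the ergodic skew product on $\Diff^2(M)^{\N}\times\bb{P}(TM)$, the Lyapunov exponent of the line $[v]$ equals $\ld(\hat\nu)$ for $\mu^{\N}\times\hat\nu$-a.e.\ $(\og,x,[v])$. If $\ld_1(\nu)=\ld_2(\nu)$, every line has exponent $\ld_1(\nu)$, so $\ld_1(\nu)=\ld(\hat\nu)\le0$ and $\hat V:=TM$ works. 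If $\ld_1(\nu)>\ld_2(\nu)$, the set $A=\{(\og,x,[v]):[v]=V_\og^2(x)\}$ is invariant under the skew product, so $(\mu^{\N}\times\hat\nu)(A)\in\{0,1\}$. If it is $0$, then $\hat\nu$-a.e.\ line has exponent $\ld_1(\nu)$, so again $\ld_1(\nu)=\ld(\hat\nu)\le0$ and $\hat V:=TM$ works. If it is $1$, then for $\mu^{\N}$-a.e.\ $\og$ the section $x\mapsto V_\og^2(x)$ carries $\hat\nu$; since $\hat\nu$ does not depend on $\og$, the line $V_\og^2(x)$ must be $\og$-independent for $\nu$-a.e.\ $x$, so $\hat V(x):=V_\og^2(x)$ is a $\mu$-a.s.\ invariant $\nu$-measurable line field with top exponent $\ld_2(\nu)=\ld(\hat\nu)\le0$. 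In every case $\hat V$ is a nonzero $\mu$-a.s.\ invariant $\nu$-measurable subbundle of nonpositive top exponent, which is the first assertion.

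For the equivalence when $\mu$ is supported on $\Diff^2_m(M)$: the forward implication is Proposition \ref{positiveexponent} (positive top exponent) together with Lemma \ref{nonrandom} (the stable distribution is not non-random). For the converse I would argue the contrapositive. If $\mu$ is not uniformly expanding, take $\nu,\hat V$ as above; since $f_*m=m$ with $m$ equivalent to volume, $\log|\det D_xf_\og^n|$ is uniformly bounded in $n$, hence $\ld_1(\nu)+\ld_2(\nu)=0$. If $\ld_1(\nu)\le0$, then $\nu$ has no positive exponent. Otherwise $\ld_1(\nu)>0>\ld_2(\nu)$, so $\nu$ is hyperbolic with $E_\og^s(x)=V_\og^2(x)$; then $\hat V$ cannot be $TM$ (its top exponent would be $\ld_1(\nu)>0$), so it is a line field of exponent $\le0$, necessarily $\ld_2(\nu)$, and therefore $\hat V(x)=V_\og^2(x)=E_\og^s(x)$ is non-random. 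Either way the conjunction ``$\ld_1(\nu)>0$ and the stable distribution is not non-random'' fails for this $\nu$. I expect the main obstacle to be the construction in the second paragraph: one point with $\Phi_N\le0$ does not suffice, since its forward orbit can drift into regions where $\Phi_N$ is large; the fix is the finite Cesàro average over the first $N$ steps, which telescopes exactly to $\tfrac1N\Phi_N(\xi_N)$ while being asymptotically stationary as $N\to\infty$.
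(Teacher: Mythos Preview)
Your argument is correct and follows essentially the same route as the paper: build a stationary measure on the projectivized bundle via Ces\`aro averages of point masses at bad directions, pass to a weak-$*$ limit and an ergodic component to get nonpositive fiberwise exponent, then project to $M$ and read off the invariant subbundle; the volume-preserving equivalence is handled identically. The only notable difference is how the subbundle $\hat V$ is extracted: the paper simply declares $\hat V(x)$ to be the span of the support of the conditional measure $\hat\nu_x$ (automatically $\mu$-a.s.\ invariant by stationarity) and invokes Birkhoff once more, whereas you do an explicit Oseledets case split on whether $\hat\nu$ lives on the slow line $V_\og^2$. Your version is a bit longer but arguably more transparent, and your use of lower semicontinuity of $\Phi$ via Fatou is slightly more careful than the paper's appeal to ``continuity of $\Phi$'' when $\mu$ is not finitely supported.
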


%The ``only if'' direction is proved in Proposition \ref{positiveexponent} and Lemma \ref{nonrandom}. Here we prove the opposite direction by contradiction. Assume that $\mu$ is not uniformly expanding, we would like to construct an ergodic $\mu$-stationary measure $\nu$ such that either it has zero Lyapunov exponent or the stable distribution is nonrandom with respect to $\nu$. 

To prove this proposition, we first note that each map $f \in \Diff^2(M)$ induces the projective action on the unit tangent bundle $T^1 M$ by
$$ f \cdot (x, v) = \left( f(x), \frac{D_x f(v)}{\| D_x f(v) \|} \right). $$
From now on we shall abuse the notation and write $f(x, v) := f \cdot (x, v)$. 

In the case that $\mu$ is uniformly expanding, we first construct an ergodic stationary measure on $T^1 M$ which does not exhibit exponential growth on average.
\begin{lemma} \label{badmeasure}
If $\mu$ is not uniformly expanding, then there exists an ergodic $\mu$-stationary measure $\onu'$ on $T^1 M$ such that
$$ \iint \log \| D_x f(v) \| d\mu(f) d\onu'(x, v) \leq 0. $$
\end{lemma}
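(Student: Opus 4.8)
The plan is to produce the measure $\onu'$ by a standard Krylov--Bogolyubov--type averaging argument, starting from a point and direction that witness the failure of uniform expansion. Since $\mu$ is \emph{not} uniformly expanding, for every $C>0$ and every $N\in\N$ — in particular for $C = 1/k$ and $N = k$, $k=1,2,\dots$ — there exist $x_k\in M$ and $v_k\in T^1_{x_k}M$ with
$$ \int_{\Diff^2(M)} \log\frac{\|D_{x_k}f(v_k)\|}{\|v_k\|}\, d\mu^{(k)}(f) \leq \frac{1}{k}. $$
Unwinding the convolution power and using the cocycle identity $\log\|D_x f_\og^k(v)\| = \sum_{j=0}^{k-1}\log\|D_{x_j}\og_j(v_j)\|$ (with $(x_j,v_j)$ the projective orbit of $(x_k,v_k)$ under $\og$), this says that the empirical measures
$$ \eta_k := \frac{1}{k}\sum_{j=0}^{k-1} \big(\mu^{\otimes\N}\big)\text{-average of }\delta_{(x_j(\og),v_j(\og))} \;=\; \frac{1}{k}\sum_{j=0}^{k-1} \int (\,f_\og^j\,)_*\delta_{(x_k,v_k)}\, d\mu^\N(\og) $$
on the compact space $T^1M$ satisfy $\int \psi\, d\eta_k \leq 1/k^2$, where $\psi(x,v) := \int \log\|D_x f(v)\|\, d\mu(f)$ is the continuous ``fiberwise average log-expansion'' function (continuity uses the $C^1$-dependence of $Df$ on $x$ and the moment control, though here we only need $\mu$ supported on $\Diff^2$ and $\psi$ bounded, which holds by compactness of $M$ once we note it suffices to assume $\mu$ finitely supported or, in general, invoke (*)).

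Next I would pass to a weak-$*$ limit. By compactness of the space of Borel probability measures on the compact manifold $T^1M$, a subsequence of $\{\eta_k\}$ converges weak-$*$ to some probability measure $\onu$ on $T^1M$. The standard Cesàro/Krylov--Bogolyubov computation shows $\mu * \onu = \onu$, i.e.\ $\onu$ is $\mu$-stationary: indeed $\mu*\eta_k - \eta_k$ is a telescoping difference of order $1/k$, which vanishes in the limit. Moreover, since $\psi$ is continuous and $\int\psi\,d\eta_k \le 1/k^2 \to 0$, we get $\int\psi\,d\onu \le 0$, which is exactly
$$ \iint \log\|D_xf(v)\|\, d\mu(f)\, d\onu(x,v) \leq 0. $$
Finally, to upgrade $\onu$ to an \emph{ergodic} stationary measure I would use the ergodic decomposition of $\mu$-stationary measures: $\onu = \int \onu'\, d\mathbb{P}(\onu')$ over ergodic $\mu$-stationary components. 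Since $\int\psi\,d\onu \le 0$, at least one ergodic component $\onu'$ must satisfy $\int\psi\,d\onu' \le 0$ as well; take that one.

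The main obstacle is the very last step — extracting an ergodic component with the inequality preserved — because the ergodic decomposition of stationary measures is over the \emph{dynamical system on $\Diff^2(M)^\N \times T^1M$} (the skew product with the shift), not a naive decomposition on $T^1M$ itself; one has to phrase ergodicity correctly (e.g.\ in terms of the shift-invariant measure $\mu^\N \times \onu$ and stationarity) and then observe that $\iint\log\|D_xf(v)\|\,d\mu(f)\,d\onu(x,v)$ is an affine function of $\onu$, hence its value on $\onu$ is the average of its values on the ergodic components, forcing one component to be $\le 0$. The rest — continuity of $\psi$, the weak-$*$ limit, and stationarity of the limit — is routine soft analysis. (A cosmetic point: the lemma as stated, and the sentence preceding it, say ``In the case that $\mu$ is uniformly expanding'' and ``if $\mu$ is not uniformly expanding''; the hypothesis used throughout is that $\mu$ is \emph{not} uniformly expanding, so I would read it that way.)
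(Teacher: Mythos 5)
Your proposal is correct and follows essentially the same route as the paper: pick witnesses $(x_k,v_k)$ to the failure of uniform expansion, form the Ces\`aro averages of $\int (f_\og^j)_*\delta_{(x_k,v_k)}\,d\mu^\N(\og)$, pass to a weak-$*$ limit to get a stationary measure with nonpositive average fiberwise log-expansion, and then extract an ergodic component using that the functional $\onu \mapsto \iint \log\|D_xf(v)\|\,d\mu\,d\onu$ is affine in $\onu$. The subtlety you flag about the ergodic decomposition of stationary measures is real but standard (it is exactly the correspondence the paper invokes later via \cite[Lem.\ I.2.3, Thm.\ I.2.1]{Kif}), and your resolution is the right one; you have also correctly identified the ``is/is not uniformly expanding'' slip in the sentence preceding the lemma.
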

\begin{proof}
Fix $\ep > 0$. Since $\mu$ is not uniformly expanding, for all positive integer $N$, there exists $(x_N, v_N) \in T^1 M$ such that
\begin{equation} \label{contra}
\int \log \| D_{x_N} f(v_N) \| d\mu^{(N)}(f) < \ep. 
\end{equation}
Let 
$$ \nu_N := \frac{1}{N} \sum_{n = 0}^{N-1} \int \delta_{f(x_N, v_N)} d \mu^{(n)}(f), $$
and let $\onu$ be any weak-* limit point of $\{\nu_N\}$. Note that $\onu$ is a $\mu$-stationary measure on $T^1 M$ since
$$ \mu * \nu_N =  \frac{1}{N} \sum_{n = 0}^{N-1} \int \mu * \delta_{f(x_N, v_N)} d \mu^{(n)}(f) = \frac{1}{N} \sum_{n = 0}^{N-1} \int \delta_{f(x_N, v_N)} d \mu^{(n+1)}(f) $$
and hence as $N \to \infty$, 
$$ \mu * \nu_N - \nu_N = \frac{1}{N} \left( \int \delta_{f(x_N, v_N)} d \mu^{(N)}(f) - \delta_{(x_N, v_N)} \right) \to 0. $$

For $f \in \Diff^2(M)$ and $(x, v) \in T^1 M$, let
$$ \Phi(f, (x, v)) := \log \| D_x f(v) \|. $$
Note that for each $N \in \N$ and $\og =(\og_0, \og_1, \og_2, \ldots) \in \Diff^2(M)^\N$, 
\begin{equation} \label{chainrule}
\log \| D_x f_\og^N (v) \| = \sum_{n = 0}^{N-1} \Phi(\og_n, f_\og^n(x, v)). 
\end{equation}
Since the first argument of $\Phi(\og_n, f_\og^n(x, v))$ depends only on the $(n+1)$-th coordinate of $\og$, and the second argument depends only on the first $n$ coordinates of $\og$, we have
$$ \int \log \| D_{x} f_\og^N (v) \| d\mu^{\N}(\og) = \sum_{n = 0}^{N-1} \int \Phi(\og_n, f_\og^n(x, v)) d\mu^\N(\og) = \sum_{n = 0}^{N-1} \int \Phi(g, f(x, v))d\mu(g) d\mu^{(n)}(f). $$
On the other hand, the left hand side is $\ds\int \log \| D_x f(v) \| d\mu^{(N)}(f)$. Therefore if we set $(x, v) = (x_N, v_N)$, by the definition of $\nu_N$ and (\ref{contra}), for all $N \in \N$, 
$$ \int \int \Phi(g, (x, v)) \; d\mu(g) \; d\nu_N(x, v) < \frac{\ep}{N}. $$
By continuity of $\Phi$ and weak-* convergence, we have upon taking limit
$$ \int \int \Phi \; d\mu \;d \onu \leq 0. $$
Let $\onu'$ be an ergodic component of $\onu$ such that 
$$ \int \int \Phi \; d\mu \; d\onu' \leq 0, $$
which exists since $\onu$ is a convex combination of its ergodic components. This measure $\onu'$ satisfies the desired properties.
\end{proof}

\begin{proof}[Proof of Proposition \ref{notue}]
%As mentioned above, the ``only if'' direction is covered in Proposition \ref{positiveexponent} and Lemma \ref{nonrandom}. It is left to prove the opposite direction by contradiction. Assume that $\mu$ is not uniformly expanding. Using Lemma \ref{badmeasure}, we can construct an ergodic $\mu$-stationary measure $\nu$ on $M$ such that either it has zero Lyapunov exponent or the stable distribution is nonrandom with respect to $\nu$.

Assume that $\mu$ is not uniformly expanding. Consider the measure $\onu'$ given by Lemma \ref{badmeasure}. Let $\nu := \pi_* \onu'$, where $\pi: T^1 M \to M$ is the natural projection. Then note that $\nu$ is an ergodic $\mu$-stationary measure on $M$ since $\pi$ is equivariant with respect to the action by $\Diff^2(M)$. Let $\{\onu'_x\}$ be a family of conditional measures of $\onu'$ along the partition of $T^1 M$ into fibers over $M$. 

Let $F$ be the skew product map on $\Diff^2(M)^\N \times T^1 M$ defined by $F(\og, x) = (\sigma(\og), \og_0(x))$. Recall that $\onu'$ is an ergodic $\mu$-stationary measure on $T^1 M$ if and only if $\mu^\N \times \onu'$ is an ergodic $F$-invariant measure on $\Diff^2(M)^\N \times T^1 M$ (\cite[Lem. I.2.3, Thm. I.2.1]{Kif}). Consider the following map on $\Diff^2(M)^\N \times T^1 M$,
$$ \Psi(\og, (x, v)) := \log \| D_x \og_0 (v) \|. $$
By the pointwise ergodic theorem, for $\nu$-a.e. $x \in M$ and $\onu'_x$-a.e. $v \in T^1_x M$, 
\begin{equation} \label{birkhoff}
\lim_{N \to \infty} \frac{1}{N} \sum_{n=0}^{N-1} \Psi(\sigma^n(\og), f_\og^n (x, v)) = \int \int \Psi \; d\mu^\N \;d\onu'  \quaddd \text{ for } \mu^\N \text{-a.e. }\og.
\end{equation}
Note that since $\Psi$ depends only on the first coordinate of $\og$, by Lemma \ref{badmeasure},
\begin{equation} \label{nonpositive}
 \int \int \Psi \; d\mu^\N \;d\onu' = \int \int \log \| D_x f(v) \| d\mu(f) \;d\onu'(x, v) \leq 0. 
\end{equation}

Now the support of $\onu'$ spans a $\mu$-a.s. invariant $\nu$-measurable subbundle $\hat{V} \subset TM$ (not necessarily proper). Apply (\ref{birkhoff}) again, we have that the top Lyapunov exponent in $\hat{V}$ is nonpositive. 

Finally, to show the second assertion, assume that $\mu$ is supported on $\Diff^2_m(M)$ for some smooth measure $m$ on $M$ and $\mu$ is not uniformly expanding. 

In the volume preserving case, for each ergodic $\mu$-stationary measure $\nu$, either all exponents are zero for $\nu$-a.e. $x$, or there is a positive and a negative exponent for $\nu$-a.e. $x$. If all the Lyapunov exponents of $\nu$ are zero, we are done. Hence we may assume that $\nu$ has a positive exponent. By Oseledets theorem, for $\mu^\N \times \nu$-a.e. $(\og, x) \in \Diff^2(M)^\N \times M$, the tangent vectors in $T_x M$ outside of $E_\og^s(x)$ have exponential growth. Since vectors in $\hat{V}(x)$ have nonpositive top exponent, $\hat{V}(x) \subset E_\og^s(x)$ for $\nu$-a.e. $x \in M$. Since $E_\og^s(x)$ is one-dimensional, we have $\hat{V}(x) = E_\og^s(x)$. Since $\hat{V}$ is $\mu$-a.s. invariant, we have that the stable distribution $E_\og^s(x)$ is non-random. This shows the ``if'' direction. The ``only if'' direction follows from Proposition \ref{positiveexponent} and Lemma \ref{nonrandom}.

%at most one $v \in T^1_x M$ (and its opposite) satisfies this, namely for $v \in E_\og^s(x)$. Hence for $\nu$-a.e. $x \in M$, $\onu'_x$ is supported on $\{v, -v\}$ for some $v \in T_x^1 M$ and $v \in E_\og^s(x)$ for $\mu^\N$-a.e. $\og$. Let $\hat{V}(x) \subset T_x M$ be the span of the support of $\onu'_x$. Since $\onu'$ is $\mu$-stationary, the family $\{\onu'_x\}$ is $\mu$-a.s. equivariant, therefore the subbundle $\hat{V} \subset TM$ with fibers $\hat{V}(x)$ is a $\mu$-a.s. invariant subbundle defined $\nu$-a.e. such that $\hat{V}(x) = E_\og^s(x)$ for $\mu^\N \times \nu$-a.e. $(\og, x)$, as desired.
\end{proof}

%NOT included
%\begin{proposition}
%$\mu$ is uniformly expanding if and only if for all ergodic $\mu$-stationary measure $\nu$, the top exponent is positive and the stable distribution is not non-random.
%\end{proposition}
%\begin{lemma}
%Let $M$ be a closed surface, let $\mu$ be a uniformly expanding probability measure on $\Diff^2(M)$ satisfying (*). If $\nu$ is an ergodic, $\mu$-stationary Borel probability measure on $M$ that is SRB, then the support of $\mu$ is $M$. 
%\end{lemma}
%
%\begin{proposition} \label{measurerigidity2}
%Let $M$ be a closed surface, let $\mu$ be a uniformly expanding probability measure on $\Diff^2(M)$ satisfying (*). If $\nu$ is an ergodic, $\mu$-stationary Borel probability measure on $M$ Then $\nu$ is either finitely supported or $\nu$ is equivalent to the Riemannian volume. 
%\end{proposition}

\section{Equidistribution and Orbit closures} \label{oclosures}

We now prove an equidistribution statement from the measure rigidity result using the existence of a Margulis function, which follows from uniform expansion. We follow the strategy in \cite{EMM}, the idea of which goes back to \cite{EMar} and \cite{EMaMo}. The orbit closure classification then follows. The assumptions we make in this section are slightly weaker than Theorem D, though Theorem D suffices for the applications in the subsequent sections.

\begin{proposition}[Equidistribution] \label{equidistribution}
Let $M$ be a closed surface, $\G \subset \Diff^2(M)$ be a subsemigroup that preserves a smooth measure $m$ on $M$. Let $\mu$ be a uniformly expanding probability measure on $\Diff^2_m(M)$ with $\mu(\G) = 1$ satisfying
\begin{equation} %OKKK??
\tag{**}
 \int_{\Diff^2(M)} |f|_{C^2}^{\delta} + |f^{-1}|_{C^2}^{\delta}\; d\mu(f) < \infty \quaddd \text{ for all sufficiently small } \delta > 0. 
\end{equation}
Suppose $x \in M$ has infinite $\G$-orbit. Then for any continuous function $\varphi \in C(M)$, 
$$ \lim_{n \to \infty} \frac{1}{n} \sum_{k=1}^n \int_{\Diff^2(M)} \varphi(f(x)) \; d \mu^{(k)}(f) = \int_M \varphi \; dm. $$
\end{proposition}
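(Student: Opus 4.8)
The plan is to follow the strategy of \cite{EMM}: combine the measure rigidity of Proposition \ref{measurerigidity} with a Margulis (drift) function to exclude escape of mass onto the finite $\G$-orbits. Write $\ev_x\colon\Diff^2(M)\to M$, $f\mapsto f(x)$, and set $\nu_{n,x}:=\tfrac1n\sum_{k=1}^n(\ev_x)_*\mu^{(k)}\in\mathcal P(M)$, so the assertion is exactly that $\nu_{n,x}\to m$ in the weak-$*$ topology; since $\mathcal P(M)$ is weak-$*$ compact and metrizable it suffices to prove that every weak-$*$ subsequential limit of $(\nu_{n,x})$ equals $m$. Using $\mu*(\ev_x)_*\mu^{(k)}=(\ev_x)_*\mu^{(k+1)}$ one finds $\mu*\nu_{n,x}-\nu_{n,x}=\tfrac1n((\ev_x)_*\mu^{(n+1)}-(\ev_x)_*\mu)\to0$, and as $\rho\mapsto\mu*\rho$ is weak-$*$ continuous (test functions are bounded), every weak-$*$ limit $\nu$ is $\mu$-stationary. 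Decomposing $\nu$ into ergodic components and applying Proposition \ref{measurerigidity} to each gives $\nu=\alpha\,m+(1-\alpha)\nu_0$ with $\alpha\in[0,1]$ and $\nu_0$ a $\mu$-stationary measure carried by the set $F$ of points of finite $\G$-orbit. By Proposition \ref{positiveexponent} and volume preservation the random dynamics is (nonuniformly) hyperbolic at every point, so finite $\G$-orbits are hyperbolic; a standard consequence is that for each $k$ the set $F_k:=\{p:|\G p|\le k\}$ is closed, $\G$-invariant and finite, $F=\bigcup_k F_k$, and $\nu_0$ is purely atomic, say $\nu_0=\sum_j\beta_j\rho_j$ with each $\rho_j$ supported on a finite orbit $\mathcal O_j$. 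Hence it suffices to prove $\nu(\mathcal O)=0$ for every finite $\G$-orbit $\mathcal O$.

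The crux is the construction of the Margulis function: \emph{for every finite $\G$-orbit $\mathcal O\subset M$ there exist $N\in\N$, $c\in(0,1)$, $b>0$, $\beta>0$, and a lower semicontinuous $u=u_{\mathcal O}\colon M\to[1,\infty]$ with $u\equiv\infty$ on $\mathcal O$, $u\asymp d(\cdot,\mathcal O)^{-\beta}$ near $\mathcal O$, and $u$ bounded outside any neighbourhood of $\mathcal O$, such that}
$$ \int_{\Diff^2(M)}u(f(y))\,d\mu^{(N)}(f)\ \le\ c\,u(y)+b\qquad\text{for all }y\in M. $$
Granting this: the moment bound (**) makes $\int u(f(x))\,d\mu^{(r)}(f)$ finite for $0\le r<N$ (using $u(x)<\infty$, valid because $x\notin F\supset\mathcal O$), so iterating the drift inequality yields $\sup_n\tfrac1n\sum_{k=1}^n\int u(f(x))\,d\mu^{(k)}(f)<\infty$. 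By lower semicontinuity of $\rho\mapsto\int u\,d\rho$ for the weak-$*$ topology, any subsequential limit $\nu$ satisfies $\int u\,d\nu\le\liminf_i\int u\,d\nu_{n_i,x}<\infty$, forcing $\nu(\mathcal O)=0$. As $\mathcal O$ was arbitrary, $\nu_0=\sum_j\beta_j\rho_j=0$, so $\nu=m$; since this holds for every subsequential limit, $\nu_{n,x}\to m$, which is the claim.

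The construction of $u_{\mathcal O}$ and the verification of the drift inequality is where I expect the real work (and the main obstacle) to lie. Fixing $p\in\mathcal O$ and working in a chart, the finiteness of $\mathcal O$ makes the nearest point of $\mathcal O$ to $y$ stable under $f$ (namely $f$ sends $p$ to $f(p)\in\mathcal O$), so $d(f(y),\mathcal O)=\|D_pf(y-p)\|\,(1+O(|f|_{C^2}\,d(y,\mathcal O)))$ for $d(y,\mathcal O)$ small, whence
$$ \int u(f(y))\,d\mu^{(N)}(f)\ \approx\ d(y,\mathcal O)^{-\beta}\int\Big(\frac{\|D_pf(v)\|}{\|v\|}\Big)^{-\beta}\,d\mu^{(N)}(f),\qquad v=\frac{y-p}{\|y-p\|}. $$
Uniform expansion gives $\int\log(\|D_pf(v)\|/\|v\|)\,d\mu^{(N)}(f)>C>0$ for a suitable $N$, and (**) furnishes finite $\beta$-th moments of $|f|_{C^2}^{\pm1}$ under $\mu^{(N)}$ for all small $\beta>0$; together these justify the estimate $\int e^{-\beta Y}\,d\mu^{(N)}\le 1-\beta\!\int Y\,d\mu^{(N)}+O(\beta^2)<e^{-\beta C/2}<1$ and control the $O(\cdot)$ error, so for $\beta$ small the transverse factor is a genuine contraction and the drift inequality holds for $d(y,\mathcal O)<\ep_0$. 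For $d(y,\mathcal O)\ge\ep_0$ the function $u$ is bounded and the inequality follows after enlarging $b$, the contribution of far-away $y$ being controlled uniformly by (**). The delicate points are making the constants uniform over the (finitely many) points of $\mathcal O$, patching the near and far regimes into a single lower semicontinuous $u$, and — in the general statement — justifying the reduction to finitely many finite orbits; the remaining steps are soft.
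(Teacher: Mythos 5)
Your proof follows essentially the same route as the paper: pass to a weak-$*$ subsequential limit of the Ces\`aro averages, show it is $\mu$-stationary, decompose it using Proposition \ref{measurerigidity}, and kill the part supported on finite orbits via a Margulis (drift) function. The drift function you construct is in substance the paper's (Lemma \ref{margulis} and Corollary \ref{orbitmargulis}): the paper works instead with the two-variable function $u(x,y)=d(x,y)^{-\delta}$ on $M\times M\setminus\Delta$, exploits volume preservation to get the drift under the diagonal action, and then sets $f_{\ml N}(x)=\frac{1}{|\ml N|}\sum_{y\in\ml N}u(x,y)$, using that each $f$ permutes $\ml N$; this avoids the local Taylor expansion in charts and the near/far patching you flag as delicate. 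Your lower-semicontinuity argument at the end is a mild streamlining of the paper's Lemma \ref{small}. These are presentational differences, not substantive ones.

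There is, however, one genuine gap: you assert that finiteness of $F_k=\{p:|\G p|\le k\}$ (equivalently, countability of the set of points with finite $\G$-orbit) is a ``standard consequence'' of the hyperbolicity supplied by Proposition \ref{positiveexponent}. It is not, at least not by the standard argument you seem to have in mind. The positive exponent of Proposition \ref{positiveexponent} holds only along $\mu^\N$-generic \emph{words}; a point with finite $\G$-orbit need not be a hyperbolic fixed or periodic point of any individual element of $\G$, so the usual structural-stability/isolation argument for hyperbolic periodic orbits of a single diffeomorphism does not apply here. Without this countability you cannot justify writing $\nu_0$ as a countable atomic sum over finite orbits and then treating each orbit separately. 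The paper proves this as a separate statement, Proposition \ref{countable}, and notably the proof again hinges on the Margulis function (via Corollary \ref{upperbound}): if infinitely many distinct finite orbits $\ml N_j$ of a fixed size $n$ existed, compactness would give $x_j\in\ml N_j$, $x_{j+1}\in\ml N_{j+1}$ with $d(x_j,x_{j+1})$ arbitrarily small; then $\nu_{j+1}$ charges $\{f_{\ml N_j}<\infty\}$, forcing $\int f_{\ml N_j}\,d\nu_{j+1}\le B$ with $B$ uniform, while the explicit lower bound $\frac{1}{n^2}\,d(x_j,x_{j+1})^{-\delta}$ blows up --- a contradiction. You already have the tools to run this argument, so the gap is repairable, but it is a genuine missing step and cannot be dismissed as standard.
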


\noindent Clearly if $\mu$ is finitely supported, then (**) is satisfied. Also assumption (**) is stronger than (*). 

\begin{proposition}[Orbit Closures] \label{orbitclosure}
Let $M$ be a closed surface, $\G \subset \Diff^2(M)$ be a subsemigroup that preserves a smooth measure $m$ on $M$. Let $\mu$ be a uniformly expanding probability measure on $\Diff^2_m(M)$ with $\mu(\G) = 1$ satisfying (**). Then every orbit of $\G$ is either finite or dense. 
\end{proposition}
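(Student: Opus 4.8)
The plan is to obtain Proposition \ref{orbitclosure} as a short formal consequence of the equidistribution statement, Proposition \ref{equidistribution}, together with the fact that a smooth measure charges every nonempty open set. Since the hypothesis (**) is in force, Proposition \ref{equidistribution} applies to every $x\in M$ with infinite $\G$-orbit. The dichotomy then requires no further dynamics: an orbit $\G x$ is either finite --- the first alternative --- or infinite, and I claim that in the latter case it is automatically dense, so only this implication needs an argument.

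To prove density I would fix $x$ with $\G x$ infinite and argue by contradiction, supposing $\overline{\G x}\neq M$. Then $V:=M\setminus\overline{\G x}$ is a nonempty open set, and since $m$ is equivalent to the Riemannian volume, $m(V)>0$; by Urysohn's lemma pick $\varphi\in C(M)$ with $0\le\varphi\le1$, $\mathrm{supp}\,\varphi\subset V$, and $\int_M\varphi\,dm>0$. The one elementary point to verify is that each convolution power $\mu^{(k)}$ is carried by $\G$: since $\G$ is closed under composition and $\mu(\G)=1$, Fubini gives $\mu^{(k)}(\G)=1$ for all $k\ge 1$. Hence for $\mu^{(k)}$-almost every $f$ we have $f\in\G$, so $f(x)\in\G x\subset\overline{\G x}$, which is disjoint from $\mathrm{supp}\,\varphi$; therefore $\varphi(f(x))=0$ and $\int_{\Diff^2(M)}\varphi(f(x))\,d\mu^{(k)}(f)=0$ for every $k$. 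Consequently
$$ \lim_{n\to\infty}\frac{1}{n}\sum_{k=1}^{n}\int_{\Diff^2(M)}\varphi(f(x))\,d\mu^{(k)}(f)=0, $$
whereas Proposition \ref{equidistribution} evaluates exactly the same limit to $\int_M\varphi\,dm>0$ --- a contradiction. Thus $\overline{\G x}=M$, and the orbit of $x$ is dense, completing the dichotomy.

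I do not expect a genuine obstacle in this deduction: the entire weight has been front-loaded into Proposition \ref{equidistribution}, whose proof follows the strategy of \cite{EMM} (with antecedents in \cite{EMar}, \cite{EMaMo}), combining the measure rigidity of Proposition \ref{measurerigidity} with the existence of a Margulis/height function that controls the excursions of the random orbit near the exceptional (finite-orbit) locus --- and it is precisely uniform expansion together with the moment bound (**) that makes such a function available. The only care needed in the present step is bookkeeping: that $\G$ (countable in the cases of interest) is measurable, that $\mathrm{supp}(\mu^{(k)})\subset\G$ up to a null set, and that the smooth measure $m$ assigns positive mass to every nonempty open set.
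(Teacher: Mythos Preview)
Your proposal is correct and follows the same approach as the paper, which states the result as ``an immediate consequence of Proposition \ref{equidistribution}, as every nonempty open subset of $M$ has positive volume.'' You have simply filled in the routine details (Urysohn function, $\mu^{(k)}(\G)=1$) that the paper leaves implicit.
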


\nt The following lemma shows that if $\mu$ is uniformly expanding, then there exists a so-called Margulis function. 
\begin{lemma} \label{margulis}
Suppose $\mu$ is a uniformly expanding measure. Then there exists a proper continuous function $u: M \times M \setminus \Delta \to \R_+$, $c < 1$, $b > 0$ and a positive integer $n_0$ such that for all $(x, y) \in M \times M \setminus \Delta$,
$$ \int u(f(x), f(y)) d\mu^{(n_0)}(f) \leq c u(x, y) + b. $$
\end{lemma}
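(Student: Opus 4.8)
\textbf{Proof proposal for Lemma \ref{margulis}.}

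The plan is to build the Margulis function as a negative power of the distance, smoothed out near the diagonal, and to exploit uniform expansion in the projectivized dynamics to get the required contraction on average. Concretely, I would first pass to the unit tangent bundle: for $(x,y)$ close together, the separation of $f(x)$ and $f(y)$ is governed to first order by $\|D_x f(v)\|$, where $v \in T_x^1 M$ is (approximately) the direction of the geodesic from $x$ to $y$. Uniform expansion says precisely that $\int \log \|D_x f(v)\| \, d\mu^{(N)}(f) > C$ for all $(x,v)$, uniformly. So for a small exponent $\beta > 0$, Jensen's inequality (applied in the form $\int e^{-\beta t}\,d\mu \le e^{-\beta \int t\,d\mu + O(\beta^2 \cdot \mathrm{Var})}$, using that the integrand $\log \|D_x f(v)\|$ has bounded moments thanks to (**)) should give
$$ \int \left( \frac{d(f(x),f(y))}{d(x,y)} \right)^{-\beta} d\mu^{(N)}(f) \le e^{-\beta C / 2} < 1 $$
for all $(x,y)$ sufficiently close, once $\beta$ is chosen small enough. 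This is the heart of the matter.

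Next I would define $u(x,y) := d(x,y)^{-\beta}$ (or a smooth proper modification that agrees with this near the diagonal and is bounded away from the diagonal), and split the analysis into two regimes. \textbf{Near the diagonal:} where $d(x,y) < \rho$ for a suitably small $\rho$, the linearization above is valid up to multiplicative errors $(1+o(1))$ that can be absorbed into the constant $c$ (shrinking $\rho$ and slightly enlarging the constant $e^{-\beta C/2}$ to some $c_1 < 1$); one also has to control the event that $f(x), f(y)$ land far apart, but there $u(f(x),f(y))$ is bounded, contributing to the additive constant $b$. \textbf{Away from the diagonal:} where $d(x,y) \ge \rho$, the function $u$ is bounded above and below by positive constants depending on $\rho$, and since $u(f(x),f(y))$ is also bounded above (using $f \in \Diff^2$ and the moment condition (**) to control how much $f$ can contract, i.e., $d(f(x),f(y)) \ge |f^{-1}|_{C^1}^{-1} d(x,y) \ge |f^{-1}|_{C^1}^{-1}\rho$, and $\int |f^{-1}|_{C^2}^\beta \, d\mu^{(N)} < \infty$), the inequality $\int u(f(x),f(y))\,d\mu^{(N)}(f) \le c u(x,y) + b$ holds trivially for appropriate $b$ — here $c$ can even be taken to be the same $c_1$ since $u(x,y) \ge \rho^{-\beta}$ is large enough that the bounded right-hand side is dominated. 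Taking $n_0 = N$, $c = c_1$, and $b$ the maximum of the additive constants from the two regimes finishes the proof.

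The main obstacle, and where care is genuinely needed, is the near-diagonal estimate: making rigorous the passage from the infinitesimal statement (uniform expansion, a statement about $D_x f$ acting on $T_x M$) to the finite statement about $d(f(x),f(y))/d(x,y)$, uniformly over all pairs $(x,y)$ with $d(x,y) < \rho$. One must control the $C^2$ nonlinearity of $f$ along the short geodesic joining $x$ to $y$ — the ratio $d(f(x),f(y))/d(x,y)$ differs from $\|D_x f(v)\|$ by a factor $1 + O(|f|_{C^2} d(x,y))$ — and this error depends on $f$, so it interacts with the integral over $\mu^{(N)}$. The fix is to further decompose the integral over $f$ according to whether $|f|_{C^2}$ is large or small relative to $1/\rho$: on the bulk (small $|f|_{C^2}$, which carries most of the mass by (**)) the linearization error is genuinely negligible and Jensen applies cleanly; on the tail ($|f|_{C^2}$ large) one uses the moment bound $\int |f|_{C^2}^\beta\, d\mu^{(N)} < \infty$ together with the crude bound $d(f(x),f(y)) \ge |f^{-1}|_{C^1}^{-1} d(x,y)$ to show this contributes only a small (uniformly bounded, and in fact $o(1)$ as $\rho \to 0$ after renormalizing) amount. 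A secondary technical point is choosing $u$ to be genuinely \emph{proper} on $M \times M \setminus \Delta$ and continuous — $d(x,y)^{-\beta}$ already does this since it blows up as $(x,y) \to \Delta$ and is bounded and continuous elsewhere on the compact complement — so no smoothing is strictly necessary, though one could smooth near the diagonal without affecting any of the estimates.
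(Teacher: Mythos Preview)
Your proposal is correct and follows essentially the same approach as the paper: take $u(x,y) = d(x,y)^{-\delta}$, use uniform expansion to show that for small $\delta$ the $(-\delta)$-moment $\int \|D_x f(v)\|^{-\delta}\, d\mu^{(N)}(f)$ is strictly less than $1$ (the paper does this by differentiating in $\delta$ at $\delta=0$, which is the same as your Jensen/Taylor argument), then split into the near-diagonal regime where linearization gives the contraction and the far-from-diagonal regime where the moment condition (**) gives the additive constant $b$. If anything, your discussion of the $C^2$ nonlinearity in passing from the infinitesimal to the finite statement is more careful than the paper's, which simply asserts that $\epsilon > 0$ can be chosen small enough.
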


\begin{proof}
The proof is similar to Lemma 10.8 of \cite{V}. We can take 
$$ u(x, y) := d(x, y)^{-\delta}, $$
where $\delta \in (0, 1)$ is a small number to be determined. Fix $x \in M$ and $v \in T_x M$. Consider the function
$$ \phi_n(\delta) := \int_{\Diff^2(M)} \left(\frac{\| D_x f(v) \|}{\| v\|} \right)^{-\delta} d\mu^{(n)}(f). $$
This is a differentiable function in $\delta$, with
$$ \phi_n'(\delta) = -\int_{\Diff^2(M)} \left(\frac{\| D_x f(v) \|}{\| v\|} \right)^{-\delta} \log \left(\frac{\| D_x f(v) \|}{\| v\|} \right) d\mu^{(n)}(f). $$
By uniform expansion, there exists $C > 0$ and $N \in \N$ (independent of $x$ and $v$) such that 
$$ \phi_{N}'(0) = -\int_{\Diff^2(M)} \log \left(\frac{\| D_x f(v) \|}{\| v\|} \right) d\mu^{(N)}(f) < -C. $$
Since $\phi_N(0) = 1$, for small enough $\delta > 0$ (can be chosen independent of $x$ and $v$ using the compactness of $M$ and $T^1 M$), we have 
$$ \phi_{N}(\delta) = \int_{\Diff^2(M)} \left(\frac{\| D_x f(v) \|}{\| v\|} \right)^{-\delta} d\mu^{(N)}(f) < 1 - \frac{C\delta}{2}. $$ 
Take such a $\delta$ in the definition of $u$, and let $n_0 = N$. Then we have
$$ \int_{\Diff^2(M)} \left(\frac{\| D_x f(v) \|}{\| v\|} \right)^{-\delta} d\mu^{(n_0)}(f) < 1 - \frac{C \delta}{2}. $$
Let $c = 1 - C\delta/4$. Take $\ep > 0$ small enough such that for all $x, y \in M \times M \setminus \Delta$ with $d(x, y) < \ep$, 
$$ \int \frac{d(f(x), f(y))^{-\delta}}{d(x, y)^{-\delta}} d \mu^{(n_0)}(f) < 1 - \frac{C\delta}{4} = c. $$
For $0 < d(x, y) < \ep$, 
$$ \int u(f(x), f(y)) d\mu^{(n_0)}(f) = \int d(f(x), f(y))^{-\delta} d\mu^{(n_0)}(f) < c d(x, y)^{-\delta} = c u(x, y). $$
Now using the moment condition (**) (take a smaller $\delta > 0$ if necessary), we can take some $b > 0$ so that for all $x, y \in M$ with $d(x, y) \geq \ep$, %\\\fixme{here I'm using exponential moment, needed?}
$$ \int d(f(x), f(y))^{-\delta} d \mu^{(n_0)}(f) \leq b. $$
Hence for all $(x, y) \in M \times M \setminus \Delta$, 
$$ \int u(f(x), f(y)) d\mu^{(n_0)}(f) \leq c u(x, y) + b. $$
\end{proof}

%The following two corollaries are standard consequences the existence of the Margulis function $u(x, y)$. 

\begin{corollary} \label{orbitmargulis}
Suppose $\mu$ is a uniformly expanding measure and $\ml{N} \subset M$ is a finite $\G$-orbit. Then there exists a proper continuous function $f_\ml{N}: M \setminus \ml{N} \to \R_+$, $c < 1$, $b > 0$ and a positive integer $n_0$ such that for all $x \in M \setminus \ml{N}$,
$$ \int f_\ml{N}(f(x)) d\mu^{(n_0)}(f) \leq c f_\ml{N}(x) + b. $$
Here $c$ and $b$ depend only on the size of $\ml{N}$. Moreover, for each $x \in M \setminus \ml{N}$, there exists a positive integer $n(x)$ such that for all $n > n(x)$, 
$$ (\mu^{(n)} * \delta_x)(f_\ml{N}) = \int f_\ml{N}(f(x)) d\mu^{(n)}(f) \leq b_1, $$
where $b_1 = b_1(b, c)$. For each compact subset $F \subset M \setminus \ml{N}$, we can take $n(x)$ such that $\sup_{x \in F} n(x) < \infty$. 
\end{corollary}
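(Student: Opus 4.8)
The plan is to build $f_\ml{N}$ out of the pairwise Margulis function $u$ of Lemma \ref{margulis} by pinning one leg of $u$ to each point of $\ml{N}$. Writing $\ml{N} = \{p_1, \dots, p_k\}$, I set
$$ f_\ml{N}(x) := \sum_{i=1}^k u(x, p_i) = \sum_{i=1}^k d(x, p_i)^{-\delta}, $$
where $\delta, c, b, n_0$ are as produced by Lemma \ref{margulis} (so $c$ does not depend on $\ml{N}$ at all). This function is continuous on $M \setminus \ml{N}$, and it is proper there: since $M$ is compact, a sequence in $M \setminus \ml{N}$ with $f_\ml{N}$ bounded cannot accumulate on $\ml{N}$, because $f_\ml{N}(x) \ge d(x, p_i)^{-\delta} \to +\infty$ as $x \to p_i$.

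The one structural input I would isolate first is that $\ml{N}$ is permuted by the dynamics: since $\ml{N}$ is a $\G$-orbit, for $\mu^{(n)}$-a.e.\ $f$ the map $f$ lies in $\G$, hence is a diffeomorphism with $f(\ml{N}) \subseteq \ml{N}$, hence $f|_{\ml{N}}$ is a bijection of $\ml{N}$; in particular $\mu^{(n)} * \delta_x$ is supported on $M \setminus \ml{N}$ for every $x \notin \ml{N}$. Using that $f$ permutes $\ml{N}$, for $x \notin \ml{N}$ we get $f_\ml{N}(f(x)) = \sum_i u(f(x), p_i) = \sum_i u(f(x), f(p_i))$, so integrating against $\mu^{(n_0)}$ and applying the inequality of Lemma \ref{margulis} to each pair $(x, p_i)$ yields
$$ \int f_\ml{N}(f(x)) \, d\mu^{(n_0)}(f) \le \sum_{i=1}^k \bigl( c\, u(x, p_i) + b \bigr) = c\, f_\ml{N}(x) + kb. $$
This is the asserted drift inequality, with the same $c$ and with additive constant $kb$ depending on $\ml{N}$ only through $k = |\ml{N}|$.

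For the quantitative consequence, I would iterate. Since the drift inequality holds at every point of $M \setminus \ml{N}$ and each $\mu^{(\ell n_0)} * \delta_x$ is supported there, induction on $m$ gives $(\mu^{(m n_0)} * \delta_x)(f_\ml{N}) \le c^m f_\ml{N}(x) + \frac{kb}{1-c}$ for $x \notin \ml{N}$ and all $m \ge 0$. To cover all $n$, write $n = m n_0 + j$ with $0 \le j < n_0$ and split the walk as $\mu^{(n)} * \delta_x = \mu^{(m n_0)} * (\mu^{(j)} * \delta_x)$. A single diffeomorphism $g$ permuting $\ml{N}$ satisfies $f_\ml{N}(g(y)) \le |g^{-1}|_{C^1}^{\delta}\, f_\ml{N}(y)$ for $y \notin \ml{N}$, since $d(g(y), g(p)) \ge d(y, p)/|g^{-1}|_{C^1}$; hence $(\mu^{(j)} * \delta_x)(f_\ml{N}) \le M_j\, f_\ml{N}(x)$ with $M_j := \int |g^{-1}|_{C^1}^{\delta}\, d\mu^{(j)}(g)$, and $M_j \le \bigl(\int |g^{-1}|_{C^1}^{\delta}\, d\mu\bigr)^{j} < \infty$ by submultiplicativity of $g \mapsto |g^{-1}|_{C^1}$ under composition together with the moment condition (**) (shrinking $\delta$ if needed and using $|g^{-1}|_{C^1} \le |g^{-1}|_{C^2}$). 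With $M_0 := \max_{0 \le j < n_0} M_j$ this gives
$$ (\mu^{(n)} * \delta_x)(f_\ml{N}) \le M_0\, c^{\lfloor n/n_0 \rfloor} f_\ml{N}(x) + \frac{kb}{1-c}, $$
so choosing $n(x)$ so large that $M_0\, c^{\lfloor n(x)/n_0 \rfloor} f_\ml{N}(x) \le 1$ yields the bound with $b_1 := 1 + \frac{kb}{1-c}$, a function of $b$ and $c$ only; and since $f_\ml{N}$ is bounded on any compact $F \subset M \setminus \ml{N}$, the threshold $n(x)$ is bounded uniformly over $x \in F$.

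Everything above is routine once Lemma \ref{margulis} is available; the only genuine idea is the permutation fact, which is what makes the substitution $u(f(x), p_i) = u(f(x), f(p_i))$ legitimate and hence lets Lemma \ref{margulis} be invoked termwise. I expect the one mildly delicate bookkeeping point to be the passage from multiples of $n_0$ to arbitrary $n$: one must check that fewer than $n_0$ ``extra'' steps inflate $f_\ml{N}$ by at most a factor that is integrable in $f$, so the per-block geometric decay survives — but this follows immediately from (**).
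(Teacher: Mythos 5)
Your proposal is correct and takes essentially the same route as the paper: define $f_\ml{N}$ as a sum of $u(\cdot,p_i)$ over $\ml{N}$ (the paper normalizes by $1/|\ml{N}|$, which is immaterial), use that each $f$ in the support of $\mu^{(n_0)}$ permutes the finite orbit $\ml{N}$ to apply Lemma~\ref{margulis} termwise, and iterate over blocks of length $n_0$.

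One point where you are actually \emph{more} careful than the paper's proof: in passing from multiples of $n_0$ to arbitrary $n$, the paper writes $(\mu^{(n)}*\delta_x)(f_\ml{N}) \le c^{\lfloor n/n_0\rfloor}\,(\mu^{(i)}*\delta_x)(f_\ml{N}) + \frac{b}{1-c}$ with $i<n_0$ and then asserts that the first term can be made small uniformly over a compact $F\subset M\setminus\ml{N}$, without justifying that $\sup_{x\in F,\,0\le i<n_0}(\mu^{(i)}*\delta_x)(f_\ml{N})<\infty$. Your bound $f_\ml{N}(g(y))\le |g^{-1}|_{C^1}^{\delta} f_\ml{N}(y)$ (which again uses the permutation fact), combined with submultiplicativity and the moment condition (**), supplies precisely the missing uniform finiteness. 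So the proposal is not merely correct but slightly more complete on this bookkeeping step.
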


\begin{proof}
Let $u: M \times M \setminus \Delta \to \R_+$ be the function as in Lemma \ref{margulis} with the corresponding $c < 1$ and $b > 0$, and define the function $f_\ml{N}: M \setminus \ml{N} \to \R$ by 
$$ f_\ml{N}(x) := \frac{1}{|\ml{N}|} \sum_{y \in \ml{N}} u(x, y). $$
\noindent Take the positive integer $n_0$ as in Lemma \ref{margulis}. Then for all $x \in M \setminus \ml{N}$, 
\begin{equation*} 
 \int f_\ml{N}(f(x)) d \mu^{(n_0)}(f) = \frac{1}{|\ml{N}|} \int \sum_{y \in \ml{N}} u(f(x), y) d \mu^{(n_0)}(f) = \frac{1}{|\ml{N}|} \int \sum_{y \in \ml{N}} u(f(x), f(y)) d \mu^{(n_0)}(f) \leq c f_\ml{N}(x) + b.
\end{equation*}
Here we used that $\ml{N}$ is $\G$-invariant in the second equality. This gives the first assertion. 

For the second assertion, from the above, for all positive integer $k$ and $x \in M \setminus \ml{N}$,
\begin{equation*} 
 (\mu^{(kn_0)} * \delta_x) (f_\ml{N}) = \int f_\ml{N}(f(x)) d \mu^{(kn_0)}(f) \leq c^k f_\ml{N}(x) + \frac{b}{1 - c}. 
\end{equation*}
Therefore for all $n \geq 0$, 
\begin{equation*} 
 (\mu^{(n)} * \delta_x) (f_\ml{N}) = \int f_\ml{N}(f(x)) d \mu^{(n)}(f) \leq c^{\lfloor n/n_0 \rfloor} \mu^{(i)} * \delta_x (f_\ml{N}) + \frac{b}{1 - c},
\end{equation*}
where $i := n - n_0 \lfloor n/n_0 \rfloor < n_0$.
Now for any compact $F \subset M \setminus \ml{N}$, there exists some positive integer $m_F$ such that for all $n > m_F$,
$$ c^{\lfloor n/n_0 \rfloor} \mu^{(i)} * \delta_x (f_\ml{N}) < \frac{b}{1 - c} \quaddd \text{ for all } \quad 0 \leq i \leq n_0, \quad x \in F. $$
Then for any $n > m_F$ and $x \in F$,
$$ (\mu^{(n)} * \delta_x)(f_\ml{N}) \leq \frac{2b}{1 - c} =: b_1. $$
\end{proof}

\begin{corollary} \label{upperbound}
Suppose $\mu$ is a uniformly expanding measure and $\ml{N} \subset M$ is a finite $\G$-orbit. Take $f_\ml{N}, c, b$ as in Corollary \ref{orbitmargulis}. Suppose $\nu$ is an ergodic $\mu$-stationary measure on $M$ with $\nu(\{f_{\ml{N}} < \infty\}) > 0$. Then 
$$ \int f_\ml{N}(x) d \nu(x) \leq B, $$
where $B$ depends only on $b, c$. 
\end{corollary}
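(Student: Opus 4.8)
The plan is to deduce the bound in two stages: first show that $\nu$ gives no mass to $\ml{N}$ (so that $f_{\ml{N}}$ is $\nu$-a.e. finite), then show $\int f_{\ml{N}}\,d\nu<\infty$ with a bound depending only on $b,c$, and finally ``close the loop'' with the Margulis inequality to get the sharp constant $B$.

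First I would observe that the hypothesis $\nu(\{f_{\ml{N}}<\infty\})>0$ is exactly $\nu(M\setminus\ml{N})>0$, since $f_{\ml{N}}(x)=\tfrac{1}{|\ml{N}|}\sum_{y\in\ml{N}}d(x,y)^{-\delta}$ is finite precisely off $\ml{N}$. Because $\ml{N}$ is a finite $\G$-orbit and every element of $\G$ is a diffeomorphism, each $f\in\G$ restricts to a permutation of $\ml{N}$, so $f(\ml{N})=f^{-1}(\ml{N})=\ml{N}$ for $\mu$-a.e.\ $f$; consequently both $\nu|_{\ml{N}}$ and $\nu|_{M\setminus\ml{N}}$ are $\mu$-stationary. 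Writing $\nu$ as the corresponding convex combination and using that $\nu$ is ergodic together with $\nu(M\setminus\ml{N})>0$ forces $\nu(\ml{N})=0$, hence $f_{\ml{N}}<\infty$ $\nu$-almost everywhere.

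The main step is to prove $\int_M f_{\ml{N}}\,d\nu\le b_1$, where $b_1=b_1(b,c)$ is from Corollary~\ref{orbitmargulis}. Since $f_{\ml{N}}$ is continuous and proper on $M\setminus\ml{N}$, the sublevel set $F_L:=\{x:f_{\ml{N}}(x)\le L\}$ is compact in $M\setminus\ml{N}$ and $F_L\uparrow M\setminus\ml{N}$ as $L\to\infty$. Fix $L>0$; by the last assertion of Corollary~\ref{orbitmargulis} there is $n_L$ with $(\mu^{(n)}*\delta_x)(f_{\ml{N}})\le b_1$ for all $x\in F_L$ and all $n>n_L$. Fix one such $n$ and put $g_L:=\min(f_{\ml{N}},L)$, a bounded Borel function. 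Using that $\nu$ is $\mu^{(n)}$-stationary, $\int g_L\,d\nu=\int(\mu^{(n)}*\delta_x)(g_L)\,d\nu(x)$; I would split this integral over $F_L$ and over $M\setminus F_L$ (the set $\ml{N}$ is $\nu$-null). On $F_L$ we have $(\mu^{(n)}*\delta_x)(g_L)\le(\mu^{(n)}*\delta_x)(f_{\ml{N}})\le b_1$, while off $F_L$ we have $g_L(x)=L$ and $(\mu^{(n)}*\delta_x)(g_L)\le L=g_L(x)$ because $g_L\le L$ pointwise. This gives $\int g_L\,d\nu\le b_1\,\nu(F_L)+\int_{M\setminus F_L}g_L\,d\nu$, and cancelling the finite term $\int_{M\setminus F_L}g_L\,d\nu$ leaves $\int_{F_L}f_{\ml{N}}\,d\nu=\int_{F_L}g_L\,d\nu\le b_1$. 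Letting $L\to\infty$ and using monotone convergence yields $\int_M f_{\ml{N}}\,d\nu\le b_1<\infty$.

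Finally, now that $\int f_{\ml{N}}\,d\nu<\infty$, I would integrate the Margulis inequality $\int f_{\ml{N}}(f(x))\,d\mu^{(n_0)}(f)\le c\,f_{\ml{N}}(x)+b$ of Corollary~\ref{orbitmargulis} against $\nu$ and use $\mu^{(n_0)}$-stationarity to obtain $\int f_{\ml{N}}\,d\nu\le c\int f_{\ml{N}}\,d\nu+b$, hence $\int f_{\ml{N}}\,d\nu\le b/(1-c)=:B$, which depends only on $b$ and $c$ (the cruder bound $b_1$ from the previous step would already do). I expect the finiteness step to be the real obstacle: the Margulis inequality alone only yields the self-referential estimate $I\le cI+b$, which is vacuous when $I=\int f_{\ml{N}}\,d\nu=+\infty$, so one genuinely needs the truncation $g_L$ together with the compactness and uniformity built into Corollary~\ref{orbitmargulis} in order to first trap $\int_{F_L}f_{\ml{N}}\,d\nu$ below a constant independent of $L$ — and the cancellation of $\int_{M\setminus F_L}g_L\,d\nu$ works only because $g_L$ is bounded and equals its cap $L$ exactly on $M\setminus F_L$.
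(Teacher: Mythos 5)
Your proof is correct, and it takes a genuinely different route from the paper. The paper's argument runs the Birkhoff ergodic theorem on the skew product to show that for $(\mu^\N\times\nu)$-a.e.\ $(\og,x)$ the time averages of the truncation $f_{\ml N,n}=\min(f_{\ml N},n)$ converge to $\int f_{\ml N,n}\,d\nu$, picks a good point $x_0\notin\ml N$, invokes Egorov's theorem to get a positive-measure set of words on which the convergence is uniform, and then integrates over those words and applies Corollary~\ref{orbitmargulis} to cap the resulting time averages. You instead exploit stationarity of $\nu$ under $\mu^{(n)}$ directly: the identity $\int g_L\,d\nu=\int(\mu^{(n)}*\delta_x)(g_L)\,d\nu(x)$ combined with the compact sublevel sets $F_L$, the uniform bound $(\mu^{(n)}*\delta_x)(f_{\ml N})\le b_1$ on $F_L$ for $n>n_F$ from Corollary~\ref{orbitmargulis}, and the key cancellation of the (finite) term $\int_{M\setminus F_L}g_L\,d\nu$ that works precisely because $g_L$ equals its cap $L$ off $F_L$. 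This sidesteps the ergodic-theoretic and Egorov machinery entirely, and the final bootstrap through the Margulis inequality sharpens the constant to $B=b/(1-c)$ (the paper only records a cruder constant on the order of $8b/(1-c)$). Your preliminary observation that $\nu(\ml N)=0$ (via ergodicity and the fact that $\ml N$ and its complement are $\mu$-a.s.\ invariant) is also sound and plays the same role as the paper's remark that one can pick $x_0\notin\ml N$.
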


\begin{proof}
For each positive integer $n$, let $f_{\ml{N}, n} := \min\{f_\ml{N}, n\}$. By the Birkhoff ergodic theorem, for $\mu^\N \times \nu$-a.e. $(\og, x) \in \G^\N \times M$, 
$$ \lim_{m \to \infty} \frac{1}{m} \sum_{k=1}^m f_{\ml{N}, n}(f_\og^k(x)) = \int f_{\ml{N}, n}(x) d\nu(x), $$
where for $\og = (\og_0, \og_1, \ldots) \in \G^\N$, $f_\og^k := \og_{k-1} \circ \og_{k-2} \circ \cdots \circ \og_0$. Pick a point $x_0 \in M \setminus \ml{N}$ such that the convergence holds for $\mu^{\N}$-a.e. $\og \in \G^\N$ (note that we can pick such $x_0 \notin \ml{N}$ since $\nu(\{f_{\ml{N}} < \infty\}) = \nu(M \setminus \ml{N}) > 0$). By Egorov's theorem, we can take a subset $\G' \subset \G^\N$ with $\mu^{\N}(\G') \geq 1/2$ such that at $x = x_0$, the convergence is uniform on $\og \in \G'$. Then there exists a positive integer $m_n$ such that for all $m > m_n$ and $\og \in \G'$, 
$$ \frac{1}{m} \sum_{k=1}^m f_{\ml{N}, n}(f_\og^k(x_0)) \geq \frac{1}{2} \int f_{\ml{N}, n}(x) d\nu(x). $$
Integrating over $\og \in \G^\N$, we have for all $m > m_n$, 
$$ \frac{1}{m} \sum_{k=1}^m \int f_{\ml{N}, n}(f(x_0)) d\mu^{(k)}(f) \geq \frac{1}{4} \int f_{\ml{N}, n}(x) d\nu(x). $$
By Corollary \ref{orbitmargulis}, for large enough $m$, the left hand side is at most some constant $B' = B'(b, c)$. Therefore for all $n$, 
$$  \int f_{\ml{N}, n}(x) d\nu(x) \leq 4B'. $$
Taking the limit $n \to \infty$, we have the assertion. 
\end{proof}

\begin{proposition} \label{countable}
The number of points with finite $\G$-orbit is countable. 
\end{proposition}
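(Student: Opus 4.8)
The plan is to reduce the statement to the assertion that there are only \emph{countably many finite minimal $\G$-invariant subsets} of $M$. Granting this, list these sets as $\ml{M}_1,\ml{M}_2,\ldots$ and let $Z$ be the set of points with finite $\G$-orbit. If $z\in Z$ then $\G z$ is a nonempty finite forward-$\G$-invariant set (each $f\in\G$ is a bijection of $M$, so a finite forward-invariant set is genuinely $\G$-invariant), hence contains a finite minimal $\G$-invariant set, which is some $\ml{M}_j$; thus $g(z)\in\ml{M}_j$ for some $g\in\G$, i.e. $z\in g^{-1}(\ml{M}_j)$. Since $\G$ is countable (it is finitely generated) and each $g^{-1}(\ml{M}_j)$ is finite (of cardinality $|\ml{M}_j|$, as $g$ is a bijection), the inclusion
$$ Z\ \subseteq\ \bigcup_{j\ge1}\ \bigcup_{g\in\G} g^{-1}(\ml{M}_j) $$
exhibits $Z$ as a countable union of finite sets.

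So the real work is to bound the number of finite minimal $\G$-invariant sets. The first observation is that each such set $\ml{N}$ is the support of some \emph{ergodic} finitely-supported $\mu$-stationary measure: a weak-$*$ limit point of $\tfrac1n\sum_{k=0}^{n-1}\mu^{(k)}*\delta_w$ (any $w\in\ml{N}$) is $\mu$-stationary and supported on $\ml{N}$, and any of its ergodic components has support a nonempty $\G$-invariant subset of $\ml{N}$, hence all of $\ml{N}$. It therefore suffices to prove that there are only countably many ergodic finitely-supported $\mu$-stationary measures. I would argue by contradiction: if there were uncountably many, there would be an integer $k$ and an infinite sequence $(\nu_i)$ of \emph{distinct} ones, each supported on at most $k$ points. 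Passing to a subsequence, $\nu_i\to\nu_\infty$ weak-$*$; then $\nu_\infty$ is $\mu$-stationary (stationarity is preserved under weak-$*$ limits, as in the proof of Lemma \ref{badmeasure}) and supported on at most $k$ points. Since the support of $\nu_\infty$ is a finite $\G$-invariant set, it contains a finite minimal $\G$-invariant set $\ml{N}$ with $|\ml{N}|\le k$.

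The contradiction comes from a dichotomy for the $\nu_i$ relative to $\ml{N}$. For each $i$, since $\mathrm{supp}(\nu_i)$ is $\G$-invariant and $\ml{N}$ is minimal, either $\mathrm{supp}(\nu_i)\cap\ml{N}=\varnothing$, or $\mathrm{supp}(\nu_i)\supseteq\ml{N}$; in the latter case a standard $0$--$1$ law for the ergodic measure $\nu_i$, applied to the $\G$- and $\G^{-1}$-invariant set $\ml{N}$ (and to $\mathrm{supp}(\nu_i)\setminus\ml{N}$), forces $\mathrm{supp}(\nu_i)=\ml{N}$ exactly. If the latter occurred for infinitely many $i$, we would get infinitely many distinct ergodic $\mu$-stationary measures on the \emph{fixed} finite set $\ml{N}$, contradicting the fact that these are the extreme points of a finite-dimensional simplex. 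Hence $\mathrm{supp}(\nu_i)\subseteq M\setminus\ml{N}$ for all large $i$. Now apply Corollary \ref{orbitmargulis} to the finite $\G$-orbit $\ml{N}$: the associated proper function $f_\ml{N}:M\setminus\ml{N}\to\R_+$ has constants $b,c$ depending only on $|\ml{N}|\le k$, so Corollary \ref{upperbound} yields $\int f_\ml{N}\,d\nu_i\le B$ with $B=B(k)$ uniform in $i$. By Markov's inequality and properness of $f_\ml{N}$, each $\{f_\ml{N}\le T\}$ is compact in $M\setminus\ml{N}$ with $\nu_i(\{f_\ml{N}\le T\})\ge 1-B/T$; the portmanteau theorem for compact sets then gives $\nu_\infty(\{f_\ml{N}\le T\})\ge 1-B/T$ for every $T$, whence $\nu_\infty(\ml{N})=0$. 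This contradicts $\ml{N}\subseteq\mathrm{supp}(\nu_\infty)$ together with the fact that $\nu_\infty$, being finitely supported, charges every point of its support.

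I expect the main obstacle to be this count of minimal sets, and within it the two halves of the dichotomy: one must genuinely exploit that the Margulis-function constants in Corollaries \ref{orbitmargulis}--\ref{upperbound} depend only on $|\ml{N}|$, so that the bound on $\int f_\ml{N}\,d\nu_i$ is uniform along the sequence and passes to the weak-$*$ limit; and one must eliminate the ``all supported on $\ml{N}$'' alternative using the finiteness of the extreme points of the simplex of $\mu$-stationary measures on a fixed finite set. The remaining ingredients --- forward-invariance of the support of a stationary measure, the $0$--$1$ law for ergodic stationary measures, stability of stationarity under weak-$*$ limits, and the existence of an ergodic stationary measure on each finite minimal set --- are routine.
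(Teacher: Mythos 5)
Your argument is correct (with one small repair below) and genuinely different from the paper's, though both hinge on the same key input: Corollary \ref{upperbound}, the uniform Margulis bound $\int f_\ml{N}\,d\nu\le B$ with $B$ independent of the finite orbit $\ml{N}$. The paper uses this quantitatively and directly: fixing an orbit size $n$, compactness of $M$ produces two distinct size-$n$ orbits with representatives $x_j,x_{j+1}$ as close as desired, and then $\int f_{\ml{N}_j}\,d\nu_{j+1}$ is both $\le B$ and $\ge d(x_j,x_{j+1})^{-\delta}/n^2$, a contradiction once $d(x_j,x_{j+1})$ is small enough. Your proof is softer: it packages compactness into a weak-$*$ limit $\nu_\infty$ of infinitely many distinct ergodic finitely-supported stationary measures of bounded support size, locates a minimal set $\ml{N}\subseteq\mathrm{supp}(\nu_\infty)$, rules out infinitely many $\nu_i$ being supported exactly on $\ml{N}$ via the finiteness of the extreme points of the set of stationary measures on $\ml{N}$ (this set is a polytope, not in general a simplex, but the count is still finite), and then uses the Margulis bound plus portmanteau to force $\nu_\infty(\ml{N})=0$, contradicting $\ml{N}\subseteq\mathrm{supp}(\nu_\infty)$ for the finitely supported $\nu_\infty$. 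The paper's argument is shorter and more elementary; yours costs the extra support dichotomy and extreme-point count, but avoids the explicit choice of $\ep$. The repair concerns your opening inclusion $Z\subseteq\bigcup_j\bigcup_{g\in\G}g^{-1}(\ml{M}_j)$, which requires $\G$ to be countable; in the setting of Proposition \ref{orbitclosure}, $\G$ is merely a subsemigroup with $\mu(\G)=1$ and is not assumed finitely generated. However the detour through $g^{-1}$ is unnecessary: the finite set $\{z\}\cup\G z$ is itself $\G$-invariant (each $g\in\G$ restricts to a permutation of it), hence is a disjoint union of finite minimal $\G$-invariant sets, and $z$ lies in one of them; thus $Z\subseteq\bigcup_j\ml{M}_j$ directly, with no countability hypothesis on $\G$.
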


\begin{proof}
It suffices to show that for each positive integer $n$, there are finitely many $\G$-orbits of size $n$. Suppose the contrary that there are infinitely many $\G$-orbits of size $n$. Then by compactness of $M$, they have an accumulation point $x \in M$, hence there exists a sequence of points $x_i \in M$ with finite $\G$-orbit of size $n$ such that $d(x_i, x_{i+1}) \to 0$ as $i \to \infty$. Fix an $\ep = \ep(B, n, \delta)> 0$ (to be determined later), and a large enough $j$ such that $d(x_j, x_{j+1}) < \ep$. By deleting finitely many points from the sequence if necessary, we may assume $x_j$ and $x_{j+1}$ are in different $\G$-orbits. For each $i \in \N$, let $\nu_i$ be the ergodic $\G$-invariant (hence $\mu$-stationary) measure on $M$ supported on the $\G$-orbit $\ml{N}_i$ of $x_i$ with uniform distribution, i.e. $\nu_i(x) = 1/n$ for each $x \in \ml{N}_i$, and let $f_i := f_{\ml{N}_i}$ be the function defined in Corollary \ref{orbitmargulis} with the corresponding upper bound $B = B(b, c)$ as in Corollary \ref{upperbound}. As $x_{j+1} \notin \ml{N}_j$, $f_j(x_{j+1}) < \infty$. Hence $\nu_{j+1}(f_j < \infty) \geq 1/n > 0$. Therefore by Corollary \ref{upperbound}, 
\begin{equation}
\tag{***}
 \int f_j(x) d\nu_{j+1}(x) \leq B. 
 \end{equation}
On the other hand, recall from definition that $f_j(x) = \frac{1}{|\ml{N}_j|}\sum_{y \in \ml{N}_j} u(x, y)$ where $u(x, y) = d(x, y)^{-\delta}$ for some $\delta > 0$ chosen in the proof of Lemma \ref{margulis}. Thus
$$ \int f_j(x) d\nu_{j+1}(x) = \frac{1}{n^2} \sum_{x \in \ml{N}_{j+1}} \sum_{y \in \ml{N}_j} u(x, y) \geq \frac{1}{n^2} u(x_{j+1}, x_j) > \frac{1}{n^2} \ep^{-\delta}. $$
Taking $\ep$ small enough such that $\ep^{-\delta} \geq 2Bn^2$, this leads to a contradiction to (***). 
\end{proof}

Define
$$ \overline{\mu}^{(n)} := \frac{1}{n} \sum_{k=1}^{n} \mu^{(k)} . $$

\begin{lemma} \label{small}
Let $\ml{N}$ be a finite $\G$-orbit in $M$. The for any $\ep > 0$, there exists an open set $\O_{\ml{N}, \ep}$ containing $\ml{N}$ with $(\O_{\ml{N}, \ep})^c$ compact such that for any compact $F \subset M \setminus \ml{N}$ there exists a positive integer $n_F$, such that for all $x \in F$ and $n > n_F$, we have
$$ (\overline{\mu}^{(n)} * \delta_x)(\O_{\ml{N}, \ep}) < \ep. $$
\end{lemma}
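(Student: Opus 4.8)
The plan is to use the Margulis-type inequality from Corollary \ref{orbitmargulis} together with a Chebyshev argument. Fix the finite $\G$-orbit $\ml{N}$ and let $f_\ml{N}: M \setminus \ml{N} \to \R_+$ be the proper continuous function produced by Corollary \ref{orbitmargulis}, with the constants $c < 1$, $b > 0$ and positive integer $n_0$, and with $b_1 = b_1(b,c)$ the uniform bound on $(\mu^{(n)} * \delta_x)(f_\ml{N})$ valid for all $n > n(x)$, where $n(x)$ is bounded on compact subsets of $M \setminus \ml{N}$. Since $f_\ml{N}$ is proper and continuous, for any $\ep > 0$ we may set
$$ \O_{\ml{N}, \ep} := \{ z \in M \setminus \ml{N} : f_\ml{N}(z) > 2b_1/\ep \} \cup \ml{N}, $$
which is an open set containing $\ml{N}$, and whose complement $\{ z : f_\ml{N}(z) \leq 2b_1/\ep \}$ is a closed, hence compact, subset of $M \setminus \ml{N}$ by properness of $f_\ml{N}$.

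First I would record the Chebyshev estimate: since $f_\ml{N} \geq 0$, for any probability measure $\rho$ on $M \setminus \ml{N}$ we have $\rho(\O_{\ml{N}, \ep}) = \rho(\{ f_\ml{N} > 2b_1/\ep\}) \leq \frac{\ep}{2b_1} \int f_\ml{N} \, d\rho$. Next, given a compact $F \subset M \setminus \ml{N}$, Corollary \ref{orbitmargulis} gives a positive integer $m_F := \sup_{x \in F} n(x) < \infty$ such that for every $x \in F$ and every $n > m_F$, $\int f_\ml{N}(g(x)) \, d\mu^{(n)}(g) \leq b_1$. Averaging over $k = 1, \dots, n$, observe that for $n > 2 m_F$ the Cesàro average $\overline{\mu}^{(n)} = \frac{1}{n}\sum_{k=1}^n \mu^{(k)}$ satisfies, for $x \in F$,
$$ \int f_\ml{N}(g(x)) \, d\overline{\mu}^{(n)}(g) = \frac{1}{n} \sum_{k=1}^{n} \int f_\ml{N}(g(x)) \, d\mu^{(k)}(g) \leq \frac{1}{n}\left( \sum_{k=1}^{m_F} \int f_\ml{N}(g(x)) \, d\mu^{(k)}(g) + (n - m_F) b_1 \right). $$
The finitely many terms $k \le m_F$ are bounded: for fixed $k$, $x \mapsto \int f_\ml{N}(g(x)) \, d\mu^{(k)}(g)$ is a lower-semicontinuous function, finite on the compact set $F$, and in fact bounded on $F$ by continuity of $f_\ml{N}$ away from $\ml{N}$ together with the moment condition on $\mu$; call $C_F := \sup_{x \in F} \sum_{k=1}^{m_F} \int f_\ml{N}(g(x)) \, d\mu^{(k)}(g) < \infty$. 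Hence for $x \in F$ and $n > 2 m_F$ we get $\int f_\ml{N} \, d(\overline{\mu}^{(n)} * \delta_x) \leq C_F / n + b_1 \leq 2 b_1$ once $n > \max(2m_F, C_F / b_1) =: n_F$.

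Finally, combining the two displays: for all $x \in F$ and $n > n_F$,
$$ (\overline{\mu}^{(n)} * \delta_x)(\O_{\ml{N}, \ep}) \leq \frac{\ep}{2 b_1} \int f_\ml{N} \, d(\overline{\mu}^{(n)} * \delta_x) \leq \frac{\ep}{2 b_1} \cdot 2 b_1 = \ep, $$
which is the claim. The main obstacle I anticipate is the careful handling of the finitely many ``early'' terms $\mu^{(k)} * \delta_x$ for $k \le m_F$ — these need not satisfy the Margulis bound, and one must verify that their contribution to the Cesàro average is $O(1/n)$ uniformly over the compact set $F$. This is where the moment hypothesis on $\mu$ (ensuring $\int f_\ml{N}(g(x)) \, d\mu^{(k)}(g) < \infty$, since $f_\ml{N}(z) = |\ml{N}|^{-1} \sum_{y \in \ml{N}} d(z,y)^{-\delta}$ has an integrable singularity only at points of $\ml{N}$, which $g(x)$ avoids for $x \notin \ml{N}$) and the compactness of $F$ away from $\ml{N}$ do the work; once that uniform finiteness is in hand, the rest is the routine Chebyshev/Cesàro bookkeeping above.
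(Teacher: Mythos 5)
Your proof is correct and follows essentially the same route as the paper's: construct $\O_{\ml{N},\ep}$ as a superlevel set of the Margulis function $f_{\ml{N}}$, bound $(\overline{\mu}^{(n)} * \delta_x)(f_{\ml{N}})$ via Corollary~\ref{orbitmargulis}, and conclude by Chebyshev. You spell out the Ces\`aro averaging and the uniform finiteness of the early terms $\mu^{(k)}*\delta_x(f_{\ml{N}})$, $k \le m_F$, which the paper's proof leaves implicit.
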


\begin{proof}
The proof follows that of Proposition 3.3 in \cite{EMM}. Take the function $f_\ml{N}: M \setminus \ml{N} \to \R_+$ as in Corollary \ref{orbitmargulis} with the corresponding $c < 1$, $b > 0$ and positive integer $n_0$. Let 
$$ \O_{\ml{N}, \ep} := \left\{x \in M: f_\ml{N}(x) > \frac{1}{\ep} \left(\frac{2b}{1 - c}+1 \right) \right\}. $$

By Corollary \ref{orbitmargulis}, for each compact subset $F \subset M \setminus \ml{N}$, there exists $b_1 = 2b/(1-c) > 0$ and positive integer $m_F$ such that for all $n > m_F$ and $x \in F$, 
$$ (\mu^{(n)} * \delta_x)(f_\ml{N}) \leq b_1. $$
Therefore there exists a positive integer $n_F \geq m_F$ such that for all $n > n_F$ and $x \in F$, 
$$ (\overline{\mu}^{(n)} * \delta_x)(f_\ml{N}) \leq b_1 + 1. $$
Thus for all $n > n_F$, $x \in F$ and $L > 0$, we have
$$ (\overline{\mu}^{(n)} * \delta_x)(\{p \in M: f_\ml{N}(p) > L\}) < \frac{b_1+1}{L}. $$
Therefore by the choice of $\O_{\ml{N}, \ep}$, we know that $(\overline\mu^{(n)}* \delta_x)(\O_{\ml{N}, \ep}) < \ep$. Moreover, it is clear from the definition of $f_\ml{N}$ and the choice of $u$ in Lemma \ref{margulis} that 
$$ (\O_{\ml{N}, \ep})^c = \left\{x \in M: f_\ml{N}(x) \leq \frac{1}{\ep} \left(\frac{2b}{1 - c}+1 \right) \right\} $$
is compact. 
\end{proof}

\begin{proof}[Proof of Proposition \ref{equidistribution}]
Assume that the conclusion of the assertion does not hold. Then there exists $\varphi \in C(M)$, $\ep > 0$, $x \in M$ with infinite $\G$-orbit and a subsequence $n_k \to \infty$ such that
$$ |(\overline\mu^{(n_k)} * \delta_x)(\varphi) - m(\varphi)| \geq \ep. $$
By compactness of the space of probability measures on $M$ with the weak-* topology, we may assume that $\overline\mu^{(n_k)} * \delta_x \to \nu$ for some probability measure $\nu$. 

\noindent First note that $\nu$ is a $\mu$-stationary measure. By Proposition \ref{countable}, there are at most countably many finite $\G$-orbits. Therefore by Proposition \ref{measurerigidity}, we have the ergodic decomposition of $\nu$: 
$$ \nu = \sum_{\ml{N} \subset M} a_\ml{N} \nu_{\ml{N}} + am, $$
where the sum is over all finite $\G$-orbit $\ml{N}$. Here $a, a_\ml{N} \in [0, 1]$, and $\nu_{\ml{N}}$ is the probability measure supported on the finite $\G$-orbit $\ml{N}$ with uniform distribution. It remains to show that $a_\ml{N} = 0$ for all finite $\G$-orbit $\ml{N}$. 

For each finite $\G$-orbit $\ml{N}$, as $x \notin \ml{N}$ by assumption, we may apply Lemma \ref{small} with $\ml{N}$ and compact $F = \{x\}$. Then for any $\ep > 0$, there exists a positive integer $n_x$ such that for all $n > n_x$, $(\overline\mu^{(n)} * \delta_x)((\O_{\ml{N}, \ep})^c) \geq 1 - \ep$. Passing to the limit along the subsequence $n_k \to \infty$, we have
$$ \nu((\O_{\ml{N}, \ep})^c) \geq 1 - \ep. $$
As $\ep > 0$ is arbitrary, we have $\nu(\ml{N}) = 0$. Hence $a_\ml{N} \leq \nu(\ml{N}) = 0$. 
\end{proof}

\begin{proof}[Proof of Proposition \ref{orbitclosure}] 
This is an immediate consequence of Proposition \ref{equidistribution}, as every nonempty open subset of $M$ has positive volume. 
\end{proof}

\section{Geometric interpretation of uniform expansion} \label{gcriterion}

In the rest of the paper, we study how to verify uniform expansion in concrete settings. In this section, we give a geometric perspective of uniform expansion by visualizing it on the hyperbolic disk. 

\subsection{Cartan decomposition and hyperbolic geometry}

Let $F \in SL_2(\R)$. Throughout we identify the real projective line $\bb{P}^1 = \bb{P}^1(\R)$ with $\R / \pi \Z$ as metric spaces, i.e. we identify each line in $\R^2$ through the origin with the angle it makes with the positive horizontal axis. Recall that the Cartan decomposition of $F$ is given by
$$ F = r_{-\varphi} a_\ld r_\theta, \quaddd \text{ where } \; r_\theta = \begin{pmatrix} \cos \theta & \sin \theta \\ -\sin \theta & \cos \theta \end{pmatrix}\; \text{ and } \; a_\ld = \begin{pmatrix} \ld & 0 \\ 0 & \ld^{-1} \end{pmatrix}, $$
for some $\ld \geq 1$ and $\varphi, \theta \in S^1 = \R / 2\pi \Z$. Moreover, 
$$ \ld = \| F \| := \sup_{v \in \R^2 \setminus \{0\}} \frac{\| Fv \|}{\| v \|} $$
is the (operator) \emph{norm} of the matrix $F$. We remark that if $\ld = \| F \|> 1$, then $\varphi$ and $\theta$ are uniquely defined modulo $\pi$, i.e. correspond to a unique element in $\P^1$. We call $\theta \in \P^1$ \emph{the expanding direction} of $F$ since 
$$ \| F(\theta) \| = \sup_{\theta' \in \P^1} \|F(\theta')\| = \ld, $$
where $F(\theta)$ is the vector $F \begin{pmatrix} \cos \theta \\ \sin \theta \end{pmatrix}$. It is easy to see that if we let $\theta_F := \theta + \pi/2 \in \P^1$, then 
$$ \| F(\theta_F) \| = \inf_{\theta' \in \P^1} \| F(\theta') \| = \ld^{-1}. $$
Hence for $\| F \| > 1$, we call $\theta_F = \theta + \pi/2 \in \P^1$ the \emph{contracting direction} of $F$. Notice also that $\varphi \in \P^1$ and $\varphi+\pi/2 \in \P^1$ are the contracting and expanding directions of $F^{-1}$. 

In certain computation we find it helpful to have an explicit formula to compute the contraction direction and the norm given the matrix $F \in SL_2(\R)$. This is given by the following simple lemma. 

\begin{lemma} \label{simple}
Let $F = \begin{pmatrix} a & b \\ c & d \end{pmatrix} \in SL_2(\R)$ with $\| F \| > 1$. Then 
\begin{enumerate}[label=(\alph*)]
\item the contracting direction $\theta_F \in \P^1$ satisfies
$$ \tan 2\theta_F = \frac{2(ab+cd)}{a^2 + c^2 - b^2 - d^2}, $$
here we follow the convention that $1/0 = \infty$ and that $\tan \varphi = \infty$ implies $\varphi = \pi/2 \in \P^1$. \\
\item The norm $\ld := \| F \|$ satisfies
$$ \ld^2 + \ld^{-2} = a^2 + b^2 + c^2 + d^2. $$
In particular, if $a^2 + b^2 + c^2 + d^2 \gg 1$, then 
$$ \ld \sim \sqrt{a^2 + b^2 + c^2 + d^2}. $$
\end{enumerate}
\end{lemma}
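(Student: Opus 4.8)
The plan is to reduce both parts to elementary spectral facts about the symmetric positive-definite matrix $G := F^{T}F = \begin{pmatrix} a^2+c^2 & ab+cd \\ ab+cd & b^2+d^2 \end{pmatrix}$, whose eigenvalues I will call $\mu_1 \ge \mu_2 > 0$. Two observations come first. Since $F \in SL_2(\R)$ we have $\mu_1\mu_2 = \det G = (\det F)^2 = 1$. And since $\|F\|^2 = \sup_{v \ne 0} \|Fv\|^2/\|v\|^2 = \sup_{v \ne 0} (v^{T}Gv)/(v^{T}v) = \mu_1$, the definition of the operator norm gives $\mu_1 = \ld^2$, hence $\mu_2 = \ld^{-2}$. (The hypothesis $\|F\| > 1$ ensures $\mu_1 = \ld^2 > 1 > \ld^{-2} = \mu_2$, so in particular $G$ is not a scalar matrix.)

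For part (b) I would just take the trace: $\ld^2 + \ld^{-2} = \mu_1 + \mu_2 = \tr G = a^2 + b^2 + c^2 + d^2$. For the asymptotic, write $S := a^2+b^2+c^2+d^2$; then $\ld^2$ is the larger root of $t + t^{-1} = S$, namely $\ld^2 = \tfrac12\bigl(S + \sqrt{S^2-4}\bigr)$, and expanding $\sqrt{S^2-4} = S + O(1/S)$ as $S \to \infty$ gives $\ld^2 = S + O(1/S) \sim S$, hence $\ld \sim \sqrt{S}$.

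For part (a), recall from the discussion preceding the lemma that the contracting direction $\theta_F$ is the direction in $\P^1$ minimizing $\theta' \mapsto \|F(\theta')\|$; since $\|F(\theta')\|^2$ equals the Rayleigh quotient $v^{T}Gv$ at the unit vector $v$ of angle $\theta'$, this minimizer is exactly the eigendirection of $G$ for its smaller eigenvalue $\mu_2 = \ld^{-2}$ (equivalently, from the Cartan decomposition $G = r_{-\theta}\,a_\ld^2\,r_\theta$, the $\mu_2$-eigendirection of $G$ is $\theta + \pi/2 = \theta_F$). It then remains to compute the eigendirections of a generic symmetric matrix $\begin{pmatrix} p & q \\ q & s \end{pmatrix}$: imposing that $(\cos\psi, \sin\psi)$ be an eigenvector and eliminating the eigenvalue yields $(p-s)\sin\psi\cos\psi = q(\cos^2\psi - \sin^2\psi)$, i.e. $\tan 2\psi = \frac{2q}{p-s}$. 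Substituting $p = a^2+c^2$, $q = ab+cd$, $s = b^2+d^2$ gives the stated formula. I would then remark that this identity is unchanged under $\psi \mapsto \psi + \pi/2$ (the doubled angle shifts by $\pi$), which is precisely why the same equation is satisfied by both the expanding and the contracting direction, and I would dispose of the degenerate case $a^2+c^2 = b^2+d^2$ separately: there $\cos 2\theta_F = 0$, so $\theta_F = \pi/4$ in $\P^1$, consistent with the conventions $1/0 = \infty$ and $\tan(\pi/2) = \infty$ — and here $\|F\| > 1$ forces $G$ non-scalar, so $q = ab+cd \ne 0$ and the right-hand side $2q/(p-s)$ is genuinely $\pm\infty$.

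I do not expect a real obstacle: the whole argument is two-dimensional and explicit. The only points that require a little care are bookkeeping ones — correctly pairing the smaller eigenvalue $\ld^{-2}$ of $G$ with the contracting (rather than the expanding) direction, and reading off the $\tan$-of-double-angle formula consistently with the $\P^1 = \R/\pi\Z$ conventions in the degenerate cases.
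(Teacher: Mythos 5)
Your proof is correct and is essentially the same as the paper's: both reduce everything to the symmetric matrix $F^{T}F$, reading off part (b) from its trace and part (a) from the fact that the contracting direction extremizes the Rayleigh quotient (the paper finds the critical point of $\theta\mapsto\|F(\theta)\|^2$ by differentiation, you equivalently solve the eigenvector equation for the smaller eigenvalue — these are the same computation). Your explicit remark that the formula in (a) is invariant under $\theta\mapsto\theta+\pi/2$, and hence is satisfied by both the expanding and contracting directions, is a point the paper leaves implicit and is worth noting.
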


\begin{proof}
Part (a) is a straightforward computation by considering the function
$$ f(\theta) := \| F(\theta) \|^2  = \left\| \begin{pmatrix} a & b \\ c & d \end{pmatrix} \begin{pmatrix} \cos \theta \\ \sin \theta \end{pmatrix} \right\|^2. $$
Notice that for $\| F \| > 1$, $f$ is not a constant function, and the expanding and contracting directions are precisely the critical points of $f$, i.e. when $f'(\theta) = 0$. 

For part (b), we observe that 
$$ \tr (F^T F) = a^2 + b^2 + c^2 + d^2. $$
On the other hand, if we write $F = r_{-\varphi} a_\ld r_\theta$, then 
$$ F^T F = (r_{-\theta} a_\ld r_{\varphi})(r_{-\varphi} a_\ld r_\theta) = r_{-\theta} a_\ld^2 r_\theta. $$
Hence its trace equals $\ld^2 + \ld^{-2}$. 
\end{proof}

We also find it helpful to think of each $F \in SL_2(\R)$ as a point of the unit tangent bundle of the hyperbolic plane in the disk model $T^1 \bb{D}$, using the identification $T^1 \bb{D} \leftrightarrow PSL_2(\R) := SL_2(\R) / \{\pm I\}$ (Figure \ref{hyperbolic}). Recall that the group $PSL_2(\R)$ is the group of orientation-preserving isometries of the hyperbolic plane $\bb{H}^2 := \{z \in \C: \Im(z) > 0\}$, which can be identified isometrically with the hyperbolic disk $\bb{D} := \{w \in \C: |w| < 1\}$ via the map $z \mapsto (z - i) / (z+i)$. $PSL_2(\R)$ acts simply transitively on the unit tangent bundle $T^1 \bb{D}$, hence one can identify $PSL_2(\R)$ with $T^1\bb{D}$ so that the identity element $e$ corresponds to the unit vector based at the origin pointing rightward. Moreover the identification is such that the isometry $g$ on $T^1 \bb{D}$ corresponds to the \emph{right} multiplication by the inverse $g^{-1}$ on $PSL_2(\R)$. We visualize the base point on the disk model $\bb{D} \leftrightarrow SO(2) \backslash SL_2(\R)$. For instance, the matrix $F = r_{-\varphi} a_\ld r_\theta \in SL_2(\R)$ corresponds to the point $P_F \in \bb{D}$ with polar coordinates $\left( 2 \log \ld, 2\theta \right)$ (the first coordinate measured in hyperbolic distance) and the unit tangent vector with angle $2(\theta - \varphi)$ from the positive real axis.

%\begin{center}
%\includegraphics[scale=0.5]{hyperbolic.jpg}
%\caption{Elements in $SL_2(\R)$ thought of as points on the hyperbolic disk $\bb{D}$}
%\end{center}  

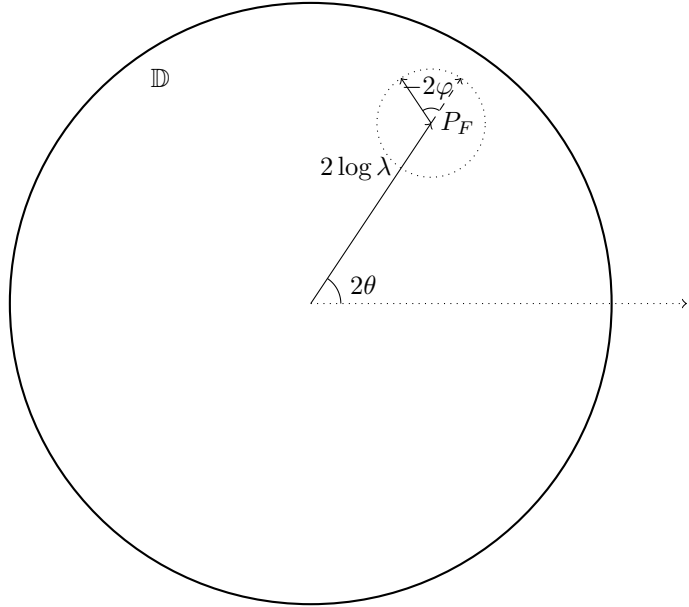
\begin{figure}[h]
\centering
\begin{tikzpicture}
\draw [thick] (0,0) circle [radius=4];
\draw [dotted, ->] (0, 0) -- (5, 0);
\draw [->] (0, 0) -- (1.6, 2.4);
\draw [->] (1.6, 2.4) -- (1.2, 3);
\draw [dashed, ->] (1.6, 2.4) -- (2, 3);
\draw [domain=0:56.31] plot ({0.4*cos(\x)}, {0.4*sin(\x)});
\draw [domain=56.31:123.69] plot ({1.6+0.2*cos(\x)}, {2.4+0.2*sin(\x)});
\draw [dotted] (1.6, 2.4) circle [radius=0.72];
\node [above] at (0.7, 0) {$2 \theta$};
\node [above] at (1.55, 2.6) {$-2 \varphi$};
\node [right] at (1.6, 2.4) {$P_F$};
\node [left] at (1.2, 1.8) {$2 \log \ld$};
\node at (-2, 3) {$\bb{D}$};
\end{tikzpicture}
\caption{The matrix $F = r_{-\varphi} a_\ld r_\theta \in SL_2(\R)$ in the hyperbolic disk}
\label{hyperbolic}
\end{figure}

Hence one can read off the norm of $F$ from the distance between $P_F$ and the origin, and read off the contracting direction from the angle from the positive axis. 

Now we relate this picture with uniform expansion. From now on, we assume that $\mu$ is finitely supported, so that the uniform expansion condition reduces to a finite sum. For simplicity, for the moment we also assume that the maps in the support of $\mu$ have the same mass. Let $\O := \{f_1, f_2, \ldots, f_d\} \subset \Diff^2(M)$ be the support of $\mu$. Then $\mu$ is uniformly expanding if there exists $C > 0$ and $N \in \N$ such that for all $x \in M$ and $v \in T_x M$, 
$$ \sum_{\og \in \O^N} \log \frac{\| D_x f_\og^N(v)\|}{\| v \|} > C. $$
Here we recall that for $\og = (\og_1, \og_2, \ldots, \og_N) \in \O^N$ and $1 \leq i \leq N$, $f_\og^i := \og_i \circ \og_{i-1} \circ \cdots \circ \og_1$. Note that by picking a measurably varying basis for the tangent bundle $TM$, we can identify $D_x f_\og^N$ as an element in $SL_2(\R)$. Note that if $\theta \in \P^1$ is the contracting direction of $D_x f_\og^N$, then $\log \| D_x f_\og^N(\theta) \| < 0$. In particular if for some $x \in M$, $\theta \in \P^1$ is close to the contracting direction of $D_x f_\og^N$ for many words $\og \in \O^N$, then uniform expansion cannot hold. Hence verifying uniform expansion amounts to checking that the contracting directions of $D_x f_\og^N$ are ``spread out'' enough. On the hyperbolic disk, for each $x \in M$, we can draw the matrices $D_x f_\og^N$ as endpoints of a tree from the origin, where each node with graph distance $i$ from the origin corresponds to a matrix $D_x f_\og^i$ (Figure \ref{basictree}, the dashed lines indicate the contracting directions of $D_x f_\og^N$ for $N = 3$). Hence verifying uniform expansion reduces to studying the geometry of the contracting directions. 

\begin{figure}[ht]
\centering
\begin{tikzpicture}
\draw [thick] (0,0) circle [radius=4];
%\draw [dotted, ->] (0, 0) -- (5, 0);
\draw [->] (0, 0) -- (1, 1);
\draw [->] (0, 0) -- (-1.5, -1);
\draw [->] (1, 1) -- (2, 0.8);
\draw [->] (1, 1) -- (0.9, 3);
\draw [->] (-1.5, -1) -- (-2.5, -0.6);
\draw [->] (-1.5, -1) -- (-0.5, -3);
\draw [->] (2, 0.8) -- (2.6, 0.6);
\draw [->] (2, 0.8) -- (2.6, 1.5);
\draw [->] (0.9, 3) -- (1.3, 2.8);
\draw [->] (0.9, 3) -- (0.5, 3.7);
\draw [->] (-2.5, -0.6) -- (-3, 1);
\draw [->] (-2.5, -0.6) -- (-3, -1.8);
\draw [->] (-0.5, -3) -- (1, -2.5);
\draw [->] (-0.5, -3) -- (-1.7, -2.4);
\draw [line width = 0.05mm, dashed, ->] (0, 0) -- (4.385, 1.01);
\draw [line width = 0.05mm, dashed, ->] (0, 0) -- (3.898, 2.25);
\draw [line width = 0.05mm, dashed, ->] (0, 0) -- (1.895, 4.08);
\draw [line width = 0.05mm, dashed, ->] (0, 0) -- (0.6, 4.46);
\draw [line width = 0.05mm, dashed, ->] (0, 0) -- (-4.27, 1.423);
\draw [line width = 0.05mm, dashed, ->] (0, 0) -- (-3.86, -2.315);
\draw [line width = 0.05mm, dashed, ->] (0, 0) -- (1.671, -4.178);
\draw [line width = 0.05mm, dashed, ->] (0, 0) -- (-2.601, -3.672);
\draw [fill] (2.6, 0.6) circle [radius=0.07];
\draw [fill] (2.6, 1.5) circle [radius=0.07];
\draw [fill] (1.3, 2.8) circle [radius=0.07];
\draw [fill] (0.5, 3.7) circle [radius=0.07];
\draw [fill] (-3, 1) circle [radius=0.07];
\draw [fill] (-3, -1.8) circle [radius=0.07];
\draw [fill] (1, -2.5) circle [radius=0.07];
\draw [fill] (-1.7, -2.4) circle [radius=0.07];
%\draw [line width = 0.05mm, dashed, ->] (0, 0) -- (2.6, 0.6);
%\draw [line width = 0.05mm, dashed, ->] (0, 0) -- (2.6, 1);
%\draw [line width = 0.05mm, dashed, ->] (0, 0) -- (1.3, 2.8);
%\draw [line width = 0.05mm, dashed, ->] (0, 0) -- (0.5, 3.7);
%\draw [line width = 0.05mm, dashed, ->] (0, 0) -- (-3, 1);
%\draw [line width = 0.05mm, dashed, ->] (0, 0) -- (-3, -1.8);
%\draw [line width = 0.05mm, dashed, ->] (0, 0) -- (1, -2.5);
%\draw [line width = 0.05mm, dashed, ->] (0, 0) -- (-1.7, -2.4);
\end{tikzpicture}
\caption{The tree representing the random walk after $3$ steps}
\label{basictree}
\end{figure}
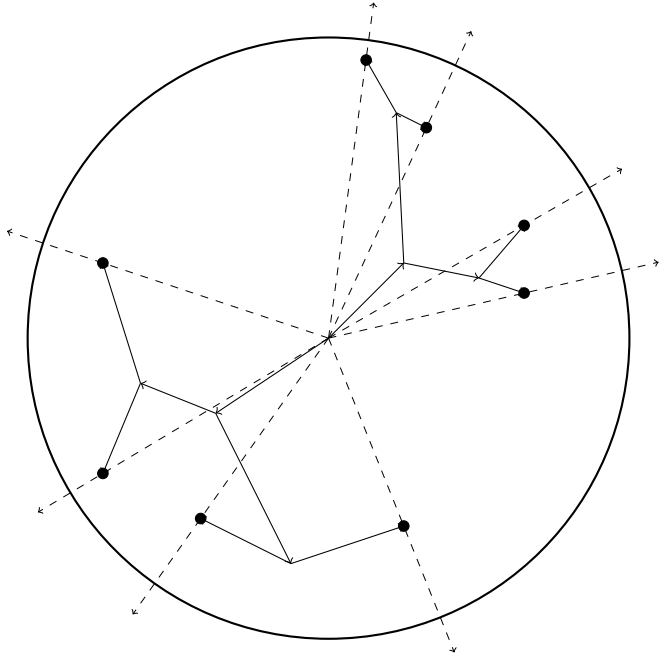

\subsection{Estimates on changes of the contracting directions}

The following lemma provides a lower bound on the expansion of a given matrix $F \in SL_2(\R)$ in the direction $\theta$, depending on the norm of $F$ and how far $\theta$ is from the contracting direction of $F$.
\begin{lemma} \label{distance}
For all $F \in SL_2(\R)$ with norm $\| F \| > 1$ and contracting direction $\theta_F \in \P^1$, we have 
$$ \| F(\theta) \| \geq \frac{2}{\pi} \| F \| \cdot d(\theta, \theta_F) \quaddd \text{ for all } \theta \in \P^1. $$
Here we recall that the metric $d$ on $\P^1$ is given by the identification $\P^1 \leftrightarrow \R / \pi \Z$. 
\end{lemma}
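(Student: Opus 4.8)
The plan is to diagonalize $F$ through its Cartan decomposition and reduce the estimate to Jordan's inequality $\sin t \ge \tfrac{2}{\pi} t$ on $[0, \pi/2]$.

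First I would write $F = r_{-\varphi} a_\ld r_{\theta_0}$ with $\ld = \|F\| > 1$, so that the contracting direction is $\theta_F = \theta_0 + \pi/2 \in \P^1$ (equivalently $\theta_0 = \theta_F - \pi/2$ is the expanding direction). For an arbitrary $\theta \in \P^1$, since $r_{\theta_0}$ and $r_{-\varphi}$ are Euclidean isometries and $r_{\theta_0}(\cos\theta,\sin\theta)^T = (\cos(\theta-\theta_0),\, \sin(\theta-\theta_0))^T$, a direct computation gives
$$ \|F(\theta)\|^2 = \big\| a_\ld\, r_{\theta_0}(\cos\theta,\sin\theta)^T \big\|^2 = \ld^2 \cos^2(\theta - \theta_0) + \ld^{-2}\sin^2(\theta - \theta_0). $$
Discarding the nonnegative second summand yields $\|F(\theta)\| \ge \ld\,|\cos(\theta - \theta_0)|$.

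The next step is purely trigonometric bookkeeping: I would set $\gamma := \theta - \theta_F$ and reduce it modulo $\pi$ into $(-\pi/2, \pi/2]$. Under the identification $\P^1 \leftrightarrow \R/\pi\Z$ one then has $d(\theta, \theta_F) = |\gamma| \in [0, \pi/2]$, while $\theta - \theta_0 = \gamma + \pi/2$ gives $\cos(\theta-\theta_0) = -\sin\gamma$, hence $|\cos(\theta - \theta_0)| = \sin|\gamma| = \sin d(\theta, \theta_F)$. Therefore $\|F(\theta)\| \ge \ld\,\sin d(\theta, \theta_F)$.

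Finally, $\sin t \ge \tfrac{2}{\pi} t$ for all $t \in [0, \pi/2]$, which is immediate from the concavity of $\sin$ on $[0, \pi/2]$ — the graph lies above the chord joining $(0,0)$ to $(\pi/2, 1)$. Combining the two displays gives $\|F(\theta)\| \ge \tfrac{2}{\pi}\ld\, d(\theta, \theta_F) = \tfrac{2}{\pi}\|F\|\,d(\theta, \theta_F)$, as desired. There is no substantive obstacle in the argument; the only point that demands care is the identification $\P^1 \cong \R/\pi\Z$, i.e. checking that the relation $\theta_F = \theta_0 + \pi/2$ together with $d(\theta,\theta_F) \le \pi/2$ produces exactly $|\cos(\theta - \theta_0)| = \sin d(\theta, \theta_F)$.
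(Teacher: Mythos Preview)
Your proof is correct and follows exactly the route the paper sketches: reduce via the Cartan decomposition to the diagonal case and then carry out the ``direct calculation,'' which amounts precisely to the bound $\|F(\theta)\| \ge \lambda\,|\cos(\theta-\theta_0)| = \lambda\,\sin d(\theta,\theta_F)$ followed by Jordan's inequality. You have simply filled in the computation the paper leaves implicit.
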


\begin{proof}
By the Cartan decomposition one may assume that $F$ is a diagonal matrix with entries $\ld$ and $\ld^{-1}$, with $\ld = \| F \|$. The lemma now follows from a direct calculation.
\end{proof}

For matrices $M_1, M_2 \in SL_2(\R)$, the following lemma shows that if $M_2$ has large norm $\ld_2$, then as long as the contracting direction of $M_1$ is far away from the contracting direction of $M_2^{-1}$, as we vary the contracting direction of $M_1$, the contracting direction of the product $M_1 M_2$ changes by $1/\ld_2^2$ of that amount. 

\begin{lemma} \label{basicangleestimate}
Let $M_1, M_2 \in SL_2(\R)$. Let $\ld_i = \| M_i \| > 1$ for $i = 1, 2$ and $\varphi = \theta_{M_1} + \pi/2 - \theta_{M_2^{-1}}$, i.e. $\varphi$ is the distance between the contracting direction of $M_1$ and the expanding direction of $M_2^{-1}$.
\begin{enumerate}[label=(\alph*)]
\item If $\| M_1 M_2 \| > 1$, then 
$$ \frac{d\theta_{M_1 M_2}}{d\theta_{M_2}} = 1, $$
where we treat $\theta_{M_1 M_2}$ as a function of $\theta_{M_2}$ by fixing $M_1$, $\theta_{M_2^{-1}}$ and $\ld_2$. 
\item If $\ld_2 \gg 1$ and $d(\varphi, \pi/2) \gtrsim \ld_2^{-1}$, then
$$ \frac{d\theta_{M_1 M_2}}{d\theta_{M_1}} \sim \frac{2(1 + k \cos 2\varphi)}{(k+\cos 2\varphi)^2} \frac{1}{\ld_2^2}, \quaddd \text{ where } \quadd k = \frac{\ld_1^2 + \ld_1^{-2}}{\ld_1^2 - \ld_1^{-2}} = 1 + \frac{2}{\ld_1^4 - 1}. $$
Here we treat $\theta_{M_1 M_2}$ as a function of $\theta_{M_1}$ by fixing $\theta_{M_1^{-1}}$, $\ld_1$ and $M_2$. Furthermore, if $\ld_1 \gg 1$ and $d(\varphi, \pi/2) \gtrsim \ld_1^{-1}$ as well, then
$$ \frac{d\theta_{M_1 M_2}}{d\theta_{M_1}} \sim \frac{2}{(1 + \cos 2\varphi)} \frac{1}{\ld_2^2}. $$
\end{enumerate}
\end{lemma}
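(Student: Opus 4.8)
The plan is to reduce everything to an explicit computation in $SL_2(\R)$ using the Cartan decomposition, exactly as in Lemma \ref{simple}. First I would normalize coordinates: by left-multiplying by a rotation (which does not change $\theta_{M_1 M_2}$ as an element of $\P^1$ in the relevant sense, only shifts the output frame) and right-multiplying similarly, I may assume $M_2 = r_{-\psi_2} a_{\ld_2} r_{\theta_{M_2}}$ with the rotations absorbed so that $M_2$ is essentially $a_{\ld_2}$ up to fixed rotations, and that $M_1 = r_{-\psi_1} a_{\ld_1} r_{\theta_{M_1}}$. The key point is that $M_1 M_2$ has the form $r_{-\psi_1} a_{\ld_1} r_{\theta} a_{\ld_2} r_{\theta_{M_2}}$ where the ``interaction angle'' is governed by $\theta - (-\psi_2)$, whose distance from $\pi/2$ (mod $\pi$) is exactly $d(\varphi, \pi/2)$ with $\varphi$ as defined. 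So the entire dependence of $\theta_{M_1 M_2}$ on the data passes through the middle product $a_{\ld_1} r_\beta a_{\ld_2}$ for an angle $\beta$ with $d(\beta, \cdot)$ controlled by $\varphi$.

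For part (a): conjugating/translating so that only $\theta_{M_2}$ varies while $\theta_{M_2^{-1}}$ and $\ld_2$ are fixed, the matrix $M_2$ changes by right-multiplication by a rotation $r_{\Delta \theta_{M_2}}$ acting purely on the ``input'' side. Since the contracting direction of $M_1 M_2$ is (by definition) the preimage under $M_1 M_2$ of the minimal-norm direction, and right-multiplying $M_1 M_2$ by $r_{\Delta}$ precisely rotates that preimage by $-\Delta$ — wait, more carefully: $\theta_{M_1 M_2}$ is the input contracting direction, and replacing $M_2 \mapsto M_2 r_{\Delta}$ replaces $M_1 M_2 \mapsto (M_1 M_2) r_{\Delta}$, whose contracting direction is $r_{-\Delta}$ applied to the old one; tracking the sign convention from Lemma \ref{simple}(a) gives $d\theta_{M_1 M_2}/d\theta_{M_2} = 1$. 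This is essentially a one-line observation once the bookkeeping of which rotation acts on which side is pinned down.

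For part (b): now $\theta_{M_1}$ varies with $\theta_{M_1^{-1}}$, $\ld_1$, $M_2$ fixed, so $M_1 = r_{-\psi_1} a_{\ld_1} r_{\theta_{M_1}}$ with $r_{-\psi_1} a_{\ld_1}$ fixed (since $\theta_{M_1^{-1}} = \psi_1$ and $\ld_1$ are fixed) and only the input rotation $r_{\theta_{M_1}}$ varies. Then $M_1 M_2 = (r_{-\psi_1} a_{\ld_1}) \, r_{\theta_{M_1}} \, M_2$, and I want $d\theta_{M_1 M_2}/d\theta_{M_1}$. Using Lemma \ref{simple}(a), $\tan 2\theta_{M_1 M_2}$ is a rational function of the entries of $M_1 M_2$, which are in turn smooth functions of $\theta_{M_1}$; differentiating and using $\ld_2 = \|M_2\| \gg 1$ (so $M_2 \approx \ld_2 \cdot (\text{rank-one projector onto expanding direction})$ up to lower-order $\ld_2^{-1}$ terms) gives the leading asymptotics. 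The factor $1/\ld_2^2$ appears because perturbing the input of the product perturbs the contracting direction by the inverse square of the dominant norm — this is the standard ``contracting directions move slowly under large matrices'' heuristic, here made quantitative. The constant $k = (\ld_1^2 + \ld_1^{-2})/(\ld_1^2 - \ld_1^{-2})$ is exactly $\tr(M_1^T M_1)/(\ld_1^2 - \ld_1^{-2})$ from Lemma \ref{simple}(b), and the trigonometric factor $2(1 + k\cos 2\varphi)/(k + \cos 2\varphi)^2$ comes from differentiating $\tan 2\theta$ of the product $a_{\ld_1} r_{\varphi}$ (the $M_2$-contribution reducing to a projection) with respect to the rotation angle. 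The further simplification when $\ld_1 \gg 1$ is just $k \to 1$. The hypothesis $d(\varphi, \pi/2) \gtrsim \ld_2^{-1}$ (resp. $\ld_1^{-1}$) is what prevents the denominator $k + \cos 2\varphi$ from degenerating — geometrically, it says the contracting direction of $M_1$ is not aligned with the expanding direction of $M_2^{-1}$, i.e. there is no catastrophic cancellation in the product.

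The main obstacle will be the trigonometric bookkeeping in part (b): correctly identifying which rotation angles are held fixed versus varied, tracking the $\pi$-ambiguities in $\P^1$, and controlling the $O(\ld_2^{-1})$ error terms uniformly so that the stated $\sim$ (asymptotic equivalence, presumably meaning ratio $\to 1$ as $\ld_2 \to \infty$ in the regime $d(\varphi,\pi/2) \gtrsim \ld_2^{-1}$) is actually justified rather than just the leading term of a formal expansion. Everything else is routine differentiation of explicit $2\times 2$ matrix entries. I would organize the computation by first reducing $M_2$ to its dominant rank-one approximation, computing the product's entries as functions of $\theta_{M_1}$, plugging into the formula $\tan 2\theta_{M_1M_2} = \frac{2(ab+cd)}{a^2+c^2-b^2-d^2}$ from Lemma \ref{simple}(a), differentiating implicitly, and then substituting $k$ from Lemma \ref{simple}(b) to recognize the closed form.
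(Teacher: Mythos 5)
Your proposal is correct and follows essentially the same route as the paper: for part (a), observe via Cartan decomposition that $\theta_{M_1M_2}$ and $\theta_{M_2}$ differ by a quantity depending only on the fixed data, so the derivative is $1$; for part (b), normalize to the product $a_{\ld_1} r_\varphi a_{\ld_2}$, identify $d\theta_{M_1M_2}/d\theta_{M_1}$ with $d\theta/d\varphi$, apply the explicit $\tan 2\theta$ (equivalently $\cot 2\theta$) formula of Lemma~\ref{simple}(a), differentiate, and pass to the $\ld_2 \gg 1$ (then $\ld_1 \gg 1$) asymptotics. One caution on your proposed organizational step of ``first reducing $M_2$ to its dominant rank-one approximation'': if you literally drop the $\ld_2^{-1}$ entry of $a_{\ld_2}$ before differentiating, the product $a_{\ld_1} r_\varphi a_{\ld_2}$ becomes rank one with contracting direction identically $\pi/2$, independent of $\varphi$, so you would compute $d\theta/d\varphi = 0$ and lose the answer; the $1/\ld_2^2$ factor you want lives precisely in the interaction between the $\ld_2$ and $\ld_2^{-1}$ terms, so you must keep $a_{\ld_2}$ exact, differentiate the exact expression (as the paper does, via $\cot 2\theta = \tfrac12(\ld_2^2+\ld_2^{-2})\cot 2\varphi + \tfrac12 k (\ld_2^2-\ld_2^{-2})\csc 2\varphi$), and only afterward send $\ld_2 \to \infty$.
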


\begin{proof}
For (a), write $M_2$ in its Cartan decomposition $M_2 = r_{-\varphi_2} a_{\ld_2} r_{\theta_2}$, and write $M_1 r_{-\varphi_2} a_{\ld_2}$ in its Cartan decomposition 
$$ M_1 r_{-\varphi_2} a_{\ld_2} = r_{-\varphi'} a_{\ld'} r_{\theta'}. $$
Then 
$$ M_1 M_2 = M_1 r_{-\varphi_2} a_{\ld_2} r_{\theta_2} = r_{-\varphi'} a_{\ld'} r_{\theta' + \theta_2}. $$
By the uniqueness of the Cartan decomposition (up to $\pm I$), we have $\theta_{M_1 M_2} = \theta_{M_2} + \theta'$, where $\theta'$ depends only on $M_1$, $\varphi_2 = \theta_{M_2^{-1}}$ and $\ld_2$, hence the result of (a). This statement can be visualized on the hyperbolic disk (Figure \ref{figurebasicangleestimate}).

\begin{figure}[ht]
\centering
\begin{tikzpicture}
\draw [thick] [domain=-5:185] plot ({6*cos(\x)}, {6*sin(\x)});
\draw [dotted, ->] (0, 0) -- (7, 0);
\draw [->] (0, 0) -- (3, 0);
\draw [->] (0, 0) -- (2.9, 0.7764);
\draw [->] (3, 0) -- (4, 4);
\draw [->] (2.9, 0.7764) -- (2.828, 4.9);
\draw [dashed, ->] (0, 0) -- (4, 4); 
\draw [dashed, ->] (0, 0) -- (2.828, 4.9); 
\draw [domain=0:15] plot ({1.5*cos(\x)}, {1.5*sin(\x)});
\draw [domain=45:60] plot ({1.5*cos(\x)}, {1.5*sin(\x)});
\node [right] at (1.5, 0.25) {$d\theta_{M_2}$};
\node [above] at (1.4, 1.2) {$d\theta_{M_1 M_2}$};
\node [below] at (1.5, 0) {$2 \log \ld_2$};
\node [right] at (2.75, 4.3) {$2 \log \ld_1$};
\node [right] at (3.65,  2.6) {$2 \log \ld_1$};
\node [above] at (-4.4, 4.4) {$\bb{D}$};
\end{tikzpicture}
\caption{The change of $\theta_{M_1 M_2}$ as $\theta_{M_2}$ varies.}
\label{figurebasicangleestimate}
\end{figure}
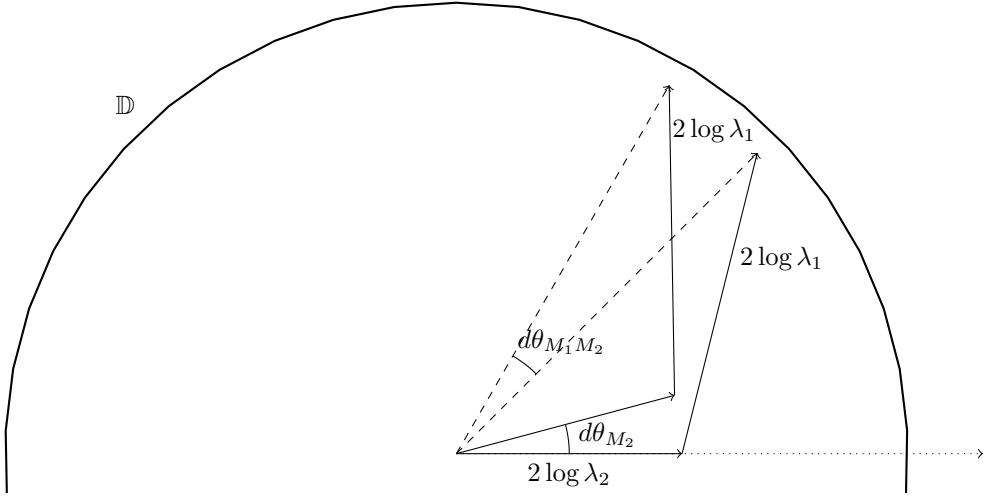

For (b), the assumptions $d(\varphi, \pi/2) \gtrsim \ld_2^{-1}$ and $\ld_1, \ld_2 > 1$ imply that $\| M_1 M_2 \| > 1$. Thus $\theta_{M_1 M_2}$ is well-defined. By applying the Cartan decomposition, we may, without loss of generality, assume that $\theta := \theta_{M_1 M_2}$ is the contracting direction of
$$ \begin{pmatrix} \ld_1 & 0 \\ 0 & \ld_1^{-1} \end{pmatrix} \begin{pmatrix} \cos \varphi & \sin \varphi \\ -\sin \varphi & \cos \varphi \end{pmatrix} \begin{pmatrix} \ld_2 & 0 \\ 0 & \ld_2^{-1} \end{pmatrix}. $$
Note that
$$ \frac{d\theta_{M_1 M_2}}{d\theta_{M_1}} = \frac{d\theta}{d\varphi}. $$
The statement can be illustrated on the hyperbolic disk (Figure \ref{figurebasicangleestimate2}). 

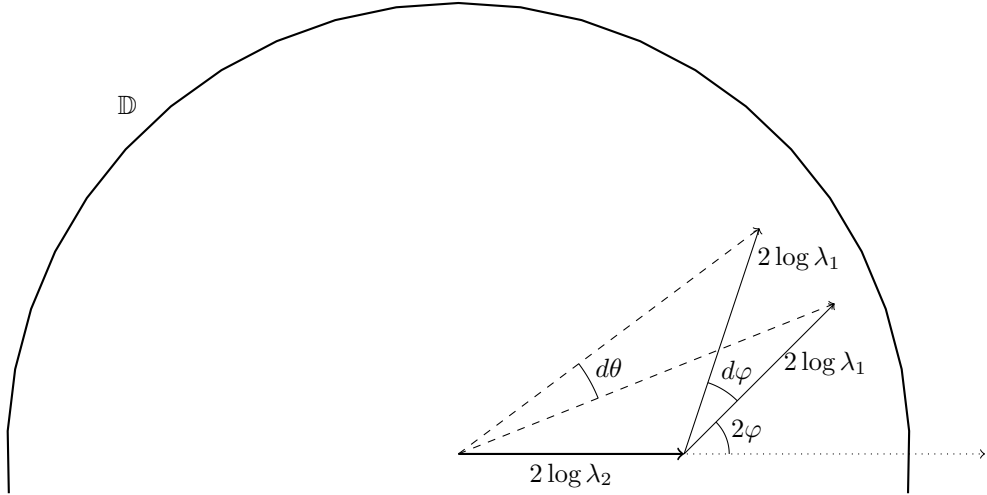
\begin{figure}[ht]
\centering
\begin{tikzpicture}
\draw [thick] [domain=-5:185] plot ({6*cos(\x)}, {6*sin(\x)});
\draw [dotted, ->] (0, 0) -- (7, 0);
\draw [->, thick] (0, 0) -- (3, 0);
\draw [->] (3, 0) -- (4, 3);
\draw [->] (3, 0) -- (5, 2);
\draw [dashed, ->] (0, 0) -- (4, 3); 
\draw [dashed, ->] (0, 0) -- (5, 2); 
\draw [domain=21.8:36.87] plot ({2*cos(\x)}, {2*sin(\x)});
\draw [domain=45:71.565] plot ({3+cos(\x)}, {sin(\x)});
\draw [domain=0:45] plot ({3+0.6*cos(\x)}, {0.6*sin(\x)});
\node [above] at (2, 0.9) {$d\theta$};
\node [above] at (3.7, 0.8) {$d\varphi$};
\node [below] at (1.5, 0) {$2 \log \ld_2$};
\node [right] at (4.2, 1.2) {$2 \log \ld_1$};
\node [right] at (3.85,  2.6) {$2 \log \ld_1$};
\node [right] at (3.5, 0.3) {$2 \varphi$};
\node [above] at (-4.4, 4.4) {$\bb{D}$};
\end{tikzpicture}
\caption{The change of $\theta$ as $\varphi$ varies.}
\label{figurebasicangleestimate2}
\end{figure}

Using Lemma \ref{simple}(a), one computes directly that
$$ \cot 2\theta = \frac{1}{2} (\ld_2^2 + \ld_2^{-2}) \cot 2\varphi + \frac{1}{2} \frac{(\ld_1^2 + \ld_1^{-2})(\ld_2^2 - \ld_2^{-2})}{\ld_1^2 - \ld_1^{-2}} \csc 2\varphi. $$
Hence upon taking derivative, one gets
$$ \frac{d\theta}{d\varphi} = \frac{\frac{(\ld_1^2 + \ld_1^{-2})(\ld_2^2 - \ld_2^{-2})}{\ld_1^2 - \ld_1^{-2}} \cos 2\varphi + (\ld_2^2 + \ld_2^{-2})}{2 \sin^2 2\varphi + \frac{1}{2} \left( \frac{(\ld_1^2 + \ld_1^{-2})(\ld_2^2 - \ld_2^{-2})}{\ld_1^2 - \ld_1^{-2}} + (\ld_2^2 + \ld_2^{-2}) \cos 2\varphi \right)^2 }. $$
Thus for $\ld_2 \gg 1$, let $k = (\ld_1^2 + \ld_1^{-2})/(\ld_1^2 - \ld_1^{-2})$, then
$$ \frac{d\theta}{d\varphi} \sim \frac{2(1 + k \cos 2\varphi)}{(k+\cos 2\varphi)^2} \frac{1}{\ld_2^2}. $$
In addition, by taking $\ld_1 \gg 1$, we have $k \sim 1$, so
$$ \frac{d\theta}{d\varphi} \sim \frac{2}{(1+\cos 2\varphi)} \frac{1}{\ld_2^2}. $$
It is clear from Figure \ref{figurebasicangleestimate2} that when $\varphi$ is close to $\pi/2$, the random walk ``backtracks'' towards the origin, so we do not expect a good estimate on $d\theta / d\varphi$.
\end{proof}

\subsection{A general criterion for uniform expansion}

We finish this section with a sufficient condition for uniform expansion on one step of the random dynamics. As mentioned in the introduction, this criterion illustrates that overlap of contraction directions and maps close to rotations are essentially the two obstructions to uniform expansion. Even though we will not use this criterion in the rest of the paper, one may consider the verification in the next few sections as proving a more refined version of Proposition \ref{generalcriterion} (depending on the specific features of each application) and the verification of this more refined criterion.

Given $F \in SL_2(\R)$, recall that we define $\ld_F := \| F \|$ to be the norm of $F$ with $\ld_F > 1$, and $\theta_{F} \in \P^1$ to be the contracting direction. 
\begin{proposition} \label{generalcriterion}
For all $\ld_{\mathrm{crit}} > 0$, $\ld_{\mathrm{max}} > 0$ and small enough $\ep > 0$ satisfying $\ds\frac{1}{\sin \ep} \sqrt{2 + \frac{1}{\ep}} < \ld_{\mathrm{crit}} \leq \ld_{\mathrm{max}}$, there exists $\eta = \eta(\ld_{\mathrm{crit}}, \ld_{\mathrm{max}}, \ep) \in (0, 1)$ such that if for all $(x, \theta) \in T^1 M$, 
$$ \mu(\{f: d(\theta_{D_xf}, \theta) > \ep \text{ and } \ld_{D_xf} > \ld_{\mathrm{crit}}\}) > \eta, \quad \text{ and } \quad \ld_{D_xf} \leq \ld_{\mathrm{max}} \quad \text{ for } \mu\text{-a.s. } f, $$
then $\mu$ is uniformly expanding. Furthermore, $\eta$ can be made explicit. 
\end{proposition}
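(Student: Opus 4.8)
The plan is to show that under the stated hypotheses $\mu$ is already uniformly expanding with $N=1$; that is, to produce a constant $C>0$, independent of $(x,v)$, with $\int_{\Diff^2(M)}\log\|D_xf(v)\|\,d\mu(f)>C$ for every $(x,v)\in T^1M$. As in Section~\ref{gcriterion} we regard each $D_xf$ as an element of $SL_2(\R)$, so that $\ld_{D_xf}=\|D_xf\|\ge 1$ and the contracting direction $\theta_{D_xf}\in\P^1$ is defined whenever $\ld_{D_xf}>1$; note $\ld_{\mathrm{max}}\ge 1$, so $B:=\log\ld_{\mathrm{max}}\ge 0$. Fix $(x,v)\in T^1M$ and let $\theta\in\P^1$ be the line through $v$. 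Put
$$ G \;:=\; \bigl\{\, f : d(\theta_{D_xf},\theta) > \ep \ \text{ and }\ \ld_{D_xf} > \ld_{\mathrm{crit}} \,\bigr\}, $$
so that $\mu(G)>\eta$ by hypothesis, while $\ld_{D_xf}\le\ld_{\mathrm{max}}$ holds for $\mu$-a.e.\ $f$.

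First I would bound $\log\|D_xf(v)\|$ on the two pieces of this partition. On $G$: the Cartan-decomposition computation behind Lemma~\ref{distance} in fact gives the sharper inequality $\|F(\theta)\|\ge\|F\|\sin d(\theta,\theta_F)$ (valid whenever $\|F\|>1$), so for $f\in G$ we get $\|D_xf(v)\|\ge\ld_{\mathrm{crit}}\sin\ep$, and the standing inequality $\tfrac{1}{\sin\ep}\sqrt{2+\tfrac1\ep}<\ld_{\mathrm{crit}}$ upgrades this to $\|D_xf(v)\|^2>2+\tfrac1\ep$, i.e.\ $\log\|D_xf(v)\|>A$ with $A:=\tfrac12\log\!\bigl(2+\tfrac1\ep\bigr)>0$. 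On $G^c$: since for any $F\in SL_2(\R)$ and any unit vector $v$ one has $\|Fv\|\ge\|F\|^{-1}$, we get $\mu$-a.s.\ $\log\|D_xf(v)\|\ge-\log\ld_{D_xf}\ge-B$. As $A>0\ge-B$ and $\mu(G)>\eta$, the two bounds combine to
$$ \int_{\Diff^2(M)}\log\|D_xf(v)\|\,d\mu(f) \;\ge\; A\,\mu(G) - B\,\bigl(1-\mu(G)\bigr) \;>\; (A+B)\,\eta - B. $$

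Finally I would choose $\eta$. The value $\eta:=\dfrac{A+2B}{2(A+B)}$ lies in $(0,1)$ precisely because $A>0$, and it makes the right-hand side above equal to $A/2$; hence $\mu$ is uniformly expanding with $N=1$ and $C=A/2=\tfrac14\log(2+1/\ep)$. Since $A$ depends only on $\ep$ and $B$ only on $\ld_{\mathrm{max}}$, this $\eta$ is explicit, which gives the final clause of the proposition. There is essentially no conceptual obstacle; the only point requiring care is the quantitative trade-off in the last display — the guaranteed one-step expansion $A$ on the large set $G$ must absorb the worst possible one-step contraction $-B$ on the small set $G^c$, which is exactly what forces $\eta$ to be taken close enough to $1$ (and is why the conclusion only asserts existence of a suitable $\eta$) — together with checking that the hypothesis relating $\ld_{\mathrm{crit}}$ and $\ep$ genuinely yields $A>0$ (indeed $A\to\infty$ as $\ep\to0$).
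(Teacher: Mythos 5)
Your proof is correct, and it takes a genuinely different and substantially shorter route than the paper's. The paper's proof is a multi-step ``non-backtracking'' argument: for each $(x,\theta)$ it considers a sequence $f_1,\ldots,f_n$ that satisfies the good conditions at every step, uses Lemma~\ref{expansion} to propagate norm growth $\ld_{f_n\cdots f_1}\ge\ld_{\mathrm{crit}}^n/m_0^{n-1}$ and Lemma~\ref{anglebound} to show the contracting direction drifts by less than $\ep/2$ along the composition, then invokes Lemma~\ref{distance}; since the probability of such a sequence is only bounded below by $\eta^n$, the paper must take $n$ large and then push $\eta$ toward $1$ to win. You short-circuit all of this by observing that the hypothesis $\tfrac{1}{\sin\ep}\sqrt{2+\tfrac1\ep}<\ld_{\mathrm{crit}}$ already forces \emph{one-step} expansion on the set $G$: with the sharp form $\|F(\theta)\|\ge\|F\|\sin d(\theta,\theta_F)$ of Lemma~\ref{distance} (which is indeed what the Cartan computation gives, and dominates the $\tfrac{2}{\pi}$-version since $\sin t\ge\tfrac{2}{\pi}t$ on $[0,\pi/2]$), one gets $\log\|D_xf(v)\|>A:=\tfrac12\log(2+\tfrac1\ep)>0$ on $G$ outright, and the trivial $\|Fv\|\ge\|F\|^{-1}$ handles $G^c$. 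Uniform expansion with $N=1$ is then a single convexity estimate. Your argument is more elementary (it uses none of Lemmas~\ref{expansion}, \ref{anglebound}, or the non-backtracking bookkeeping), produces a cleaner and smaller explicit $\eta=\tfrac{A+2B}{2(A+B)}$, and in fact only needs the weaker constraint $\ld_{\mathrm{crit}}\sin\ep>1$; the paper's multi-step version, by contrast, is deliberately written to exhibit the geometric picture of contracting directions spreading out along a tree in $\mathbb{D}$, which is the mechanism exploited in the later verifications (Sections~\ref{smap} and~\ref{cvariety}). One cosmetic point: the display gives $\int>A/2$ for each $(x,v)$, with the strict inequality coming from $\mu(G)>\eta$, so to exhibit a single uniform constant you should take, say, $C=A/4$ rather than $C=A/2$, since the infimum over $(x,v)$ need not be strictly greater than $A/2$ without a compactness/continuity argument.
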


We think of $\ep$ as measuring the separation of the contracting directions at each point $x \in M$, $\ld_{\mathrm{crit}}$ as measuring how far $D_x f$ is from a rotation, and $\ld_{\mathrm{max}}$ as the maximum norm over all the points $x \in M$ and all the possible maps $f$ in the support of $\mu$. 

The idea of the proposition is that if at every point, the contracting directions of the diffeomorphisms are spread out enough and most of the diffeomorphisms are far from being a rotation, then with high probability the random walk does not backtrack. Lemma \ref{basicangleestimate}(b) and the next two lemma then tell us that the contracting directions of the random walk will eventually be spread out as well. In this case, as long as none of the norms dominate the others (bounded by $\ld_{\mathrm{max}}$), we can obtain uniform expansion. In particular, as we will see, $\eta$ is an increasing function of $\ld_{\mathrm{max}}$ and a decreasing function of $\ld_{\mathrm{crit}}$ and $\ep$. 

\begin{lemma} \label{expansion}
Fix $m > 1$. Let $M_1, M_2 \in SL_2(\R)$. Let $\ld := \| M_1 \| > 1$ and $\tau := \| M_2 \| > 1$ be the norm of $M_1$ and $M_2$, $\varphi = \theta_{M_1} + \pi/2 - \theta_{M_2^{-1}}$ be the difference between the contracting direction of $M_1$ and the expanding direction of $M_2^{-1}$.  Then the norm of the product $M_1 M_2$ is at least $\ld \tau / m$ if and only if 
$$ \cos 2 \varphi \geq \frac{2((\ld \tau / m)^2 + (\ld \tau / m)^{-2})}{(\ld^2 - \ld^{-2})(\tau^2 - \tau^{-2})} - \frac{\ld^2 + \ld^{-2}}{\ld^2 - \ld^{-2}} \cdot \frac{\tau^2 + \tau^{-2}}{\tau^2 - \tau^{-2}}. $$
In particular, if $\ld > \sqrt{m}$, $\tau > \sqrt{m}$ and $|\cos \varphi| \geq 1/m$, then the norm of $M_1 M_2$ is at least $\ld \tau / m$. 
\end{lemma}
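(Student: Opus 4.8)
The plan is to reduce to the diagonal case via the Cartan decomposition and then read off $\|M_1 M_2\|$ from a single trace identity. Write $M_1 = r_{-\varphi_1} a_\ld r_{\theta_1}$ and $M_2 = r_{-\varphi_2} a_\tau r_{\theta_2}$. Since $r_{\theta_1} r_{-\varphi_2} = r_{\theta_1 - \varphi_2}$ and left/right multiplication by rotations preserves the operator norm,
$$ \| M_1 M_2 \| = \| a_\ld \, r_{\theta_1 - \varphi_2} \, a_\tau \|. $$
By the conventions recalled before Lemma \ref{simple}, $\theta_{M_1} = \theta_1 + \pi/2$ and $\theta_{M_2^{-1}} = \varphi_2$, so $\varphi = \theta_{M_1} + \pi/2 - \theta_{M_2^{-1}} \equiv \theta_1 - \varphi_2 \pmod{\pi}$; and since $r_{\psi + \pi} = -r_\psi$, the matrix $a_\ld r_{\theta_1 - \varphi_2} a_\tau$ equals $\pm a_\ld r_\varphi a_\tau$. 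Hence it suffices to study $P := a_\ld r_\varphi a_\tau \in SL_2(\R)$, and multiplying out,
$$ P = \begin{pmatrix} \ld\tau\cos\varphi & \ld\tau^{-1}\sin\varphi \\ -\ld^{-1}\tau\sin\varphi & \ld^{-1}\tau^{-1}\cos\varphi \end{pmatrix}. $$

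Next, writing $u := \| M_1 M_2 \| = \|P\|$ and applying Lemma \ref{simple}(b) (which also holds trivially when $\|P\| = 1$, since then $P$ is orthogonal),
$$ u^2 + u^{-2} = \tr(P^T P) = \cos^2\varphi\,(\ld^2\tau^2 + \ld^{-2}\tau^{-2}) + \sin^2\varphi\,(\ld^2\tau^{-2} + \ld^{-2}\tau^2). $$
Substituting $\cos^2\varphi = \tfrac12(1 + \cos 2\varphi)$ and $\sin^2\varphi = \tfrac12(1 - \cos 2\varphi)$, the $\varphi$-independent part collapses to $\tfrac12(\ld^2 + \ld^{-2})(\tau^2 + \tau^{-2})$ and the coefficient of $\cos 2\varphi$ to $\tfrac12(\ld^2 - \ld^{-2})(\tau^2 - \tau^{-2})$, which rearranges to
$$ \cos 2\varphi = \frac{2(u^2 + u^{-2}) - (\ld^2 + \ld^{-2})(\tau^2 + \tau^{-2})}{(\ld^2 - \ld^{-2})(\tau^2 - \tau^{-2})}. $$
Because $\ld, \tau > 1$, the denominator is positive, so this expresses $\cos 2\varphi$ as a strictly increasing function of $u^2 + u^{-2}$. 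Since $t \mapsto t^2 + t^{-2}$ is increasing on $[1, \infty)$ and every matrix in $SL_2(\R)$ has norm at least $1$, we have $u \geq 1$, and therefore (in the relevant range $\ld\tau/m \geq 1$) the inequality $u \geq \ld\tau/m$ is equivalent to $u^2 + u^{-2} \geq (\ld\tau/m)^2 + (\ld\tau/m)^{-2}$, which by the identity above is exactly the stated bound on $\cos 2\varphi$ after splitting the fraction into its two displayed terms.

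For the final assertion, assume $\ld, \tau > \sqrt m$ and $|\cos\varphi| \geq 1/m$; then $\ld\tau/m > 1$, so by the equivalence just proved it suffices to bound $u^2 + u^{-2}$ below by $(\ld\tau/m)^2 + (\ld\tau/m)^{-2}$. Set $A := \ld^2\tau^2 + \ld^{-2}\tau^{-2}$ and $B := \ld^2\tau^{-2} + \ld^{-2}\tau^2$, so that $A - B = (\ld^2 - \ld^{-2})(\tau^2 - \tau^{-2}) \geq 0$, $B \geq 2$ by AM--GM, and $u^2 + u^{-2} = B + \cos^2\varphi\,(A - B) \geq B + m^{-2}(A - B)$. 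A short computation gives
$$ B + m^{-2}(A - B) - \big( (\ld\tau/m)^2 + (\ld\tau/m)^{-2} \big) = \frac{m^2 - 1}{m^2}\left( B - \frac{m^2 + 1}{\ld^2\tau^2} \right), $$
and the right-hand side is positive because $m > 1$ and $B \geq 2 > 1 + m^{-2} > (m^2+1)/(\ld^2\tau^2)$, using $\ld^2\tau^2 > m^2$. Hence $u^2 + u^{-2} > (\ld\tau/m)^2 + (\ld\tau/m)^{-2}$, and so $u \geq \ld\tau/m$.

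The computations here are routine trigonometry and algebra. The points needing a little care are: (i) matching the angle $\varphi$ of the statement with the rotation angle left over after the Cartan reduction, i.e. tracking the $\pi/2$-shifts in the contracting/expanding-direction conventions together with the sign ambiguity of $r_\psi$; and (ii) the passage from the trace identity back to a norm inequality, which relies on $\|M_1 M_2\| \geq 1$ together with $\ld\tau/m \geq 1$ — the regime in which the asserted lower bound is meaningful, and which in particular holds under the hypotheses of the final assertion.
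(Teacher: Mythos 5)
Your proof is correct and follows essentially the same route as the paper: reduce to $a_\ld r_\varphi a_\tau$ via the Cartan decomposition, apply the trace identity $\|P\|^2 + \|P\|^{-2} = \tr(P^T P)$, and observe that the left side of the resulting inequality is increasing in $\cos^2\varphi$. Where the paper says "one can verify directly that the inequality holds when $\cos^2\varphi = 1/m^2$," you actually carry out that verification, reducing it to the clean identity $B + m^{-2}(A-B) - \bigl((\ld\tau/m)^2 + (\ld\tau/m)^{-2}\bigr) = \tfrac{m^2-1}{m^2}\bigl(B - \tfrac{m^2+1}{\ld^2\tau^2}\bigr)$ and noting $B \geq 2 > (m^2+1)/(\ld^2\tau^2)$; you also track the $\pi/2$-shifts in the direction conventions and flag the implicit hypothesis $\ld\tau/m \geq 1$ needed for the "if and only if" (which the paper leaves tacit, and which holds automatically in the final assertion). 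These are elaborations of the same argument, not a different one.
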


\begin{proof}
The first equivalence is a calculation using the Cartan decomposition. Note that the norm $\ld$ of a matrix $\begin{pmatrix} a & b \\ c & d \end{pmatrix} \in SL_2(\R)$ is the unique root of 
$$ \ld^2  + \ld^{-2} = a^2 + b^2 + c^2 + d^2$$
with $\ld \geq 1$. In particular $\ld$ is an increasing function of $a^2 + b^2 + c^2 + d^2$. Now the norm of $M_1 M_2$ is the same as that of
$$ \begin{pmatrix} \ld & 0 \\ 0 & \ld^{-1} \end{pmatrix} \begin{pmatrix} \cos \varphi & \sin \varphi \\ -\sin\varphi & \cos \varphi \end{pmatrix} \begin{pmatrix} \tau & 0 \\ 0 & \tau^{-1} \end{pmatrix} = \begin{pmatrix} \ld \tau \cos \varphi & \ld \tau^{-1} \sin \varphi \\ -\ld^{-1} \tau \sin \varphi & \ld^{-1}\tau^{-1} \cos \varphi \end{pmatrix}. $$
Thus $\| M_1 M_2 \| \geq \ld \tau / m$ if and only if 
\begin{equation} \label{tedious}
 (\ld \tau \cos \varphi)^2 + (\ld \tau^{-1} \sin \varphi)^2 + (\ld^{-1} \tau \sin \varphi)^2 + (\ld^{-1}\tau^{-1} \cos \varphi)^2 \geq \left( \frac{\ld \tau}{m} \right)^2 + \left( \frac{\ld \tau}{m} \right)^{-2}. 
\end{equation}
Rearranging (\ref{tedious}) gives the first assertion.
Finally, the left hand side of (\ref{tedious}) is an increasing function of $\cos^2 \varphi$ for $\ld > 1$ and $\tau > 1$. One can verify directly that (\ref{tedious}) holds when $\ld > \sqrt{m}$, $\tau > \sqrt{m}$, $\cos^2 \varphi = 1/m^2$, therefore it also holds for $\cos^2 \varphi \geq 1/m^2$. 
%if $\ld > \sqrt{m}$ and $\tau > \sqrt{m}$, it can be checked directly that
%$$ \frac{2((\ld \tau / m)^2 + (\ld \tau / m)^{-2})}{(\ld^2 - \ld^{-2})(\tau^2 - \tau^{-2})} - \frac{\ld^2 + \ld^{-2}}{\ld^2 - \ld^{-2}} \cdot \frac{\tau^2 + \tau^{-2}}{\tau^2 - \tau^{-2}} \leq \frac{2}{m^2} - 1. $$
%Hence if $\cos 2\varphi \geq 2/m^2 - 1$ or equivalently $|\cos \varphi| \geq 1/m$, then the norm of $M_1M_2$ is at least $\ld \tau / m$. 
\end{proof}

The next lemma controls the contracting direction of $M_1 M_2$ assuming no backtracking.

\begin{lemma} \label{anglebound}
Fix $m > 1$ large (an explicit lower bound will be obtained in the proof). Let $M_1, M_2 \in SL_2(\R)$. Let $\ld := \| M_1 \| > 1$ and $\tau := \| M_2 \| > 1$, $\varphi = \theta_{M_1} + \pi/2 - \theta_{M_2^{-1}} \in \P^1 = \R/\pi\Z$ as in the previous lemma. If $|\cos \varphi| \geq 1/m$ and $\tau \geq m$, 
$$ d(\theta_{M_2}, \theta_{M_1 M_2}) \leq \frac{m^2}{\tau^2}. $$
If we further assume that $\tau \geq \sqrt{2}m$, the conclusion holds for all $m > 1$.
\end{lemma}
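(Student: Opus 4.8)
The plan is to normalise $M_1,M_2$ via the Cartan decomposition, observe that feeding the contracting direction of $M_2$ into the product $M_1M_2$ produces a short vector, and then apply Lemma~\ref{distance} to the product to force $\theta_{M_1M_2}$ to lie near $\theta_{M_2}$. Concretely, first I would reduce to a normal form exactly as in the proof of Lemma~\ref{basicangleestimate}: left-multiplying $M_1$ by a rotation and right-multiplying $M_2$ by a rotation changes neither $\|M_1M_2\|$ nor the quantity $d(\theta_{M_2},\theta_{M_1M_2})$, and after such modifications one may take $M_2=a_\tau$ and $M_1=a_\ld r_\varphi$ (up to a left rotation), so that $M_1M_2=Q$ with
$$ Q := \begin{pmatrix}\ld & 0\\ 0 & \ld^{-1}\end{pmatrix}\begin{pmatrix}\cos\varphi & \sin\varphi\\ -\sin\varphi & \cos\varphi\end{pmatrix}\begin{pmatrix}\tau & 0 \\ 0 & \tau^{-1}\end{pmatrix} = \begin{pmatrix}\ld\tau\cos\varphi & \ld\tau^{-1}\sin\varphi\\ -\ld^{-1}\tau\sin\varphi & \ld^{-1}\tau^{-1}\cos\varphi\end{pmatrix}. $$
In this normalisation $\theta_{M_2}=\pi/2$ (the direction of $e_2$), $\|Q\|=\|M_1M_2\|$, and $d(\theta_{M_2},\theta_{M_1M_2})=d(\pi/2,\theta_Q)$.

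Next I would record two elementary norm estimates. Since $Qe_2=\tau^{-1}(\ld\sin\varphi,\ \ld^{-1}\cos\varphi)$ and $\ld\ge1$,
$$ \|Q(\theta_{M_2})\| = \|Qe_2\| = \tfrac1\tau\sqrt{\ld^2\sin^2\varphi+\ld^{-2}\cos^2\varphi} \le \frac{\ld}{\tau}; $$
and since $Qe_1=\tau(\ld\cos\varphi,\ -\ld^{-1}\sin\varphi)$, using $\ld\ge1$ together with $|\cos\varphi|\ge 1/m$,
$$ \|Q\| \ge \|Qe_1\| = \tau\sqrt{\ld^2\cos^2\varphi+\ld^{-2}\sin^2\varphi} \ge \tau\ld|\cos\varphi| \ge \frac{\tau\ld}{m} > 1, $$
the last inequality because $\tau\ge m$ and $\ld>1$; in particular $\|Q\|>1$, so $\theta_Q$ is well defined. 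Then Lemma~\ref{distance} applied to $F=Q$ with the test direction $\theta=\theta_{M_2}$ yields
$$ d(\theta_{M_2},\theta_{M_1M_2}) = d(\pi/2,\theta_Q) \le \frac{\pi}{2}\cdot\frac{\|Q(\theta_{M_2})\|}{\|Q\|} \le \frac{\pi}{2}\cdot\frac{\ld/\tau}{\tau\ld/m} = \frac{\pi m}{2\tau^2}, $$
which is $\le m^2/\tau^2$ as soon as $m\ge\pi/2$; this is the explicit lower bound on $m$. For the last assertion, where $\tau\ge\sqrt2\,m$ is meant to replace the lower bound on $m$, I would instead bound the ratio more carefully: with $a=\ld^2$, $s=\sin^2\varphi$, $c=\cos^2\varphi$ one has $\|Qe_2\|^2/\|Q\|^2\le\|Qe_2\|^2/\|Qe_1\|^2=\tau^{-4}\,\frac{a^2 s+c}{a^2 c+s}$, and $a\mapsto\frac{a^2s+c}{a^2c+s}$ is monotone on $a\ge1$, so its supremum is $\max(s/c,1)\le\max(m^2-1,1)$; plugging this into Lemma~\ref{distance} and using $\tfrac{\pi^2}{4}(m^2-1)\le m^4$ (the quadratic in $m^2$ has negative discriminant) pushes the bound through for smaller $m$ in the regime $\tau\gtrsim m$. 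I expect the only nontrivial point here to be pinning down the sharp numerical constant, which I would finalise in the write-up.

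The one genuinely delicate step is the application of Lemma~\ref{distance}: a priori $\theta_{M_1M_2}$ could sit near the expanding direction of $Q$ rather than near $\theta_{M_2}$, but testing against $\theta=\theta_{M_2}$ settles this automatically — $M_2$ contracts $\theta_{M_2}$ by $\tau^{-1}$ while $M_1$ can stretch it by at most $\ld$, so $\|Q(\theta_{M_2})\|\le\ld/\tau$ is small, whereas the no-backtracking hypothesis $|\cos\varphi|\ge1/m$ keeps $\|Q\|\gtrsim\tau\ld/m$ large; everything else is bookkeeping of constants. (When $\varphi$ is close to $\pi/2$ the estimate $\|Q\|\gtrsim\tau\ld/m$ degenerates, matching the remark after Lemma~\ref{basicangleestimate} that the random walk then backtracks and no such bound should hold.)
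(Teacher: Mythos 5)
Your first half — normalise to $Q = a_\ld r_\varphi a_\tau$, bound $\|Qe_2\|\le\ld/\tau$ and $\|Q\|\ge\|Qe_1\|\ge\tau\ld/m$, then feed these into Lemma~\ref{distance} — is a genuinely cleaner route to the first assertion than the paper's. The paper instead computes $\tan 2\theta$ explicitly from Lemma~\ref{simple}(a) and then treats the two cases $\ld\le\tau$ and $\ld\ge\tau$ separately; your two one-line norm estimates hold uniformly and the case split disappears. Your explicit threshold $m\ge\pi/2$ is slightly worse than the paper's $m>1.4$, but the lemma only asks for \emph{some} explicit bound, so this is fine.

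The second assertion is where the proposal fails, and the issue is not, as you suggest, merely ``pinning down the sharp numerical constant.'' Lemma~\ref{distance} carries an irreducible factor of $\pi/2$, coming from the concavity bound $\sin\phi\ge\frac{2}{\pi}\phi$ on $[0,\pi/2]$, and that loss is fatal for small $m$. Even with your finer bound on $\|Qe_2\|^2/\|Qe_1\|^2$, what you obtain is
$$ d(\theta_{M_2},\theta_{M_1M_2}) \;\le\; \frac{\pi}{2\tau^2}\sqrt{\max(m^2-1,\,1)}, $$
and when $1<m<\sqrt{2}$ the right-hand side is $\frac{\pi}{2\tau^2}$, which exceeds $m^2/\tau^2$ for every $m<\sqrt{\pi/2}\approx 1.2533$. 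The extra hypothesis $\tau\ge\sqrt{2}m$ cannot rescue this: after dividing the target inequality by $\tau^{-2}$, $\tau$ disappears from the comparison entirely. In fact your sketch never actually uses $\tau\ge\sqrt{2}m$, which should have been a warning sign that the claimed conclusion for all $m>1$ is out of reach by this route. A concrete sanity check: take $\ld\to1^+$ and $\varphi\to0$; then $Q\to a_\tau$, so $\theta_Q=\theta_{M_2}$ and the true $d$ tends to $0$, while your bound stays pinned at $\pi/(2\tau^2)$.

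To close the gap you need an angle estimate that is asymptotically sharp as $d\to0$. One option compatible with your framework: replace Lemma~\ref{distance} by the exact identity $\tan^2 d(\theta_{M_2},\theta_Q)=\dfrac{\|Q(\theta_{M_2})\|^2-\|Q\|^{-2}}{\|Q\|^2-\|Q(\theta_{M_2})\|^2}$ together with $\phi\le\tan\phi$. This is in spirit what the paper does, via the explicit $\tan2\theta$ formula from Lemma~\ref{simple}(a) and the bound $|2\theta|\le|\tan2\theta|$; and it is only in that sharper regime that $\tau\ge\sqrt{2}m$ does real work, controlling the correction $\dfrac{1/m}{1+(1-m^2)\tau^{-4}}$ so that the resulting expression stays $\le1$ all the way down to $m=1$.
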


\begin{proof}
Note that if $\varphi = 0$, $d(\theta_{M_2}, \theta_{M_1 M_2}) = 0$. Therefore we need to give an upper bound on the increment of $\theta_{M_1 M_2}$ as we vary $\varphi$ within the given range. Again by the Cartan decomposition, it suffices to consider the matrix
$$ \begin{pmatrix} \ld & 0 \\ 0 & \ld^{-1} \end{pmatrix} \begin{pmatrix} \cos \varphi & \sin \varphi \\ -\sin\varphi & \cos \varphi \end{pmatrix} \begin{pmatrix} \tau & 0 \\ 0 & \tau^{-1} \end{pmatrix} = \begin{pmatrix} \ld \tau \cos \varphi & \ld \tau^{-1} \sin \varphi \\ -\ld^{-1} \tau \sin \varphi & \ld^{-1}\tau^{-1} \cos \varphi \end{pmatrix}, $$
and give an upper bound on the absolute value of its contracting direction $\theta$. By Lemma \ref{simple} (a), one obtains, 
$$ \tan 2\theta = \frac{(\ld^2 - \ld^{-2}) \sin 2\varphi}{\frac{1}{2}(\ld^2 + \ld^{-2})(\tau^2 - \tau^{-2}) + \frac{1}{2}(\ld^2 - \ld^{-2})(\tau^2 + \tau^{-2}) \cos 2\varphi}. $$
Since $|2\theta| \leq |\tan 2\theta|$, and also the right hand side is an odd function of $\varphi$, it remains to show that for $\varphi \in [0, \pi/2]$ with $|\cos \varphi| \geq 1/m$, 
\begin{equation} \label{boundbound}
 f(\varphi) := \frac{(\ld^2 - \ld^{-2}) \sin 2\varphi}{(\ld^2 + \ld^{-2})(\tau^2 - \tau^{-2}) + (\ld^2 - \ld^{-2})(\tau^2 + \tau^{-2}) \cos 2\varphi} \leq \frac{m^2}{\tau^2}. 
\end{equation}
Clearly $|\cos \varphi| \geq 1/m$ if and only if $\cos 2\varphi \geq -1 + 2/m^2$. \\
{\bf Case 1}: $\ld \leq \tau$. Then 
$$ (\ld^2 + \ld^{-2})(\tau^2 - \tau^{-2}) \geq (\ld^2 - \ld^{-2})(\tau^2 + \tau^{-2}). $$
Using the fact that $\cos 2\varphi \geq -1 + 2/m^2$, the denominator of $f(\varphi)$ has a lower bound
$$ (\ld^2 + \ld^{-2})(\tau^2 - \tau^{-2}) + (\ld^2 - \ld^{-2})(\tau^2 + \tau^{-2}) \cos 2\varphi \geq (\ld^2 - \ld^{-2})(\tau^2 + \tau^{-2})(1+\cos 2\varphi) \geq \frac{2}{m^2}(\ld^2 - \ld^{-2})\tau^2, $$
and (\ref{boundbound}) holds. \\
{\bf Case 2}: $\ld \geq \tau$. We let $k := (\ld^2 + \ld^{-2})/(\ld^2 - \ld^{-2}) > 1$ and write
$$ f(\varphi) = \frac{\sin 2\varphi}{k(\tau^2 - \tau^{-2}) + (\tau^2 + \tau^{-2})\cos 2\varphi}. $$
Since $\ld \geq \tau$, 
$$ (\ld^2 + \ld^{-2})(\tau^2 - \tau^{-2}) \leq (\ld^2 - \ld^{-2})(\tau^2 + \tau^{-2}), $$
and therefore $k(\tau^2 - \tau^{-2}) \leq (\tau^2 + \tau^{-2})$. 
Now compute
$$ f'(\varphi) = 2\frac{k(\tau^2 - \tau^{-2}) \cos 2\varphi + (\tau^2 + \tau^{-2})}{(k(\tau^2 - \tau^{-2}) + (\tau^2 + \tau^{-2})\cos 2\varphi)^2} > 0. $$
On the other hand, note that the denominator of $f(\varphi)$ is positive for $\varphi \in [0, \pi/2]$ with $|\cos \varphi| \geq 1/m$:
\begin{align*}
 (\ld^2 + \ld^{-2})(\tau^2 - \tau^{-2}) + (\ld^2 - \ld^{-2})(\tau^2 + \tau^{-2}) \cos 2\varphi &> (\ld^2 - \ld^{-2})[(\tau^2 - \tau^{-2}) + (\tau^2 + \tau^{-2})\cos 2\varphi] \\
&\geq (\ld^2 - \ld^{-2})[(\tau^2 - \tau^{-2}) + (\tau^2 + \tau^{-2})(-1+2/m^2)] \\
&\geq (\ld^2 - \ld^{-2})\left( \frac{2}{m^2}(\tau^2+\tau^{-2}) - 2\tau^{-2} \right). 
\end{align*}
Since $\tau \geq m$, $2\tau^2/m^2 \geq 2\tau^{-2}$, and hence the right hand side is positive. Therefore within the given range of $\varphi$, $f(\varphi)$ is a smooth increasing function of $\varphi$, hence its maximum occurs for $\varphi = \varphi_0$, where $\varphi_0 \in [0, \pi/2]$ is such that $\cos 2\varphi_0 = -1 + 2/m^2$, or equivalently $|\cos \varphi_0| = 1/m$. Now
$$ \sin 2\varphi_0 = 2\sin \varphi_0 \cos \varphi_0 < \frac{2}{m}. $$
Therefore recalling that $k > 1$,
$$ f(\varphi_0) = \frac{\sin 2\varphi_0}{k(\tau^2 - \tau^{-2}) + (\tau^2 + \tau^{-2})\cos 2\varphi_0} < \frac{2/m}{(\tau^2 - \tau^{-2}) + (\tau^2 + \tau^{-2})(-1+2/m^2)} = \frac{m^2}{\tau^2} \left( \frac{1/m}{1-(m^2-1)\tau^{-4}} \right). $$
Finally, as $\tau \geq m$, we have
$$ \frac{1/m}{1-(m^2-1)\tau^{-4}} \leq \frac{1/m}{1-(m^2-1)m^{-4}} = \frac{m^3}{m^4 + 1 - m^2}. $$
As $m \to \infty$, the right hand side goes to $0$, therefore for large enough $m$, it is less than $1$, hence for large enough $m$ (can take, say, $m > 1.4$), 
$$ f(\varphi_0) \leq m^2/\tau^2, $$
and the result follows. If we assume that $\tau \geq \sqrt{2}m$, then we have instead
$$ \frac{1/m}{1+(1-m^2)\tau^{-4}} \leq \frac{1/m}{1+(1-m^2)m^{-4}/4} = \frac{m^3}{m^4 + (1 - m^2)/4}. $$
The right hand side is a smooth decreasing function for all $m > 1$ and is exactly $1$ at $m = 1$, hence it is at most $1$ for all $m \geq 1$, and so $f(\varphi_0) \leq m^2 / \tau^2$ for all $m \geq 1$. 
\end{proof}

\begin{proof}[Proof of Proposition \ref{generalcriterion}]
Let $m_0 := 1/\sin \ep$. Clearly $\ld_{\mathrm{crit}} > m_0$. Fix $x \in M$ and $\theta \in T_x^1 M$. Consider $n$ maps $f_1, f_2, \ldots, f_n \in \Diff^2(M)$ satisfying
\begin{align} \label{anyway1}
 \quaddd \ld_{D_{f_{i-1} f_{i-2} \cdots f_1(x)} f_i} > \ld_{\mathrm{crit}} \quadd \text{ and } \quadd \ld_{D_{f_{i-1} f_{i-2} \cdots f_1(x)} f_i} \leq \ld_{\mathrm{max}} \quaddd \text{ for all } i, 
\end{align}
and
\begin{align} \label{anyway2}
 d(\theta_{D_x f_1}, \theta) > \ep, \quadd d(\theta_{D_{f_{i-1} f_{i-2} \cdots f_1(x)} f_i}, \theta_{(D_x f_{i-1} f_{i-2} \cdots f_1)^{-1}} ) > \ep \quaddd \text{ for all } i.
 \end{align}
For each $i > 1$, we apply Lemma \ref{expansion} with $M_1 = D_{f_{i-1} f_{i-2} \cdots f_1(x)} f_i$, $M_2 = D_x f_{i-1} f_{i-2} \cdots f_1$ and $m = m_0$. Then $M_1 M_2 = D_x f_{i} f_{i-1} \cdots f_1$. Note that the corresponding 
$$ \varphi = \theta_{D_{f_{i-1} f_{i-2} \cdots f_1(x)} f_i} + \pi/2 - \theta_{(D_x f_{i-1} f_{i-2} \cdots f_1)^{-1}} $$
satisfies $|\cos \varphi| = |\sin (\theta_{D_{f_{i-1} f_{i-2} \cdots f_1(x)} f_i} - \theta_{(D_x f_{i-1} f_{i-2} \cdots f_1)^{-1}})| \geq |\sin \ep| = 1/m_0$. Also $\| D_{f_{i-1} f_{i-2} \cdots f_1(x)} f_i \| > \ld_{\mathrm{crit}} > m_0 > \sqrt{m_0}$ for all $i$, thus by induction using Lemma \ref{expansion} we have
$$ \ld_{f_i f_{i-1} \cdots f_1} \geq \frac{\ld_{\mathrm{crit}}^i}{m_0^{i-1}} $$
(note that the right hand side is greater than $\ld_{\mathrm{crit}} > m_0 > \sqrt{m_0}$.) 
Since $\ld_{\mathrm{crit}} > \sqrt{2}m_0$, by Lemma \ref{anglebound}, we get that 
$$ d(\theta_{D_x f_{i-1} f_{i-2} \cdots f_1}, \theta_{D_x f_{i} f_{i-1} \cdots f_1}) \leq m_0^2 \left( \frac{\ld_{\mathrm{crit}}^i}{m_0^{i-1}} \right)^{-2} = \left( \frac{m_0}{\ld_{\mathrm{crit}}} \right)^{2i}. $$
Since $d(\theta_{D_x f_1}, \theta) > \ep$, we have
$$ d(\theta_{D_x f_n f_{n-1} \cdots f_1}, \theta) > \ep - \left( \left( \frac{m_0}{\ld_{\mathrm{crit}}} \right)^2 + \left(\frac{m_0}{\ld_{\mathrm{crit}}} \right)^4 + \cdots + \left(\frac{m_0}{\ld_{\mathrm{crit}}} \right)^{2n} \right) > \ep - \frac{(m_0/\ld_{\mathrm{crit}})^2}{1 - (m_0/\ld_{\mathrm{crit}})^2}. $$
As $\ds\frac{1}{\sin \ep} \sqrt{2 + \frac{1}{\ep}} < \ld_{\mathrm{crit}}$, we have $ \ds\frac{(m_0/\ld_{\mathrm{crit}})^2}{1 - (m_0/\ld_{\mathrm{crit}})^2} < \ep/2 $ (recall that $m_0 = 1/\sin \ep$). Thus $ d(\theta_{D_x f_n f_{n-1} \cdots f_1}, \theta) > \ep/2$. By Lemma \ref{distance}, 
$$ \log \| D_x f_n f_{n-1} \cdots f_1 (\theta) \| \geq \log \left( \frac{2}{\pi} \ld_{f_n f_{n-1} \cdots f_1} d(\theta_{D_x f_n f_{n-1} \cdots f_1}, \theta) \right) > \log \frac{\ld_{\mathrm{crit}}^n}{m_0^{n-1}} \frac{\ep}{\pi}. $$
By assumption we know that the $\mu^{(n)}$-probability that the chosen $f_1, \ldots, f_n$ satisfy (\ref{anyway1}) and (\ref{anyway2}) is at least $\eta^n$. Moreover for $\mu^{(n)}$-almost every $f$, $\log \| D_x f(\theta)\| \geq -n \log \ld_{\mathrm{max}}$. Hence
\begin{align} \label{anyway3}
 \int \log \| D_x f(\theta) \| d \mu^{(n)}(f) \geq \eta^n \left( \log \frac{\ld_{\mathrm{crit}}^n}{m_0^{n-1}} \frac{\ep}{\pi} \right) + (1 - \eta^n) (-n \log \ld_{\mathrm{max}}). 
 \end{align}
Take $n$ large enough so that 
$$ \log \frac{\ld_{\mathrm{crit}}^n}{m_0^{n-1}} \frac{\ep}{\pi} > 0. $$
Now fix such $n$, as the right hand side of (\ref{anyway3}) increases to $\log \ds\frac{\ld_{\mathrm{crit}}^n}{m_0^{n-1}} \frac{\ep}{\pi} $ as $\eta \to 1$, there is some $\eta \in (0, 1)$ such that the right hand side of (\ref{anyway3}) is positive. 
\end{proof}

\section{Discrete random perturbation of the standard map} \label{smap}

In this section, we show an example of a random dynamical system satisfying uniform expansion. 

Let $L \in \R$ be a parameter. The standard map $\Phi_L$ of the $2$-torus $\T^2 = \R^2 / (2\pi \Z)^2$, given by
$$ \Phi_L(I, \theta) = (I + L \sin \theta, \theta + I + L \sin \theta), $$
is a well-known example of a chaotic system for which it is hard to show positivity of Lyapunov exponents (with respect to the Lebesgue measure on $\T^2$). For $L  \gg 1$, it has strong expansion and contraction on a large but non-invariant region. Nonetheless on two narrow strips near $\theta = \pm \pi/2$, vectors can be arbitrarily rotated. The area of these ``bad regions'' goes to zero as $L \to \infty$, so one expect the Lyapunov exponent to be roughly $\log L$, reflecting the expansion rate in the rest of the phase space. However, positivity of Lyapunov exponents has not been shown for any single $L$. 

In \cite{BXY}, the authors considered a kind of random perturbations of a family of maps including the standard map, and showed positivity of Lyapunov exponents for this perturbation for sufficiently large $L$.  More precisely, under a linear change of coordinates $x = \theta$, $y = \theta - I$, the standard map is conjugate to the map
\begin{equation} \label{stanmap}
 F(x, y) = (L \sin x + 2x - y, x) 
\end{equation}
on $\T^2 = \R^2 / (2\pi \Z)^2$. Note that $F$ preserves the Lebesgue measure on $\T^2$. They considered the composition of random maps
$$ F_{\underline{\og}}^n = F_{\og_n} \circ \cdots \circ F_{\og_1} \quaddd \text{ for } \quadd n = 1, 2, 3, \ldots, $$
where
$$ F_\og = F \circ S_\og, \quaddd S_\og(x, y) = (x + \og, y), $$
and the sequence $\underline{\og} = (\og_1, \og_2, \ldots) \in \O^\N$ is chosen with the probability measure $\mu^\N$, where $\mu = \mathrm{Leb}_{[-\ep, \ep]}$ is the uniform distribution on the interval $[-\ep, \ep]$ for some $\ep > 0$. 

For this Markov chain, any stationary measure is absolutely continuous with respect to Lebesgue measure. Hence they were able to use this in the subsequent estimates of the Lyapunov exponents, using the fact that the Lebesgue measure of the ``bad regions'' goes to zero as $L \to \infty$. 

In this section, we consider a discrete version of the random perturbation, where at each step, one can choose from only finitely many maps with equal probability. In this case it is not \emph{a priori} clear that every stationary measure is absolutely continuous with respect to Lebesgue. In particular it is possible that the stationary measure may have positive measure concentrated in the bad region. In fact, one of our results is a classification of the ergodic stationary measures of this perturbation. 

We shall show that this random dynamical system satisfies uniform expansion. As a corollary we show that the maps have a Lyapunov exponent $\sim \log L$. Moreover, from the previous sections, it follows that the stationary measures are either finitely supported or Lebesgue, and the orbits are either finite or dense. 

Let $r \in \N$ and $\O := \{k\ep: k = 0, \pm 1, \pm 2, \ldots, \pm r\}$. We consider the composition of random maps
$$ F_{\underline{\og}}^n = F_{\og_n} \circ \cdots \circ F_{\og_1} \quaddd \text{ for } \quadd n = 1, 2, 3, \ldots, $$
where
$$ F_\og = F \circ S_\og, \quaddd S_\og(x, y) = (x + \og, y), $$
and the sequence $\underline{\og} = (\og_1, \og_2, \ldots) \in \O^\N$ is chosen with the probability measure $\mu^\N := \left( \ds\frac{1}{|\O|}\ds\sum_{\og \in \O} \delta_{\og} \right)^\N$. Here $\delta_{k\ep}$ is the delta mass on $\Diff^2(\bb{T}^2)$ at the map $F_{k\ep}$. 

The main proposition in this section is the following. 

\begin{proposition} \label{uniexpand}
Let $\delta \in (0, 1)$. There exists an integer $r_0 = r_0(\delta) > 0$ such that if $r \geq r_0$ and $\ep \in [L^{-1+\delta}, 1/(2r+1))$, then the measure $\mu = \ds\frac{1}{|\O|}\ds\sum_{\og \in \O} \delta_{\og}$ is uniformly expanding on $\T^2$ for all large enough $L$.
\end{proposition}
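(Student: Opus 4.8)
The plan is to reduce the uniform expansion inequality to an estimate for the one--dimensional ``slope dynamics'' of the derivative cocycle, and then to exploit the fact that varying one coordinate of the perturbation moves the relevant angle at a rate of order $L\ep\ge L^\delta\gg1$, which randomizes it.

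\smallskip
\emph{Slope formula.} Identifying $T\T^2$ with $\R^2$, one has $D_{(x,y)}F_\og=A(x+\og)$ with $A(\theta):=\begin{pmatrix} L\cos\theta+2 & -1\\ 1 & 0\end{pmatrix}\in SL_2(\R)$. Fix $(x,y)$, a word $\og=(\og_1,\og_2,\dots)$ and a unit vector $v$; set $(x_n,y_n)=F^n_\og(x,y)$, $\alpha_n:=x_{n-1}+\og_n$, $a_n:=L\cos\alpha_n+2$, and let $s_n$ be the slope (first coordinate over second) of $D_{(x,y)}F^n_\og(v)$, with $s_0$ the slope of $v$. One checks $y_n=\alpha_n$, hence $\alpha_{n+1}=L\sin\alpha_n+2\alpha_n-\alpha_{n-1}+\og_{n+1}$ and $\partial\alpha_{n+1}/\partial\og_n=a_n$; the projective action of $A(\alpha_n)$ sends $s_{n-1}$ to $s_n=a_n-1/s_{n-1}$, and the identity $(a_ns_{n-1}-1)^2+s_{n-1}^2=s_{n-1}^2(1+s_n^2)$ telescopes to
$$ \log\frac{\|D_{(x,y)}F^N_\og(v)\|}{\|v\|}=\tfrac12\log(1+s_N^2)-\tfrac12\log(1+s_0^2)+\sum_{n=0}^{N-1}\log|s_n|, $$
valid whenever the slopes are finite and nonzero, with the a priori per--step bound $\log(\|DF^m_\og w\|/\|w\|)\ge -m\log(L+4)$ handling degenerate cases. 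Since $\tfrac12\log(1+s_N^2)\ge0$ and the contributions of the first two steps together with $-\tfrac12\log(1+s_0^2)+\log|s_0|+\log|s_1|$ are each bounded below by $-O(\log L)$, it suffices to show that for a suitable absolute integer $N$, all $r\ge r_0(\delta)$ and all large $L$,
$$ \int\sum_{n=2}^{N-1}\log|s_n|\;d\mu^{(N)}\;\ge\;c_1N\log L $$
for some absolute $c_1>0$.

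\smallskip
\emph{Spreading.} I would call step $n$ \emph{good} if $|\cos\alpha_n|\ge\kappa$ for a small absolute $\kappa$, so $|a_n|\ge\tfrac12L\kappa$ and $\log|a_n|\ge\tfrac12\log L$ for $L$ large. The decisive point: conditionally on $\og_1,\dots,\og_{n-1}$ (which fix $s_{n-1}$ and $x_{n-1}$), the number $a_n=L\cos(x_{n-1}+\og_n)+2$ depends on the still--free variable $\og_n$, and since $\ep<1/(2r+1)$ the $2r+1$ admissible values of $\alpha_n$ lie in an interval of length $<1$ on which $\cos$ has at most one critical point. Away from that critical point, consecutive values of $a_n$ differ by $\gtrsim L\ep\ge L^\delta$; hence the $2r+1$ values of $s_n=a_n-1/s_{n-1}$ are $\gtrsim L^\delta$--separated, so all but $O(1)$ of them satisfy $|s_n|\ge L^{\delta/2}$ and avoid the bad arcs. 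If instead the $\alpha_n$--window sits near a critical point of $\cos$, then $|a_n|\ge L-O(1)$ for \emph{every} $\og_n$, so $|s_n|\ge\tfrac12L$ unless $|s_{n-1}|<2/L$. Chaining these two alternatives shows the conditional probability that step $n$ is not good is $\le C_0/r$, and on a good step with $|s_{n-1}|$ not atypically small one gets $\log|s_n|\ge\tfrac{\delta}{2}\log L$.

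\smallskip
\emph{Bad steps and conclusion.} A non--good step can make $\log|s_n|$ very negative, but adjacent slopes compensate: $\log|s_{n-1}|+\log|s_n|=\log|s_{n-1}a_n-1|$, and a tiny $|s_n|$ forces $|s_{n+1}|=|a_{n+1}-1/s_n|\gtrsim1/|s_n|$ to be large. Pairing consecutive steps, writing $\sum_n\log|s_n|=\sum_j\log|s_{2j}a_{2j+1}-1|$, and applying the spreading estimate with $\og_{2j+1}$ as the free variable gives $\int\log|s_{2j}a_{2j+1}-1|\,d\mu^{(N)}\ge(1-C_0/r)\tfrac{\delta}{2}\log L-O\!\big((1/r)\log L\big)$, the error collecting the $O(1/r)$--fraction of configurations where spreading degrades (a neighbourhood of a $\cos$--critical strip together with an atypical previous slope), estimated from below by the a priori per--block bound. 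Summing over $j$, choosing $\kappa$ small and then $r_0(\delta)$ large enough that $C_0/r_0$ and the implied constants are small compared with $\delta$, and finally taking $N$ a large absolute constant and $L$ large, yields $\int\sum_{n=2}^{N-1}\log|s_n|\,d\mu^{(N)}\ge c_1N\log L$, hence $\int\log(\|D_{(x,y)}F^N_\og(v)\|/\|v\|)\,d\mu^{(N)}\ge c_1N\log L-O(\log L)>0$ uniformly in $(x,y)$ and $v$, which is exactly uniform expansion.

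\smallskip
\emph{The hard part.} Everything delicate is concentrated in the spreading step and its bookkeeping: the conditioning must be set up so that the variable driving the randomisation of $s_n$ is genuinely independent of everything $s_{n-1}$ depends on (hence ``condition on $\og_1,\dots,\og_{n-1}$, then Fubini''), and one must control, uniformly in $N$, the negative contribution of the rare configurations in which $\alpha_n$ lands in a bad arc or near a $\cos$--critical point with an unlucky previous slope --- which is precisely where the identity $\log|s_{n-1}|+\log|s_n|=\log|s_{n-1}a_n-1|$ and the ``tiny slope forces a huge next slope'' mechanism do the work. The interlocking of the hypotheses is also essential: $\ep<1/(2r+1)$ keeps the $\alpha_n$--window inside one monotonicity interval of $\cos$; $\ep\ge L^{-1+\delta}$ makes the spacing $L\ep\ge L^\delta$ beat every threshold used; $r\ge r_0(\delta)$ renders the $O(1/r)$ bad fractions negligible; and $L$ large lets the $\asymp N\log L$ main term dominate the $O(\log L)$ errors.
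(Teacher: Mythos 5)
The proposal takes a genuinely different route from the paper. The paper tracks the \emph{contracting direction $\theta_{DF_{\uog}^n}$ of the cumulative cocycle} $DF_{\uog}^n$, viewing the random walk as a tree in the hyperbolic disk, and uses Lemmas~\ref{singleangleestimate}--\ref{angleestimate} to show that if $\uog'$ is a ``long'' word differing from a fixed minimizer $\uog$ first at position $i$, then $|\theta_{\uog'}-\theta|$ has an explicit lower bound, whence $\log\|DF_{\uog'}^n(\theta)\|$ has one via Lemma~\ref{distance}; this produces a per-word lower bound and never leaves the safe world of the trivial bound $\log\|DF_{\uog}^n(\theta)\|\ge -n\log L$. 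Your strategy instead tracks the \emph{propagated slope} $s_n$ of the initial vector and uses the telescoping identity
$\log\|v_N\|/\|v_0\|=\tfrac12\log(1+s_N^2)-\tfrac12\log(1+s_0^2)+\sum_{n<N}\log|s_n|$. The slope recursion, the spreading mechanism driven by $L\ep\ge L^\delta$ (and the roles of $\ep<1/(2r+1)$, $r\ge r_0(\delta)$) are all correctly identified and are in the right spirit.

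However, there is a real gap in the bookkeeping at exactly the place you flag as delicate. You claim that, for each fixed $j$,
$$ \int\log|s_{2j}a_{2j+1}-1|\,d\mu^{(N)}\ge(1-C_0/r)\,\tfrac{\delta}{2}\log L-O\!\big(\tfrac1r\log L\big), $$
``the error \dots estimated from below by the a priori per--block bound.'' This is not justified, and in fact the integral can be $-\infty$. Indeed $\log|s_{2j}a_{2j+1}-1|=\log|s_{2j}s_{2j+1}|$, and there is no a priori lower bound on this quantity: for suitable $(x,y,\theta)$ and a suitable single word in the finite set $\Omega^N$ (which has $\mu^{(N)}$-mass $\ge(2r+1)^{-N}$) one can have $s_{2j}=1/a_{2j+1}$ exactly, so that $s_{2j+1}=0$ and the $j$-th block contributes $-\infty$. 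The a priori per-block bound that \emph{is} available is $\log(\|v_{2j+2}\|/\|v_{2j}\|)\ge-2\log(L+4)$, but this controls $\log|s_{2j}s_{2j+1}| + \tfrac12\log(1+s_{2j+2}^2)-\tfrac12\log(1+s_{2j}^2)$, and the two boundary $\tfrac12\log(1+s^2)$ terms are unbounded; the compensating blowup (a tiny $s_{2j+1}$ forcing a huge $s_{2j+2}$) lands in the \emph{next} pair $\log|s_{2j+2}s_{2j+3}|$, so the fixed parity pairing does not localize the cancellation. For the same reason, the assertion that $-\tfrac12\log(1+s_0^2)+\log|s_0|+\log|s_1|$ is ``bounded below by $-O(\log L)$'' fails: this equals $\log\big(|s_0a_1-1|/\sqrt{1+s_0^2}\big)$, which is $-\infty$ when $s_0=1/a_1$. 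To make the slope approach rigorous one would need to control the sum $\sum_j$ globally rather than term by term (or to work, as the paper does, with a per-word lower bound that carries no singularities), and this is precisely the technical work your sketch defers.
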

%In fact we can take $r = 3$, and $\mu$ is uniformly expanding after $3$ iterations. 
\noindent Throughout this section, estimates containing $\gg, \gtrsim$ and $\sim$ are with respect to $L \to \infty$. More precisely, we write
\begin{itemize}
\item $f(L) \gg g(L)$ if $|f(L) / g(L)| \to \infty$ as $L \to \infty$.
\item $f(L) \gtrsim g(L)$ if $\ds\liminf_{L \to \infty} |f(L) / g(L)| > 0$ (possibly infinite). 
\item $f(L) \sim g(L)$ if $f(L) / g(L) \to 1$ as $L \to \infty$. 
\end{itemize}

\noindent For $A \in \R$, let $G(A) := \begin{pmatrix} A & -1 \\ 1 & 0 \end{pmatrix} \in SL_2(\R)$. Note that $DF_{(x, y)} = \begin{pmatrix} L \cos x +2 & -1 \\ 1 & 0 \end{pmatrix} = G(L \cos x + 2)$. Let $n \in \N$ to be determined. By Lemma \ref{simple}, we observe that if $A \gg 1$, then
$$ \| G(A) \| \sim A, \quaddd \theta_{G(A)} \sim \frac{\pi}{2}, \quaddd \text{ and } \quaddd \theta_{G(A)^{-1}} \sim 0. $$

The next lemma estimates the change of the contracting direction of products of $G(A_i)$ as we vary one of $A_i$ and fix the rest, assuming $A_j$ is large for all $j \neq i$. 

\begin{lemma} \label{singleangleestimate}
Let $\theta_n$ be the contracting direction of $G(A_n)G(A_{n-1}) \cdots G(A_2)G(A_1)$. If $A_i \gg 1$ for all $i = 1, 2, \ldots n$, then for each $i$ with $1 \leq i \leq n$, 
$$ \frac{d\theta_n}{dA_i} \sim \frac{1}{A_1^2 A_2^2 \cdots A_i^2}. $$
More precisely, let $\theta'_n$ be the contracting direction of $G(A_n')G(A_{n-1}') \cdots G(A_2')G(A_1')$. \\
For each $i = 1, 2, \ldots, n$, if $A_j = A_j' \gg 1$ for all $j \neq i$, and $A_i, A_i' \gg 1$, then
$$ \theta_n' - \theta_n \sim \ds\frac{1}{A_1^2A_2^2 \cdots A_{i-1}^2} \left(\frac{1}{A_i} - \frac{1}{A_i'} \right). $$

\end{lemma}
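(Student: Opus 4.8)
The plan is to pass to slope coordinates on $\P^1$, identify the contracting direction of the partial products as (an approximation of) a finite continued fraction in the $A_j$'s, differentiate that continued fraction, and use Lemma \ref{basicangleestimate} to control the approximation. First I would reduce the ``more precisely'' finite--difference statement to the derivative statement: once one knows $\frac{d\theta_n}{dt}\sim \frac{1}{A_1^2\cdots A_{i-1}^2\, t^2}$ with the $o(1)$--error uniform as $t=A_i$ ranges over large values (the error produced by the argument below is bounded by an explicit decreasing function of $\min_k A_k$, hence uniform), the fundamental theorem of calculus gives
\[
\theta_n'-\theta_n=\int_{A_i}^{A_i'}\frac{d\theta_n}{dt}\,dt=\frac{1+o(1)}{A_1^2\cdots A_{i-1}^2}\int_{A_i}^{A_i'}\frac{dt}{t^2}=\frac{1+o(1)}{A_1^2\cdots A_{i-1}^2}\Big(\frac1{A_i}-\frac1{A_i'}\Big).
\]

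For the derivative statement, identify $\P^1$ with $\R\cup\{\infty\}$ via $\theta\mapsto\tan\theta=:s$, so that $d\theta/ds=(1+s^2)^{-1}$; a computation with Lemma \ref{simple} shows that $G(A)$ acts on slopes by the M\"obius map $s\mapsto (A-s)^{-1}$ and $G(A)^{-1}$ by $s\mapsto A-s^{-1}$, that $\theta_{G(A)}$ has slope $\sigma(A):=\tfrac12(A+\sqrt{A^2-4})\sim A$, and that $\theta_{G(A)^{-1}}$ has slope $\sim A^{-1}$. I would then prove, by downward induction on $k$ (from $n$ to $1$) using Lemma \ref{basicangleestimate}(a), that the slope of $\theta_{G(A_n)\cdots G(A_k)}$ obeys the recursion ``$s(k)\approx A_k-1/s(k+1)$'' with bottom value $s(n)=\sigma(A_n)$, so that the slope of $\theta_n=\theta_{P_n}$ equals
\[
\Gamma \;:=\; A_1-\cfrac{1}{A_2-\cfrac{1}{\ddots\;-\cfrac{1}{A_{n-1}-\dfrac{1}{\sigma(A_n)}}}}
\]
up to an error term $o\big(\Gamma/(A_1^2\cdots A_n^2)\big)$ whose $A_i$--derivative is also negligible. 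At each step, adjoining $G(A_k)$ on the right changes the contracting slope from $\sigma(A_k)$ by an amount governed by Lemma \ref{basicangleestimate}(a) (depending on the current product, its inverse's contracting direction, and its norm), and that amount is $\approx -1/s(k+1)$; the no--backtracking hypothesis holds because the relevant angle $\varphi=\theta_{G(A_k)}+\pi/2-\theta_{(G(A_n)\cdots G(A_{k+1}))^{-1}}$ is $\approx \pi/2+\pi/2-0\equiv 0$, i.e.\ bounded away from $\pi/2$, and the change is $O(\ld_{G(A_k)}^{-2})$ in the angle.

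Finally I would differentiate. In $\Gamma$ the variable $A_i$ occurs once; writing $\Gamma=g_1$, $g_k=A_k-1/g_{k+1}$ for $k<n$ and $g_n=\sigma(A_n)$, the chain rule telescopes to $\partial_{A_i}\Gamma=\big(\prod_{k=2}^{i}g_k^{-2}\big)\cdot\big(1$ or $\sigma'(A_i)\big)$, and since $g_k\sim A_k$, $\sigma'(A_i)\to 1$ and $\Gamma\sim A_1$, one gets
\[
\frac{d\theta_n}{dA_i}=\frac{\partial_{A_i}\Gamma}{1+\Gamma^2}+o(\cdot)\;\sim\;\frac{1}{A_1^2}\cdot\frac{1}{A_2^2\cdots A_i^2}=\frac{1}{A_1^2\cdots A_i^2};
\]
the factor $1/(1+\Gamma^2)\sim A_1^{-2}$ is exactly what produces the extra $A_1^2$ in the denominator. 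Running the same computation with $A_i$ replaced by $A_i'$ and subtracting yields the finite--difference formula directly, via the telescoping identity $g_1'-g_1=(g_i'-g_i)\big/\prod_{k=2}^i g_kg_k'$ together with $\arctan x=\pi/2-x^{-1}+O(x^{-3})$.

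The main obstacle is the structural induction of the third paragraph: establishing the continued--fraction approximation with \emph{uniform}, explicit error terms propagated through the product, so that (i) Lemma \ref{basicangleestimate}(a) applies at every stage (no backtracking, which itself requires first verifying $\theta_{P_j}\to\pi/2$, $\theta_{P_j^{-1}}\to 0$, $\ld_{P_j}\sim A_1\cdots A_j$ with rates), and (ii) the contribution to $\frac{d\theta_n}{dA_i}$ of the change term $\theta'$ \emph{through} the auxiliary data $\theta_{P_{j-1}^{-1}}$ and $\ld_{P_{j-1}}$ (which themselves move with $A_i$, at rates $O((A_2\cdots A_i)^{-2})$ and $O(\ld_{P_{j-1}}/A_i)$ respectively) is shown to be lower order than the main term $\sim (A_1^2\cdots A_i^2)^{-1}$ --- this uses the bound $|\theta'|=O(\ld_{P_{j-1}}^{-2})$ and the analogous bounds $\partial\theta'/\partial\theta_{P_{j-1}^{-1}}=O(\ld_{P_{j-1}}^{-2})$, $\partial\theta'/\partial\ld_{P_{j-1}}=O(\ld_{P_{j-1}}^{-3})$ read off from the explicit formula for $\theta'$ afforded by Lemma \ref{simple}(a). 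Pinning down the multiplicative constant (the ``$\sim$'' rather than just the order of magnitude) is what forces the use of the precise continued--fraction telescoping instead of a cruder estimate.
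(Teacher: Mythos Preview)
Your continued-fraction approach is a valid alternative and can be made to work, but it is considerably more roundabout than the paper's argument, and your invocation of Lemma~\ref{basicangleestimate}(a) is misplaced. The paper factors the product \emph{once} at index $i$, setting $M_1=G(A_n)\cdots G(A_i)$ and $M_2=G(A_{i-1})\cdots G(A_1)$. Since $G(A_i)$ sits as the rightmost factor of $M_1$, Lemma~\ref{basicangleestimate}(a) gives exactly $\dfrac{d\theta_{M_1}}{dA_i}=\dfrac{d\theta_{G(A_i)}}{dA_i}=\dfrac{1}{A_i^2+4}$ (using $\tan 2\theta_{G(A)}=-2/A$ from Lemma~\ref{simple}(a)). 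Then Lemma~\ref{basicangleestimate}(b), with the observations $\theta_{M_1}\sim\pi/2$, $\theta_{M_2^{-1}}\sim 0$ (so $\varphi\sim 0$) and $\|M_2\|\sim A_1\cdots A_{i-1}$, yields $\dfrac{d\theta_n}{d\theta_{M_1}}\sim (A_1\cdots A_{i-1})^{-2}$, and the chain rule finishes. No induction on $k$, no error propagation through a continued fraction.

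The recursion $s(k)\approx A_k-1/s(k+1)$ you want is not a consequence of Lemma~\ref{basicangleestimate}(a): that lemma says $\theta_{M_1M_2}=\theta_{M_2}+\theta'$ with $\theta'=\theta'(M_1,\theta_{M_2^{-1}},\ld_2)$, which in your setup (with $M_2=G(A_k)$) expresses $\theta_{P_k}$ as $\theta_{G(A_k)}$ plus a correction, \emph{not} as $G(A_k)^{-1}$ applied to $\theta_{P_{k+1}}$. What you actually need for the continued fraction is the approximation $\theta_{M_1M_2}\approx M_2^{-1}(\theta_{M_1})$ when $\ld_{M_1}\gg 1$, which is true (and closer in spirit to Lemma~\ref{anglebound}) but requires its own proof with uniform error bounds --- precisely the ``main obstacle'' you flag. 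The paper's single left/right split sidesteps this entirely, trading your pretty closed form for $\tan\theta_n$ for a two-line computation. (Incidentally, your $\sigma(A)=\tfrac12(A+\sqrt{A^2-4})$ should be $\tfrac12(A+\sqrt{A^2+4})$, though both are $\sim A$.)
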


\begin{proof}
By Lemma \ref{simple}(a), we know that 
$$ \tan 2\theta_{G(A)} = -\frac{2}{A}. $$
By differentiating in $A$,
$$ \frac{d\theta_{G(A)}}{dA} = \frac{1}{A^2 + 4}. $$
By Lemma \ref{basicangleestimate} (a), for all $1 \leq i \leq n$, 
$$ \frac{d\theta_{G(A_n)G(A_{n-1})\cdots G(A_i)}}{dA_i} = \frac{d\theta_{G(A_i)}}{dA_i} = \frac{1}{A_i^2 + 4}. $$
Moreover, using Lemma \ref{simple}(b), one can show that for all $1 \leq i \leq n$, 
$$ \| G(A_i) G(A_{i-1}) \cdots G(A_1) \| \sim A_1 A_2 \cdots A_i $$
since the top left corner of $G(A_i) G(A_{i-1}) \cdots G(A_1)$ is $A_1 A_2 \cdots A_i$ and the other three entries are of strictly lower order if $A_k \gg 1$ for all $1 \leq k \leq i$. Also notice that 
$$ \theta_{G(A_n)G(A_{n-1})\cdots G(A_i)} \sim \pi/2 \quaddd \text{ and  } \quaddd \theta_{(G(A_{i-1}) G(A_{i-2}) \cdots G(A_1))^{-1}} \sim 0. $$ Apply Lemma \ref{basicangleestimate}(b) with $M_1 = G(A_n)G(A_{n-1})\cdots G(A_i)$ and $M_2 = G(A_{i-1}) G(A_{i-2}) \cdots G(A_1)$, we have
$$ \frac{d\theta_n}{d\theta_{G(A_n)G(A_{n-1})\cdots G(A_i)}} \sim \frac{1}{(A_1 A_2 \cdots A_{i-1})^2}. $$
Hence
$$ \frac{d\theta_n}{dA_i} = \frac{d\theta_n}{d\theta_{G(A_n)G(A_{n-1})\cdots G(A_i)}} \frac{d\theta_{G(A_n)G(A_{n-1})\cdots G(A_i)}}{dA_i} \sim \frac{1}{(A_1 A_2 \cdots A_{i-1})^2} \frac{1}{A_i^2 + 4} \sim \frac{1}{A_1^2 A_2^2 \cdots A_i^2}. $$

%
%
%We may then apply Lemma \ref{basicangleestimate} to get
%$$ \frac{d\theta}{dA_i} = \frac{d\theta}{d\theta_{G(A_i)}} \frac{d\theta_{G(A_i)}}{dA_i} \sim \frac{1}{(A_1 A_2 \cdots A_{i-1})^2} \frac{1}{A_i^2 + 4} \sim \frac{1}{A_1^2 A_2^2 \cdots A_i^2}. $$
%The second statement follows directly from the first one for $A_i, A_i' \gg 1$. 
\end{proof}

The next lemma estimates the change of the contracting direction of $DF_{\underline{\og}}^n$ if we fix the first $i-1$ letters in $\underline{\og}$ and change $\og_j$ for all $j \geq i$.

\begin{lemma} \label{angleestimate} %Should make this more precise without asymptote
Let $\underline{\og}, \underline{\og}' \in \O^\N$, $\ep > L^{-1}$. Given $(x, y) \in \bb{T}^2$, for $i = 0, 1, 2, \ldots, n$, let 
\begin{itemize}
\item $(x_i, y_i) := F_{\underline{\og}}^i(x, y)$ and $(x_i', y_i') := F_{\underline{\og}'}^i(x, y)$, 
\item $A_i := L \cos x_{i-1} + 2$ and $A_i' := L \cos x_{i-1}' + 2$ for $i = 1, 2, 3, \ldots$, 
\item $\theta, \theta'$ be the contracting directions of $G(A_n)G(A_{n-1}) \cdots G(A_2)G(A_1)$ and $G(A_n')G(A_{n-1}') \cdots G(A_2')G(A_1')$.
\end{itemize}
For each $i = 1, 2, \ldots, n$, suppose $A_j = A_j' \gg 1$ for all $j < i$,  $A_j, A_j' \gg 1$ for all $j \geq i$ and $A_i - A_i' \gtrsim \ep L/2$. Then 
\begin{equation} \label{diff}
 \theta - \theta' \gtrsim \ds\frac{1}{A_1^2 A_2^2 \cdots A_{i-1}^2}\frac{\ep L/2}{A_i A_i'}. \tag{a}
\end{equation}
As a result,
\begin{equation} \label{diff2}
\| DF_{\underline{\og}}^n (\theta') \| \gtrsim \frac{A_{i+1} A_{i+2} \cdots A_n}{A_1 A_2 \cdots A_{i-1} } \frac{\ep L/2}{A_i'}. \tag{b}
\end{equation}
%Suppose $A_1, A_1' \gg 1$ and $A_1' - A_1 \sim m\ep L$. Then $\theta' - \theta \sim \ds\frac{m\ep L}{A_1A_1'}$. \\
%Suppose $A_1 = A_1' \gg 1$, $A_2, A_2' \gg 1$ and $A_2' - A_2 \sim m\ep L$. Then $\theta' - \theta \sim \ds\frac{1}{A_1^2}\frac{m\ep L}{A_2 A_2'}$. \\
%Suppose $A_1 = A_1' \gg 1$, $A_2 = A_2' \gg 1$, $A_3, A_3' \gg 1$ and $A_3' - A_3 \sim m\ep L$. Then $\theta' - \theta \sim \ds\frac{1}{A_1^2 A_2^2}\frac{m\ep L}{A_3 A_3'}$. \\
\end{lemma}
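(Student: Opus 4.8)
The plan is to reduce the statement to the scalar estimates already established in Lemmas \ref{simple}, \ref{distance}, \ref{basicangleestimate} and \ref{singleangleestimate}, using the factorisation $DF_{\underline{\og}}^n = G(A_n)G(A_{n-1})\cdots G(A_1)$, $DF_{\underline{\og}'}^n = G(A_n')\cdots G(A_1')$, together with the hypothesis that $\underline{\og}$ and $\underline{\og}'$ agree on their first $i-1$ letters, so that $A_j = A_j'$ for $j<i$. Write $\theta = \theta_{DF_{\underline{\og}}^n}$ and $\theta' = \theta_{DF_{\underline{\og}'}^n}$.

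For (a), I would first peel off the common head. Factor $DF_{\underline{\og}}^n = PQ$ and $DF_{\underline{\og}'}^n = P'Q$, with $Q := G(A_{i-1})\cdots G(A_1)$ shared and $P := G(A_n)\cdots G(A_i)$, $P' := G(A_n')\cdots G(A_i')$. By Lemma \ref{basicangleestimate}(a) the contracting direction of a product $MQ$ equals $\theta_Q$ plus a term depending on $M$ (and on the fixed data $\theta_{Q^{-1}}$, $\|Q\|$) but not on $\theta_Q$, so $\theta - \theta'$ is precisely the difference of these terms for $P$ and for $P'$; since $\|P\|,\|P'\|\gg 1$, a short computation with Lemma \ref{simple}(a) (the $\|P\|$-dependence washing out) shows each such term is, to leading order, a function of $\theta_P$ (resp.\ $\theta_{P'}$) alone, varying at rate $\sim \|Q\|^{-2}\sim (A_1\cdots A_{i-1})^{-2}$ by Lemma \ref{basicangleestimate}(b) applied with $M_2=Q$ (here $\theta_P\sim\pi/2$ and $\theta_{Q^{-1}}\sim0$ keep the relevant angle $\varphi$ away from $\pi/2$, and $\|Q\|\sim A_1\cdots A_{i-1}\gg1$). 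Hence $\theta-\theta'\sim (A_1^2\cdots A_{i-1}^2)^{-1}(\theta_P-\theta_{P'})$. To bound $\theta_P-\theta_{P'}$ I would peel once more, $P=R\,G(A_i)$ with $R:=G(A_n)\cdots G(A_{i+1})$ and likewise $P'=R'\,G(A_i')$: by Lemma \ref{basicangleestimate}(a), $\theta_P=\theta_{G(A_i)}$ plus a correction which, by the size bound in Lemma \ref{basicangleestimate}(b) with $M_2=G(A_i)$ of norm $\sim A_i$, is $O(A_i^{-2})$ (and $O(A_i'^{-2})$ for $P'$), while the main term comes from the motion of $\theta_{G(A)}$ itself: differentiating $\tan 2\theta_{G(A)}=-2/A$ (Lemma \ref{simple}(a)) gives $d\theta_{G(A)}/dA=(A^2+4)^{-1}$, whence $\theta_{G(A_i)}-\theta_{G(A_i')}=\int_{A_i'}^{A_i}(t^2+4)^{-1}\,dt\sim \tfrac{1}{A_i'}-\tfrac{1}{A_i}=\tfrac{A_i-A_i'}{A_iA_i'}\gtrsim \tfrac{\ep L/2}{A_iA_i'}$, using $A_i-A_i'\gtrsim\ep L/2$. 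Discarding the lower-order $O(A_i^{-2})+O(A_i'^{-2})$ corrections gives $\theta_P-\theta_{P'}\gtrsim (\ep L/2)/(A_iA_i')$, and chaining with the previous estimate proves (a). (Equivalently one can telescope directly via the precise form of Lemma \ref{singleangleestimate}, switching $A_j\to A_j'$ one index at a time for $j=i,\dots,n$, the $j=i$ switch being the dominant one.)

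Part (b) then follows immediately from (a): by Lemma \ref{distance}, $\|DF_{\underline{\og}}^n(\theta')\|\ge\tfrac{2}{\pi}\|DF_{\underline{\og}}^n\|\,d(\theta',\theta)$, and since the $(1,1)$-entry of $G(A_n)\cdots G(A_1)$ equals $A_1\cdots A_n$ by induction while all the other entries are of strictly lower order, Lemma \ref{simple}(b) gives $\|DF_{\underline{\og}}^n\|\sim A_1\cdots A_n$ (as already recorded in the proof of Lemma \ref{singleangleestimate}); multiplying by the bound from (a) yields $\|DF_{\underline{\og}}^n(\theta')\|\gtrsim \tfrac{A_1\cdots A_n}{A_1^2\cdots A_{i-1}^2}\cdot\tfrac{\ep L/2}{A_iA_i'}=\tfrac{A_{i+1}\cdots A_n}{A_1\cdots A_{i-1}}\cdot\tfrac{\ep L/2}{A_i'}$, which is (b).

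The hard part will be the "discard the lower-order corrections" step in (a): perturbing $\og_i$ changes the whole downstream orbit, hence every one of $A_i',A_{i+1}',\dots,A_n'$, and one must show that the resulting change in the contracting direction is nevertheless governed to leading order by the change in $A_i$ alone. Each correction coming from an index $j>i$ carries an extra factor $\sim A_j^{-2}$, but $|A_j-A_j'|$ can be as large as $\sim L$, so to beat the main term (itself only of size $\sim \ep L/(A_iA_i')$ with $\ep\ge L^{-1+\delta}$) one needs the $A_j$ to be quantitatively large — of order $L$, as they are in the regime of interest where the walk remains in the region $|\cos x|$ bounded away from $0$ — rather than merely $A_j\to\infty$; this should be the intended reading of the hypothesis $A_j\gg1$ in the applications of the lemma.
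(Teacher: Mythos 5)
Your overall plan is the right one — reduce $\theta-\theta'$ to the scalar angle estimates via the factorisation $DF_{\underline{\og}}^n = G(A_n)\cdots G(A_1)$, isolate the change at index $i$ as the dominant contribution, and then deduce (b) from (a) via Lemmas \ref{distance} and \ref{simple}(b); that last step matches the paper exactly. But your main route to (a), the ``peel twice and discard $O(A_i^{-2})+O(A_i'^{-2})$'' argument, has a genuine gap, and the diagnosis in your final paragraph is aimed at the wrong culprit.

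After the second peel $P = R\,G(A_i)$, $P' = R'\,G(A_i')$, the two corrections $\theta_P - \theta_{G(A_i)}$ and $\theta_{P'} - \theta_{G(A_i')}$ depend on the downstream tails $R$ and $R'$, which genuinely differ, so they do not cancel; your absolute bounds $O(A_i^{-2})$ and $O(A_i'^{-2})$ from Lemma \ref{anglebound} are correct but then compete with the main term $\theta_{G(A_i)}-\theta_{G(A_i')}\sim \frac{A_i-A_i'}{A_iA_i'}$. The main term dominates precisely when $\frac{(A_i-A_i')A_i'}{A_i}\gg1$, which, given $A_i-A_i'\gtrsim\ep L/2$ and $A_i'\gg1$, holds if and only if $\ep L\gg1$. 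That is fine in the eventual application ($\ep\ge L^{-1+\delta}$), but the lemma only assumes $\ep>L^{-1}$, under which $\ep L$ may stay bounded and the discarded corrections can be comparable to, or larger than, the main term.

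The paper's telescoping — the alternative you relegate to a parenthetical ``equivalently'' — is not equivalent but strictly sharper, and it is what the paper actually does. Defining $\theta_j$ as the contracting direction of $G(A_n')\cdots G(A_j')G(A_{j-1})\cdots G(A_1)$ and applying Lemma \ref{singleangleestimate} to each single-index switch gives increments $\theta_j-\theta_{j+1}\sim(A_1^2\cdots A_{j-1}^2)^{-1}\bigl(\tfrac{1}{A_j}-\tfrac{1}{A_j'}\bigr)$; the crucial point your final paragraph misses is that $\bigl|\tfrac{1}{A_j}-\tfrac{1}{A_j'}\bigr|\le 2/\min(A_j,A_j')\to0$ \emph{regardless of how large} $|A_j-A_j'|$ is, since both are $\gg1$. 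The reciprocal structure automatically damps large downstream perturbations, so the ratio of the $j>i$ increment to the $j=i$ increment is $\lesssim 4/\bigl(\ep L\,\min(A_{i+1},A_{i+1}')\bigr)\to 0$ already under $\ep>L^{-1}$. Your proposed fix (insisting $A_j\sim L$) is therefore unnecessary, and is in fact incompatible with the application, where ``long'' words only satisfy $A_j\gtrsim L^\delta$, not $A_j\sim L$.
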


\begin{proof}
Without loss of generality, assume that $A_i > A_i'$. For all $j \geq i$, let $\theta_j$ be the contracting direction of 
$$ G(A_n')G(A_{n-1}') \cdots G(A_j') G(A_{j-1}) G(A_{j-2}) \cdots G(A_1). $$
Then $\theta' = \theta_i$. We also use the notation $\theta_{n+1} := \theta$, the contracting direction of $G(A_n)G(A_{n-1}) \cdots G(A_1)$. By Lemma \ref{singleangleestimate}, for all $i \leq j \leq n$, 
$$ \theta_{j} - \theta_{j+1} \sim \frac{1}{A_1^2A_2^2 \cdots A_{j-1}^2} \left(\frac{1}{A_j} - \frac{1}{A_j'} \right). $$
For all $j > i$, since $A_j, A_j' \gg 1$, $A_i > A_i'$ and $\ep > L^{-1}$, we have
$$ \theta_j - \theta_{j+1} \sim \frac{1}{A_1^2A_2^2 \cdots A_{j-1}^2} \left(\frac{1}{A_j} - \frac{1}{A_j'} \right) \ll \ds\frac{1}{A_1^2 A_2^2 \cdots A_{i-1}^2}\frac{\ep L / 2}{A_i A_i'} \lesssim \frac{1}{A_1^2A_2^2 \cdots A_{i-1}^2} \left(\frac{1}{A_i} - \frac{1}{A_i'} \right) \sim \theta_i - \theta_{i+1}. $$
Therefore $\theta_j - \theta_{j+1}$ is dominated by $\theta_i - \theta_{i+1}$ for all $i < j \leq n$. Hence
$$ \theta' - \theta = \theta_i - \theta_{n+1} = (\theta_i - \theta_{i+1}) + (\theta_{i+1} - \theta_{i+2}) + \cdots (\theta_n - \theta_{n+1}) \sim \frac{1}{A_1^2A_2^2 \cdots A_{i-1}^2} \left(\frac{1}{A_i} - \frac{1}{A_i'} \right). $$
The second statement follows from the first by Lemma \ref{distance} since $\| G(A_n) G(A_{n-1}) \cdots G(A_1) \| \sim A_n A_{n-1} \cdots A_1$ by Lemma \ref{simple}(b). 
\end{proof}

\begin{proof}[Proof of Proposition \ref{uniexpand}]
We are now ready to prove the main proposition of the section. The idea is as follows: for each point $(x, y) \in \bb{T}^2$, since the elements in $\O$ are of distance at least $\ep \geq L^{-1+\delta}$ apart, for each $k\ep \in \O$, for all $k'\ep \in \O \setminus \{k\ep\}$, all except possibly one of them satisfy (let $A(x) := L \cos x + 2$ for $x \in \R / 2\pi \Z$)
\begin{equation} \label{good}
 |A(x + k'\ep) - A(x+k\ep)| \gtrsim \ep L  / 2 \quaddd \text{ and } \quaddd |A(x+k'\ep)| \gtrsim L^\delta. 
\end{equation}
Geometrically, this means that firstly, all except one of them has norm growing to infinity with $L$, and the contracting directions of the corresponding differential maps 
$$ DF_{(x+\og, y)} = \begin{pmatrix} L \cos (x+\og) + 2 & -1 \\ 1 & 0 \end{pmatrix} $$
are all pointing in roughly the vertical direction. Moreover, each of the contracting direction is separated from all others (except one) by a significiant amount ($\sim \ep / \| F_{(x+\og, y)} \|$). Hence after $n$ steps, for many of the words $\uog \in \O^n$, the contracting directions are close to the vertical direction and yet well separated (Figure \ref{tree}). Thus each $\theta \in \P^1$ has distance from all but one of these contracting direction bounded from below. From Lemma \ref{angleestimate}, we know that the distance between the contracting directions of two words are dominated by their distance at the first letter they differ, and yet the norm grows by at least $L^\delta$ after every step. Using Lemma \ref{distance}, as long as the word does not enter a bad region (where the contracting direction is rotated drastically), the log expansion $\log \| DF_{\uog}^n \|$ will eventually be large. Since most words do not enter a bad region, and those that do enter a bad region admit a trivial lower bound $\log \|DF_{\uog}^n \| \geq -n \log L$, eventually we will obtain positive expansion on average. 

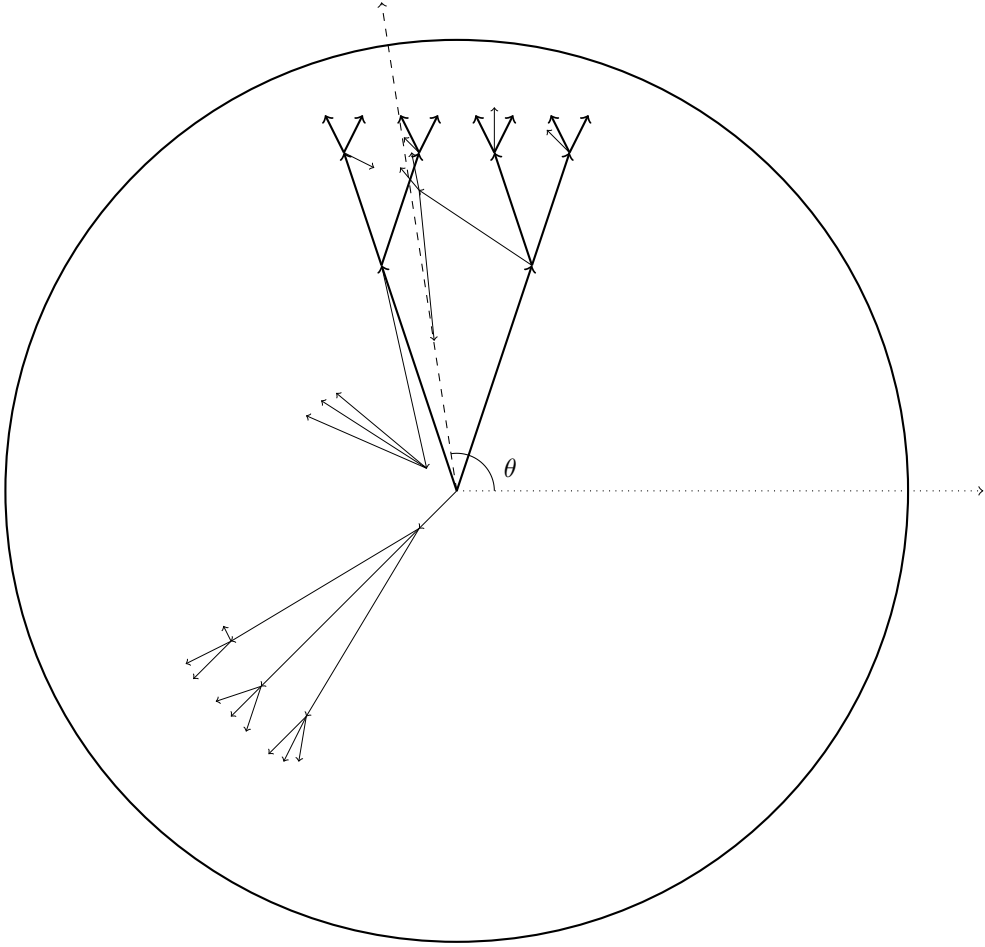
\begin{figure}[ht]
\centering
\begin{tikzpicture}
\draw [thick] (0,0) circle [radius=6];
\draw [dotted, ->] (0, 0) -- (7, 0);
\draw [thick, ->] (0, 0) -- (1, 3);
\draw [thick, ->] (0, 0) -- (-1, 3);
\draw [thick, ->] (1, 3) -- (0.5, 4.5);
\draw [thick, ->] (1, 3) -- (1.5, 4.5);
\draw [thick, ->] (-1, 3) -- (-0.5, 4.5);
\draw [thick, ->] (-1, 3) -- (-1.5, 4.5);
\draw [thick, ->] (0.5, 4.5) -- (0.75, 5);
\draw [thick, ->] (0.5, 4.5) -- (0.25, 5);
\draw [thick, ->] (1.5, 4.5) -- (1.75, 5);
\draw [thick, ->] (1.5, 4.5) -- (1.25, 5);
\draw [thick, ->] (-0.5, 4.5) -- (-0.75, 5);
\draw [thick, ->] (-0.5, 4.5) -- (-0.25, 5);
\draw [thick, ->] (-1.5, 4.5) -- (-1.75, 5);
\draw [thick, ->] (-1.5, 4.5) -- (-1.25, 5);
% bad ones
\draw [line width = 0.05mm, ->] (0, 0) -- (-0.5, -0.5);
\draw [line width = 0.05mm, ->] (1, 3) -- (-0.5, 4);
\draw [line width = 0.05mm, ->] (-1, 3) -- (-0.4, 0.3);
\draw [line width = 0.05mm, ->] (0.5, 4.5) -- (0.5, 5.1);
\draw [line width = 0.05mm, ->] (1.5, 4.5) -- (1.2, 4.8);
\draw [line width = 0.05mm, ->] (-0.5, 4.5) -- (-0.7, 4.7);
\draw [line width = 0.05mm, ->] (-1.5, 4.5) -- (-1.1, 4.3);
\draw [line width = 0.05mm, ->] (-0.5, -0.5) -- (-2, -3);
\draw [line width = 0.05mm, ->] (-0.5, -0.5) -- (-3, -2);
\draw [line width = 0.05mm, ->] (-0.5, -0.5) -- (-2.6, -2.6);
\draw [line width = 0.05mm, ->] (-2, -3) -- (-2.5, -3.5);
\draw [line width = 0.05mm, ->] (-2, -3) -- (-2.3, -3.6);
\draw [line width = 0.05mm, ->] (-2, -3) -- (-2.1, -3.6);
\draw [line width = 0.05mm, ->] (-3, -2) -- (-3.5, -2.5);
\draw [line width = 0.05mm, ->] (-3, -2) -- (-3.6, -2.3);
\draw [line width = 0.05mm, ->] (-3, -2) -- (-3.1, -1.8);
\draw [line width = 0.05mm, ->] (-2.6, -2.6) -- (-3, -3);
\draw [line width = 0.05mm, ->] (-2.6, -2.6) -- (-3.2, -2.8);
\draw [line width = 0.05mm, ->] (-2.6, -2.6) -- (-2.8, -3.2);
\draw [line width = 0.05mm, ->] (-0.5, 4) -- (-0.3, 2);
\draw [line width = 0.05mm, ->] (-0.5, 4) -- (-0.6, 4.5);
\draw [line width = 0.05mm, ->] (-0.5, 4) -- (-0.75, 4.3);
\draw [line width = 0.05mm, ->] (-0.4, 0.3) -- (-2, 1);
\draw [line width = 0.05mm, ->] (-0.4, 0.3) -- (-1.8, 1.2);
\draw [line width = 0.05mm, ->] (-0.4, 0.3) -- (-1.6, 1.3);
% main direction
\draw [dashed, ->] (0, 0) -- (-1, 6.5);
\draw [domain=0:98.75] plot ({0.5*cos(\x)}, {0.5*sin(\x)});
\node [right] at (0.5, 0.3) {$\theta$};
%
%\draw [->, thick] (0, 0) -- (3, 0);
%\draw [->] (3, 0) -- (4, 3);
%\draw [->] (3, 0) -- (5, 2);
%\draw [dashed, ->] (0, 0) -- (4, 3); 
%\draw [dashed, ->] (0, 0) -- (5, 2); 
%\draw [domain=21.8:36.87] plot ({2*cos(\x)}, {2*sin(\x)});
%\draw [domain=45:71.565] plot ({3+cos(\x)}, {sin(\x)});
%\draw [domain=0:45] plot ({3+0.6*cos(\x)}, {0.6*sin(\x)});
%\node [above] at (2, 0.9) {$d\theta$};
%\node [above] at (3.7, 0.8) {$d\varphi$};
%\node [below] at (1.5, 0) {$2 \log \ld_2$};
%\node [right] at (4.2, 1.2) {$2 \log \ld_1$};
%\node [right] at (3.85,  2.6) {$2 \log \ld_1$};
%\node [right] at (3.5, 0.3) {$2 \varphi$};
%\node [above] at (-4.4, 4.4) {$\bb{D}$};
\end{tikzpicture}
\caption{The random walk after $3$ steps. The bold directions form a well-separated "tree".}
\label{tree}
\end{figure}

We now make the above discussion precise using the previous lemmas. For $x \in \R / 2\pi \Z$, let $A(x) = L \cos x + 2$. Recall that at each point $(x, y) \in \bb{T}^2 = \R^2 / (2 \pi \Z)^2$, the differential map of $F(x, y) = (L \sin x + 2x - y, x)$ is
$$ DF = \begin{pmatrix} L \cos x + 2 & -1 \\ 1 & 0 \end{pmatrix} = G(A(x)). $$
Let $\ep \in [L^{-1+\delta}, 1/(2r+1))$. For each $\og \in \O = \{k\ep: k = 0, \pm 1, \pm 2, \ldots, \pm r\}$, 
$$ F_\og = F \circ S_\og = (L \sin (x+\og) + 2(x+\og) - y, x+\og). $$
Hence the differential $DF_\og$ is
$$ \begin{pmatrix} L \cos (x+\og) + 2 & -1 \\ 1 & 0 \end{pmatrix} = G(A(x+\og)). $$
Fix a point $(x, y) \in \bb{T}^2$. For $\uog, \uog' \in \O^n$ and $0 \leq i \leq n$, let
$$ (x_i, y_i) := F_{\uog}^i(x, y) \quaddd \text{ and } \quaddd (x_i', y_i') := F_{\uog'}^i(x, y).$$
Let 
$$ A_i := L \cos x_{i-1} + 2, \quaddd \text{ and } \quaddd A_i' := L \cos x_{i-1}' + 2. $$
We say that a word $\uog \in \O^n$ is \emph{long} (with respect to $(x, y) \in \bb{T}^2$) if 
$$ |A_i| \gtrsim L^\delta \quaddd \text{ for all } 1 \leq i \leq n. $$
For each word $\uog \in \O^n$, let 
$$ \theta_{\uog} := \theta_{DF_{\uog}^n} $$
be the contracting direction of the matrix $DF_{\uog}^n$. 

Observe by (\ref{good}) that for each long $\uog \in \O^n$, there are at least $(|\O| - 2)(|\O| - 1)^{n-1}$ long words $\uog' \in \O^n$ such that
$$ |A_1 - A_1' | \gtrsim \frac{\ep L}{2}. $$
By Lemma \ref{angleestimate}(a), since $A_1 \leq L+2$, 
$$ |\theta_{\uog} - \theta_{\uog'} | \gtrsim \frac{\ep L / 2}{A'_1 A_1} \gtrsim \frac{\ep/2}{A'_1}. $$
Similarly, for all $1 \leq i \leq n$, there are at least $(|\O| - 2)(|\O| - 1)^{n-i}$ long words $\uog' \in \O^n$ such that
$$ \og_j = \og_j' \quad \text{ for all } j < i, \quaddd \text{ and } \quaddd |A_i - A_i'| \gtrsim \frac{\ep L}{2}. $$
Thus again by Lemma \ref{angleestimate}(a), 
$$ |\theta_{\uog} - \theta_{\uog'}| \gtrsim \frac{1}{A_1^2 A_2^2 \cdots A_{i-1}^2} \frac{\ep L/2}{A_i A_i'} \gtrsim \frac{1}{A_1^2 A_2^2 \cdots A_{i-1}^2} \frac{\ep /2}{A_i'}. $$
For all $\theta \in \P^1$, take a long word $\uog \in \O^n$ that minimizes $|\theta_{\uog} - \theta|$ (among long words). Then from above, we know that for each $1 \leq i \leq n$, there are at least $(|\O| - 2)(|\O| - 1)^{n-i}$ long words $\uog' \in \O^n$ such that
$$ |\theta_{\uog'} - \theta| \gtrsim \frac{1}{2} \frac{1}{A_1^2 A_2^2 \cdots A_{i-1}^2} \frac{\ep /2}{A_i'} = \frac{1}{A_1^2 A_2^2 \cdots A_{i-1}^2} \frac{\ep /4}{A_i'}. $$
Hence by Lemma \ref{distance} (note that $A_j = A_j'$ for $j < i$ since $\og_j = \og_j'$), 
\begin{align*}
 \| DF_{\uog'}^n(\theta) \| &\gtrsim  \frac{1}{A_1^2 A_2^2 \cdots A_{i-1}^2} \frac{\ep /4}{A_i'} \cdot \| DF_{\uog'}^n \| 
\gtrsim  \frac{1}{A_1^2 A_2^2 \cdots A_{i-1}^2} \frac{\ep /4}{A_i'} (A_1' A_2' \cdots A_n') \\
&\gtrsim \frac{A_{i+1}' A_{i+2}' \cdots A_n'}{A_1 \cdots A_{i-1}} \frac{\ep}{4} \gtrsim \frac{(L^\delta)^{n-i}}{L^{i-1}} \frac{L^{-1+\delta}}{4} \\
&\gtrsim L^{\delta(n-i+1) - i}.
\end{align*}
%observe that for each $k\ep \in \O$, for all except possibly one element $k'\ep$ in $\O \setminus \{k\ep\}$, we have 
%\begin{equation} \label{good}
% |A(x + k'\ep) - A(x+k\ep)| \gtrsim \ep L  / 2 \quaddd \text{ and } \quaddd |A(x+k'\ep)| \gtrsim L^\delta. 
%\end{equation}
% If $\underline{\og}, \underline{\og}' \in \O^n$ satisfy $\og_j = \og_j'$ for $1 \leq j \leq i-1$, 
%$$ |A(x + \og_i) - A(x + \og_i') | \gtrsim \ep L / 2 $$
%and $A(x + \og_j), A(x + \og_j') \gtrsim L^\delta$ for all $1 \leq j \leq n$, then 
%$$ |\theta_{\underline{\og}} - \theta_{\underline{\og}'}| \gtrsim 

%For $x \in \R / 2\pi \Z$ and $\theta \in \P^1$, we say that $k\ep \in \O$ is \emph{bad for $x$ and $\theta$} unless:
%$$ \quad A(x + k\ep) \gtrsim L^\delta \quaddd \text{ and } \quaddd |\theta_{A(x+k\ep)} - \theta| \gtrsim \ep L / 2. $$
%Then from above, we know that for each $x$ and $\theta$, at most two elements in $\O$ are bad for $x$ and $\theta$. 
%We say that a word $\underline{\og} = (\og_1, \og_2, \ldots, \og_n) \in \O^n$ is {bad for $x$ and $\theta$} if one of the following happens
%\begin{itemize}
%\item there exists $2 \leq i \leq n$ such that $\og_i$ is bad for $x_{i-1}$ and $\theta_{G(A_{i-1}) G(A_{i-2})\cdots G(A_2)G(A_1)}$, 
%\item |\theta
%\end{itemize}

%Fix $(x, y) \in \bb{T}^2$ and $\theta \in \P^1$. Then at most two elements  of $\O$ is bad for $x$ and $\theta$. Hence for the words $\underline{\og} = (\og_1, \og_2, \ldots, \og_n) \in \O^n$ such that $\og_1$ is not bad for $x$ and $\theta$, 

Thus for each direction $\theta \in \P^1$, for each $i = 1, 2, \ldots, n$, we have at least $(|\O| - 2)(|\O| - 1)^{n-i}$ words $\underline{\og}$ in $\O^n$ such that 
$$ \log \| DF_{\underline{\og}}^n (\theta) \| \gtrsim (\delta(n-i+1) - i) \log L. $$
For the remaining $|\O|^n - (|\O| - 1)^n + 1$ words $\underline{\og}$, we have 
$$ \log \| DF_{\underline{\og}}^n (\theta)  \| \geq -n \log L. $$

Hence 
$$ \int \log \| DF_{\underline{\og}}^n (\theta)  \| d \mu^{(n)}(\underline{\og}) \gtrsim \frac{\Xi(|\O|, n, \delta)}{|\O|^n} \log L, $$
where $\Xi(|\O|, n, \delta) = \ds\sum_{i=1}^n (|\O|-2)(|\O| - 1)^{n-i} (\delta(n-i+1) - i) - (|\O|^n - (|\O|-1)^n + 1)n. $

The coefficient of $|\O|^n$ in $\Xi(|\O|, n, \delta)$ is $n \delta - 1$, hence is positive if $n > 1/\delta$. The coefficient of $|\O|^{n-1}$ is $-(\delta + 1)(n^2 + 1) + n$. If $n > 1/\delta$, for large enough $r$ (hence large enough $|\O| = 2r+1$), we have $\Xi(|\O|, n, \delta) > 1$. Hence $\mu$ is uniformly expanding for all large enough $r$ (depending only on $\delta$) and large enough $L$, with $N := \lceil 1/\delta \rceil$ and $C = |\O|^{-N} \log L$. Moreover, for $\delta \in (1/3, 1)$, we can take $n = 3$, and $|\O| \geq \ds\frac{10\delta + 7}{3\delta - 1}$. 

\end{proof}

\section{Computer-assisted verification of uniform expansion} \label{computer}

In this section we outline an algorithm to verify uniform expansion numerically, when $\mu$ is finitely supported on $\Diff^2(M)$. Uniform expansion is \emph{a priori} an infinite condition in the sense that there are infinitely many points on the manifold and infinitely many directions on each fiber of the unit tangent bundle. Nonetheless since the maps in the support of $\mu$ are $C^2$ and the left hand side of the uniform expansion condition is Lipschitz in $v$, using the fact that the unit tangent bundle $T^1 M$ is compact, one can take a finite grid on $T^1 M$, verify the uniform expansion at each grid point, and then prove uniform expansion on the whole $T^1 M$ by the Lipschitz condition. 

This algorithm checks a sufficient condition of uniform expansion when $N = 1$. Nonetheless, by replacing $\mu^{(N)}$ with $\mu$, one may in principle apply the same algorithm to verify uniform expansion for any $N$.

Let $f_1, \ldots f_d$ be the maps in the support of $\mu$ and $\mu = c_1 \delta_{f_1} + \cdots + c_d \delta_{f_d}$ for $c_i \in (0, 1]$. For each $i = 1, 2, \ldots, d$, $P \in M$ and $\theta \in \P^1$, we consider the function
$$ F_i(P, \theta) := \log \| D_P f_i (\theta) \|. $$
Our goal is to verify that
\begin{align} \tag{UE} \label{uuee}
F(P, \theta) := \sum_{i=1}^d c_i F_i(P, \theta) > C 
\end{align}
for some $C > 0$.

We now outline the algorithm.

\begin{enumerate}[label={\bf Step \arabic*:}]
\item Choose local coordinates $t_1, t_2$ on $M$, and find $C_M, C_\theta > 0$ such that 
$$ \left| \frac{\pl F_i}{\pl t} \right| < C_M, \quaddd \left| \frac{\pl F_i}{\pl \theta} \right| < C_\theta $$
for $t = t_1, t_2$. Such constants exist since $F_i$ is $C^1$ and $M$ is compact.
\item Fix some $C > 0$. 
\item Pick $r, \rho > 0$ such that $rC_M < C/4$ and $\rho C_\theta < C/4$. 
\item Take a finite grid $\ml{G}$ on the unit tangent bundle $T^1 M$ that is $r$-dense on the manifold and $\rho$-dense on the unit tangent space $T_P^1 M$ for each grid point $P \in M$. 
\item Verify (\ref{uuee}) for each grid point $(P, \theta) \in \ml{G}$. 
\item From the derivative bounds in {\bf Step 1} and the choices of $r$ and $\rho$ in {\bf Step 3}, one can conclude that (\ref{uuee}) holds with $C$ replaced by $C/4$.
\end{enumerate}

\section{Outer automorphism group action on character variety} \label{cvariety}

\subsection{Introduction}

In this section, we consider an example of a random dynamical system where the uniform expansion property can be checked numerically using the algorithm outlined in Section \ref{computer}.

Let $F_n$ be a free group of rank $n > 1$, $G$ be a compact Lie group. The natural volume form on $\Hom(F_n, G)$ is invariant under $\Aut(F_n)$. This form descends to a natural finite measure $\ld$ on the character variety $\Hom(F_n, G)//G$ that is invariant under $\Out(F_n)$. We refer the reader to \cite{G} for more details about ergodic properties of this system, and the celebrated work of Goldman \cite{G2} for a detailed account in the case when $F_n$ is replaced by the mapping class group of a surface.

Goldman \cite{G} proved that in the case when $G = \mathrm{SU(2)}$ and $n > 2$, the $\Out(F_n)$-action on $\Hom(F_n, G)//G$ is ergodic. On the other hand, the action is not ergodic when $n = 2$, since it preserves the surjective function
\begin{align*}
\kappa: \Hom(F_n, G) // G &\to [-2, 2] \\
[\rho] &\mapsto \tr(\rho([X, Y])) 
\end{align*}
where $X, Y$ is a pair of free generators of $F_2$, and $[X, Y] := XYX^{-1}Y^{-1}$ is the commutator of $X$ and $Y$. The ergodic components are the disintegration $\ld_s$ of $\ld$ on the fibers $\mk{X}_s := \kappa^{-1}(s)$ of $\kappa$ for $s \in [-2, 2]$. 

In the case when $n = 2$, the topological dynamics of this action was studied by Previte and Xia \cite{PX}, who proved, in particular, that on each shell $\mk{X}_s$, the $\Out(F_2)$-invariant sets are either finite or dense. In fact, they classified all the finite $\Out(F_2)$-invariant sets, and gave a condition for when the invariant set is dense. On the other hand, Brown \cite{B} showed using standard KAM techniques that for any nontrivial cyclic subgroup $\G \subset \Out(F_2)$ and $s$ close enough to $-2$, there is a $\G$-invariant set with positive measure on $\mk{X}_s$ that is not dense. We refer our readers to \cite{G2} and \cite{PX2} for analogous analysis of the measurable and topological dynamics of the mapping class group $\Out(\pi_1(M))$-action on the character variety $\Hom(\pi_1(M), \mathrm{SU(2)})/\mathrm{SU(2)}$. 

The analysis in \cite{PX} relies crucially on the fact that $\Out(F_2)$ is generated by Dehn twists. In fact with minor modification their method also applies to the action of a subsemigroup $\G \subset \Out(F_2)$ generated by at least two powers of distinct Dehn twists. In this section, we consider a set of generators $\ml{S}$ of a semigroup $\G \subset \Out(F_2)$ that does not contain any Dehn twists or powers of Dehn twists, and attempt to show that the $\G$-invariant sets are finite or dense by showing uniform expansion on $\ml{S}$ and applying Theorem D. The uniform expansion property is checked using a computer program. For $s$ close to $2$, the expansion is large enough that uniform expansion is observed after $1$ iteration. However, for $s$ close to $-2$, the expansion cannot be checked numerically due to the limitation of computational power. We will verify uniform expansion for a specific $s$ as a proof of concept, though the same algorithm carries for other $s$ close to $2$ as well.

More precisely, consider the following two elements of $\Out(F_2)$:
$$ \tau_X: X \mapsto X, \quad Y \mapsto XY, \quaddd \tau_Y: X \mapsto YX, \quad Y \mapsto Y. $$
Note that $\tau_X$ and $\tau_Y$ generate a subgroup $\langle \tau_X, \tau_Y \rangle$ that has index $2$ in $\Out(F_2)$. Let $\tau_{ABC} := \tau_A \circ \tau_B \circ \tau_C$ where $A, B, C \in \{X, Y\}$. Define the subsemigroup 
$$ \G = \langle f_i: i = 1, 2, \ldots, 16 \rangle \subset \Out(F_2), $$
where
\begin{itemize}
\item $f_1 = \tau_{XXXXY}$
\item $f_2 = \tau_{XXXYY}$,
\item $f_3 = \tau_{XXYYY}$,
\item $f_4 = \tau_{XYYYY}$,
\item $f_5 = \tau_{YXXXX}$,
\item $f_6 = \tau_{YYXXX}$,
\item $f_7 = \tau_{YYYXX}$,
\item $f_8 = \tau_{YYYYX}$,
\end{itemize}
and $f_i = f_{17-i}^{-1}$ for $i = 9, 10, \ldots, 16$. Now define
the measure $\mu := \ds\frac{1}{16}\left( \ds\sum_{i=1}^{16} \delta_{f_i} \right)$ on $\Out(F_2)$. 
%NEED TO BE CHANGEDDDDDDDDDDD

The result of this section is the following.

\begin{proposition} \label{ue}
For $s = 1.99$, the measure $\mu$ is uniformly expanding as an action on the surface $\mk{X}_s$. 
\end{proposition}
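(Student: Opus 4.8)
The plan is to verify the uniform expansion condition directly with $N = 1$ by means of the computer-assisted scheme of Section~\ref{computer}, after putting the maps $f_1, \dots, f_{16}$ into explicit polynomial coordinates on $\mk{X}_s$. First I would parametrize the character variety by the classical trace coordinates $(x, y, z) := (\tr \rho(X), \tr \rho(Y), \tr \rho(XY))$, under which $\mk{X}_s$ is identified with the real algebraic surface $\{(x,y,z) \in [-2,2]^3 : x^2 + y^2 + z^2 - xyz - 2 = s\}$ and $\kappa$ with the polynomial $x^2+y^2+z^2-xyz-2$; for $s = 1.99$ this is a smooth closed surface (a topological sphere) sitting well inside the admissible region and away from the singular value $s = 2$ and from $s = -2$. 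In these coordinates the generators $\tau_X,\tau_Y$ of $\langle \tau_X,\tau_Y\rangle \subset \Out(F_2)$ act by explicit polynomial automorphisms of $\R^3$ preserving $\kappa$ — for instance $\tau_X\colon (x,y,z)\mapsto (x,z,xz-y)$ and $\tau_Y\colon (x,y,z)\mapsto (z,y,yz-x)$ — so each $f_i$, being a length-five alternating word in $\tau_X,\tau_Y$, and each $f_i^{-1}$, is a polynomial automorphism of $\R^3$ restricting to a $C^\infty$ diffeomorphism of $\mk{X}_s$. These maps preserve the Goldman area form $\ld_s$ on the level set, so $\mu$ is supported on $\Diff^2_{\ld_s}(\mk{X}_s)$, and (being reduced alternating words of length five in $\tau_X,\tau_Y$) none of the $f_i$ is a nontrivial power of a Dehn twist.

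Next I would cover $\mk{X}_s$ by finitely many coordinate charts obtained by projecting to two of the three trace coordinates — using that at every point of $\mk{X}_{1.99}$ at least one of $\partial\kappa/\partial x,\partial\kappa/\partial y,\partial\kappa/\partial z$ is bounded away from zero — and in each chart compute the differentials $D_P f_i$, identified with elements of $SL_2(\R)$, explicitly from the polynomial formulas, extracting norms and contracting directions via Lemma~\ref{simple}. Then I would run the algorithm of Section~\ref{computer} with $N = 1$: fix a target $C > 0$; bound the first derivatives $|\partial F_i/\partial t|$ and $|\partial F_i/\partial\theta|$ of $F_i(P,\theta) = \log\|D_P f_i(\theta)\|$ by explicit constants $C_M,C_\theta$ — these are available because the $f_i$ are polynomials of bounded degree on the compact cube $[-2,2]^3$ and because, crucially, $\|D_P f_i(\theta)\|$ stays bounded below by a definite constant on $\mk{X}_{1.99}$; choose $r,\rho > 0$ with $rC_M < C/4$ and $\rho C_\theta < C/4$; take a grid $\ml{G}$ that is $r$-dense on $\mk{X}_s$ and $\rho$-dense in each unit tangent circle; and verify $\tfrac{1}{16}\sum_{i=1}^{16}\log\|D_P f_i(\theta)\| > C$ at every grid point. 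The Lipschitz bounds then upgrade this to (\ref{uuee}) with $C$ replaced by $C/4$ on all of $T^1\mk{X}_s$, which is exactly uniform expansion.

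The main obstacle is essentially the content of the computation rather than of any separate lemma, and it is twofold. First, the verification must be made rigorous: one needs explicit (interval-arithmetic) control of the bounds $C_M,C_\theta$ and of the grid-point evaluations, together with the lower bound on $\|D_P f_i(\theta)\|$, so that checking finitely many $(P,\theta)$ genuinely certifies (\ref{uuee}) everywhere; the degeneration near the corners $(\pm2,\pm2,\pm2)$ and near the singular values $s = \pm 2$ is what makes this delicate, and is the reason $s = 1.99$ is chosen close to — but not equal to — $2$. Second, and this is why the particular set of sixteen generators was selected, at \emph{every} $P \in \mk{X}_{1.99}$ and \emph{every} direction $\theta$ enough of the sixteen differentials $D_P f_i$ must expand $\theta$ strongly, i.e. their contracting directions must be spread out so that no direction is simultaneously near-contracting for too many of them; ruling out any such bad $(P,\theta)$ is precisely what the grid search establishes, and the proposition follows once the program reports success with the above margins. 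Since $\mu$ is finitely supported, conditions $(**)$ and $(*)$ hold automatically, so Proposition~\ref{ue} together with Theorem~\ref{mainorbitclosure} and Proposition~\ref{equidistribution} then yields parts (a)--(c) of Theorem B for $s = 1.99$.
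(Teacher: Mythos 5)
Your proposal follows essentially the same route as the paper: trace coordinates on $\mk{X}_s\subset\R^3$, explicit polynomial formulas for the differentials restricted to the level set, local charts chosen according to which component of the normal vector dominates, $C^1$ bounds $C_M, C_\theta$, and then a finite grid check per the algorithm of Section~\ref{computer} with $N=1$. The paper just supplies the concrete constants ($C=0.25$, $r=\rho=10^{-4}$, $C_M=C_\theta=600$) and fixes a specific orthonormal frame (normalized by $\sqrt{n_k(P)}$) so that each $D_P f_i$ is literally an $SL_2(\R)$ matrix with entries that are polynomials in $x,y,z$ up to a square-root factor, which is what your sketch implicitly needs and the paper makes explicit.
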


\begin{corollary}
For $s = 1.99$, the $\G$-invariants sets on $\mk{X}_s$ are either finite or dense. 
\end{corollary}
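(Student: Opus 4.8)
The plan is to obtain the corollary as a direct application of Theorem D to the smooth measure $\ld_{1.99}$ on $\mk{X}_{1.99}$, using Proposition \ref{ue} as the verification of uniform expansion, and then to upgrade the resulting orbit dichotomy to the statement about invariant sets. First I would check that the hypotheses of Theorem D hold in the present setting. For $s=1.99\in(-2,2)$ the relative character variety $\mk{X}_s=\kappa^{-1}(s)$ is a closed (compact, connected, boundaryless) surface, and each generator $f_i\in\ml{S}$ acts on $\mk{X}_s$ by a real-analytic --- in particular $C^2$ --- diffeomorphism, given by polynomial maps in the trace coordinates cutting out $\mk{X}_s$, and each $f_i$ preserves the natural finite measure $\ld_s$. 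Normalizing $\ld_s$ to a probability measure $m$ (equivalent to the Riemannian volume on $\mk{X}_s$), we are exactly in the situation of Theorem D with $M=\mk{X}_s$ and the finite set $\ml{S}=\{f_1,\dots,f_{16}\}\subset\Diff^2_m(\mk{X}_s)$; since $f_{17-i}=f_i^{-1}$, the set $\ml{S}$ is symmetric, so the semigroup $\G$ it generates is a group. By Proposition \ref{ue} the uniform measure on $\ml{S}$ is uniformly expanding, i.e.\ $\ml{S}$ is uniformly expanding in the sense of Theorem D. Hence Theorem D (Proposition \ref{orbitclosure}) applies: every $\G$-orbit on $\mk{X}_s$ is either finite or dense, and by Proposition \ref{equidistribution} each dense orbit equidistributes with respect to $m=\ld_{1.99}$.

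Next I would deduce the statement for an arbitrary $\G$-invariant subset $E\subseteq\mk{X}_s$. Because $\G$ is a group, $E$ is a union of $\G$-orbits. If $E$ contains some point $x$ with infinite $\G$-orbit, then $\G x\subseteq E$ is dense by the previous step, so $E$ is dense. Otherwise every point of $E$ has finite $\G$-orbit, so $E$ is contained in the set $Z\subseteq\mk{X}_s$ of points with finite $\G$-orbit. The one auxiliary input needed here is that $Z$ is finite: this follows from the classification of finite $\Out(F_2)$-orbits in \cite{PX}, which forces the finite $\G$-orbits on a fixed shell $\mk{X}_s$ into a finite set; alternatively it can be extracted from the Margulis-function estimates of Lemma \ref{margulis} and Corollary \ref{upperbound}, exactly along the lines of the proof of Proposition \ref{countable}. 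Given that $Z$ is finite, $E$ is finite, which yields the dichotomy.

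The substantive obstacle lies not in this corollary but in Proposition \ref{ue}, whose proof is the computer-assisted verification, via the algorithm of Section \ref{computer}, of the uniform expansion inequality at every point of a grid on $T^1\mk{X}_{1.99}$ taken fine enough relative to the $C^2$-bounds of the sixteen generators. Within the corollary itself, the only genuinely nontrivial point is the finiteness of $Z$; everything else is a formal consequence of Theorem D together with the routine book-keeping identifying $(\mk{X}_{1.99},\ld_{1.99},\ml{S})$ as an instance of its hypotheses. I would therefore present the corollary in three short steps --- verify the hypotheses, invoke Theorem D for orbits, pass from orbits to invariant sets via finiteness of $Z$ --- and flag that all the analytic work has already been done in Proposition \ref{ue} and Sections \ref{mrigidity}--\ref{oclosures}.
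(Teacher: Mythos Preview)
Your core argument --- verify that $(\mk{X}_{1.99},\ld_{1.99},\ml{S})$ satisfies the hypotheses of Theorem D and then invoke Proposition~\ref{ue} and Theorem D --- is exactly what the paper does (the corollary is stated without proof, as an immediate consequence of Proposition~\ref{ue} via Theorem D). That part is correct.

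Where your proposal goes beyond the paper is in taking the phrase ``$\G$-invariant sets'' literally and trying to upgrade the orbit dichotomy to a dichotomy for arbitrary invariant sets. Here there is a genuine gap. Your second alternative for proving that the set $Z$ of points with finite $\G$-orbit is finite --- ``exactly along the lines of the proof of Proposition~\ref{countable}'' --- does not work: that proof shows only that for each $n$ there are finitely many $\G$-orbits of size $n$, hence $Z$ is \emph{countable}, not finite. A countable union of finite orbits can certainly be infinite and non-dense, so countability alone does not close the argument. Your first alternative, appealing to \cite{PX}, is more promising but also incomplete as stated: \cite{PX} classifies finite $\Out(F_2)$-orbits, whereas $\G$ is a priori a proper subgroup, so you would need to argue (e.g.\ via finite index of $\G$ in $\langle\tau_X,\tau_Y\rangle$, which you have not checked) that finite $\G$-orbits coincide with finite $\Out(F_2)$-orbits on this shell.

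In short: the paper's intended content is the orbit-closure dichotomy (compare Theorem B(b)), and your derivation of that is fine and matches the paper. If you want to prove the literal ``invariant sets'' statement, you must actually establish that $Z$ is finite on $\mk{X}_{1.99}$, and neither of the two routes you sketch does so without further input.
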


\subsection{Character variety as a subvariety of $\R^3$} \label{tracecoordinates}

We now describe the character variety $\Hom(F_2, \mathrm{SU(2)})//\mathrm{SU(2)}$ in more explicit terms. The character variety $\Hom(F_2, \mathrm{SU(2)})//\mathrm{SU(2)}$ injects into $\R^3$ under the trace coordinates
\begin{align*}
\Hom(F_2, \mathrm{SU(2)}) // \mathrm{SU(2)} &\to \R^3 \\
[\rho] &\mapsto \begin{pmatrix} \tr(\rho(X)) \\ \tr(\rho(Y)) \\ \tr(\rho(XY)) \end{pmatrix}.
\end{align*}
This is injective, with image
$$ \mk{X} := \{(x, y, z) \in \R^3: -2 \leq x^2 + y^2 + z^2 - xyz - 2 \leq 2\}. $$
Hence we may identify $\Hom(F_2, \mathrm{SU(2)})//\mathrm{SU(2)}$ with $\mk{X}$. In these coordinates, the map $\kappa: \Hom(F_2, \mathrm{SU(2)}) // \mathrm{SU(2)} \to [-2, 2]$ described in the introduction is then
$$ \kappa(x, y, z) = x^2 + y^2 + z^2 - xyz - 2. $$
For $s \in [-2, 2]$, the ergodic components are
$$ \mk{X}_s := \kappa^{-1}(s) = \{(x, y, z) \in \R^3: x^2 + y^2 + z^2 - xyz - 2 = s\}. $$
In trace coordinates, the maps $\tau_X$ and $\tau_Y$ are
$$ \tau_X: \begin{pmatrix} x \\ y \\ z \end{pmatrix} \mapsto \begin{pmatrix} x \\ z \\ xz - y \end{pmatrix}, \quaddd \tau_Y: \begin{pmatrix} x \\ y \\ z \end{pmatrix} \mapsto \begin{pmatrix} z \\ y \\ yz - x \end{pmatrix}. $$
At each point $P = (x, y, z)$, a normal vector is given by ${\bf n}(P) = (2x - yz, 2y - zx, 2z - xy)$, with the unit normal ${\bf v_3}(P) = \ds\frac{{\bf n}(P)}{\| {\bf n}(P) \|}$. 

From \cite[Sect. 5.3]{G}, a cosymplectic structure on $\mk{X}_t$ can be given explicitly by (up to a multiplicative constant)
$$ (2x - yz) \frac{\pl}{\pl y} \wedge \frac{\pl}{\pl z} + (2y - zx) \frac{\pl}{\pl z} \wedge \frac{\pl}{\pl x} + (2z - xy) \frac{\pl}{\pl x} \wedge \frac{\pl}{\pl y}. $$

Since $\G$ preserves the symplectic structre, if we take the metric $\| \cdot \|_P := \| {\bf n}(P) \|^{-1/2} \| \cdot \|$ on $T_P \mk{X}_s$, where $\| \cdot \|$ is the restriction of the Euclidean metric from $\R^3$ to the tangent space $T_P \mk{X}_s$, then for each $f \in \Out(F_2)$, we have the area-preserving linear map 
$$ D_P f: T_P \mk{X}_s \to T_{f(P)} \mk{X}_s. $$
Note that each element $f \in \Out(F_2)$ is the restriction of a map $f_0: \R^3 \to \R^3$ to $\mk{X}_s$ in terms of the trace coordinates. Therefore $D_P f$ can be expressed as the restriction of a volume-preserving linear map $D_P f_0: \R^3 \to \R^3$, i.e. an element of $SL_3(\R)$, to $T_P \mk{X}_s$. For instance, writing $P = (x, y, z)$, 
$$ D_P \tau_X = \begin{pmatrix} 1 & 0 & 0 \\ 0 & 0 & 1 \\ z & -1 & x \end{pmatrix}, \quaddd D_P \tau_Y = \begin{pmatrix} 0 & 0 & 1 \\ 0 & 1 & 0 \\ -1 & z & y \end{pmatrix}, $$
both restricted to the tangent space $T_P \mk{X}_s$. 

\subsection{Choice of metric}

We will choose a convenient metric to work with. To do so, it suffices to give an orthonormal basis at each point. For each $P = (x, y, z) \in \mk{X}_s$, let ${\bf n}(P) = (n_1(P), n_2(P), n_3(P)) := (2x - yz, 2y - zx, 2z - xy)$ be the normal vector. Consider the following three tangent vectors in $T_P(\mk{X}_s)$
$$ {\bf v}_1(P) = \begin{pmatrix} 0 \\ n_3(P) \\ -n_2(P) \end{pmatrix}, \quaddd {\bf v}_2(P) = \begin{pmatrix} -n_3(P) \\ 0 \\ n_1(P) \end{pmatrix}, \quaddd {\bf v}_3(P) = \begin{pmatrix} n_2(P) \\ -n_1(P) \\ 0 \end{pmatrix}. $$
Clearly these are tangent vectors at $P$. Moreover since the normal vector ${\bf n}(P) = (n_1(P), n_2(P), n_3(P))$ is nonzero, at least one of $n_i(P)$, $i = 1, 2, 3$ is nonzero, thus at least two of ${\bf v}_1(P), {\bf v}_2(P), {\bf v}_3(P)$ are linearly independent. In fact, for $s < 2$, there is a positive lower bound $c = c(s)$ such that $\max_{i = 1, 2, 3} |n_i(P)| \geq c(s)$ for all $P \in \mk{X}_s$, so at least two of ${\bf v}_1(P), {\bf v}_2(P), {\bf v}_3(P)$ have Euclidean norm larger than $c(s)$. 

Now at each $P \in \mk{X}_s$, we define a positive definite inner product $\la \cdot, \cdot \ra_P$ on $T_P\mk{X}_s$ such that 
$$ \left\{ \frac{{\bf v}_i(P)}{\sqrt{n_k(P)}}, \frac{{\bf v}_j(P)}{\sqrt{n_k(P)}} \right\} $$ 
form an orthonormal basis, where $k \in \{1, 2, 3\}$ is  the index that maximizes $|n_k(P)|$, and $\{i, j, k\}$ form an even permutation of $\{1, 2, 3\}$ (we will comment on the normalizing factor $\sqrt{n_k(P)}$ in the next section). The map $P \mapsto \la \cdot, \cdot \ra_P$ is smooth except along the curves on $\mk{X}_s$ where at least two of $x, y, z$ are equal. Therefore strictly speaking they do not form a smooth metric. Nonetheless from the end of the previous paragraph, we know that there exists a constant $c'(s) > 0$ such that 
$$ c'(s)^{-1} \la \cdot, \cdot \ra \leq \la \cdot, \cdot \ra_P \leq c'(s) \la \cdot, \cdot \ra, $$
where $\la \cdot, \cdot \ra$ is the Euclidean inner product induced from $\R^3$. It is evident from the definition of uniform expansion that it is invariant under change of equivalent metrics, so it suffices to verify uniform expansion with respect to $\{\la \cdot, \cdot \ra_P\}_{P \in \mk{X}_s}$. 

The advantage of considering this metric is that, with respect to this metric and the specific orthonormal basis chosen above, $D_P \tau_X$ and $D_P \tau_Y$ (and hence the compositions) are $2 \times 2$ matrices such that up to the factor $n_k(P)$, the entries are polynomials in $x, y, z$. For instance, 
\begin{align*}
D_P \tau_X {\bf v}_1(P) &= {\bf v}_1(\tau_X(P)), \\
D_P \tau_X {\bf v}_2(P) &= \frac{n_1(P)}{n_3(\tau_X(P))} {\bf v}_1(\tau_X(P)) + \frac{n_3(P)}{n_3(\tau_X(P))} {\bf v}_2(\tau_X(P)), \\
D_P \tau_Y {\bf v}_1(P) &= \frac{n_3(P)}{n_3(\tau_Y(P))} {\bf v}_1(\tau_Y(P)) + \frac{n_2(P)}{n_3(\tau_Y(P))} {\bf v}_2(\tau_Y(P)), \\
 D_P \tau_Y {\bf v}_2(P) &= {\bf v}_2(\tau_Y(P)). 
\end{align*}
The matrices with respect to other bases can be found using the identity
$$ n_1(P) {\bf v}_1(P) + n_2(P) {\bf v}_2(P) + n_3(P) {\bf v}_3(P) = 0. $$

\subsection{Derivative bounds}

To choose the bounds $C_M$ and $C_\theta$ in the algorithm, it is necessary to compute bounds on $|\pl F_i / \pl t|$ and $|\pl F_i / \pl \theta|$ for $F_i(P, \theta) = \log \| D_P f_i(\theta)\|$ and local coordinates $t = t_1, t_2$ near $P$. If we treat $f_i$ as a function $\R^3 \to \R^3$, we can compute $D_P f_i$ as an element $L_i$ of $SL_3(\R)$. 

With respect to the metric and the corresponding orthonormal basis chosen above, $D_P f_i$ can be written as a $2 \times 2$ matrix with entries being the square root of rational functions of $x, y, z$, say $D_P f_i = \begin{pmatrix} a_{i, P} & b_{i, P} \\ c_{i, P} & d_{i, P} \end{pmatrix}$. For instance, if the orthonormal basis for $P$ is $\ds\frac{\{{\bf v}_1(P), {\bf v}_2(P)\}}{\sqrt{n_3(P)}}$ and that of $f_i(P)$ is $\ds\frac{\{{\bf v}_1(f_i(P)), {\bf v}_2(f_i(P))\}}{\sqrt{n_3(f_i(P))}}$, we can write explicitly that
$$ D_P f_i = \frac{1}{\sqrt{n_3(P) n_3(f_i(P))}} \begin{pmatrix} (L_i{\bf v}_1)_2 & (L_i{\bf v}_2)_2 \\ -(L_i{\bf v}_1)_1 & -(L_i{\bf v}_2)_1 \end{pmatrix}. $$
In particular, $\sqrt{n_3(P) n_3(f_i(P))} D_P f_i$ has polynomial entries and 
$$ \det D_P f_i = 1 $$
(the primary reason to have the normalizing factor $\sqrt{n_k(P)}$ is to ensure this matrix has determinant $1$.) Similar expressions can be obtained for the other points where the other two orthonormal bases are chosen. Hence if we choose $x$ and $y$ to be the local coordinates near $P$ (corresponding to the ${\bf v}_1$ and ${\bf v}_2$ directions), the derivatives with respect to $x$ and $y$ can be explicitly computed and bounded. 

More explicitly, for $M = \begin{pmatrix} a & b \\ c & d \end{pmatrix} \in SL_2(\R)$, let $F_M(\theta) = \log \| M(\theta) \|$. Then
$$ F_M(\theta) = \frac{1}{2} \log \left( \frac{1}{2}(a^2 + b^2 + c^2 + d^2) + \frac{1}{2}(a^2 - b^2 + c^2 - d^2) \cos 2\theta + (ab + cd)\sin 2\theta \right). $$
Thus $\pl F_M(\theta) / \pl t$ can be represented explicitly in terms of $a, b, c, d, a', b', c', d'$ and $\theta$, where $a' = \pl a / \pl t$ etc. Since for all $P = (x, y, z) \in \ml{X}_s$, the coordinates $x, y, z$ are in $[-2, 2]$, while $a, b, c, d$ are polynomials in $x, y, z$ divided by $\sqrt{n_3(P) n_3(f_i(P))}$, all these can be explicitly bounded. Furthermore by the choice of the orthonormal bases at $P$ and $f_i(P)$ we know that $|n_3(P)| > |n_1(P)|, |n_2(P)|$ and similarly for $|n_3(f_i(P))|$, we have that $\sqrt{n_3(P) n_3(f_i(P))}$ is bounded below by an explicit positive number depending only on $s$. We shall omit the explicit expressions here as they are written in the program (see Program 1). 

\subsection{Choice of Parameters in the verification}

In this section we choose the parameters in the algorithm to check that $\mu$ is uniformly expanding.

\begin{proof}[Proof of Proposition \ref{ue}]
We verify uniform expansion using the algorithm from the previous section. Let $f_i$ be the maps in the support of $\mu$ with $i = 1, 2, \ldots, d$, where $d = 16$. We choose the grid $\ml{G}$ in the following process: recall that
$$ \mk{X}_s = \{(x, y, z) \in \R^3: x^2 + y^2 + z^2 - xyz - 2 = s\}. $$
Let ${\bf n}(P) = (n_1(P), n_2(P), n_3(P)) = (2x - yz, 2y - zx, 2z - xy)$. Within the region $\{P \in M \mid |n_3(P)| = \max_{k=1,2,3} |n_k(P)|\}$, we use the $x$ and $y$ directions as local coordinates. This corresponds to using ${\bf v}_1$ and ${\bf v}_2$ as an orthonormal coordinate system. Similarly for the other two regions where $|n_1(P)|$ and $|n_2(P)|$ dominate. 
We verify uniform expansion for $s = 1.99$.
\begin{enumerate}[label={\bf Step \arabic*:}]
\item We take $C_M = 600$ and $C_\theta = 600$. \\ (these are computed using the explicit expressions of $\pl F_M(\theta) / \pl t$ on a grid ({\bf Program 1}) and then a na\"{i}ve bound on second derivatives of $F_M(\theta)$. ). %\fixme{say more?}
\item Fix $C = 0.25$. 
\item Let $r = 0.0001 < C/(4C_M)$ and $\rho = 0.0001 < C / (4C_\theta)$.
\item Take an $r$-dense grid on $\mk{X}_s$ using the specified local coordinates. We fix a $\rho$-grid in the unit tangent space direction.
\item We verify with {\bf Program 2} that (\ref{uuee}) holds on the grid with $C = 0.25$ as in {\bf Step 2}.
\item From the derivative bounds in {\bf Step 1} and the choices of $r$ and $\rho$ in {\bf Step 3}, one can conclude that (\ref{uuee}) holds on the whole surface with $C$ replaced by $C/4$.
\end{enumerate}
The programs were run on the University of Chicago Midway compute cluster partition broadwl. \\
 Specification: 28 cores of Intel E5-2680v4 2.4 GHz. Memory: 64 GB. Runtime: 47714 seconds. \\

\noindent {\bf Program 1} ($C^2$ bounds in {\bf Step 1}):
\begin{itemize}
\item Code: \url{http://math.uchicago.edu/~briancpn/derivative_single.cpp}
\item Output: \url{http://math.uchicago.edu/~briancpn/secondderivative.txt}
\end{itemize}
\noindent {\bf Program 2} ($C^1$ bounds and (\ref{uuee}) in {\bf Step 5}):
\begin{itemize}
\item Code: \url{http://math.uchicago.edu/~briancpn/actual.cpp}
\item Output: \url{http://math.uchicago.edu/~briancpn/character_variety_test.txt}
\end{itemize}

%Finally, note that using the same derivative bounds, one can also verify uniform expansion for $s \in (3.99, 4]$. 

%{\bf Step 1}: {\bf Claim}: For all $P \in \mk{X}_s$, $\mu(f: \| D_P f \| \leq \ld_m) = 1$ for $\ld_m = \fixme{???}$. 
%\begin{proof}
%
%\end{proof}
%\nt{\bf Step 2}: {\bf Claim}: For all $P \in \mk{X}_s$, $\mu(f: \| D_P f \| \leq \ld_{crit}) \leq \alpha$, where $\ld_{crit} = \fixme{???}$ and $\alpha = \fixme{???}$. 
%\begin{proof}
%
%\end{proof}
%\nt{\bf Step 3}: {\bf Claim}:  For all $P \in \mk{X}_s$, let $\ld_i(P) = \| D_P f_i \|_P$ and $\theta_i(P)$ be the angle between the contraction direction of $D_P f_i$ and ${\bf v}_1(P)$. Then for all $P \in \mk{X}_s$ and $i,j = 1, 2, \ldots d$, 
%$$ \left| \frac{\pl \ld_i}{\pl t} \bigg|_{\ld = \ld_{crit}} \right| \leq C_{\ld}, \quaddd \text{ and } \quaddd  \left| \frac{\pl}{\pl t}(\theta_i - \theta_j) \bigg|_{\ld = \ld_{crit}} \right| \leq C_{\theta}, $$
%where $C_\ld = \fixme{100}$ and $C_\theta = \fixme{100}$. 
%\begin{proof}
%\end{proof}
%\nt{\bf Step 4}: For each $P \in \ml{G}$, $\theta \in \P^1$ and $i = 1, 2, \ldots, d$, define $\bar\ld_i(P, \theta)$ and $\bar{w}_i(P, \theta)$ as in {\bf Algorithm} in Section \ref{computer}. \\
%\nt{\bf Step 5}: Now it can be checked using a simple program that for all grid point $P \in \ml{G}$ and $\theta \in \P^1$, 
%$$ \frac{1}{d} \sum_{i=1}^d \log (\bar\ld_i^2 \bar{w}_i + \bar\ld_i^{-2} (1 - \bar{w}_i))^{1/2} - 2 \alpha \log \ld_{crit} > C = \fixme{???}. $$
\end{proof}

\end{document}